\numberwithin{equation}{section}
\newtheorem{theorem}{Theorem}[section]
\newtheorem{corollary}[theorem]{Corollary}
\newtheorem{lemma}[theorem]{Lemma}
\newtheorem{proposition}[theorem]{Proposition}
\theoremstyle{definition}
\newtheorem{definition}{Definition}[section]
\newtheorem{remark}{Remark}[section]
\newcommand\R{\mathbb R}
\newcommand\divg{\mathop{\mbox{\rm div}}}
\newcommand{\lagr}[1]{\boldsymbol{\mathbf{#1}}}
\newcommand\adj{\mathop{\mbox{\rm adj}}}
\newcommand\supp{\mathop{\mbox{\rm supp}}}
\title[Maximal regularity for inhomogeneous Navier-Stokes equations]{Maximal $L^1$ regularity for solutions to inhomogeneous incompressible Navier-Stokes equations}
\author{Huan XU}
\subjclass{35Q30; 35B40; 76D03.}
\keywords{Inhomogeneous Navier-Stokes equations, maximal $L^1$ regularity, Lagrangian coordinates, Stokes system, elliptic gradient estimates.}
\begin{document}

\maketitle

\begin{abstract}
This paper is devoted to the maximal $L^1$ regularity and asymptotic behavior for solutions to the inhomogeneous incompressible Navier-Stokes equations under a scaling-invariant smallness assumption on the initial velocity. We obtain a new global $L^1$-in-time estimate for the Lipschitz seminorm of the velocity field without any smallness assumption on the fluctuation of the initial density. In the derivation of this estimate, we study the maximal $L^1$ regularity for a linear Stokes system with variable coefficients. The analysis tools are a use of the semigroup generated by a generalized Stokes operator to characterize some Besov norms and a new gradient estimate for a class of second-order elliptic equations of divergence form. Our method might be used to study some other issues arising from incompressible or compressible viscous fluids.
\end{abstract}

\bigskip


\bigskip

\section{Introduction}

\bigskip

In many problems arsing from partial differential equations in fluid mechanics, such as global existence, regularity, uniqueness, stability, etc., the heart of the matter is to bound the $L^1$-in-time integral of the Lipschitz seminorm of the velocity field.  In this work, we investigate this core issue for the three-dimensional inhomogeneous incompressible Navier-Stokes equations
\begin{eqnarray}\label{INS}
\left\{\begin{aligned}
&\partial_t\rho+u\cdot\nabla\rho =0, &\mathrm{in}\ (0,\infty)\times\R^3,\\
&\rho(\partial_tu+u\cdot\nabla u)-\Delta u+\nabla P=0,\ &\mathrm{in}\ (0,\infty)\times\R^3,\\
&\divg u=0, &\mathrm{in}\ (0,\infty)\times\R^3,\\
&(\rho,u)|_{t=0}=(\rho_0,u_0), &\mathrm{on}\ \R^3.
\end{aligned}\right.
\end{eqnarray}
In the above system, $\rho$, $u$ and $P$ stand for the density, velocity field and pressure of the fluids, respectively. The viscosity coefficient is a constant normalized to $1$. This system can model the motion of incompressible flows of mixing fluids with different densities, or of fluid flows with melted substances inside it. The inhomogeneity attributes to the presence of the variable density, and \eqref{INS} becomes the classical Navier-Stokes equations if $\rho\equiv1$. Throughout this paper, the initial density is assumed to be bounded and bounded from below, namely,
\begin{align}\label{initial density bounds}
m\le\rho_0(x)\le\frac1m,\ \ \mathrm{a.e.}\ x\in\R^3
\end{align}
for some constant $m\in(0,1]$. In view of the continuity equation of \eqref{INS}, the density $\rho(t,x)$ satisfies these bounds for all times as long as the velocity field is regular, namely,
\begin{align}\label{density bounds}
m\le\rho(t,x)\le\frac1m,\ \ \forall t\ge0,\ \mathrm{a.e.}\ x\in\R^3.
\end{align}

To be precise, we are concerned with the global-in-time estimate
\begin{align}\label{global L1Linfty estimate for gradient of velocity}
\int_0^\infty\|\nabla u(t)\|_{\infty}\,dt\ll1
\end{align}
when the initial velocity $u_0$ satisfies certain smallness condition. To our knowledge, if the fluctuation of the density is not small, there is not much evidence in the literature that supports the validity of \eqref{global L1Linfty estimate for gradient of velocity}. For example, it was proved in \cite{chen jde 2016,paicu cpde 2013} that the quantity $\int_0^T\|\nabla u(t)\|_{\infty}\,dt$ has a polynomial growth in time provided that the initial velocity $u_0$ belongs to some Sobolev space $H^s(\R^3)$ with $s>\frac12$, and that $u_0$ satisfies a scaling-invariant smallness condition. However, such growth could possibly be removed from the point of view of equivalent characterizations of norms. To see this, we suppose that $u$ is the solution to the classical heat equation $\partial_tu-\Delta u=0$ with initial value $u_0$. Then the estimate \eqref{global L1Linfty estimate for gradient of velocity} dictates the smallest norm to be used to measure the smallness of the initial data because of the equivalence of Besov norms
\begin{align*}
\int_0^\infty\|\nabla u(t)\|_{\infty}\,dt\simeq\|u_0\|_{\Dot{B}_{\infty,1}^{-1}}.
\end{align*}
The above equivalent characterization is classical and can be proved, for example, by applying {\cite[Lemma~2.4]{BCD}} and Lemma \ref{characterization via classic heat kernel}. In general, the space $\Dot{B}_{\infty,1}^{-1}(\R^3)$ is too rough in order for a nonlinear equation to be well-posed in it. Then we have to replace it with smaller spaces $\Dot{B}_{p,1}^{3/p-1}(\R^3)$, $1\le p<\infty$, so that there is still hope for \eqref{global L1Linfty estimate for gradient of velocity} to be true. In fact, the main contribution of this work is the estimate
\begin{align*}
\int_0^\infty\|\nabla u(t)\|_{\infty}\,dt\lesssim\|u_0\|_{\Dot{B}_{2,1}^{1/2}}
\end{align*}
if the right side is small.

It is worth noting that the scaling invariance of \eqref{INS} also suggests the use of the $\Dot{B}_{p,1}^{3/p-1}$ norms for the velocity. For any $\lambda>0$, it is easy to see that \eqref{INS} is invariant under the scaling
\begin{align*}
(\rho,u,P)(t,x)\rightsquigarrow(\rho_\lambda,u_\lambda,P_\lambda)(t,x)\vcentcolon=(\rho,\lambda u,\lambda^2 P)(\lambda^2t,\lambda x).
\end{align*}
So the smallness condition on $u_0$ makes sense if it is measured by a norm which is invariant under the scaling $u_0(\cdot)\rightsquigarrow\lambda u_0(\lambda\cdot)$. And we do have $\|\lambda u_0(\lambda\cdot)\|_{\Dot{B}_{p,1}^{3/p-1}}\simeq\|u_0\|_{\Dot{B}_{p,1}^{3/p-1}}$. Nowadays, the spaces $\Dot{B}_{q,1}^{3/q}(\R^3)\times\Dot{B}_{p,1}^{3/p-1}(\R^3)$ (for $(\rho_0,u_0)$) are called {\it critical spaces} for \eqref{INS}. 

The desired estimate \eqref{global L1Linfty estimate for gradient of velocity} plays a specific important role in solving hyperbolic-parabolic systems arsing from fluid dynamics (specifically, when the system has a convection-diffusion structure). It enables us to rewrite \eqref{INS} in its Lagrangian formulation which is a parabolic system only (see \cite{danchin cpam 2012,danchin arma 2013}). Now the main task is to establish a maximal $L^1$-in-time regularity for the linear part of the Lagrangian formulation in critical spaces. However, this is a challenging problem because the velocity and the pressure are strongly coupled in the presence of the variable density.

Before we present the main results of the present paper, let us review some known results for the existence and uniqueness of solutions to \eqref{INS}. Global weak solutions with finite energy were first obtained by Kazhikhov \cite{kazihov 1974} under the assumption that the initial density $\rho_0$ has a positive infimum. Several improvements can be found in \cite{simon siam 1990,Lions book,desjardins DIE 1997}. The main estimate for weak solutions is the energy inequality
\begin{equation*}
\|\sqrt{\rho}u(t)\|_{L^2}^{2} +2\int_0^t \|\nabla u(\tau)\|_{L^2}^{2}d\tau \le\|\sqrt{\rho_0}u_0\|_{L^2}^{2}.    
\end{equation*}
This estimate is far from enough to prove the uniqueness of weak solutions in $3$-D. Ladyzhenskaya and Solonnikov \cite{ladyzhenskaya} initiated the studies for unique solvability of \eqref{INS}
in a bounded domain with homogeneous Dirichlet boundary condition for $u$. In the last two decades, a large amount of work was devoted to the well-posedness of \eqref{INS} under minimum regularity assumptions on the data. Firstly, Danchin \cite{danchin edinburgh 2003} constructed a unique strong solution to \eqref{INS} in the critical space $(L^\infty(\R^3)\cap\Dot{B}_{2,\infty}^{3/2}(\R^3))\times\Dot{B}_{2,1}^{1/2}(\R^3)$ in the case when the initial density is close to a constant. Later, many authors tried to improve Danchin's result to allow different Lebesgue indices of the critical spaces, or to remove the smallness assumption on the initial density (see \cite{abidi rmi 2007,abidi aif 2007,abidi arma 2012, abidi jmpa 2013,danchin cpam 2012,burtea apde 2017,zhai jde 2017}). Secondly, it is interesting to lower the regularity of the density to allow discontinuity. A well-posedness result with only bounded density would demonstrate that the motion of a mixture of two incompressible fluids with different densities can be modeled by \eqref{INS}. One can see \cite{danchin arma 2013,huang arma 2013,paicu cpde 2013,chen jde 2016} for the work towards this direction. Now one might ask whether a small initial velocity in some critical space and a discontinuous bounded initial density can generate a unique global-in-time solution to \eqref{INS}. To our knowledge, this question is not settled. Nevertheless, in his recent paper \cite{zhang ping adv 2020}, Zhang established a global existence result in which $\rho_0$ merely satisfies \eqref{initial density bounds} and $\|u_0\|_{\Dot{B}_{2,1}^{1/2}}$ is small, yet the uniqueness is not known due to the lack of \eqref{global L1Linfty estimate for gradient of velocity} (even the local-in-time estimate is not know). Finally, we remark that our results will be based on the assumption \eqref{initial density bounds}. In case one is interested in the case when vacuum state is allowed, we refer to a recent paper \cite{danchin cpam 2019} and the references therein.

\subsection{Main results}

Our first main theorem concerns the global existence and maximal $L^1$ regularity of solutions to \eqref{INS} provided the initial velocity is small in the Besov space $\dot{B}_{2,1}^{1/2}(\R^3)$. But we do not require the initial density to stay close to an equilibrium. First, let us be clear about what it means by a solution to \eqref{INS}. Let $\mathcal{P}\dot{B}_{2,1}^{1/2}(\R^3)$ be the space that consists of all divergence free vector fields whose components belong to $\dot{B}_{2,1}^{1/2}(\R^3)$.

\begin{definition}\label{definition of strong solutions to Eulerian formulation}
Let $T\in(0,\infty]$. Suppose that $\rho_0-1\in \dot{B}_{2,1}^{3/2}(\R^3)$ and $u_0\in\mathcal{P}\dot{B}_{2,1}^{1/2}(\R^3)$. We say that $(\rho,u,\nabla P)$ is a {\it strong solution} to \eqref{INS} if
\begin{eqnarray*}
\left\{\begin{aligned}
&\rho-1\in C([0,T);\dot{B}_{2,1}^{3/2}(\R^3)), \partial_t\rho\in L_{loc}^{2}([0,T);\dot{B}_{2,1}^{1/2}(\R^3)),\\
&u\in C([0,T);\mathcal{P}\dot{B}_{2,1}^{1/2}(\R^3)),\\
&(\partial_tu,\Delta u,\nabla P)\in \left(L_{loc}^{1}([0,T);\dot{B}_{2,1}^{1/2}(\R^3))\right)^3,
\end{aligned}\right.
\end{eqnarray*}
and $(\rho,u,\nabla P)$ satisfies \eqref{INS} for a.e. $t\in(0,T)$.
\end{definition}

\begin{theorem}\label{global wellposedness}
Assume that the initial density $\rho_0$ satisfies \eqref{initial density bounds}, $\rho_0-1\in\dot{B}_{2,1}^{3/2}(\R^3)$, and the initial velocity $u_0\in\mathcal{P}\dot{B}_{2,1}^{1/2}(\R^3)$. Then there exists some $T>0$ such that \eqref{INS} has a unique local-in-time solution $(\rho,u,\nabla P)$ with $\rho$ verifying \eqref{density bounds}.

Moreover, there exists a positive constant $\varepsilon_0$ depending on $m$ and $\|\rho_0-1\|_{\dot{B}_{2,1}^{3/2}}$ such that if $u_0$ satisfies
\begin{align}\label{smallness on initial velocity}
\|u_0\|_{\dot{B}_{2,1}^{1/2}}\le\varepsilon_0,
\end{align}
then the above solution exists globally in time and verifies
\begin{align}\label{maximal regularity INS}
\|u\|_{L^\infty(\R_+;\dot{B}_{2,1}^{1/2})}+\|\Delta u,\partial_tu,\nabla P\|_{L^1(\R_+;\dot{B}_{2,1}^{1/2})}\le C_0\|u_0\|_{\dot{B}_{2,1}^{1/2}},
\end{align}
and
\begin{align}\label{global in time estimate for the density}
\|\rho-1\|_{L^\infty(\R_+;\dot{B}_{2,1}^{3/2})}\le C_1\|\rho_0-1\|_{\dot{B}_{2,1}^{3/2}},
\end{align}
where $C_0$ is a constant depending on $m$ and $\|\rho_0-1\|_{\dot{B}_{2,1}^{3/2}}$, and $C_1$ is an absolute constant.
\end{theorem}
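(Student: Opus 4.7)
The plan is the standard two-step strategy for critical regularity solutions of inhomogeneous Navier--Stokes: first build a local-in-time solution by an approximation scheme, then upgrade it to a global one under \eqref{smallness on initial velocity} through a bootstrap on
\begin{equation*}
X(T)\vcentcolon=\|u\|_{L^\infty_T(\dot{B}_{2,1}^{1/2})}+\|\Delta u,\partial_tu,\nabla P\|_{L^1_T(\dot{B}_{2,1}^{1/2})}.
\end{equation*}
For the local existence part, I would smooth the data to $(\rho_0^n,u_0^n)$ with frequencies truncated between $1/n$ and $n$, and run a fixed-point iteration that alternates a transport step updating $\rho^{k+1}$ from $u^k$ with a Stokes step updating $u^{k+1}$ from $\rho^{k+1}$. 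The transport step preserves \eqref{initial density bounds} and controls $\|\rho^{k+1}-1\|_{L^\infty_T(\dot{B}_{2,1}^{3/2})}$ via the standard para-commutator estimate, as long as $\int_0^T\|\nabla u^k\|_\infty\,dt$ is finite. The Stokes step, with $\rho u\cdot\nabla u$ put on the right-hand side, is where the maximal $L^1$ regularity theorem for the variable-coefficient Stokes system developed later in the paper enters, yielding uniform bounds in the norms of Definition \ref{definition of strong solutions to Eulerian formulation}. Passage to the limit is a routine compactness argument on a short time interval, and uniqueness follows from a Gronwall argument on the difference of two solutions in a lower regularity space such as $\dot{B}_{2,1}^{-1/2}$, driven by the finiteness of $\int_0^T\|\nabla u\|_\infty\,dt$.

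For the global statement, applying the variable-coefficient maximal regularity to
\begin{equation*}
\rho\partial_tu-\Delta u+\nabla P=-\rho u\cdot\nabla u,\qquad\divg u=0,
\end{equation*}
gives $X(T)\le C_0\|u_0\|_{\dot{B}_{2,1}^{1/2}}+C_0\|u\cdot\nabla u\|_{L^1_T(\dot{B}_{2,1}^{1/2})}$, where $C_0$ depends on $m$ and $\|\rho_0-1\|_{\dot{B}_{2,1}^{3/2}}$. Rewriting $u\cdot\nabla u=\divg(u\otimes u)$ and using the algebra property of $\dot{B}_{2,1}^{3/2}(\R^3)$, the nonlinearity is bounded by $\|u\|_{L^2_T(\dot{B}_{2,1}^{3/2})}^2$, and the interpolation
\begin{equation*}
\|u\|_{L^2_T(\dot{B}_{2,1}^{3/2})}^2\lesssim\|u\|_{L^\infty_T(\dot{B}_{2,1}^{1/2})}\|\Delta u\|_{L^1_T(\dot{B}_{2,1}^{1/2})}\le X(T)^2
\end{equation*}
produces the closed inequality $X(T)\le C_0\|u_0\|_{\dot{B}_{2,1}^{1/2}}+C_0X(T)^2$. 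Choosing $\varepsilon_0$ so small that $4C_0^2\varepsilon_0<1$ and reading this as a bootstrap gives $X(T)\le 2C_0\|u_0\|_{\dot{B}_{2,1}^{1/2}}$ on the maximal existence interval, which is \eqref{maximal regularity INS}.

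The density bound \eqref{global in time estimate for the density} then follows from the standard transport estimate in $\dot{B}_{2,1}^{3/2}$,
\begin{equation*}
\|\rho-1\|_{L^\infty_T(\dot{B}_{2,1}^{3/2})}\le\|\rho_0-1\|_{\dot{B}_{2,1}^{3/2}}\exp\Bigl(C\int_0^T\|\nabla u\|_\infty\,dt\Bigr),
\end{equation*}
combined with the embedding $\dot{B}_{2,1}^{3/2}(\R^3)\hookrightarrow L^\infty(\R^3)$ applied to $\nabla u$, which gives $\int_0^T\|\nabla u\|_\infty\,dt\lesssim\|\Delta u\|_{L^1_T(\dot{B}_{2,1}^{1/2})}\le X(T)$. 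With $\varepsilon_0$ small this exponent stays bounded by an absolute constant, so $C_1$ is absolute. The very same Lipschitz integral plays the role of the natural blow-up quantity in a standard continuation criterion and therefore extends the local solution to $[0,\infty)$.

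The main obstacle, and the reason Theorem \ref{global wellposedness} is hard to prove without the machinery of this paper, is the variable-coefficient maximal $L^1$ regularity used as a black box above. When $\rho$ is only bounded and possibly discontinuous, the usual Helmholtz-projection argument decoupling $u$ from $\nabla P$ collapses, and one must recover the endpoint $L^1$-in-time gain of two spatial derivatives by a new route. This is precisely what the semigroup characterization of $\dot{B}_{2,1}^{1/2}$ via a generalized Stokes operator and the new elliptic gradient estimate announced in the abstract are designed to provide; once they deliver the variable-coefficient Stokes estimate, the bootstrap and continuation above close the proof.
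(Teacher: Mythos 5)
Your bootstrap arithmetic, the interpolation
\begin{equation*}
\|u\|_{L^2_T(\dot{B}_{2,1}^{3/2})}^2\lesssim\|u\|_{L^\infty_T(\dot{B}_{2,1}^{1/2})}\|\Delta u\|_{L^1_T(\dot{B}_{2,1}^{1/2})},
\end{equation*}
and the transport estimate giving the density bound are all sound, but the proposal has a genuine gap at the precise point where you invoke the variable-coefficient maximal regularity as a black box. Theorem \ref{maximal regularity for Stokes} requires the coefficient $\rho=\rho(x)$ to be \emph{time-independent}: the whole construction of $\mathcal{S}=b\mathcal{P}_b\Delta$ as the generator of a single analytic semigroup $e^{t\mathcal{S}}$, and the semigroup characterization of $\dot{B}_{2,1}^{1/2}$ in Theorem \ref{characterization of besov spaces with positive regularity via stokes}, break down if the generator varies with $t$. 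In the Eulerian formulation the density $\rho(t,x)$ is transported by the flow, so it \emph{is} time-dependent. Applying the Stokes maximal regularity theorem directly to $\rho(t,x)\partial_tu-\Delta u+\nabla P=-\rho u\cdot\nabla u$, or to your iterate $\rho^{k+1}(t,x)\partial_t u^{k+1}-\Delta u^{k+1}+\nabla P^{k+1}=\ldots$, is therefore not legitimate. Nor does freezing the coefficient at $\rho_0$ and treating $(\rho(t)-\rho_0)\partial_t u$ as a forcing term help: $\|\rho(t)-\rho_0\|_{\dot{B}_{2,1}^{3/2}}$ has no smallness at all (only $\|\rho_0-1\|$ is allowed to be of order one and not small), so the perturbation cannot be absorbed, and the resulting estimate does not close.

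This is exactly the obstruction the paper resolves by passing to Lagrangian coordinates. Because $\partial_t\lagr{\rho}=0$ along particle trajectories, one has $\lagr{\rho}\equiv\rho_0$, and the momentum equation in system \eqref{Lagrangian formulation} becomes a Stokes-type system with the \emph{fixed} coefficient $\rho_0(y)$ and metric-type corrections $\mathscr{A}_{\lagr{u}}\mathscr{A}_{\lagr{u}}^T-I$ that \emph{are} controllably small when $\|\nabla\lagr{u}\|_{L^1_T(\dot{B}_{2,1}^{3/2})}$ is small. Theorem \ref{maximal regularity for Stokes} then applies verbatim, the fixed-point argument of Theorems \ref{local wellposedness for lagrangian formulation} and \ref{global wellposedness for lagrangian formulation} closes, and Appendix \ref{appendix1} transfers the estimates back to the Eulerian unknowns. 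Your proposal correctly identifies the loss of the Helmholtz decoupling as a difficulty, but misses the equally essential role of the Lagrangian change of variables; without it the key theorem you rely on does not apply to the equation you write down.
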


\begin{remark}
The estimate of the $L^1(\R_+;\dot{B}_{2,1}^{1/2})$ norm in \eqref{maximal regularity INS} is a maximal regularity type estimate. To our knowledge, this is the first such result concerning maximal $L^1$ regularity for density-dependent viscous fluids without any smallness assumption on the fluctuation of the initial density.
\end{remark}

\begin{remark}
The regularity assumption on the initial density can be weakened to allow a slight discontinuity. But to simplify the exposition and avoid unpleasant
technicalities, we do not pursue the optimal regularity assumption on $\rho_0$. Nevertheless, we do not know if our method could be improved to give maximal $L^1$ regularity estimates for weak solutions to \eqref{INS} with merely measurable (or, piecewise constant) initial densities.
\end{remark}

\begin{remark}
The estimate $\|u\|_{L^\infty(\R_+;\dot{B}_{2,1}^{1/2})}\le C\|u_0\|_{\dot{B}_{2,1}^{1/2}}$ for weak solutions to \eqref{INS} has been recently obtained by Zhang in \cite{zhang ping adv 2020}, in which he also obtained some global $L^2$-in-time estimates. The initial density in \cite{zhang ping adv 2020} is merely bounded and bounded from zero. But the uniqueness of weak solutions in critical spaces is not known, unless the initial velocity field has more regularity so that one can prove a local $L^1$-in-time estimate for the Lipschitz seminorm of the velocity field (see also \cite{chen jde 2016,paicu cpde 2013}). However, the energy methods used in \cite{chen jde 2016,paicu cpde 2013,zhang ping adv 2020} are unlikely to give \eqref{maximal regularity INS}, even if the data is smooth.
\end{remark}

As an application of \eqref{maximal regularity INS}, we prove a second result concerning the long time asymptotics for the globally-defined velocity constructed in Theorem \ref{global wellposedness}.

\begin{theorem}\label{long time asymptotics}
Assume that $\rho_0$ and $u_0$ satisfy the assumptions in Theorem \ref{global wellposedness} and \eqref{smallness on initial velocity}. Let $(\rho,u,\nabla P)$ be the global-in-time solution to \eqref{INS}. Then it holds that
\begin{align}\label{large solution asymptotic to 0}
\lim_{t\rightarrow\infty}\|u(t)\|_{\dot{B}_{2,1}^{1/2}}=0.
\end{align}
\end{theorem}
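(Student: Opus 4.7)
The plan is to combine the maximal regularity estimate \eqref{maximal regularity INS} with Duhamel's representation for the heat semigroup. Rewriting the momentum equation of \eqref{INS} as a forced heat equation
$$\partial_t u-\Delta u=G,\qquad G\vcentcolon=\partial_t u-\Delta u,$$
the bound \eqref{maximal regularity INS} immediately yields $G\in L^1(\mathbb{R}_+;\dot{B}^{1/2}_{2,1})$. Since $u_0\in\dot{B}^{1/2}_{2,1}$, Duhamel's identity gives
$$u(t)=e^{t\Delta}u_0+\int_0^t e^{(t-s)\Delta}G(s)\,ds,$$
so the task reduces to proving that both terms on the right tend to $0$ in $\dot{B}^{1/2}_{2,1}$ as $t\to\infty$.

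For the free evolution, a Littlewood-Paley decomposition yields
$$\|e^{t\Delta}u_0\|_{\dot{B}^{1/2}_{2,1}}\lesssim\sum_{j\in\mathbb{Z}}2^{j/2}e^{-ct\,2^{2j}}\|\dot{\Delta}_j u_0\|_{L^2},$$
where each summand vanishes as $t\to\infty$ and is dominated by the summable sequence $2^{j/2}\|\dot{\Delta}_j u_0\|_{L^2}$; dominated convergence delivers the decay. For the Duhamel term I split the integral at $s=t/2$. The tail $\int_{t/2}^{t}$ is controlled in $\dot{B}^{1/2}_{2,1}$ by $\int_{t/2}^{\infty}\|G(s)\|_{\dot{B}^{1/2}_{2,1}}\,ds$, which goes to $0$ since $G\in L^1$. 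The head $\int_0^{t/2}$ can be written as $\int_0^\infty\mathbf{1}_{[0,t/2]}(s)\,\|e^{(t-s)\Delta}G(s)\|_{\dot{B}^{1/2}_{2,1}}\,ds$; for each fixed $s$ the integrand tends to $0$ (because $t-s\to\infty$, so the semigroup decay just established applies to $G(s)$), while remaining pointwise bounded by the $L^1_s$ function $\|G(s)\|_{\dot{B}^{1/2}_{2,1}}$. A second application of dominated convergence closes the argument.

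There is no serious obstacle in this strategy. The Duhamel formula is legitimate given the regularity of $u$ and $G$ supplied by Theorem \ref{global wellposedness}, and $e^{t\Delta}$ acts contractively on $\dot{B}^{s}_{2,1}$ for every $s\in\mathbb{R}$. If a self-contained proof of $\|e^{\tau\Delta}h\|_{\dot{B}^{1/2}_{2,1}}\to 0$ for $h\in\dot{B}^{1/2}_{2,1}$ is preferred, one may first approximate $h$ by a finite sum of dyadic blocks, for which the decay is explicit, and then pass to the limit. The smallness condition \eqref{smallness on initial velocity} enters here only through Theorem \ref{global wellposedness}, which produces the global solution and the estimate \eqref{maximal regularity INS}; no further smallness is invoked in the asymptotic analysis itself.
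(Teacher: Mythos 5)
Your argument is correct, but it is genuinely different from the paper's. You exploit the full strength of the global maximal regularity \eqref{maximal regularity INS}: once one knows $G:=\partial_t u-\Delta u\in L^1(\R_+;\dot{B}^{1/2}_{2,1})$, the conclusion follows from Duhamel's formula for the constant-coefficient heat semigroup, the decay $\|e^{t\Delta}h\|_{\dot{B}^{1/2}_{2,1}}\to0$ for fixed $h\in\dot{B}^{1/2}_{2,1}$ (dominated convergence over dyadic blocks), and two further applications of dominated convergence after splitting the Duhamel integral at $s=t/2$. All of these steps are sound; the only point worth writing out carefully is the validity of the Duhamel representation, which follows either from the strong-to-mild implication recalled in Section 2.1 or by applying $\dot{\Delta}_j$ to the equation and solving the resulting ODE in $L^2$ blockwise. (Even more directly: $\partial_t u\in L^1(\R_+;\dot{B}^{1/2}_{2,1})$ makes $t\mapsto u(t)$ Cauchy in $\dot{B}^{1/2}_{2,1}$ as $t\to\infty$, while $\Delta u\in L^1(\R_+;\dot{B}^{1/2}_{2,1})$ forces the limit to be a harmonic element of $\dot{B}^{1/2}_{2,1}$, hence zero.) The paper instead runs a Gallagher--Iftimie--Planchon-type stability argument: it splits $u_0$ into a small low-frequency part, which generates a global solution $u_l$ enjoying the extra time-weighted estimates of Lemma \ref{combine with zhang's result}, and an $L^2$ high-frequency part; an energy estimate for the difference $u_h=u-u_l$ yields $u_h\in L^4_t(\dot{B}^{1/2}_{2,1})$, hence a time $t_\varepsilon$ at which $\|u(t_\varepsilon)\|_{\dot{B}^{1/2}_{2,1}}\lesssim\varepsilon$, after which the small-data theory is restarted. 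The paper's route is longer but uses only the $L^\infty_t$ bound on $u$ together with Zhang's estimates, not the $L^1$-in-time control of $\partial_t u$ and $\Delta u$; it is therefore the kind of argument that could survive in settings (e.g., large global solutions, as the equation label suggests) where \eqref{maximal regularity INS} is unavailable. Under the stated hypotheses, where \eqref{maximal regularity INS} is guaranteed, your shorter argument is perfectly adequate.
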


\subsection{Strategy of the proof}

The proof of Theorem \ref{long time asymptotics} relies on Theorem \ref{global wellposedness}, so let us only elaborate the strategy for the proof of Theorem \ref{global wellposedness}. 

\subsubsection{Lagrangian coordinates}
The Lagrangian coordinate is a natural coordinate system at fluid motion, in which the observer follows an individual fluid parcel as it moves through space and time. It can be used to convert a free boundary problem into an equivalent problem in a fixed domain (see, e.g., \cite{solonnikov}); or to convert a coupled hyperbolic-parabolic system into a merely parabolic system (see, e.g., \cite{danchin cpam 2012,danchin arma 2013,danchin memoir 2015}).

Assume temporarily that $u=u(t,x)$ is a $C^1$ vector field (not necessarily divergence free). By virtue of Cauchy-Lipschitz theorem (see Appendix \ref{appendix1}), the unique trajectory $X(t,\cdot)$ of $u$ is a $C^1$-diffeomorphism over $\R^3$ for every $t\ge0$. Let us introduce $A(t,y)=\big(D_y X(t,y)\big)^{-1}$, $J(t,y)=\det DX(t,y)$, and $\mathscr{A}(t,y)=\adj DX(t,y)$ (the adjugate of $D X$, i.e.,  $\mathscr{A}=JA$). For any scalar function $\phi=\phi(x)$ and any vector field $v=v(x)$, it is easy to see that
\begin{align}\label{change of variable for gradient}
(\nabla\phi)\circ X=A^{T}\nabla(\phi\circ X),
\end{align}
and
\begin{align}\label{change of variable for divergence1}
(\divg v)\circ X=\mathrm{Tr}[AD(v\circ X)],
\end{align}
where $A^T$ denotes the transpose of $A$, and $\mathrm{Tr}A$ denotes the trace of $A$. On the other hand, using an integration by part argument as in the appendix of \cite{danchin cpam 2012}, we also have
\begin{align}\label{change of variable for divergence2}
(\divg v)\circ X=J^{-1}\divg(\mathscr{A}(v\circ X)).
\end{align}
This along with \eqref{change of variable for divergence1} gives the following identity
\begin{align}\label{identity for divergence}
\mathrm{Tr}[\mathscr{A}D(v\circ X)]=\divg(\mathscr{A}(v\circ X)).
\end{align}
By writing $\Delta=\divg\nabla$, we get from \eqref{change of variable for gradient} and \eqref{change of variable for divergence2} that
\begin{align}\label{change of variable for Laplacian}
(\Delta v)\circ X=J^{-1}\divg(\mathscr{A}A^{T}\nabla(v\circ X)).
\end{align}
Note that all the above equations hold even if $u$ is not divergence free. But if $\divg u=0$, we have $J\equiv1$, and thus, $A=\mathscr{A}$.

Now we introduce new unknowns
\begin{align*}
(\lagr{\rho},\lagr{u},\lagr{P})(t,y)=(\rho,u,P)\big(t,X(t,y)\big).
\end{align*}
In view of the continuity equation in \eqref{INS}, we have $\lagr{\rho}\equiv\rho_0$. Using \cref{change of variable for gradient,change of variable for divergence1,change of variable for divergence2,identity for divergence,change of variable for Laplacian} and the chain rule, one can formally convert the system \eqref{INS} into its Lagrangian formulation that reads
\begin{eqnarray}\label{Lagrangian formulation}
\left\{\begin{aligned}
&\rho_0\partial_t \lagr{u}-\divg(\mathscr{A}_{\lagr{u}}\mathscr{A}_{\lagr{u}}^{T}\nabla\lagr{u})+\mathscr{A}_{\lagr{u}}^{T}\nabla\lagr{P}=0,\\
&\divg(\mathscr{A}_{\lagr{u}} \lagr{u})=\mathrm{Tr}(\mathscr{A}_{\lagr{u}}D\lagr{u})=0,\\
&\lagr{u}|_{t=0}=u_0.
\end{aligned}\right.
\end{eqnarray}
In this new system, we associate $\mathscr{A}_{\lagr{u}}$ with the new velocity $\lagr{u}$ so that the system is closed (i.e., determined). Precisely, we denote
\begin{align}\label{definition of Au and Xu}
\mathscr{A}_{\lagr{u}}=\adj DX_{\lagr{u}},\ \ \mathrm{with}\ \ X_{\lagr{u}}(t,y)=y+\int_0^t \lagr{u}(\tau,y)\,d\tau.
\end{align}

\begin{remark}
One can write $A_{\lagr{u}}\vcentcolon=(DX_{\lagr{u}}(t,y))^{-1}$ in place of $\mathscr{A}_{\lagr{u}}$ in the "momentum" equation of \eqref{Lagrangian formulation}. But as in the work of Solonnikov \cite{solonnikov}, one should use $\mathscr{A}_{\lagr{u}}$ in the second equation. The reason is that, when linearizing \eqref{Lagrangian formulation} to seek existence, we will need the fact that \eqref{identity for divergence} is an identity, whether $u$ is divergence free or not. Of course, once the existence issue is settled, we can use either $\mathscr{A}_{\lagr{u}}$ or $A_{\lagr{u}}$ in \eqref{Lagrangian formulation}. More precisely, the linearized system of \eqref{Lagrangian formulation} reads
\begin{eqnarray}\label{linearized Lagrangian formulation}
\left\{\begin{aligned}
&\rho_0\partial_t \lagr{u}-\Delta\lagr{u}+\nabla\lagr{P}=\divg((\mathscr{A}_{\lagr{v}}\mathscr{A}_{\lagr{v}}^{T}-I)\nabla\lagr{v})+(I-\mathscr{A}_{\lagr{v}}^{T})\nabla\lagr{Q},\\
&\divg\lagr{u}=\divg((I-\mathscr{A}_{\lagr{v}})\lagr{v})=Tr((I-\mathscr{A}_{\lagr{v}})D\lagr{v}),\\
&\lagr{u}|_{t=0}=u_0.
\end{aligned}\right.
\end{eqnarray}
To obtain {\it a priori} estimates for this system, we need to write $\divg\lagr{u}$ in two different ways. To conclude, we will solve \eqref{linearized Lagrangian formulation} without assuming $\det DX_{\lagr{v}}\equiv1$.
\end{remark}

\begin{definition}\label{definition of strong solutions to Lagrangian formulation}
Let $\rho_0-1\in \dot{B}_{2,1}^{3/2}(\R^3)$ and $u_0\in\mathcal{P}\dot{B}_{2,1}^{1/2}(\R^3)$. We say that $(\lagr{u},\nabla\lagr{P})$ is a {\it strong solution} to \eqref{Lagrangian formulation} if for some $T\in(0,\infty]$,
\begin{eqnarray*}
\left\{\begin{aligned}
&\lagr{u}\in C([0,T);\dot{B}_{2,1}^{1/2}(\R^3)),\\
&\mathscr{A}_{\lagr{u}}-I\in C([0,T);\dot{B}_{2,1}^{3/2}(\R^3)),\\
&(\partial_t\lagr{u},\Delta \lagr{u},\nabla\lagr{P})\in \left(L_{loc}^{1}([0,T);\dot{B}_{2,1}^{1/2}(\R^3))\right)^3,
\end{aligned}\right.
\end{eqnarray*}
and $(\lagr{u},\nabla\lagr{P})$ satisfies \eqref{Lagrangian formulation} for a.e. $t\in(0,T)$.
\end{definition}

The justification of equivalence between Eulerian and Lagrangian formulations can be found, for example, in \cite{danchin cpam 2012,danchin memoir 2015}. For the reader's convenience, we will give the details in Appendix \ref{appendix1}. Let us now state a well-posedness result for \eqref{Lagrangian formulation}.

\begin{theorem}\label{local and global well posedness for Lagragian formulation}
Assume that the initial density $\rho_0$ satisfies \eqref{initial density bounds}, $\rho_0-1\in\dot{B}_{2,1}^{3/2}(\R^3)$, and the initial velocity $u_0\in\mathcal{P}\dot{B}_{2,1}^{1/2}(\R^3)$. Then there exists some $T>0$ such that \eqref{Lagrangian formulation} has a unique local solution $(\lagr{u},\nabla\lagr{P})$ with $\lagr{u}$ verifying
\begin{align}
\|\nabla\lagr{u}\|_{L_T^1(\dot{B}_{2,1}^{3/2})}\le c_0
\end{align}
for some positive constant $c_0$.

Moreover, there exists another constant $\varepsilon_0$ depending on $m$ and $\|\rho_0-1\|_{\dot{B}_{2,1}^{3/2}}$ so that if $u_0$ satisfies
\begin{align*}
\|u_0\|_{\dot{B}_{2,1}^{1/2}}\le\varepsilon_0,
\end{align*}
the local solution becomes globally in time and verifies
\begin{align*}
\|\lagr{u}\|_{L^\infty(\R_+;\dot{B}_{2,1}^{1/2})}+\|\Delta \lagr{u},\partial_t\lagr{u},\nabla\lagr{P}\|_{L^1(\R_+;\dot{B}_{2,1}^{1/2})}\le C_0\|u_0\|_{\dot{B}_{2,1}^{1/2}}
\end{align*}
for some constant $C_0$ depending on $m$ and $\|\rho_0-1\|_{\dot{B}_{2,1}^{3/2}}$.
\end{theorem}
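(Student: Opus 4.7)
The plan is to reduce the well-posedness of the nonlinear system \eqref{Lagrangian formulation} to a Banach fixed-point argument on its linearization \eqref{linearized Lagrangian formulation}, using as a black box the maximal $L^1$ regularity estimate for the variable-coefficient Stokes system
\begin{equation*}
\rho_0\partial_t \lagr{u}-\Delta\lagr{u}+\nabla\lagr{P}=f,\quad \divg\lagr{u}=g,\quad \lagr{u}|_{t=0}=u_0,
\end{equation*}
which is the central technical result developed earlier in the paper. Granted that estimate, I would introduce the solution space $E_T$ consisting of $(\lagr{u},\nabla\lagr{P})$ with $\lagr{u}\in C([0,T];\dot{B}_{2,1}^{1/2})$ and $(\partial_t\lagr{u},\Delta\lagr{u},\nabla\lagr{P})\in L^1(0,T;\dot{B}_{2,1}^{1/2})$, normed by the left-hand side of \eqref{maximal regularity INS}, and define a map $\Phi\colon \lagr{v}\mapsto \lagr{u}$ by solving \eqref{linearized Lagrangian formulation} with that $\lagr{v}$ plugged into the coefficients and source terms.

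The first technical step is to control the flow map. Since $X_{\lagr{v}}(t,y)=y+\int_0^t\lagr{v}\,d\tau$, one has $\|DX_{\lagr{v}}-I\|_{L^\infty_T(\dot{B}_{2,1}^{3/2})}\le \|\nabla\lagr{v}\|_{L^1_T(\dot{B}_{2,1}^{3/2})}$, and because $\dot{B}_{2,1}^{3/2}(\R^3)$ is a Banach algebra, a Neumann series expansion of the adjugate yields $\|\mathscr{A}_{\lagr{v}}-I\|_{L^\infty_T(\dot{B}_{2,1}^{3/2})}\lesssim \|\nabla\lagr{v}\|_{L^1_T(\dot{B}_{2,1}^{3/2})}$ whenever the right-hand side is small. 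The second step is to estimate the right-hand side of \eqref{linearized Lagrangian formulation} in $L^1_T(\dot{B}_{2,1}^{1/2})$: the source $\divg((\mathscr{A}_{\lagr{v}}\mathscr{A}_{\lagr{v}}^T-I)\nabla\lagr{v})$ and the pressure correction $(I-\mathscr{A}_{\lagr{v}}^T)\nabla\lagr{Q}$ are handled by the algebra property together with the maximal regularity norm of $\lagr{v}$, while the divergence constraint is written in both forms in \eqref{linearized Lagrangian formulation} so that one can either absorb a derivative into $\mathscr{A}_{\lagr{v}}-I$ or keep $\lagr{v}$ at $\dot{B}_{2,1}^{3/2}$ regularity, matching the compatibility input required by the linear Stokes estimate.

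For local well-posedness, I would choose $T$ small enough that the bound $\|\nabla\lagr{v}\|_{L^1_T(\dot{B}_{2,1}^{3/2})}\le c_0$ is preserved: by the maximal regularity estimate applied to the linearized system, the image $\lagr{u}=\Phi(\lagr{v})$ inherits the same bound provided the coefficient-smallness of the previous paragraph is satisfied, and an analogous computation on differences shows $\Phi$ is a contraction. The smallness of $\int_0^T\|\nabla\lagr{u}\|_{\dot{B}_{2,1}^{3/2}}\,dt$ as $T\to 0$ for the heat extension of $u_0$ provides the initial ball. For global existence under \eqref{smallness on initial velocity}, I would run a continuation/bootstrap argument: on the maximal interval of existence, apply the maximal regularity estimate to the full system viewed as a perturbation, obtaining
\begin{equation*}
\|\lagr{u}\|_{L^\infty_T(\dot{B}_{2,1}^{1/2})}+\|\partial_t\lagr{u},\Delta\lagr{u},\nabla\lagr{P}\|_{L^1_T(\dot{B}_{2,1}^{1/2})}\le C\|u_0\|_{\dot{B}_{2,1}^{1/2}}+C\|\mathscr{A}_{\lagr{u}}-I\|_{L^\infty_T(\dot{B}_{2,1}^{3/2})}\|\nabla\lagr{u}\|_{L^1_T(\dot{B}_{2,1}^{3/2})},
\end{equation*}
and since the last term is quadratic in the solution norm, the smallness of $\|u_0\|_{\dot{B}_{2,1}^{1/2}}$ closes the bootstrap and rules out a finite blow-up time.

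The main obstacle is the coupling between velocity and pressure through a variable-density Stokes operator: unlike in the constant-density case, the Helmholtz projection does not diagonalize the principal part, so one cannot extract $L^1$-in-time control of the Lipschitz seminorm from semigroup bounds for the classical Stokes operator. This is precisely what the earlier maximal $L^1$ regularity theorem for the variable-coefficient Stokes system supplies, and the role of this final theorem is to verify that the nonlinear terms in \eqref{Lagrangian formulation}, when grouped as in \eqref{linearized Lagrangian formulation}, are compatible with that linear estimate at the critical Besov scaling. A secondary difficulty is that the divergence constraint is not the usual $\divg\lagr{u}=0$; the two representations of $\divg\lagr{u}$ in \eqref{linearized Lagrangian formulation} are essential for producing the appropriate compatibility data for the linear theory.
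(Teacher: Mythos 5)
Your overall strategy --- a Banach fixed point for the linearized system \eqref{linearized Lagrangian formulation} in the maximal-regularity space, powered by Theorem \ref{maximal regularity for Stokes}, flow estimates for $\mathscr{A}_{\lagr{v}}$, and a small-data argument for global existence --- is exactly the paper's. There is, however, one step in the local-existence part that does not work as written. You claim that the image $\lagr{u}=\Phi(\lagr{v})$ ``inherits the same bound'' $\|\nabla\lagr{u}\|_{L_T^1(\dot B^{3/2}_{2,1})}\le c_0$ from the maximal regularity estimate. But that estimate reads $\|\Delta\lagr{u}\|_{L_T^1(\dot B^{1/2}_{2,1})}\le C\|u_0\|_{\dot B^{1/2}_{2,1}}+C\|f,\partial_tR,\nabla g\|_{L_T^1(\dot B^{1/2}_{2,1})}$, and for large data the first term is not small no matter how small $T$ is; so a ball of radius $c_0$ centered at the origin is not stable under $\Phi$, and the self-map property fails. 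The fix --- which the paper implements, and which your remark about the heat extension of $u_0$ gestures toward but does not carry out --- is to center the contraction ball at the free solution $(\lagr{u}_L,\nabla\lagr{P}_L)$ of $\rho_0\partial_t\lagr{u}_L-\Delta\lagr{u}_L+\nabla\lagr{P}_L=0$, $\divg\lagr{u}_L=0$, $\lagr{u}_L(0)=u_0$, and to apply the maximal regularity estimate to the difference $\bar{\lagr{u}}=\lagr{u}-\lagr{u}_L$, which has zero initial data. Then $\|\nabla\lagr{u}\|_{L_T^1(\dot B^{3/2}_{2,1})}\le r+M(T)$ with $M(T)\vcentcolon=\|\Delta\lagr{u}_L,\partial_t\lagr{u}_L,\nabla\lagr{P}_L\|_{L_T^1(\dot B^{1/2}_{2,1})}\to0$ as $T\to0$, and the self-map and contraction constants are of size $r+M(T)+\|u_0\|_{\dot B^{1/2}_{2,1}}M(T)^{1/2}$, which can be made small by shrinking $r$ and $T$. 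With this modification your argument coincides with the paper's proof; your global step (quadratic nonlinearity absorbed by the smallness $\varepsilon_0$, here with reference solution $(0,0)$ and radius $r\lesssim\|u_0\|_{\dot B^{1/2}_{2,1}}$) is as in the paper.
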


\subsubsection{Maximal regularity for Stokes system}
The heart of the present paper is the well-posedness issue for the linearized system \eqref{linearized Lagrangian formulation}. To this end, we will mainly focus on the maximal $L^1$ regularity for the following linear Stokes-like system
\begin{eqnarray}\label{Stokes system}
\left\{\begin{aligned}
&\rho(x)\partial_tu-\Delta u+\nabla P=f,
&(t,x)\in\mathbb{R}^{+}\times\mathbb{R}^{3},\\
&\divg u=\divg R=g,&(t,x)\in\mathbb{R}^{+}\times\mathbb{R}^{3},\\
&u(0,x)=u_0(x),&x\in\mathbb{R}^{3}.
\end{aligned}\right.
\end{eqnarray}
Here the coefficient $\rho$ is a time-independent function that satisfies \eqref{initial density bounds} and no other assumption is needed temporarily. The system is supplemented with the compatibility condition $\divg u_0(x)=\divg R(0,x)$. For simplicity, we will take $R(0,\cdot)=0$, since this is the case in applications. 

We are going to construct solutions to \eqref{Stokes system} using the theory of $C_0$ semigroups and abstract Cauchy problem. In doing so, let us introduce a new variable
\begin{equation*}
v\vcentcolon=u-\mathcal{Q}R=u+\nabla(-\Delta)^{-1}g,  
\end{equation*}
where $\mathcal{Q}=-\nabla(-\Delta)^{-1}\divg$ is the Hodge operator. Then the system \eqref{Stokes system} can be equivalently reformulated as
\begin{eqnarray}\label{equivalent Stokes system}
\left\{\begin{aligned}
&\rho\partial_tv-\Delta v+\nabla(P-g)=f-\rho\mathcal{Q}\partial_tR,\\
&\divg v=0,\\
&v(0,x)=u_0(x).
\end{aligned}\right.
\end{eqnarray}
We shall obtain maximal $L^1$ regularity estimates for the above system. Compared with the classical Stokes system, this is a challenging problem because the velocity and the pressure are strongly coupled in the presence of the density. Let us now explain how to achieve our goal. Denote $b=\rho^{-1}$ and let $\mathcal{L}_b=-\divg(b\nabla)$. Applying $\divg b$ to the first equation of \eqref{equivalent Stokes system} and using the second equation, we see that
\begin{align*}
\mathcal{L}_b(P-g)=-\divg[b(\Delta v+f-\rho\mathcal{Q}\partial_tR)].
\end{align*}
Next, we introduce the Hodge operator
\begin{align}\label{Hodge b operator}
\mathcal{Q}_b\vcentcolon=-\nabla \mathcal{L}_b^{-1}\divg b
\end{align}
associated with $\mathcal{L}_b$, and let
\begin{align}\label{Pb operator}
\mathcal{P}_b=I-\mathcal{Q}_b.
\end{align}
Then we can write
\begin{align}\label{expression for gradient of P minus g}
\nabla(P-g)=\mathcal{Q}_b(\Delta v+f-\rho\mathcal{Q}\partial_tR). 
\end{align}
Plugging \eqref{expression for gradient of P minus g} in \eqref{equivalent Stokes system}, we hence introduce a generalized Stokes operator
\begin{align}\label{Stokes operator}
\mathcal{S}\vcentcolon=b\mathcal{P}_b\Delta,
\end{align}
so \eqref{equivalent Stokes system} can be further reformulated as an abstract Cauchy problem
\begin{eqnarray}\label{ACP Stokes}
\left\{\begin{aligned}
&\partial_tv-\mathcal{S}v=\tilde{f}\vcentcolon=b\mathcal{P}_b(f-\rho\mathcal{Q}\partial_tR),\\
&v(0,x)=u_0(x).
\end{aligned}\right.
\end{eqnarray}
We will show that the Stokes operator $\mathcal{S}$ generates a semigroup $e^{t\mathcal{S}}$ on 
\begin{align*}
\mathcal{P}L^2\vcentcolon=\{u\in L^2(\R^3,\R^3)| \divg u=0\}.
\end{align*}
So by Duhamel's principle, we can formally write the solution $v$ to \eqref{ACP Stokes} as
\begin{align}\label{Duhamel principle for v}
v(t)=e^{t\mathcal{S}}u_0+\int_0^te^{(t-\tau)\mathcal{S}}\tilde{f}(\tau)\,d\tau.
\end{align}
To obtain maximal regularity estimates for $v$, we will characterize some Besov norms for divergence free vector fields via the semigroup $e^{t\mathcal{S}}$. In fact, we are able to prove the following:

\begin{theorem}\label{characterization of besov spaces with positive regularity via stokes}
Assume that $b$ satisfies \eqref{initial density bounds}. For any $s\in(0,2)$, any $q\in[1,\infty]$, and any
\begin{align*}
u_0\in\mathcal{P}H^2\vcentcolon=\{u\in H^2(\R^3,\R^3)| \divg u=0\},
\end{align*}
we have
\begin{align*}
\|u_0\|_{\dot{B}_{2,q}^{s}}\simeq\left\|t^{-s/2}\|t\mathcal{S}e^{t\mathcal{S}}u_0\|_2\right\|_{L^q(\R_+,\frac{dt}{t})}\simeq\left\|t^{-s/2}\|t\Delta e^{t\mathcal{S}}u_0\|_2\right\|_{L^q(\R_+,\frac{dt}{t})}.
\end{align*}
\end{theorem}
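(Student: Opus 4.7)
My plan is to realize $-\mathcal{S}$ as a self-adjoint non-negative operator on a weighted $L^2$ space and then combine its spectral calculus with elliptic regularity for $\mathcal{L}_b$. First, set $H_\rho\vcentcolon=(\mathcal{P}L^2,\langle\cdot,\cdot\rangle_\rho)$ with weighted inner product $\langle f,g\rangle_\rho\vcentcolon=\int_{\R^3}\rho\,f\cdot g\,dx$; by \eqref{initial density bounds} this is equivalent to the standard inner product on $\mathcal{P}L^2$. For $u\in\mathcal{P}H^2$ and $v\in\mathcal{P}L^2\cap\dot{H}^1$, using $\rho b=1$ and the fact that $\mathcal{Q}_b\Delta u$ is a gradient (against which the divergence-free $v$ tests to $0$), one computes
\begin{equation*}
\langle -\mathcal{S}u,v\rangle_\rho=-\int_{\R^3}\mathcal{P}_b\Delta u\cdot v\,dx=-\int_{\R^3}\Delta u\cdot v\,dx=\int_{\R^3}\nabla u:\nabla v\,dx.
\end{equation*}
Hence $-\mathcal{S}$ is symmetric non-negative, and its Friedrichs extension is self-adjoint on $H_\rho$ with closed quadratic form $a(u,u)=\|\nabla u\|_{L^2}^2$ of domain $\mathcal{P}\dot{H}^1\cap L^2$. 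It follows that $e^{t\mathcal{S}}$ is an analytic contraction semigroup on $\mathcal{P}L^2$, and the semigroup calculus yields $\|(t\mathcal{S})^{k}e^{t\mathcal{S}}u_0\|_{L^2}\lesssim\|u_0\|_{L^2}$ for every $k\in\mathbb{N}$, with a constant depending only on $m$.

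Second, I would establish the equivalence $\|t\mathcal{S}e^{t\mathcal{S}}u_0\|_{L^2}\simeq\|t\Delta e^{t\mathcal{S}}u_0\|_{L^2}$. Writing $v=e^{t\mathcal{S}}u_0$, the identity $\mathcal{P}_b\Delta v=\rho\mathcal{S}v$ (from $\rho b=1$) gives at once $\|\mathcal{P}_b\Delta v\|_{L^2}\simeq\|\mathcal{S}v\|_{L^2}$ with constants depending on $m$. Setting $P\vcentcolon=-\mathcal{L}_b^{-1}\divg(b\Delta v)$, the pair $(v,P)$ satisfies the \emph{constant-viscosity} Stokes system $-\Delta v+\nabla P=-\rho\mathcal{S}v$, $\divg v=0$, so classical $L^2$ Stokes regularity (via the Leray projector) furnishes $\|\Delta v\|_{L^2}+\|\nabla P\|_{L^2}\lesssim\|\mathcal{S}v\|_{L^2}$. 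The reverse inequality follows from $\|\mathcal{S}v\|_{L^2}\le m^{-1}\|\mathcal{P}_b\Delta v\|_{L^2}\le m^{-1}(\|\Delta v\|_{L^2}+\|\nabla P\|_{L^2})$ together with the Lax--Milgram bound $\|\nabla P\|_{L^2}\lesssim\|b\Delta v\|_{L^2}$ for $\mathcal{L}_b P=-\divg(b\Delta v)$.

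Finally, for the identification with the Besov norm, I would invoke the spectral resolution $-\mathcal{S}=\int_0^\infty\lambda\,dE_\lambda$ on $H_\rho$. For $q=2$, Fubini and a change of variables give
\begin{equation*}
\int_0^\infty t^{-s}\|t\mathcal{S}e^{t\mathcal{S}}u_0\|_{L^2}^2\,\frac{dt}{t}\simeq\int_0^\infty\lambda^s\,d\mu_{u_0}(\lambda)=\|(-\mathcal{S})^{s/2}u_0\|_{L^2(\rho)}^2,
\end{equation*}
with $\mu_{u_0}=\langle E_{(\cdot)}u_0,u_0\rangle_\rho$. Steps~1 and~2 give $\|(-\mathcal{S})^{1/2}u\|_{L^2(\rho)}^2=\|\nabla u\|_{L^2}^2$ and $\|\mathcal{S}u\|_{L^2}\simeq\|\Delta u\|_{L^2}$; interpolation then identifies $D((-\mathcal{S})^\sigma)\cap L^2$ with $\mathcal{P}\dot{H}^{2\sigma}\cap L^2$ for $\sigma\in[0,1]$, settling the case $q=2$. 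For general $q\in[1,\infty]$, I would apply the standard real-interpolation characterization of the intermediate spaces $(\mathcal{P}L^2,D(\mathcal{S}))_{s/2,q}$ of the sectorial operator $-\mathcal{S}$---whose norm is precisely the right-hand side of the statement---and then identify this space with $\mathcal{P}\dot{B}_{2,q}^{s}\cap L^2$ by reiteration from the Sobolev endpoints. The main obstacle I expect is this final step: passing from the abstract intermediate space to the \emph{homogeneous} Besov scale uniformly in the rough coefficient $b$. At the $L^2$ level this reduces to Step~2, but in sharper $L^p$ settings it is exactly the ``new gradient estimate'' for $\mathcal{L}_b$ advertised in the abstract that makes the comparison robust.
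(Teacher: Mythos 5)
Your route---realizing $-\mathcal{S}$ as a non-negative self-adjoint operator on the weighted space and then identifying the fractional domain spaces and real-interpolation spaces with the Sobolev/Besov scale---is genuinely different from the paper's. The paper never touches fractional powers or abstract interpolation: it proves the reproducing formula $u_0=-\int_0^\infty\Delta e^{\tau\mathcal{S}}b\mathcal{P}_bu_0\,d\tau$ (valid because $\|e^{t\mathcal{S}}u_0\|\to0$ as $t\to\infty$, Proposition \ref{asymptotics of semigroup}), compares $e^{t\mathcal{S}}$ directly with the flat heat semigroup $e^{t\Delta}$ by a weighted Schur-type estimate using only the analyticity bound $\|\mathcal{S}e^{t\mathcal{S}}\|\le C/t$, first obtains the negative-regularity equivalence of Theorem \ref{characterization of besov spaces with negative regularity via stokes}, and then deduces the present statement by the substitution $u_0\mapsto\Delta u_0$. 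Your Step 2 is correct and is in essence Lemma \ref{basic properties of P and S} (\romannumeral2) applied to $\Delta v\in\mathcal{P}L^2$.

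There is, however, a genuine gap in your final step. The Da Prato--Grisvard/Lions--Peetre characterization of $(\mathcal{P}L^2,D(\mathcal{S}))_{s/2,q}$ and the reiteration $(L^2,H^2)_{s/2,q}=B^{s}_{2,q}$ are \emph{inhomogeneous} statements: what they deliver is
\[
\|u_0\|_{L^2}+\|u_0\|_{\dot{B}^{s}_{2,q}}\simeq\|u_0\|_{L^2}+\bigl\|t^{-s/2}\|t\mathcal{S}e^{t\mathcal{S}}u_0\|\bigr\|_{L^q(\R_+,\frac{dt}{t})},
\]
and this does not imply the asserted equivalence of the two homogeneous quantities (take $u_0$ concentrated at very low frequencies: both homogeneous seminorms are then much smaller than $\|u_0\|_{L^2}$, and the inhomogeneous equivalence says nothing about their ratio). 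Removing the $L^2$ terms with constants uniform in $u_0$ is precisely where the large-time decay of $e^{t\mathcal{S}}$ and the reproducing formula enter in the paper. A second, smaller gap: even for $q=2$, where your spectral identity is genuinely homogeneous, the identification $\|(-\mathcal{S})^{s/2}u\|\simeq\|u\|_{\dot{H}^{s}}$ for intermediate $s$ does not follow ``by interpolation'' off the shelf, because $-\mathcal{S}$ and $-\Delta$ are self-adjoint with respect to \emph{different} inner products; Heinz--Kato and L\"owner--Heinz compare operators on a single Hilbert space, and symmetrizing by $\rho^{1/2}$ destroys the divergence-free structure. Both issues are repairable, but the repair essentially reduces to the two-sided comparison of $e^{t\mathcal{S}}$ with $e^{t\Delta}$ over all time scales that the paper carries out in Theorem \ref{characterization of besov spaces with negative regularity via stokes}.
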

\begin{remark}
Note that we do not need any regularity for the coefficient $b$ at this point.
\end{remark}

Applying Theorem \ref{characterization of besov spaces with positive regularity via stokes} (with $q=1$) to \eqref{Duhamel principle for v} gives us an {\it a priori} maximal regularity estimate for $v$:
\begin{align}\label{a priori maximal regularity estimate for v}
\|v\|_{L_T^\infty(\dot{B}_{2,1}^{s})}+\|\partial_t v,\mathcal{S}v\|_{L_T^1(\dot{B}_{2,1}^{s})}\lesssim\|u_0\|_{\dot{B}_{2,1}^{s}}+\|\tilde{f}\|_{L_T^1(\dot{B}_{2,1}^{s})}.
\end{align}
This gives good estimates for $(u,\nabla P)$ except for $\|u\|_{L_T^\infty(\dot{B}_{2,1}^{s})}$. If we apply \eqref{a priori maximal regularity estimate for v} to bound $\|u\|_{L_T^\infty(\dot{B}_{2,1}^{s})}$ directly, we need to use $\|\mathcal{Q}R\|_{L_T^\infty(\dot{B}_{2,1}^{s})}$, which would cause trouble when proving local existence of large solutions to \eqref{Lagrangian formulation}. To overcome this difficulty, we view $\nabla P$ in the first equation of \eqref{Stokes system} as a source term, and write
\begin{align}\label{write P as a source term}
\partial_tu-b\Delta u=b(f-\nabla P).
\end{align}
Then the maximal $L^1$ regularity estimate for $u$ can be obtained by equivalent characterizations of Besov norms via the semigroup $e^{tb\Delta}$.

\subsubsection{Elliptic estimates}

It remains to bound the inhomogeneous term $\tilde{f}$ in \eqref{ACP Stokes}. For this, we need to study the continuity of the operator $b\mathcal{P}_b$ on Besov spaces. In other words, we need to study the gradient estimates for solutions to the divergence form elliptic equation
\begin{align}\label{2nd order elliptic equation of divergence form}
-\mathcal{L}_bP=\divg f.
\end{align}
But this is again a difficult problem for it is well-known that $\nabla\mathcal{L}_b^{-1}\divg$ is not of Calder\'{o}n-Zygmund type. In general, $\nabla\mathcal{L}_b^{-1}\divg$ is not bounded on $L^p$ for $p$ not close enough to $2$, even if the coefficient $b$ is smooth (see \cite{jiang jmpa 2020}). In fact, in order to prove continuity of $\nabla\mathcal{L}_b^{-1}\divg$ on homogeneous function spaces, one should treat it as a zeroth-order operator. This suggests that $b$ should be in some function spaces that have the same scaling as $L^\infty$. So we once again need $b$ to be in some "critical" spaces. 

Our strategy is to use an iteration scheme to gain elliptic regularity. In the initial iteration, we prove an inequality of the form
\begin{align*}
\|\nabla\mathcal{L}_b^{-1}\divg f\|_{\dot{B}_{p_0,r}^{s_0}}\le C\|f\|_{\dot{B}_{p,r}^{s}},
\end{align*}
in which a loss of regularity is allowed, but the scalings of both Besov spaces are the same, meaning that $s_0-\frac{3}{p_0}=s-\frac{3}{p}$ with $s_0<s$. The proof relies on equivalent characterizations of Besov norms via the heat semigroup $e^{-t\mathcal{L}_b}$ (see \cite{bui adv 2012,liu fm 2015}), and the boundedness of the Riesz transform $\nabla\mathcal{L}_b^{-1/2}$ on $L^p$, $1<p\le2$ (see \cite{auscher book 1998}). In this step, we only require $b$ to satisfy \eqref{initial density bounds}. But if $b$ has more regularity (in critical spaces), the loss of regularity can be recovered via an iteration scheme. We are able to eventually prove that $\nabla\mathcal{L}_b^{-1}\divg$ is bounded on some homogeneous Besov spaces including $\dot{B}_{2,1}^{1/2}(\R^3)$. Such result is nontrivial for $\dot{B}_{2,1}^{1/2}(\R^3)$ has the same scaling as $L^{3}(\R^3)$. Indeed, our method also applies to $L^p$ elliptic gradient estimates for $p$ slightly larger than the space dimensions.

Carrying out the details of the strategy, we can prove the following maximal $L^1$ regularity theorem for the Stokes system \eqref{Stokes system}.

\begin{theorem}\label{maximal regularity for Stokes}
Let $T\in(0,\infty]$. Assume that $\rho=\rho(x)$ satisfies \eqref{initial density bounds}, $\rho-1\in\dot{B}_{2,1}^{3/2}(\R^3)$ and $u_0\in\mathcal{P}\dot{B}_{2,1}^{1/2}(\R^3)$. Suppose that $f,g,R$ satisfy
\begin{align*}
R\in C_b([0,T);\dot{B}_{2,1}^{1/2}(\R^3)),\ \ (f,\nabla g,\partial_t R)\in\left(L^1((0,T);\dot{B}_{2,1}^{1/2}(\R^3))\right)^3,
\end{align*}
$R(0)=0$, and $\divg R=g$. Then the system \eqref{Stokes system} has a unique strong solution $(u,\nabla P)$ in the class
\begin{align*}
u\in C_b([0,T);\dot{B}_{2,1}^{1/2}(\R^3)),\ (\Delta u,\partial_tu,\nabla P)\in\left(L^1((0,T);\dot{B}_{2,1}^{1/2}(\R^3))\right)^3.
\end{align*}
Moreover, there exists a constant $C$ depending on $m$ and $\|\rho-1\|_{\dot{B}_{2,1}^{3/2}}$ such that
\begin{align}\label{maximal regularity estimates for Stokes}
\|u\|_{L_T^\infty(\dot{B}_{2,1}^{1/2})}+\|\partial_tu,\Delta u,\nabla P\|_{L_T^1(\dot{B}_{2,1}^{1/2})}\le C\|u_0\|_{\dot{B}_{2,1}^{1/2}}+C\|f,\partial_t R,\nabla g\|_{L_T^1(\dot{B}_{2,1}^{1/2})}.
\end{align}
\end{theorem}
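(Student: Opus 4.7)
The plan is to follow the blueprint described in the strategy subsection: reformulate the system as an abstract Cauchy problem for a divergence-free vector field, then combine the semigroup characterization of Besov norms with elliptic estimates for the variable-coefficient operator $\mathcal{L}_b$. First I would introduce the divergence-free unknown
$$v := u - \mathcal{Q}R = u + \nabla(-\Delta)^{-1}g,$$
which satisfies
$$\rho\partial_t v - \Delta v + \nabla(P - g) = f - \rho\mathcal{Q}\partial_t R, \quad \divg v = 0, \quad v|_{t=0} = u_0.$$
Applying $\divg b$ to the momentum equation and inverting $\mathcal{L}_b$ yields $\nabla(P-g) = \mathcal{Q}_b(\Delta v + f - \rho\mathcal{Q}\partial_t R)$, which reduces the problem to the abstract Cauchy problem
$$\partial_t v - \mathcal{S}v = \tilde{f} := b\mathcal{P}_b(f - \rho\mathcal{Q}\partial_t R), \quad v(0) = u_0.$$

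Next, Duhamel's principle gives $v(t) = e^{t\mathcal{S}}u_0 + \int_0^t e^{(t-\tau)\mathcal{S}}\tilde{f}(\tau)\,d\tau$. I would apply \cref{characterization of besov spaces with positive regularity via stokes} with $s = 1/2$ and $q = 1$, combined with a convolution-in-time (Young inequality) argument for the Duhamel term, to obtain
$$\|v\|_{L_T^\infty(\dot{B}_{2,1}^{1/2})} + \|\partial_t v, \mathcal{S}v\|_{L_T^1(\dot{B}_{2,1}^{1/2})} \lesssim \|u_0\|_{\dot{B}_{2,1}^{1/2}} + \|\tilde{f}\|_{L_T^1(\dot{B}_{2,1}^{1/2})}.$$
The source term $\tilde{f}$ is then controlled via the continuity of $b\mathcal{P}_b = I + b\nabla\mathcal{L}_b^{-1}\divg b$ on $\dot{B}_{2,1}^{1/2}(\R^3)$ — the critical elliptic gradient estimate established earlier in the paper under the assumption $\rho - 1 \in \dot{B}_{2,1}^{3/2}$ — together with the standard $\dot{B}_{2,1}^{1/2}$-boundedness of the classical Hodge projector $\mathcal{Q}$. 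This yields
$$\|\tilde{f}\|_{L_T^1(\dot{B}_{2,1}^{1/2})} \lesssim \|f\|_{L_T^1(\dot{B}_{2,1}^{1/2})} + \|\partial_t R\|_{L_T^1(\dot{B}_{2,1}^{1/2})}.$$

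A subtlety is the recovery of $\|u\|_{L_T^\infty(\dot{B}_{2,1}^{1/2})}$: writing $u = v + \mathcal{Q}R$ would require control of $\mathcal{Q}R$ in $L_T^\infty(\dot{B}_{2,1}^{1/2})$, which the hypotheses on $R$ do not directly supply. To circumvent this I would use the scalar formulation \eqref{write P as a source term}, treating $\nabla P$ as an external source, and invoke an analogous semigroup characterization of $\dot{B}_{2,1}^{1/2}$ via the scalar operator $b\Delta$ to estimate $u$ directly in terms of $u_0$ and $b(f - \nabla P)$ in $L_T^1(\dot{B}_{2,1}^{1/2})$. The pressure gradient is then reconstructed from $\nabla P = \mathcal{Q}_b(\Delta v + f - \rho\mathcal{Q}\partial_t R) + \nabla g$ and bounded via the same elliptic estimates, and the assumed control on $\nabla g$ closes the loop. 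Uniqueness follows from linearity and the a priori estimate applied to the difference of two candidate solutions, while existence is obtained by regularizing the data $(u_0,f,g,R)$ and the coefficient $\rho$, solving the corresponding smooth problems (where classical Stokes theory applies), and passing to the limit using the uniform bounds.

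The main obstacle is securing the continuity of $b\mathcal{P}_b$ on the critical Besov space $\dot{B}_{2,1}^{1/2}(\R^3)$. The operator $\nabla\mathcal{L}_b^{-1}\divg$ is not of Calder\'on--Zygmund type, and $\dot{B}_{2,1}^{1/2}$ sits at the critical scaling of $L^3$, outside the range in which $L^p$ theory for elliptic operators with merely bounded coefficients applies. Consequently, the iterative scheme described in the strategy — beginning with a Besov estimate admitting a controlled loss of regularity (proved via the heat semigroup $e^{-t\mathcal{L}_b}$ and the $L^p$-boundedness of the Riesz transform $\nabla\mathcal{L}_b^{-1/2}$ for $p\in(1,2]$) and then recovering the lost regularity through paraproduct decompositions that exploit $\rho - 1 \in \dot{B}_{2,1}^{3/2}$ — is where the bulk of the technical work lies. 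Once these elliptic estimates and the companion semigroup characterizations are available, the assembly of the maximal regularity estimate \eqref{maximal regularity estimates for Stokes} is comparatively routine.
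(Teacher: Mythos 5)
Your proposal mirrors the paper's proof closely: reduce to the abstract Cauchy problem for $v$ via the Hodge operator, obtain the maximal $L^1$ estimate from the semigroup characterization of Besov norms (Theorem \ref{characterization of besov spaces with positive regularity via stokes}), control the source $\tilde f$ through the elliptic continuity of $b\mathcal{P}_b$ on $\dot{B}_{2,1}^{1/2}$ (Theorem \ref{elliptic estimates in Besov}), and switch to the $b\Delta$ formulation \eqref{write P as a source term} to close the $L_T^\infty$ bound for $u$ and reconstruct $\nabla P$. This is exactly the paper's route.

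Two small inaccuracies are worth flagging. First, your stated reason for avoiding $u = v + \mathcal{Q}R$ --- that the hypotheses on $R$ do not supply control of $\|\mathcal{Q}R\|_{L_T^\infty(\dot{B}_{2,1}^{1/2})}$ --- is not quite right: since $R(0)=0$ and $\partial_t R\in L_T^1(\dot{B}_{2,1}^{1/2})$, one already has $\|R\|_{L_T^\infty(\dot{B}_{2,1}^{1/2})}\le\|\partial_t R\|_{L_T^1(\dot{B}_{2,1}^{1/2})}$, so the estimate \eqref{maximal regularity estimates for Stokes} would in fact survive that route; the real reason recorded in the paper is that the extra $\|\mathcal{Q}R\|_{L_T^\infty}$ dependence becomes genuinely problematic in the downstream fixed-point argument for \eqref{Lagrangian formulation}. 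Second, for existence you propose regularizing $\rho$ and invoking classical Stokes theory, but even for smooth $\rho\not\equiv 1$ the coupled variable-coefficient Stokes system is not classical, and one would then need uniform-in-mollification versions of the very same semigroup and elliptic estimates to pass to the limit. The paper instead builds the strong solution directly from the semigroup Duhamel formula (Theorem \ref{maximal regularity for ACP rough b}, via \eqref{space time estimate G} and Fubini rather than a scalar Young inequality in time) and then reads off $(u,\nabla P)$ in closed form, which is cleaner.
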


Let us conclude the introduction with a few remarks on the methodology. Overall, the proof of the estimate like \eqref{global L1Linfty estimate for gradient of velocity} requires three main ingredients: Lagrangian coordinates, critical spaces and maximal $L^1$ regularity. For a (brief) history of the development of this method, we refer to \cite{danchin edinburgh 2003,danchin jfa 2009,danchin cpam 2012,danchin memoir 2015}. However, this paper gives the first application of this method to viscous fluids with truly variable densities. In a subsequent paper, we apply this method to establish a maximal $L^1$ regularity result for a parabolic Lam\'{e} system with rough coefficients. Finally, in view of the latest application of this method to free boundary problems \cite{danchin arxiv 2020}, we have reason to believe that it can be applied to tackle more problems arising from fluid dynamics.

\bigskip

\noindent {\bf Organization of the paper:} In Section \ref{preliminaries}, we recall some prerequisites covering the abstract Cauchy problem, elliptic operators of divergence form, and the theory of homogeneous Besov spaces. Section \ref{main section} is devoted to the proof of maximal $L^1$ regularity for the Stokes system \eqref{Stokes system}. To this end, we give equivalent characterizations of homogeneous Besov norms via the semigroup generated by the generalized Stokes operator, and prove elliptic gradient estimates. Then, in Section \ref{proof of main theorems}, we apply the results established in Section \ref{main section} to prove Theorem \ref{global wellposedness} and Theorem \ref{local and global well posedness for Lagragian formulation}. The proof of Theorem \ref{long time asymptotics} will be given in Section \ref{proof of long time behavior}. In Appendix \ref{appendix1}, for the reader's convenience, we collect some useful estimates for changes of variables and verify the equivalence between Eulerian and Lagrangian formulations in the framework of homogeneous Besov spaces.

\bigskip

\noindent {\bf Notations:} Throughout the paper, the letter $C$ stands for a generic positive constant that may change from line to line. The notation $a\lesssim b$ means $a\leq Cb$ for some $C$, and $a\simeq b$ means $a\lesssim b$ and $b\lesssim a$. For two quantities $a, b$, we denote by $a\vee b$ the bigger quantity and by $a\wedge b$ the smaller one. For $p\in[1,\infty]$, the conjugate index $p'$ is determined by $\frac1p+\frac{1}{p'}=1$. We denote by $\|\cdot\|_p$ the Lebesgue $L^p$-norm. For a Banach space $X$ and $q\in[1,\infty]$, we may write $\|\cdot\|_{L_t^q(X)}$ for the norm of the space $L^q((0,t);X)$. Finally, we denote operators on Banach spaces by "mathcal" letters (e.g., $\mathcal{P}$, $\mathcal{Q}$, $\mathcal{S}$, etc.), variables in Lagrangian coordinates by bold letters (e.g., $\lagr{\rho}$, $\lagr{u}$, $\lagr{P}$, etc.).

\bigskip

\section{Preliminaries}\label{preliminaries}

\bigskip

\subsection{Semigroups and abstract Cauchy problem}


We adopt the concept that a $C_0$ semigroup $\{\mathcal{T}(t)\}_{t\ge0}$ on a Banach space $(X,\|\cdot\|)$ is called a bounded $C_0$ semigroup if $\|\mathcal{T}(t)\|\le C<\infty$ for all $t\ge0$, while it is called a contraction semigroup if $C=1$. In this paper, we take a "real" characterization of analyticity of semigroups. Such a characterization can also be taken as the definition of real analytic semigroups.
\begin{lemma}[see {\cite[Theorem~4.6]{engel book 2000}}]\label{generation theorem for analytic semigroups}
Let $\mathcal{T}(t)=e^{t\mathcal{A}}$ be a bounded $C_0$ semigroup on a Banach space $(X,\|\cdot\|)$ with infinitesimal generator $\mathcal{A}$. Then $\mathcal{T}(t)$ is a bounded analytic semigroup if and only if there is a dense subset $S$ of $X$ such that
\begin{align*}
\|t\mathcal{A}e^{t\mathcal{A}}x\|\le C\|x\|,\ \forall x\in S,\ \forall t>0.
\end{align*}
\end{lemma}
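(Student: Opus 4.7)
The plan is to reduce to the classical characterization of bounded analytic semigroups, namely that a bounded $C_0$ semigroup $\mathcal{T}(t)=e^{t\mathcal{A}}$ is analytic if and only if $\sup_{t>0}\|t\mathcal{A}e^{t\mathcal{A}}\|<\infty$ on the whole space, and then bridge the gap from the hypothesis ``on the dense subset $S$'' to this full-space statement.

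The ``only if'' direction is immediate: if $\mathcal{T}$ is a bounded analytic semigroup, Cauchy's integral formula applied to the analytic extension on a sector around $\R_+$ yields the uniform bound $\|t\mathcal{A}e^{t\mathcal{A}}\|\le C$, and we may take $S=X$. For the ``if'' direction, the first step is to upgrade the bound on $S$ to a bound on all of $X$. Fix $t>0$ and note that the linear map $x\mapsto t\mathcal{A}e^{t\mathcal{A}}x$ is defined on $S$ with norm $\le C$. By density it extends to a bounded operator on $X$ of norm $\le C$. Given $x\in X$ and $x_n\in S$ with $x_n\to x$, boundedness of $e^{t\mathcal{A}}$ gives $e^{t\mathcal{A}}x_n\to e^{t\mathcal{A}}x$, and the above extension gives convergence of $\mathcal{A}e^{t\mathcal{A}}x_n$ to some $y\in X$. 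The closedness of $\mathcal{A}$ then forces $e^{t\mathcal{A}}x\in D(\mathcal{A})$ and $\mathcal{A}e^{t\mathcal{A}}x=y$, hence
\[
\|t\mathcal{A}e^{t\mathcal{A}}x\|\le C\|x\|,\qquad \forall\,x\in X,\ \forall\,t>0.
\]

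With this estimate in hand, the second step is to build the analytic extension by a power series. Using the semigroup law to write $e^{t\mathcal{A}}=\bigl(e^{(t/n)\mathcal{A}}\bigr)^n$ and iterating the bound $\|\mathcal{A}e^{(t/n)\mathcal{A}}\|\le Cn/t$, one obtains
\[
\|\mathcal{A}^n e^{t\mathcal{A}}\|\le (Cn/t)^n.
\]
Combined with the Stirling-type inequality $n!\ge (n/e)^n$, this shows that for every $x\in X$ the Taylor series
\[
e^{z\mathcal{A}}x\,\vcentcolon=\,\sum_{n=0}^{\infty}\frac{(z-t)^n}{n!}\,\mathcal{A}^n e^{t\mathcal{A}}x
\]
converges in $X$ provided $|z-t|<t/(Ce)$. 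Letting $t$ sweep through $\R_+$ produces a holomorphic extension of $t\mapsto\mathcal{T}(t)x$ into a sector $\Sigma_\theta$ with $\sin\theta=1/(Ce)$; the same series estimate delivers the uniform boundedness on subsectors $\Sigma_{\theta'}$ for $\theta'<\theta$, which is precisely bounded analyticity.

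The only delicate point I anticipate is the bookkeeping in the power-series step: one must verify that the formally defined $e^{z\mathcal{A}}x$ is actually $X$-holomorphic in $z$, agrees with $\mathcal{T}(\cdot)x$ on $\R_+$, and satisfies the semigroup law in the sector. These are standard consequences of the growth estimate $\|\mathcal{A}^n e^{t\mathcal{A}}\|\le(Cn/t)^n$ together with the identity theorem for analytic Banach-space-valued functions, so no genuine difficulty arises; the rest of the argument is purely soft, resting on density, closedness of $\mathcal{A}$, and the Hille--Yosida framework underlying Lemma~\ref{generation theorem for analytic semigroups}.
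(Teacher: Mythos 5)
The paper does not actually prove this lemma; it is quoted in Section~\ref{preliminaries} as classical material from Pazy or Engel--Nagel. Your argument is precisely the standard textbook proof of that characterization (cf.\ Pazy, Ch.~2, Thm.~5.2): upgrade the bound from $S$ to all of $X$ via density and closedness of $\mathcal{A}$, iterate the semigroup law to obtain $\|\mathcal{A}^n e^{t\mathcal{A}}\|\le (Cn/t)^n$, and then sum the Taylor series, using $n!\ge (n/e)^n$, to build a uniformly bounded holomorphic extension into a sector of half-angle $\arcsin(1/(Ce))$. The ``only if'' half via Cauchy's formula is likewise standard. So the proposal is correct and takes the same route the paper implicitly relies on.

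One small wrinkle, which you inherit from the lemma's phrasing rather than introduce yourself: the hypothesis says ``dense \emph{subset} $S$,'' but your first step --- extending $t\mathcal{A}e^{t\mathcal{A}}$ by density and concluding that $\mathcal{A}e^{t\mathcal{A}}x_n$ is Cauchy --- tacitly requires $S$ to be a \emph{subspace} (or at least closed under differences), since $\|t\mathcal{A}e^{t\mathcal{A}}x\|\le C\|x\|$ on a set does not automatically pass to linear combinations. The paper's remark following the lemma (``the above inequality holds for all $x\in X$ since $\mathcal{A}$ is closed'') runs the same density argument and carries the same imprecision. Since every set the paper actually plugs in for $S$ (e.g.\ $\mathcal{P}H^2$ or the core $\mathscr{C}$) is a linear subspace, nothing breaks downstream, but it is worth being aware that the statement should really say ``dense subspace.''
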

\begin{remark}
Obviously, the above inequality holds for all $x\in X$ since $\mathcal{A}$ is closed.
\end{remark}

Let $(H,\langle\cdot,\cdot\rangle)$ be a real Hilbert space. A linear operator $\mathcal{A}:D(\mathcal{A})\subset H\rightarrow H$ is called dissipative on $H$ if
\begin{align*}
\langle  \mathcal{A}x,x\rangle\le0,\ \forall x\in D(\mathcal{A}).
\end{align*}
We have the following well-known result:
\begin{theorem}\label{generation theorem of selfadjoint operator}
Let $\mathcal{A}$ be a self-adjoint operator on $H$. Then $\mathcal{A}$ generates an analytic semigroup of contraction $\{e^{t\mathcal{A}}\}_{t\ge0}$ if and only if $\mathcal{A}$ is dissipative. Moreover, $e^{t\mathcal{A}}$ is self-adjoint on $H$ for every $t\ge0$.
\end{theorem}
For the complex version of Theorem \ref{generation theorem of selfadjoint operator}, we refer to {\cite[Example~3.7.5]{arendt book 2011}} and {\cite[Corollary~3.3.9]{arendt book 2011}}. 

In applications, we will first apply Theorem \ref{generation theorem of selfadjoint operator} to construct a semigroup on $L^2$, and then extrapolate it to some other function spaces. However, it is usually not easy to identify the generator of the new semigroup. In this situation, we wish
to identify the generator restricted on a dense subspace of its domain. Recall that a subspace $Y$ of the domain $D(\mathcal{A})$ of a linear operator $\mathcal{A}:D(\mathcal{A})\subset X\rightarrow X$ is called a core for $\mathcal{A}$ if $Y$ is dense in $D(\mathcal{A})$ for the graph norm $\|x\|_{D(\mathcal{A})}\vcentcolon=\|x\|+\|\mathcal{A}x\|$. In other words, $Y$ is a core for $\mathcal{A}$ if and only if $\mathcal{A}$ is the closure of $\mathcal{A}|_{Y}$. The next result gives a useful sufficient condition for a subspace to be a core for the generator.

\begin{lemma}[see {\cite[p. 53]{engel book 2000}}]\label{identify a core}
Let $\mathcal{A}$ be the infinitesimal generator of a $C_0$ semigroup $\mathcal{T}(t)$ on $X$. If $Y\subset D(\mathcal{A})$ is a dense subspace of $X$ and invariant under $\mathcal{T}(t)$ (i.e., $\mathcal{T}(t)Y\subset Y$), then $Y$ is a core for $\mathcal{A}$.
\end{lemma}

Next, we recall shortly how to use semigroups to solve abstract Cauchy problems. Suppose that $\mathcal{A}$ is the infinitesimal generator of a $C_0$ semigroup $e^{t\mathcal{A}}$ on a Banach space $(X,\|\cdot\|)$. We are concerned with the well-posedness of the inhomogeneous abstract Cauchy problem
\begin{eqnarray}\label{ACP}
\left\{\begin{aligned}
&u'(t)-\mathcal{A}u(t)=f(t),\ \ 0<t\le T,\\
&u(0)=u_0.
\end{aligned}\right.
\end{eqnarray}
We assume that $u_0\in X$ and the inhomogeneous term $f$ only belongs to $L^1((0,T);X)$. Then \eqref{ACP} always has a unique {\it mild solution} $u\in C([0,T];X)$ given by the formula
\begin{equation*}
u(t)=e^{t\mathcal{A}}u_0+\int_0^t e^{(t-\tau)\mathcal{A}}f(\tau)\,d\tau.
\end{equation*}
A continuous function $u$ is called a {\it strong solution} if $u\in W^{1,1}((0,T);X)\cap L^1((0,T);D(\mathcal{A}))$ satisfies \eqref{ACP} for a.e. $t\in(0,T)$. A strong solution is also a mild solution. Conversely, a mild solution with suitable regularity becomes a strong one.
\begin{lemma}[see {\cite[Theorem~2.9]{pazy book 1983}}]\label{mild to strong}
Let $u\in C([0,T];X)$ be a mild solution to \eqref{ACP}. If $u\in W^{1,1}((0,T),X)$, or $u\in L^1((0,T),D(\mathcal{A}))$, then $u$ is a strong solution.
\end{lemma}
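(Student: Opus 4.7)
The plan is to exploit the Duhamel representation \eqref{Duhamel principle} through a single key finite-difference identity,
\begin{equation*}
u(t+h)-u(t) \;=\; (e^{h\mathcal{A}}-I)\,u(t) \;+\; \int_t^{t+h} e^{(t+h-\tau)\mathcal{A}} f(\tau)\,d\tau,\qquad 0<t<t+h<T,
\end{equation*}
which is immediate from \eqref{Duhamel principle} and the semigroup law. A preliminary observation I would record is that $h^{-1}$ times the second summand converges in $X$ to $f(t)$ at every Lebesgue point of $f$ (hence a.e.), and is dominated by $M\cdot h^{-1}\int_t^{t+h}\|f(\tau)\|\,d\tau$ with $M\vcentcolon=\sup_{s\in[0,T]}\|e^{s\mathcal{A}}\|<\infty$. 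Everything then reduces to analyzing the first summand under each regularity hypothesis.

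First, assume $u\in W^{1,1}((0,T);X)$, so that $u$ is strongly differentiable a.e. At any $t$ that is both a differentiability point of $u$ and a Lebesgue point of $f$, dividing the identity by $h$ and sending $h\to 0^+$ forces the quotient $h^{-1}(e^{h\mathcal{A}}-I)u(t)$ to possess a limit in $X$. By the very definition of the infinitesimal generator, this means $u(t)\in D(\mathcal{A})$ with $\mathcal{A}u(t)=u'(t)-f(t)$. Since the right-hand side lies in $L^1((0,T);X)$, we conclude $u\in L^1((0,T);D(\mathcal{A}))$ and the ACP \eqref{ACP} holds for a.e.\ $t$.

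Second, assume $u\in L^1((0,T);D(\mathcal{A}))$. Using the identity $(e^{h\mathcal{A}}-I)x=\int_0^h e^{s\mathcal{A}}\mathcal{A}x\,ds$ valid for $x\in D(\mathcal{A})$, the finite-difference formula rewrites as
\begin{equation*}
u(t+h)-u(t) \;=\; \int_0^h e^{s\mathcal{A}}\mathcal{A}u(t)\,ds \;+\; \int_t^{t+h} e^{(t+h-\tau)\mathcal{A}} f(\tau)\,d\tau.
\end{equation*}
I would then integrate both sides in $t$ over an arbitrary $[a,b]\subset(0,T)$, rearrange the double integrals by Fubini, divide by $h$, and pass to the limit $h\to 0^+$ via dominated convergence, with $M\|\mathcal{A}u(t)\|$ and $M\|f(\tau)\|$ serving as integrable majorants. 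The left-hand side telescopes to $u(b)-u(a)$ by continuity of $u$, so the limit identity reads $u(b)-u(a)=\int_a^b[\mathcal{A}u+f]\,d\tau$ for every $0<a\le b<T$. This exhibits $u$ as the primitive of an $L^1$-function, whence $u\in W^{1,1}((0,T);X)$ with $u'=\mathcal{A}u+f$ a.e.

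The hard part, I expect, is the Fubini-plus-dominated-convergence step in the second case. After interchanging the order of integration, the $\tau$-integral splits into a bulk region of length $\sim b-a$ on which $h^{-1}\int_0^h e^{s\mathcal{A}}\,ds$ concentrates to the identity, plus two thin boundary layers of width $h$ near $a$ and $b$; one must verify that those layers contribute only $O(h)$ so that the termwise limit is justified. Once this bookkeeping is handled, both cases collapse to the same conclusion and the lemma follows.
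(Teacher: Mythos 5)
Your argument is correct, and it is the standard finite-difference/Duhamel argument; the paper itself does not prove this lemma but cites it as a classical fact from Engel--Nagel and Pazy, whose textbook proof proceeds exactly along the lines you take (identity $u(t+h)-u(t)=(e^{h\mathcal{A}}-I)u(t)+\int_t^{t+h}e^{(t+h-\tau)\mathcal{A}}f(\tau)\,d\tau$, then differentiate at common differentiability/Lebesgue points in the $W^{1,1}$ case, respectively integrate in $t$ and apply Fubini in the $L^1(D(\mathcal{A}))$ case). One small imprecision: after dividing by $h$, the two boundary layers of width $h$ near $a$ and $b$ contribute $o(1)$ (controlled by $M\int_a^{a+h}\|f\|$ and $M\int_b^{b+h}\|f\|$ via absolute continuity of the Lebesgue integral), not $O(h)$, but this does not affect the conclusion.
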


\subsection{Elliptic operators of divergence form}

We recall some known results concerning elliptic equations of divergence form, heat kernels and Riesz transforms. The second order divergence form elliptic equation to be considered is given by \eqref{2nd order elliptic equation of divergence form} with $b$ satisfying \eqref{initial density bounds}. But we remark that all the subsequent results concerning $\mathcal{L}_b$ remain true if the coefficient $b$ is replaced by a real symmetric matrix that satisfies a uniform ellipticity condition. To simplify the notation, we write $\mathcal{L}=\mathcal{L}_b$.

Let $\dot{H}^1(\R^3)$ be the space of distributions having a square-integrable gradient, equipped with the inner product $\int\nabla u\cdot\nabla v\,dx$. We adopt the convention that two functions in $\dot{H}^1(\R^3)$ are identical if their difference is a constant. For any $f\in L^2(\R^3;\R^3)$, as a consequence of the Lax-Milgram theorem, the equation \eqref{2nd order elliptic equation of divergence form} has a unique weak solution $P\in\dot{H}^1(\R^3)$ satisfying $\|\nabla P\|_2\le\frac{1}{m}\|f\|_2$.

Let $D(\mathcal{L})=\{u\in H^1(\R^3)| \mathcal{L}u\in L^2(\R^3)\}$. Then $-\mathcal{L}:D(\mathcal{L})\subset L^2\rightarrow L^2$ is a dissipative self-adjoint operator that generates an analytic semigroup of contraction $e^{-t\mathcal{L}}$. The maximal accretive square root of $\mathcal{L}$ is given by the formula
\begin{align*}
\mathcal{L}^{1/2}u=\frac{2}{\sqrt{\pi}}\int_0^\infty e^{-t^2\mathcal{L}}\mathcal{L}u\,dt,\ \ u\in D(\mathcal{L}).
\end{align*}
The above integral converges normally in $L^2$ since $\|e^{-t^2\mathcal{L}}\mathcal{L}u\|\le \|\mathcal{L}u\|\wedge t^{-2}\|u\|$. The domain $D(\mathcal{L}^{1/2})$ of $\mathcal{L}^{1/2}$ coincides with $H^1(\R^3)$ and it holds that $\|\mathcal{L}^{1/2}u\|\simeq\|\nabla u\|$ for $u\in H^1(\R^3)$. Indeed, $\mathcal{L}^{1/2}$ extends to a continuous operator from $\dot{H}^1(\R^3)$ to $L^2(\R^3)$ with a continuous inverse (denoted by $\mathcal{L}^{-1/2}$). Let $\mathcal{R}=\nabla\mathcal{L}^{-1/2}$ be the Riesz transform associated with $\mathcal{L}$. Then $\mathcal{R}$ is bounded from $L^2(\R^3)$ to $L^2(\R^3;\R^3)$. Denote by $\mathcal{R}^{*}$ the adjoint of $\mathcal{R}$. Now for any $f\in L^2(\R^3;\R^3)$, one can write the gradient of the solution to \eqref{2nd order elliptic equation of divergence form} as $\nabla P=\mathcal{R}\mathcal{R}^{*}f$. We refer the reader to the monograph \cite{auscher book 1998} for more details in this paragraph.

The $L^p$ theory for the square root problem is based on the famous Aronson-Nash estimates for the kernel of $e^{-t\mathcal{L}}$.

\begin{lemma}[see, e.g., \cite{auscher jlms 1996}]\label{gauss property}
The semigroup $e^{-t\mathcal{L}}$ acting on $L^2$ has a kernel $K_t(x,y)$ satisfying the Gaussian property. Precisely, there exist constants $\mu\in(0,1]$ and $C>0$ depending only on $m$ such that for all $t>0$ and $x,y\in\R^3$,
\begin{align*}
|K_t(x,y)|\le Ct^{-3/2}\exp\left\{-\frac{|x-y|^2}{Ct}\right\},
\end{align*}
and if in addition $2|h|\le\sqrt{t}+|x-y|$,
\begin{align*}
|K_t(x+h,y)-K_t(x,y)|+|K_t(x,y+h)-K_t(x,y)|\\
\le Ct^{-3/2}\left(\frac{|h|}{\sqrt{t}+|x-y|}\right)^{\mu}\exp\left\{-\frac{|x-y|^2}{Ct}\right\}.
\end{align*}
\end{lemma}

Thanks to this property, $e^{-t\mathcal{L}}$ extrapolates to a bounded analytic semigroup on $L^p$ for every $p\in(1,\infty)$ (see \cite{ouhabaz pams 1995} and the references therein). We denote by $e^{-t\mathcal{L}_p}$ this semigroup and by $-\mathcal{L}_p$ the generator. Thus, the square root $\mathcal{L}_p^{1/2}$ of $\mathcal{L}_p$ on $L^p(\R^3)$ is also well-defined. The following result can be found in {\cite[pp. 131-132]{auscher book 1998}}.
\begin{lemma}
$\mathcal{L}^{1/2}$ extends to a continuous operator from $\dot{W}^{1,p}(\R^3)$ to $L^p(\R^3)$, $1<p<\infty$, which has a continuous inverse for $1<p<2+\varepsilon$ for some $\varepsilon>0$. Moreover, the extension of $\mathcal{L}^{1/2}$ on $W^{1,p}(\R^3)$ coincides with $\mathcal{L}_p^{1/2}$ for $1<p<2+\varepsilon$.
\end{lemma}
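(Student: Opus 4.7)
The plan is to establish the two-sided equivalence $\|\mathcal{L}^{1/2}u\|_p \simeq \|\nabla u\|_p$ for smooth compactly supported $u$ in the appropriate ranges of $p$, and then extend by density; this simultaneously yields the continuous extension to $\dot W^{1,p}$ and the existence of a continuous inverse in the restricted range. The $p=2$ case is already recorded in the excerpt via Lax--Milgram on $\dot H^1$, so the task is to extrapolate off $L^2$ using Calder\'on--Zygmund techniques adapted to $\mathcal{L}$, with the Aronson--Nash Gaussian kernel bounds of Lemma \ref{gauss property} as the main analytic input.

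For the bounded direction $\|\mathcal{L}^{1/2}u\|_p \lesssim \|\nabla u\|_p$, I would use the subordination formula $\mathcal{L}^{1/2}u = \pi^{-1/2}\int_0^\infty s^{-1/2} e^{-s\mathcal{L}}(-\divg(b\nabla u))\,ds$ (derived from the integral representation preceding Lemma \ref{gauss property} by the substitution $s=t^2$) and dualize: the bound reduces to $L^{p'}$-boundedness of the adjoint Riesz transform $\mathcal{R}^{*}=\mathcal{L}^{-1/2}\divg$. The kernel of $\mathcal{R}^{*}$ satisfies standard size and H\"older estimates obtained by integrating the Gaussian bounds on $K_t(x,y)$ and its modulus of continuity against $s^{-1/2}$. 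By the Duong--McIntosh/Coifman--Weiss version of Calder\'on--Zygmund theory for semigroup-regularized singular integrals, $\mathcal{R}^{*}$ is then of weak type $(1,1)$ and hence bounded on $L^q$ for every $1<q<\infty$, giving the desired inequality for all $1<p<\infty$.

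The reverse inequality $\|\nabla u\|_p \lesssim \|\mathcal{L}^{1/2}u\|_p$, which furnishes the invertibility, amounts to $L^p$-boundedness of the Riesz transform $\mathcal{R}=\nabla\mathcal{L}^{-1/2}$ itself, and is more delicate because pointwise bounds on $\nabla e^{-t\mathcal{L}}$ are unavailable when $b$ is merely bounded. For $1<p\le 2$, I would apply the Blunck--Kunstmann/Auscher variant of Calder\'on--Zygmund theory, in which the usual pointwise H\"older smoothness condition is replaced by $L^2$--$L^2$ off-diagonal (Gaffney) estimates for $\sqrt{t}\,\nabla e^{-t\mathcal{L}}$; these come out of an integration-by-parts/Caccioppoli argument using only the ellipticity of $b$ and the $L^2$-contraction of $e^{-t\mathcal{L}}$. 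For $p$ slightly above $2$, I would then invoke Sneiberg's extrapolation theorem applied to the analytic family of a priori estimates around $p=2$, which delivers boundedness on some open interval $(2-\varepsilon,2+\varepsilon)$ and accounts for the $\varepsilon$ in the statement.

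The main obstacle is precisely this $p>2$ extension: without extra regularity on $b$, one cannot hope for $\mathcal{R}$ to be bounded on $L^p$ for arbitrarily large $p$ (classical Meyers-type counterexamples rule this out), so the range $(1,2+\varepsilon)$ is essentially sharp and the size of $\varepsilon$ depends only on the ellipticity constant $m$. Once both inequalities are secured on the dense subspace $C_c^\infty(\R^3)$, the extension of $\mathcal{L}^{1/2}$ as $\dot W^{1,p}\to L^p$ and its inverse are unique by density; the identification with $\mathcal{L}_p^{1/2}$ on $W^{1,p}$ for $1<p<2+\varepsilon$ follows because both operators are represented by the same subordination integral $\pi^{-1/2}\int_0^\infty s^{-1/2}e^{-s\mathcal{L}_p}\mathcal{L}_p u\,ds$ on the dense invariant subspace where the integral converges in $L^p$, using the $L^p$-analyticity of $e^{-t\mathcal{L}_p}$ guaranteed by Lemma \ref{gauss property}.
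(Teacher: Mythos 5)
The paper offers no proof of this lemma; it simply cites pp.~131--132 of \cite{auscher book 1998}, so your proposal is necessarily a fresh argument and must be judged on its own. Unfortunately the bounded direction $\|\mathcal{L}^{1/2}u\|_p\lesssim\|\nabla u\|_p$ for all $1<p<\infty$ is not obtained correctly. The factorization $\mathcal{L}^{1/2}u=-\mathcal{L}^{-1/2}\divg(b\nabla u)$ reduces that inequality to $L^p$-boundedness of $\mathcal{R}^{*}=-\mathcal{L}^{-1/2}\divg$ (you wrote $L^{p'}$, but the exponent is $p$, equivalently $\mathcal{R}$ on $L^{p'}$), and you then assert that the Duong--McIntosh machinery makes $\mathcal{R}^{*}$ weak type $(1,1)$, hence bounded on $L^q$ for every $1<q<\infty$. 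This cannot hold: if it did, duality would force $\mathcal{R}=\nabla\mathcal{L}^{-1/2}$ to be bounded on every $L^q$, contradicting both the $1<p<2+\varepsilon$ restriction in the very statement you are proving and the Meyers-type obstruction you yourself invoke in your final paragraph. Concretely, Lemma~\ref{gauss property} controls $K_t$ and its H\"older modulus of continuity but gives no pointwise bound on $\nabla_y K_t$, so the kernel of $\mathcal{R}^{*}$ (which is $-\nabla_y$ applied to the kernel of $\mathcal{L}^{-1/2}$) does not inherit size and H\"older estimates by integrating the Aronson--Nash bounds against $s^{-1/2}$; and the Gaffney/Caccioppoli substitute $\sqrt{t}\,\nabla e^{-t\mathcal{L}}$ that drives the Blunck--Kunstmann argument puts the gradient on the output side, which is the orientation adapted to $\mathcal{R}$, not to a divergence acting on the input as in $\mathcal{R}^{*}$. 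In fact $\mathcal{R}^{*}$ is bounded on $L^q$ only for $q$ in a dual neighborhood of $2$ and is not of weak type $(1,1)$.

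Consequently the forward bound for all $1<p<\infty$ is genuinely stronger than $L^p$-boundedness of $\mathcal{R}^{*}$: the extra structure is that $\divg(b\nabla u)$ lies in the range of $\mathcal{L}$, not in an arbitrary element of $\dot W^{-1,p}$, and the Auscher--Tchamitchian proof exploits precisely this (through a Calder\'on--Zygmund decomposition of $\nabla u$ itself, combined with the heat-kernel H\"older regularity, rather than treating $\mathcal{R}^{*}$ as a singular integral). Your reverse direction for $1<p\le 2$ via $L^2$--$L^2$ off-diagonal estimates for $\sqrt{t}\,\nabla e^{-t\mathcal{L}}$ is sound, the Sneiberg (or Meyers reverse-H\"older) extrapolation for $p$ slightly above $2$ is a legitimate route to the $\varepsilon$, and the identification with $\mathcal{L}_p^{1/2}$ through the shared subordination integral is fine; the real gap is the forward bound for $p$ near $1$, which in your scheme is covered only by the false weak-$(1,1)$ claim for $\mathcal{R}^{*}$.
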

From now on, we do not distinguish between an operator and its continuous extension, and drop the $p$'s for all operators associated with $\mathcal{L}_p$. The above lemma implies that the Riesz transform $\mathcal{R}$ is bounded from $L^{p}(\R^3)$ to $L^{p}(\R^3;\R^3)$ for $1<p<2+\varepsilon$, and its adjoint $\mathcal{R}^{*}$ is bounded from $L^{p'}(\R^3;\R^3)$ to $L^{p'}(\R^3)$. Moreover, all the bounds (i.e., operator norms) only depend on $m$ and $p$.

\subsection{Homogeneous Besov spaces}

We collect some homogeneous Besov spaces relevant preliminaries. While there is a vast literature on this topic, we will mainly refer to the book \cite{BCD} because it focuses on applications of Fourier analysis to PDEs.

Let $\chi,\varphi$ be two smooth radial functions valued in the interval [0,1],
the support of $\chi$ be the ball $\overline{B_{4/3}}$, and the support of $\varphi$ be the washer $\overline{B_{8/3}}\setminus B_{3/4}$. Moreover, $\sum_{j\in\mathbb{Z}}\varphi(2^{-j}\xi)=1$ for $\xi\in\R^3\setminus\{0\}$, and $\chi(\xi)+\sum_{j\geq 0}\varphi(2^{-j}\xi)=1$ for $\xi\in\R^3$.
Denote $h=\mathcal{F}^{-1}\varphi$ and $\widetilde{h}=\mathcal{F}^{-1}\chi$, where $\mathcal{F}^{-1}$ is the inverse of the Fourier transform $\mathcal{F}$. Then we define the homogeneous dyadic blocks $\dot{\Delta}_{j}$ and the low-frequency cutoff operators $\dot{S}_{j}$, respectively, by
\begin{align*}
\dot{\Delta}_{j}u=2^{3j}\int_{\mathbb{R}^3}h(2^{j}y)u(x-y)dy,\ \ \mathrm{and}\ \ 
\dot{S}_{j}u=2^{3j}\int_{\mathbb{R}^3}\widetilde{h}(2^{j}y)u(x-y)dy.
\end{align*}

The frequency localization of the blocks $\dot{\Delta}_{j}$ and $\dot{S}_{j}$ leads to some fine properties. First, one has orthogonality due to the interaction of frequencies supported in disjoint regions. More precisely, for two tempered distributions $u,v\in\mathscr{S}^{'}(\R^3)$, we have
\begin{align*}
\dot{\Delta}_{k}\dot{\Delta}_{j}u=&0\ \ \mathrm{if}\ \ |k-j|\geq2,\\
\dot{\Delta}_{j}(\dot{S}_{k-1}u\dot{\Delta}_{k}v)=&0\ \ \mathrm{if}\ \ |k-j|\geq5,\\
\dot{\Delta}_{j}(\dot{\Delta}_{k}u\widetilde{\dot{\Delta}}_{k}v)=&0\ \ \mathrm{if}\ \ k\le j-4,\ \ \mathrm{with}\ \ \widetilde{\dot{\Delta}}_{k}v\vcentcolon=\sum_{|k^{'}-k|\leq1}\dot{\Delta}_{k^{'}}v. 
\end{align*}
Second, we have the Poincare type inequalities (also called Bernstein's inequalities):
\begin{lemma}[see {\cite[Lemmas 2.1-2.2]{BCD}}]\label{Bernstein Lemma}
Let $r$ and $R$ be two constants satisfying $0<r<R<\infty$.
There exists a positive constant $C=C(r,R)$ such that for any $k\in\mathbb{N}$,
any $\lambda>0$, any smooth homogeneous function $\sigma$ of degree $d\in\mathbb{N}$, any $1\leq p\leq q\leq\infty$, and any function $u\in L^{p}(\R^3)$,
\begin{align*}
\supp\widehat{u}\subset\lambda B_R\Longrightarrow&
\|D^{k}u\|_{q}\vcentcolon=\sum_{|\alpha|=k}\|\partial^{\alpha}u\|_{q}\le C^{k+1}\lambda^{k+3(\frac{1}{p}-\frac{1}{q})}\|u\|_{p},\\
\supp\widehat{u}\subset\lambda(B_R\setminus B_r)\Longrightarrow&
C^{-k-1}\lambda^{k}\|u\|_{p}\le\|D^{k}u\|_{p}\le C^{k+1}\lambda^{k}\|u\|_{p},\\
\supp\widehat{u}\subset\lambda(B_R\setminus B_r)\Longrightarrow&\|\sigma(D)u\|_{q}\le C_{\sigma,d}\lambda^{m+3(\frac1p-\frac1q)}\|u\|_{p},
\end{align*}
where $\widehat{u}=\mathcal{F}u$ and $\sigma(D)u$ is defined as $\mathcal{F}^{-1}(\sigma\widehat{u})$. 
\end{lemma}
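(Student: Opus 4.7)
The plan is to derive all three estimates by realizing each bound as a convolution with an explicit rescaled Schwartz kernel and then invoking Young's convolution inequality. The key observation is that the frequency support assumption lets us freely insert a fixed smooth cutoff into the multiplier; we then extract the $\lambda$-dependence through a scaling change of variables and the $(p,q)$-dependence through Young's inequality with exponent $r$ defined by $1+\tfrac{1}{q}=\tfrac{1}{p}+\tfrac{1}{r}$.

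For the first inequality, I would fix once and for all a $\phi\in C_c^\infty(\R^3)$ with $\phi\equiv 1$ on $B_R$ and $\supp\phi\subset B_{2R}$. Since $\supp\widehat u\subset\lambda B_R$ we may write $\widehat u=\phi(\cdot/\lambda)\widehat u$, so for any multi-index $\alpha$ with $|\alpha|=k$,
\begin{equation*}
\partial^\alpha u = g_\alpha^\lambda*u,\qquad g_\alpha^\lambda(x)=\lambda^{k+3}g_\alpha^1(\lambda x),\qquad g_\alpha^1=\mathcal{F}^{-1}\bigl((i\xi)^\alpha\phi(\xi)\bigr).
\end{equation*}
The rescaled $L^r$ norm is $\|g_\alpha^\lambda\|_r=\lambda^{k+3(1-1/r)}\|g_\alpha^1\|_r=\lambda^{k+3(1/p-1/q)}\|g_\alpha^1\|_r$, so Young's inequality immediately yields the claim for each $\alpha$; summing over $|\alpha|=k$ gives the stated constant, with the $C^{k+1}$ growth to be justified separately. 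The third inequality is entirely analogous once we pick a cutoff $\widetilde\phi$ adapted to the annulus: smoothness of $\sigma$ away from the origin and the annular support of $\widehat u$ guarantee that $\sigma\widetilde\phi$ is a Schwartz function, and homogeneity of degree $m$ produces the factor $\lambda^m$ under scaling. The upper half of the second inequality is a special case of the first (with the cutoff replaced by an annular one).

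The more interesting step is the lower bound $\|D^k u\|_p\gtrsim \lambda^k\|u\|_p$ for frequencies in an annulus. The idea is to invert: pick $\widetilde\phi\in C_c^\infty$ that equals $1$ on $B_R\setminus B_r$ and whose support avoids the origin; then $\widetilde\phi(\xi)/|\xi|^{2k}$ is Schwartz, and
\begin{equation*}
u=\mathcal{F}^{-1}\!\left(\frac{\widetilde\phi(\xi/\lambda)}{|\xi/\lambda|^{2k}}\cdot\frac{(-|\xi|^2)^k}{(-\lambda^2)^k}\widehat u(\xi)\right)=\lambda^{-2k}\widetilde g^{\,\lambda}*(-\Delta)^k u,
\end{equation*}
with $\widetilde g^{\,\lambda}(x)=\lambda^{3}\widetilde g^{\,1}(\lambda x)$. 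Young's inequality in $L^1\ast L^p\to L^p$ gives $\|u\|_p\le\lambda^{-2k}\|\widetilde g^{\,1}\|_1\|(-\Delta)^k u\|_p$, and the right-hand side is controlled by $\|D^{2k}u\|_p$; iterating or using an interpolation between $\|u\|_p$ and $\|D^{2k}u\|_p$ (both already related to $\|D^k u\|_p$ via the upper bound just proved) gives the desired lower bound at the correct exponent $k$.

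The main obstacle is tracking the explicit $C^{k+1}$ dependence on $k$. Here I would exploit that $\phi,\widetilde\phi$ are fixed and that $\|g_\alpha^1\|_r$, $\|\widetilde g^{\,1}\|_1$ can be bounded by a finite number of Schwartz seminorms of the multiplier, each of which in turn produces at most a $C^{k+1}$ blow-up through the Leibniz expansions of $\partial^\beta((i\xi)^\alpha\phi)$ and of $\partial^\beta(\widetilde\phi(\xi)/|\xi|^{2k})$ (the latter using $|\xi|\gtrsim 1$ on the support). These are routine but tedious combinatorial estimates; everything else reduces to scaling plus Young.
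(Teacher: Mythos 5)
Your proposal is correct and is essentially the standard argument behind {\cite[Lemmas 2.1-2.2]{BCD}}, which the paper cites without proof: insert a fixed cutoff adapted to the ball or annulus, rescale, and apply Young's inequality, with the $C^{k+1}$ constants tracked through finitely many Schwartz seminorms of the multiplier. The only loose point is how you close the reverse inequality ("iterating or interpolation"): no interpolation is needed, since writing each $\partial^{\beta}$ with $|\beta|=2k$ as $\partial^{\beta_1}\partial^{\beta_2}$ with $|\beta_1|=|\beta_2|=k$ and applying the already-proved direct inequality to $\partial^{\beta_2}u$ (whose spectrum still lies in the annulus) gives $\|D^{2k}u\|_{L^p}\le C^{k+1}\lambda^{k}\|D^{k}u\|_{L^p}$, which combined with $\|u\|_{L^p}\le C^{k+1}\lambda^{-2k}\|D^{2k}u\|_{L^p}$ yields the stated lower bound; the route in \cite{BCD} avoids even this detour by using the identity $|\xi|^{2k}=\sum_{|\alpha|=k}\tfrac{k!}{\alpha!}\xi^{2\alpha}$ to express $u$ directly as a sum of convolutions against the $\partial^{\alpha}u$ with $|\alpha|=k$.
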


In most literature on the theory of function spaces, the homogeneous Besov spaces are defined in the ambient space of tempered distributions modulo polynomials (see, e.g., \cite{Triebel book 1983}). However, we wish to avoid this type of spaces when solving nonlinear PDEs. Instead, we adopt the definitions of homogeneous spaces in {\cite[Section~2.3]{BCD}}. Let $\mathscr{S}_{h}^{'}(\R^3)$ denote the space of all tempered distributions $u\in\mathscr{S}^{'}(\R^3)$ that satisfy
\begin{align*}
u=\sum_{ j\in \mathbb{Z}}\dot{\Delta }_{j}u\ \  \mathrm{in}\  \mathscr{S}^{'}.
\end{align*}
Note that $\mathscr{S}_h^{'}$ is a large enough proper subspace of $\mathscr{S}^{'}$. For example, if $b\in L^p$ with $p\in[1,\infty)$, then $b\in\mathscr{S}_h^{'}$; and if $b\in L^\infty$, then $\nabla b\in\mathscr{S}_h^{'}$.


Let us now recall the definition of homogeneous Besov spaces.
\begin{definition}
Let $s\in\R$ and $1\leq p,r\leq\infty$. The homogeneous Besov space $\dot{B}_{p,r}^{s}(\R^3)$
consists of all the distributions $u$ in $\mathscr{S}_{h}^{'}(\R^3)$ such that
$$
\|u\|_{\dot{B}_{p,r}^{s}}\vcentcolon=\left\|\big(2^{js}\|\dot{\Delta}_{j}u\|_{L^{p}}\big)_{j\in\mathbb{Z}}\right\|_{l^{r}}
<\infty.
$$
\end{definition}
As an immediate consequence of the definition, we have that $u\in\dot{B}_{p,r}^{s}(\mathbb{R}^3)$ if and only if there exists a nonnegative sequence $\{c_{j,r}\}_{j\in\mathbb{Z}}$ such that $\|c_{j,r}\|_{l^{r}}\lesssim1$ and $\|\dot{\Delta}_{j}u\|_{L^p}\lesssim c_{j,r}2^{-js}\|u\|_{\dot{B}_{p,r}^{s}}$ for every $j\in\mathbb{Z}$.

Let us collect some useful properties and inequalities for homogeneous Besov spaces.

\begin{lemma}[see {\cite[Chapter~2]{BCD}}]\label{basic properties of Besov spaces}
{\rm (\romannumeral1)} For $1\le p_1\le p_2\le\infty$, $1\le r_1\le r_2\le\infty$ and $s\in\R$, $\dot{B}_{p_1,r_1}^{s}(\R^3)$ is continuously embedded in $\dot{B}_{p_2,r_2}^{s-3\left(1/p_1-1/p_2\right)}(\R^3)$. $\dot{B}_{\infty,1}^{0}(\R^3)$ is continuously embedded in $C_0(\R^3)$, the space of continuous functions that tend to zero at infinity.

{\rm (\romannumeral2)} Suppose that $(s_1,s_2,p,p_1,p_2,r)\in\R^2\times[1,\infty]^4$, $\theta\in(0,1)$ and $\frac{1}{p}=\frac{\theta}{p_1}+\frac{1-\theta}{p_2}$. Then for any $u\in\mathscr{S}^{'}_h(\R^3)$, we have
\begin{align*}
\|u\|_{\dot{B}_{p,r}^{\theta s_1+(1-\theta)s_2}}\le\|u\|_{\dot{B}_{p_1,r}^{s_1}}^{\theta}\|u\|_{\dot{B}_{p_2,r}^{s_2}}^{1-\theta}.
\end{align*}

{\rm (\romannumeral3)} If $(p,r)\in[1,\infty)^2$, then the space $\mathscr{S}_0(\R^3)$ of functions in $\mathscr{S}(\R^3)$ whose Fourier transforms are supported away from $0$ is dense in $\dot{B}_{p,r}^{s}(\R^3)$.

{\rm (\romannumeral4)} $\dot{B}_{p,r}^{s}(\R^3)$ is a Banach space if $(s,p,r)$ satisfies
\begin{align}\label{spr restriction}
s<\frac{3}{p},\ \ or\ \  s=\frac{3}{p}\ \ and\ \ r=1.
\end{align}

{\rm (\romannumeral5)} Suppose that  $(s,p,r)\in(0,\infty)\times[1,\infty]^2$ satisfies \eqref{spr restriction}. Assume that $f$ is a smooth function on $\R$ which vanishes at $0$. For any real-valued function $u\in L^\infty(\R^3)\cap\dot{B}_{p,r}^{s}(\R^3)$, the composite function $f\circ u$ belongs to the same space, and there exists a constant $C$ depending on $f'$ and $\|u\|_{\infty}$ such that
\begin{align*}
\|f\circ u\|_{\dot{B}_{p,r}^{s}}\le C\|u\|_{\dot{B}_{p,r}^{s}}. 
\end{align*}
\end{lemma}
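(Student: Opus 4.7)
The plan is to establish each of (i)--(v) via the homogeneous Littlewood--Paley decomposition, using Bernstein's inequalities (Lemma \ref{Bernstein Lemma}) as the main technical tool; the arguments are the standard ones from \cite[Chapter~2]{BCD}.

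For (i), the first Bernstein inequality applied to $\dot{\Delta}_j u$ (whose spectrum is supported in an annulus of size $2^j$) gives $\|\dot{\Delta}_j u\|_{L^{p_2}}\le C\,2^{3j(1/p_1-1/p_2)}\|\dot{\Delta}_j u\|_{L^{p_1}}$; multiplying by the appropriate weight and invoking $\ell^{r_1}\hookrightarrow\ell^{r_2}$ yields the first embedding. For $\dot{B}_{\infty,1}^0\hookrightarrow C_0$, write $u=\sum_{j\in\mathbb{Z}}\dot{\Delta}_j u$; each $\dot{\Delta}_j u=2^{3j}h(2^j\cdot)*u$ is continuous and decays at infinity (since $h\in\mathscr{S}$ and $\dot{\Delta}_j u\in L^\infty$), and the $\ell^1$ summability condition provides uniform convergence of the series, so the limit belongs to $C_0$. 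For (ii), pointwise H\"older gives $\|\dot{\Delta}_j u\|_{L^p}\le\|\dot{\Delta}_j u\|_{L^{p_1}}^\theta\|\dot{\Delta}_j u\|_{L^{p_2}}^{1-\theta}$; multiplying by $2^{j[\theta s_1+(1-\theta)s_2]}$ and applying H\"older in $\ell^r$ with exponents $1/\theta$ and $1/(1-\theta)$ produces the interpolation inequality.

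For (iii), given $u\in\dot{B}_{p,r}^s$ with $p,r<\infty$, the truncation $u_N:=\sum_{|j|\le N}\dot{\Delta}_j u$ converges to $u$ in $\dot{B}_{p,r}^s$ by dominated convergence in $\ell^r$, and $u_N$ has spectrum in a compact annulus avoiding the origin. A second step multiplies by a smooth spatial cutoff $\chi(\cdot/R)$ and lets $R\to\infty$; combined with continuity of multiplication by a Schwartz function on Besov spaces, this yields approximants in $\mathscr{S}_0$. For (iv), the condition \eqref{spr restriction} is tailored so that for any $u$ with finite Besov norm, $\dot{S}_j u\to 0$ in $\mathscr{S}'$ as $j\to-\infty$: indeed, Bernstein gives $\|\dot{S}_j u\|_{L^\infty}\lesssim 2^{j(3/p-s)}\|u\|_{\dot{B}_{p,r}^s}$, which either decays geometrically (when $s<3/p$) or is $\ell^1$-summable (when $s=3/p$, $r=1$), so the limit of a Cauchy sequence lies in $\mathscr{S}_h'$ and completeness follows. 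For (v), the telescoping identity
\[
f\circ u=\sum_{j}\bigl[f(\dot{S}_{j+1}u)-f(\dot{S}_j u)\bigr]=\sum_{j}\dot{\Delta}_j u\int_0^1 f'\bigl(\dot{S}_j u+\tau\dot{\Delta}_j u\bigr)\,d\tau
\]
combined with boundedness of $f'$ on the range of $u$ (guaranteed by $u\in L^\infty$) reduces the composition estimate to summing dyadic pieces weighted by $2^{js}$ in $\ell^r$; the condition $s>0$ is essential for convergence of the telescoping sum in $\mathscr{S}_h'$.

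The main obstacles will be (iii) and (v). In (iii), obtaining functions actually in $\mathscr{S}_0$ (with both compactly supported spectrum away from $0$ and Schwartz decay) forces the two-step approximation above, and convergence of the spatial cutoff in the Besov norm relies on continuity of multiplication by a Schwartz function on $\dot{B}_{p,r}^s$, which itself requires a Bony paraproduct argument. In (v), one must track the $L^\infty$ bounds of the intermediate quantities $\dot{S}_j u+\tau\dot{\Delta}_j u$ uniformly in $j$ and $\tau$, then verify summability of the resulting series; this is where the sign condition $s>0$ and the restriction \eqref{spr restriction} enter crucially, and constitutes the technical heart of the composition estimate.
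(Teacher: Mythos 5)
This lemma is quoted verbatim from \cite[Chapter~2]{BCD} and the paper supplies no proof of its own, so your sketch is measured against the standard arguments there; it follows them in outline, but three steps do not go through as written. First, in (i) you assert that each $\dot{\Delta}_j u=2^{3j}h(2^j\cdot)*u$ tends to zero at infinity ``since $h\in\mathscr{S}$ and $\dot{\Delta}_j u\in L^\infty$''. Convolving a Schwartz kernel with a general $L^\infty$ function yields a bounded uniformly continuous function, not one vanishing at infinity: $u(x)=\sin x_1$ has only finitely many nonzero dyadic blocks, lies in $\mathscr{S}_h'\cap\dot{B}^0_{\infty,1}$, and does not decay (so the $C_0$ conclusion for $\dot{B}^0_{\infty,1}$ cannot be proved this way; the version that is true, and the one the paper actually uses, is $\dot{B}^{3/p}_{p,1}\hookrightarrow C_0$ for $p<\infty$, where each block is the convolution of an $L^p$ function, $p<\infty$, with a fattened annulus kernel in $L^{p'}$ and therefore does vanish at infinity). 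Second, in (iii) the spatial truncation $\chi(\cdot/R)u_N$ does \emph{not} belong to $\mathscr{S}_0$: multiplying by a cutoff convolves the Fourier transform with $R^3\hat{\chi}(R\cdot)$ and smears spectrum onto the origin, so the product is Schwartz but its Fourier transform is no longer supported away from $0$. The standard repair is to approximate the frequency-truncated $u_N$ in $L^p$ by some $\phi\in\mathscr{S}$ and then re-project, taking $\psi=\sum_{|j|\le N+1}\dot{\Delta}_j\phi\in\mathscr{S}_0$; Bernstein applied to the finitely many surviving blocks converts the $L^p$ error into a $\dot{B}^s_{p,r}$ error. No continuity of pointwise multiplication on Besov spaces is then needed.

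Third, in (v) the first-order telescoping identity, estimated in the usual way with $m_j=\dot{\Delta}_j u\int_0^1 f'(\dot{S}_j u+\tau\dot{\Delta}_j u)\,d\tau$ (namely $\|\dot{\Delta}_k m_j\|_{p}\lesssim\|\dot{\Delta}_j u\|_{p}$ for $j\ge k$ and $\|\dot{\Delta}_k m_j\|_{p}\lesssim 2^{j-k}\|\dot{\Delta}_j u\|_{p}$ for $j<k$), gives the composition estimate only for $0<s<1$: the high-to-low sum requires $s>0$, as you note, but the low-to-high sum requires $s<1$. The lemma is invoked in the paper for $s=3/2$ (e.g.\ to pass from $\rho-1$ to $b-1$ in $\dot{B}^{3/2}_{2,1}$), so you must either use a higher-order linearization or induct on the integer part of $s$ with the help of the product laws; your sketch does not indicate this extension. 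Parts (ii) and (iv) are fine as outlined, and your identification of where the restriction \eqref{spr restriction} enters the completeness argument is the correct one.
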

Next, we recall Bony's paraproduct decomposition which can be used to define a product of two Besov functions. For two Besov functions $u$ and $v$, we can formally write
\begin{align*}
uv=\dot{T}_{u}v+\dot{T}_{v}u+\dot{R}(u,v),
\end{align*}
where
\begin{align*}
\dot{T}_{u}v\vcentcolon=\sum_{j\in\mathbb{Z}}\dot{S}_{j-1}u\dot{\Delta}_{j}v\ \ \ \mathrm{and}\ \ \ \dot{R}(u,v)\vcentcolon=\sum_{j\in\mathbb{Z}}\dot{\Delta}_{j}u\widetilde{\dot{\Delta}}_{j}v.
\end{align*}
Sometimes it is sufficient to just estimate $\dot{T}_{v}^{'}u\vcentcolon=\dot{T}_{v}u+\dot{R}(u,v)$.
We refer the reader to {\cite[section~2.6]{BCD}} for the convergence of the above series and the continuity of the paraproduct operators on homogeneous  Besov spaces. In the present paper, we will frequently use the following product laws:
\begin{align*}
\|uv\|_{\dot{B}_{p,1}^{3/p}}\lesssim&\|u\|_{\dot{B}_{p,1}^{3/p}}\|v\|_{\dot{B}_{p,1}^{3/p}},\ \ 1\le p<\infty,\\
\|uv\|_{\dot{B}_{2,1}^{1/2}}\lesssim&\|u\|_{\dot{B}_{q,1}^{3/q}}\|v\|_{\dot{B}_{2,1}^{1/2}},\ \ 1\le q\le6.
\end{align*}

Finally, we recall several equivalent characterizations of homogeneous Besov norms. The next lemma is well-known and can be implied by {\cite[Theorem~2.34]{BCD}}.
\begin{lemma}\label{characterization via classic heat kernel}
Suppose that $s\in\R$ and $(p,q)\in[1,\infty]^2$. If $k>s/2$ and $k\ge0$, we have
\begin{align*}
\|f\|_{\dot{B}_{p,q}^{s,-\Delta}}\vcentcolon=\left\|t^{-s/2}\|(t\Delta)^ke^{t\Delta}f\|_p\right\|_{L^q(\R_+;\frac{dt}{t})}\simeq\|f\|_{\dot{B}_{p,q}^{s}},\ \ \forall f\in\mathscr{S}_{h}^{'}.
\end{align*}
\end{lemma}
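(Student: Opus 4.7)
The plan is to deploy a dyadic decomposition together with the spectral localization of the heat semigroup. The single technical ingredient that both directions rest on is the pointwise (in $t$) estimate
\[
\|(t\Delta)^{k}e^{t\Delta}\dot{\Delta}_{j}g\|_{p}\lesssim (t\,2^{2j})^{k}e^{-c\,t\,2^{2j}}\|\dot{\Delta}_{j}g\|_{p},\qquad c>0,
\]
valid for every $j\in\mathbb Z$, $p\in[1,\infty]$ and $g\in\mathscr{S}'_{h}$. This is obtained by applying the Bernstein inequality of Lemma \ref{Bernstein Lemma} to the Fourier multiplier $(-t|\xi|^{2})^{k}e^{-t|\xi|^{2}}$, frequency-restricted to an annulus of size $2^{j}$, and absorbing the polynomial factor $(t|\xi|^{2})^{k}$ into the Gaussian decay when $t\,2^{2j}$ is large.

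For the direction $\|f\|_{\dot B_{p,q}^{s,-\Delta}}\lesssim \|f\|_{\dot B_{p,q}^{s}}$, I decompose $f=\sum_{j}\dot{\Delta}_{j}f$ in $\mathscr{S}'_{h}$ and apply the localization estimate termwise to get
\[
t^{-s/2}\|(t\Delta)^{k}e^{t\Delta}f\|_{p}\le \sum_{j}\phi(t\,2^{2j})\,\bigl(2^{js}\|\dot{\Delta}_{j}f\|_{p}\bigr),\qquad \phi(u)\vcentcolon=u^{k-s/2}e^{-cu}.
\]
The hypothesis $k>s/2$ is exactly what makes $\phi\in L^{1}\cap L^{q}(\R_{+},du/u)$, because the polynomial factor is integrable near $0$ and the Gaussian kills the tail. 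The right-hand side is a convolution on the multiplicative group $(\R_{+},dt/t)$ of the sequence $(2^{js}\|\dot{\Delta}_{j}f\|_{p})_{j}\in\ell^{q}$ against $\phi$, so Young's inequality (after the standard log-scale discretization identifying $L^{q}(\R_{+},dt/t)\cap\{\text{slowly varying}\}$ with $\ell^{q}$-samples at $t=2^{-2j}$) yields the claimed bound.

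For the reverse direction, when $k\ge 1$ I invoke the Calderón-type reproducing formula
\[
f=\frac{2^{2k}}{\Gamma(2k)}\int_{0}^{\infty}(-t\Delta)^{2k}e^{2t\Delta}f\,\frac{dt}{t}\qquad\text{on }\mathscr{S}'_{h},
\]
which is just the identity $1=\frac{2^{2k}}{\Gamma(2k)}\int_{0}^{\infty}(t|\xi|^{2})^{2k}e^{-2t|\xi|^{2}}\,\frac{dt}{t}$ for $\xi\ne0$, combined with the fact that elements of $\mathscr{S}'_{h}$ have no zero-frequency component. Applying $\dot{\Delta}_{j}$, commuting it with the Fourier multipliers, using the localization estimate once more and the $L^{p}$-boundedness of $\dot{\Delta}_{j}$ produces
\[
\|\dot{\Delta}_{j}f\|_{p}\lesssim \int_{0}^{\infty}(t\,2^{2j})^{k}e^{-c\,t\,2^{2j}}\,\|(t\Delta)^{k}e^{t\Delta}f\|_{p}\,\frac{dt}{t},
\]
whence multiplying by $2^{js}$ and taking $\ell^{q}_{j}$ runs the same Young-convolution machinery as before, now in the opposite direction. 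For the remaining case $k=0$ (which forces $s<0$), the reproducing integral diverges, so I replace it by a smooth partition $\int_{0}^{\infty}\eta(t\,2^{2j})\,dt/t=1$ with $\eta\in C_{c}^{\infty}((0,\infty))$, and use that $\tilde{\Delta}_{j}e^{-t\Delta}$ (where $\tilde{\Delta}_{j}$ is a slight thickening with $\tilde{\Delta}_{j}\dot{\Delta}_{j}=\dot{\Delta}_{j}$) is uniformly bounded on $\supp\eta(\cdot\,2^{2j})$ thanks to the compactness of the support; this recovers $\dot{\Delta}_{j}f$ from $e^{t\Delta}\dot{\Delta}_{j}f$ with the same type of bound.

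The main obstacles I expect are bookkeeping ones rather than conceptual: first, justifying the $L^{q}(dt/t)\leftrightarrow \ell^{q}_{j}$ passage rigorously (one shows the continuous kernel $\phi(t/s)$ is smooth enough on the log-scale that sampling at $t=2^{-2j}$ is comparable to integrating, which is where the integrability of $\phi$ on $(\R_{+},du/u)$ intervenes and where the borderline condition $k>s/2$ is sharp); and second, ensuring that the Calderón integral and the dyadic series converge in $\mathscr{S}'_{h}$, which is precisely the reason the definition of Besov spaces is taken in $\mathscr{S}'_{h}$ rather than $\mathscr{S}'$ in the paper.
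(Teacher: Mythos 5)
The paper does not actually prove this lemma --- it is quoted as classical and attributed to \cite[Theorem~2.34]{BCD} --- so there is no internal proof to compare against; your argument is the standard one (dyadic localization of the heat flow plus a Calder\'on reproducing formula plus Young's inequality on the multiplicative group), and in both directions the overall strategy is sound. Two points deserve attention. First, the key localization estimate is not literally an application of Lemma \ref{Bernstein Lemma}: the symbol $(-t|\xi|^2)^k e^{-t|\xi|^2}$ is not homogeneous, and the crucial Gaussian factor $e^{-ct2^{2j}}$ comes from the decay of the heat semigroup on annuli (\cite[Lemma~2.4]{BCD}, which the paper cites for exactly this purpose); the polynomial factor is then handled by a routine multiplier bound on the annulus. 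This is cosmetic, but worth stating correctly.

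Second, and more substantively, your reverse inequality has a genuine gap for part of the claimed parameter range. After applying the reproducing formula with exponent $2k$ and peeling off one factor $(-t\Delta)^k e^{t\Delta}$ to form the quantity appearing in $\|f\|_{\dot B^{s,-\Delta}_{p,q}}$, the remaining weight in the $j$-th block is
\begin{align*}
(t2^{2j})^{\,k+s/2}\,e^{-ct2^{2j}},
\end{align*}
which belongs to $L^1(\R_+,dt/t)$ only when $k+s/2>0$, i.e.\ $s>-2k$. The hypothesis $k>s/2$ does not imply this (take $k=1$, $s=-10$), so the Young-convolution step fails near $t=0$ in that regime even though the lemma is still true there. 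The fix is either to use a reproducing formula with a larger exponent $N>k-s/2$, writing $1=C_N\int_0^\infty (t|\xi|^2)^N e^{-2t|\xi|^2}\,\frac{dt}{t}$ and splitting off the same factor $(t|\xi|^2)^k e^{-t|\xi|^2}$, or simply to use everywhere the single-scale inversion you already describe for $k=0$: for $t\simeq 2^{-2j}$ the multiplier $\varphi(2^{-j}\xi)(t|\xi|^2)^{-k}e^{t|\xi|^2}$ is uniformly nice on the annulus, so $\|\dot{\Delta}_j f\|_p\lesssim \|(t\Delta)^k e^{t\Delta}f\|_p$ there, and averaging over $t\in[2^{-2j-2},2^{-2j}]$ before summing in $j$ gives the reverse bound for every $k\ge0$ and every $s\in\R$ without any reproducing formula. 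With either repair the proof is complete.
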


\begin{lemma}\label{characterization of Besov via ball mean difference}
Suppose that $s\in(0,1)$, $p\in[1,\infty)$ and $q\in[1,\infty]$. It holds that
\begin{align*}
\|f\|_{\dot{\Lambda}_{p,q}^{s}}\vcentcolon=\left\|\left(\int_{\R^3}\fint_{B(x,r)}\frac{|f(x)-f(y)|^p}{r^{sp}}\,dy\,dx\right)^{\frac1p}\right\|_{L^q(\R_+;\frac{dr}{r})}\simeq\|f\|_{\dot{B}_{p,q}^{s}},\ \ \forall f\in L^p,
\end{align*}
where $\fint_{B(x,r)}$ denotes the integral mean over the ball $B(x,r)$ centered at $x$ with radius $r$. 
\end{lemma}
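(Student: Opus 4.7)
The plan is to reduce the quantity $\|f\|_{\dot{\Lambda}_{p,q}^{s}}$ to the classical $L^p$-modulus-of-continuity characterization of $\dot{B}_{p,q}^{s}$. A direct application of Fubini together with the substitution $h=y-x$ gives
\begin{align*}
\int_{\R^n}\fint_{B(x,r)}|f(x)-f(y)|^p\,dy\,dx \;=\; \fint_{B(0,r)}\|\tau_h f - f\|_p^p\,dh,
\end{align*}
where $\tau_h f(x)=f(x+h)$. Setting
\begin{align*}
\omega(r):=\Bigl(\fint_{B(0,r)}\|\tau_h f-f\|_p^p\,dh\Bigr)^{1/p}\qquad\text{and}\qquad \Omega(r):=\sup_{|h|\le r}\|\tau_h f-f\|_p,
\end{align*}
one sees that $\|f\|_{\dot{\Lambda}_{p,q}^{s}}= \|r^{-s}\omega(r)\|_{L^q(\R_+;dr/r)}$.

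The core of the argument is the pointwise equivalence $\omega(r)\simeq\Omega(r)$, with constants depending only on $n$ and $p$. The bound $\omega(r)\le\Omega(r)$ is immediate. For the reverse, I exploit that $h\mapsto \|\tau_h f - f\|_p$ is subadditive; this gives $\Omega(2r)\le 2\Omega(r)$, while the volume comparison $|B(0,r)|=2^n|B(0,r/2)|$ combined with the monotonicity of the integral yields $\omega(r/2)\le 2^{n/p}\omega(r)$. For fixed $|h_0|\le r/2$ and every $h\in B(h_0,r/2)\subset B(0,r)$, the triangle inequality gives
\begin{align*}
\|\tau_{h_0}f-f\|_p \le \|\tau_h f-f\|_p + \|\tau_{h-h_0}f-f\|_p.
\end{align*}
Averaging in $h$ over $B(h_0,r/2)$, applying Jensen's inequality to the first term and changing variable $h'=h-h_0$ in the second, one obtains $\Omega(r/2)\lesssim \omega(r)+\omega(r/2)\lesssim \omega(r)$, and combining with the doubling of $\Omega$ yields $\Omega(r)\lesssim\omega(r)$.

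The conclusion then follows from the classical equivalence, valid for $0<s<1$, $1\le p<\infty$ and $1\le q\le\infty$,
\begin{align*}
\|f\|_{\dot{B}_{p,q}^{s}}\simeq \bigl\|r^{-s}\Omega(r)\bigr\|_{L^q(\R_+;dr/r)},\qquad f\in L^p,
\end{align*}
a well-known consequence of the Littlewood--Paley decomposition (see, e.g., \cite{BCD}). Together with the pointwise equivalence $\omega(r)\simeq\Omega(r)$ established above, this proves $\|f\|_{\dot{\Lambda}_{p,q}^{s}}\simeq\|f\|_{\dot{B}_{p,q}^{s}}$.

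The main obstacle is the lower bound $\Omega(r)\lesssim\omega(r)$: recovering a supremum from an $L^p$ volume average is only possible thanks to the subadditivity of $h\mapsto\|\tau_h f - f\|_p$. The Fubini rewriting and the appeal to the standard Besov characterization are routine bookkeeping by comparison.
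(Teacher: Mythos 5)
Your proof is correct, and it takes a genuinely different and more elementary route than the paper. The paper does not prove the lemma directly; the remark following it indicates that the equivalence $\|\cdot\|_{\dot{\Lambda}_{p,q}^{s}}\simeq\|\cdot\|_{\dot{B}_{p,q}^{s,\mathcal{L}}}$ is established via the semigroup machinery of \cite{liu fm 2015}, and then the case $\mathcal{L}=-\Delta$ is combined with Lemma~\ref{characterization via classic heat kernel} to obtain Lemma~\ref{characterization of Besov via ball mean difference}. That route has the advantage of simultaneously yielding Lemma~\ref{characterization via heat kernel for divergence operator} for a variable-coefficient $\mathcal{L}$ with only Gaussian kernel bounds, which is what the paper actually needs downstream. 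Your argument instead unwinds $\dot{\Lambda}^s_{p,q}$ by Fubini into an average of $\|\tau_h f - f\|_p$ over shrinking balls, and then shows this average is pointwise comparable to the sup-modulus $\Omega(r)$ by exploiting the subadditivity $\|\tau_{a+b}f - f\|_p \le \|\tau_a f - f\|_p + \|\tau_b f - f\|_p$, the doubling inequality $\Omega(2r)\le 2\Omega(r)$, and the volume-doubling comparison $\omega(r/2)\le 2^{n/p}\omega(r)$. The Jensen step to control $\fint_{B(h_0,r/2)}\|\tau_h f - f\|_p\,dh$ by $2^{n/p}\omega(r)$ is where $p\ge 1$ is used, and the whole argument is purely real-variable, self-contained, and avoids heat kernels entirely. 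Each approach has its merits: yours is shorter and more transparent for this particular lemma, while the paper's route is chosen because the same proof template delivers the harder Lemma~\ref{characterization via heat kernel for divergence operator} for free.
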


\begin{lemma}[see \cite{bui adv 2012,liu fm 2015}]\label{characterization via heat kernel for divergence operator}
Let $\mu$ be the H\"{o}lder index in Lemma \ref{gauss property}. Suppose that $s\in(0,\mu)$, $p\in[1,\infty)$ and $q\in[1,\infty]$. If $k>s/2$, we have
\begin{align*}
\|f\|_{\dot{B}_{p,q}^{s,\mathcal{L}}}\vcentcolon=\left\|t^{-s/2}\|(t\mathcal{L})^ke^{-t\mathcal{L}}f\|_p\right\|_{L^q\left(\R_+;\frac{dt}{t}\right)}\simeq\|f\|_{\dot{B}_{p,q}^{s}},\ \ \ \forall f\in L^p.
\end{align*}
\end{lemma}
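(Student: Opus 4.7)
The plan is to establish both inequalities of the equivalence by comparing with the ball-mean-difference characterization of Lemma \ref{characterization of Besov via ball mean difference}, which applies since $s\in(0,\mu)\subset(0,1)$. A preliminary step is to show that the integral kernel $K_t^{(k)}(x,y)$ of $(t\mathcal{L})^k e^{-t\mathcal{L}}$ inherits the Gaussian size and $\mu$-H\"older bounds of $K_t$, via the Cauchy formula
$(t\mathcal{L})^k e^{-t\mathcal{L}}=\tfrac{k!\,t^k}{2\pi i}\oint_\gamma (z-t)^{-k-1}e^{-z\mathcal{L}}\,dz$
along a contour in the sector of analyticity, combined with Lemma \ref{gauss property}. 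Moreover, since $\mathcal{L}$ annihilates constants, $\int K_t(x,y)\,dy\equiv 1$, so differentiating $k$ times in $t$ yields the crucial cancellation $\int K_t^{(k)}(x,y)\,dy=0$ for $k\ge 1$.

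For the upper bound, the cancellation permits the representation
$$
(t\mathcal{L})^k e^{-t\mathcal{L}}f(x)=\int K_t^{(k)}(x,y)\bigl(f(y)-f(x)\bigr)\,dy.
$$
Splitting into dyadic annuli around $x$ of radii $\simeq 2^j\sqrt t$, the Gaussian bound furnishes a factor $e^{-c4^j}$ per shell; H\"older in $y$ and Minkowski in $x$ then give
$$
\|(t\mathcal{L})^k e^{-t\mathcal{L}}f\|_p\lesssim\sum_{j\ge 0}2^{3j}e^{-c4^j}A_{2^j\sqrt t}(f),\qquad A_r(f):=\Bigl(\int_{\R^3}\fint_{B(x,r)}|f(y)-f(x)|^p\,dy\,dx\Bigr)^{1/p}.
$$
Multiplying by $t^{-s/2}$ and taking the $L^q(dt/t)$-norm, the change of variables $r=2^j\sqrt t$ produces a geometrically convergent $j$-sum controlled by $\|f\|_{\dot B_{p,q}^s}$ via Lemma \ref{characterization of Besov via ball mean difference}.

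For the reverse inequality, I would use the Calder\'on reproducing formula $f=\tfrac{1}{(k-1)!}\int_0^\infty (t\mathcal{L})^k e^{-t\mathcal{L}}f\,\tfrac{dt}{t}$ (obtained from $\int_0^\infty\mathcal{L}e^{-t\mathcal{L}}f\,dt=f$ via iterated integration by parts, valid on the subspace where $e^{-t\mathcal{L}}f\to 0$) to write $f(y)-f(x)=\tfrac{1}{(k-1)!}\int_0^\infty[A(t,y)-A(t,x)]\,\tfrac{dt}{t}$ with $A(t,z):=(t\mathcal{L})^k e^{-t\mathcal{L}}f(z)$, and split the integral at $t=r^2$. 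For $t<r^2$, the crude bound $|A(t,y)-A(t,x)|\le|A(t,y)|+|A(t,x)|$, averaged over $y\in B(x,r)$ and taken in $L^p(x)$, gives a contribution $\int_0^{r^2}\|A(t,\cdot)\|_p\,\tfrac{dt}{t}$, whose $r^{-s}$-weighted $L^q(dr/r)$-norm is bounded by $\|f\|_{\dot B_{p,q}^{s,\mathcal L}}$ (the condition $s>0$ ensures convergence at $t=0$). For $t\ge r^2$, the semigroup identity $A(t,\cdot)=2^k e^{-(t/2)\mathcal{L}}A(t/2,\cdot)$ combined with the $\mu$-H\"older bound on $K_{t/2}$ yields the pointwise estimate $|A(t,y)-A(t,x)|\lesssim(|x-y|/\sqrt t)^\mu\,\mathcal{M}[A(t/2,\cdot)](x)$ for $y\in B(x,r)$, where $\mathcal M$ is the Hardy-Littlewood maximal operator; integrating in $t\ge r^2$, multiplying by $r^{-s}$, and applying Hardy's inequality in $L^q(dr/r)$ with exponent $\mu-s>0$ completes the estimate.

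The principal obstacle is the large-$t$ regime of the reverse inequality, where the condition $s<\mu$ enters genuinely through Hardy's inequality: the weight $(r/\sqrt t)^\mu$ supplied by the kernel's H\"older regularity must defeat the $r^{-s}$ of the target Besov norm, and this balance works precisely when the H\"older exponent exceeds the target regularity. A secondary technicality is transferring the Gaussian/H\"older bounds from $K_t$ to $K_t^{(k)}$, which is a standard consequence of sectorial analyticity (see \cite{auscher book 1998}).
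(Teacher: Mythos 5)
Your argument is essentially the route the paper itself points to: the paper does not prove this lemma but cites \cite{bui adv 2012,liu fm 2015} and remarks that the content is the equivalence $\|\cdot\|_{\dot{B}_{p,q}^{s,\mathcal{L}}}\simeq\|\cdot\|_{\dot{\Lambda}_{p,q}^{s}}$, which is exactly what you establish before invoking Lemma \ref{characterization of Besov via ball mean difference}; your two-regime splitting at $t\simeq r^2$ and the role of $s<\mu$ through the kernel's H\"older bound are the standard Grigor'yan--Liu argument. One small repair: in the regime $t\ge r^2$ you should close the estimate with Young's convolution inequality $\|G_{t/2,C}*|A(t/2,\cdot)|\|_p\le\|G_{t/2,C}\|_1\|A(t/2,\cdot)\|_p$ rather than the Hardy--Littlewood maximal operator, since $\mathcal{M}$ is not bounded on $L^1$ and the lemma is claimed for all $p\in[1,\infty)$; also note that the H\"older bound of Lemma \ref{gauss property} requires $2|h|\le\sqrt{t/2}+|x-z|$, so the split point should be taken at $t\simeq 8r^2$ (harmless).
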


\begin{remark}
Let us give a remark on the proof of Lemma \ref{characterization of Besov via ball mean difference} and Lemma \ref{characterization via heat kernel for divergence operator}. In \cite{liu fm 2015}, only the characterization of inhomogeneous norms was given. But the proof there can be easily adjusted to give the equivalence between homogeneous norms. In fact, using the method in \cite{liu fm 2015}, one can obtain equivalence between $\|\cdot\|_{\dot{B}_{p,q}^{s,\mathcal{L}}}$ and $\|\cdot\|_{\dot{\Lambda}_{p,q}^{s}}$ instead of what is stated in Lemma \ref{characterization via heat kernel for divergence operator}. This together with Lemma \ref{characterization via classic heat kernel} implies Lemma \ref{characterization of Besov via ball mean difference} (note that $\mu=1$ if $\mathcal{L}$ is replaced by $-\Delta$). Hence, Lemma \ref{characterization via heat kernel for divergence operator} also follows. 
\end{remark}

\bigskip

\section{Maximal $L^1$ regularity for Stokes system}\label{main section}

\bigskip

This section is devoted to the proof of Theorem \ref{maximal regularity for Stokes}, which is the heart of the present paper.

\subsection{Stokes operator}
We first present some useful properties for the Stokes operator. Let $\rho=\rho(x)$ satisfy \eqref{initial density bounds} and denote $b=\rho^{-1}$. In the sequel, we use the notations $L^2=L^2(\R^3;\R^3)$, $\mathcal{P}L^2=\{u\in L^2|\divg u=0\}$, $\|\cdot\|$ the $L^2$ norm induced by the standard $L^2$ inner product $\langle\cdot,\cdot\rangle$, and $\|\cdot\|_{\rho}$ the weighted norm induced by the inner product
\begin{align*}
\langle u,v\rangle_{\rho}=\int_{\R^3}u(x)\cdot v(x)\rho(x)\,dx.
\end{align*}
Let $\mathcal{Q}_b$, $\mathcal{P}_b$ and $\mathcal{S}$ be defined by \eqref{Hodge b operator}, \eqref{Pb operator} and \eqref{Stokes operator}, respectively. It is well know that $\mathcal{Q}_b$ is bounded on $L^2$ and $\|\mathcal{Q}_bf\|_b\le\|f\|_b$ for every $f\in L^2$. If $b\equiv1$, we denote $\mathcal{P}=\mathcal{P}_1$ and $\mathcal{Q}=\mathcal{Q}_1$. Formally, we have $\divg\mathcal{P}=0$. It is for this reason we use $\mathcal{P}$ in front of a space of vectors to denote its subspace of divergence-free vectors.

\begin{lemma}\label{basic properties of P and S}
With the above notations, we have

{\rm (\romannumeral1)} $b\mathcal{P}_b:L^2\rightarrow \mathcal{P}L^2$ is bounded and it holds that $\mathcal{Q}(b\mathcal{P}_b)=b\mathcal{P}_b\mathcal{Q}\equiv0$, and $\mathcal{P}(b\mathcal{P}_b)=b\mathcal{P}_b\mathcal{P}\equiv b\mathcal{P}_b$.

{\rm (\romannumeral2)} $b\mathcal{P}_b:\mathcal{P}L^2\rightarrow \mathcal{P}L^2$ is invertible with a continuous inverse $\mathcal{P}\rho$. Thus, it holds that
\begin{align*}
\|b\mathcal{P}_bu\|\simeq\|u\|,\ \   \forall u\in\mathcal{P}L^2. 
\end{align*}

{\rm (\romannumeral3)} $b\mathcal{P}_b:L^2\rightarrow L^2$ is self-adjoint on $(L^2,\langle\cdot,\cdot\rangle)$, while $b\mathcal{P}_b:\mathcal{P}L^2\rightarrow \mathcal{P}L^2$ is self-adjoint on both $(\mathcal{P}L^2,\langle\cdot,\cdot\rangle)$ and $(\mathcal{P}L^2,\langle\cdot,\cdot\rangle_{\rho})$.

{\rm (\romannumeral4)} $\mathcal{S}:\mathcal{P}H^2\subset \mathcal{P}L^2\rightarrow \mathcal{P}L^2$ is a self-adjoint operator on $(\mathcal{P}L^2,\langle\cdot,\cdot\rangle_{\rho})$.
\end{lemma}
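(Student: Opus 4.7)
The plan is to address (i), (iii), (ii), (v), and then (iv), in that order, since each builds on the previous. Part (i) reduces to the single identity $\divg(b\mathcal{Q}_b u)=\divg(bu)$, which follows from the definition $\mathcal{L}_b=-\divg(b\nabla)$ applied to $\phi:=\mathcal{L}_b^{-1}\divg(bu)$; this immediately gives $\divg(b\mathcal{P}_b u)=0$, while boundedness on $L^2$ follows from the Lax-Milgram bound $\|\nabla\mathcal{L}_b^{-1}\divg(bu)\|\lesssim\|u\|$. The composition identities are easy consequences: substituting a pure gradient $u=\nabla\phi$ gives $\mathcal{P}_b\nabla\phi=\nabla\phi+\nabla\mathcal{L}_b^{-1}\divg(b\nabla\phi)=\nabla\phi-\nabla\phi=0$, which yields $b\mathcal{P}_b\mathcal{Q}\equiv0$, and the remaining two identities follow from $\mathcal{P}=I-\mathcal{Q}$ together with $\divg(b\mathcal{P}_b u)=0$.

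For (iii), the key calculation is the manifestly symmetric formula
\begin{align*}
\langle b\mathcal{Q}_b u,v\rangle=\int_{\R^3}\mathcal{L}_b^{-1}\divg(bu)\,\divg(bv)\,dx,
\end{align*}
obtained by integration by parts and the $L^2$ self-adjointness of $\mathcal{L}_b^{-1}$; combined with the obvious symmetry of $\langle bu,v\rangle$, this gives the $L^2$ self-adjointness of $b\mathcal{P}_b$. On the weighted space one instead verifies $\langle b\mathcal{P}_b u,v\rangle_\rho=\langle u,v\rangle$ for $u,v\in\mathcal{P}L^2$, since the $\nabla\mathcal{L}_b^{-1}\divg(bu)$ correction pairs trivially with $\divg v=0$. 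For (ii), the explicit candidate inverse is $u\mapsto\mathcal{P}(\rho u)$: given $u\in\mathcal{P}L^2$, Helmholtz-decompose $\rho u=v-\nabla\psi$ with $v=\mathcal{P}(\rho u)$; multiplying by $b$ and using $\divg u=0$ yields $\divg(bv)=-\mathcal{L}_b\psi$, so that $\mathcal{P}_b v=v-\nabla\psi=\rho u$ and hence $b\mathcal{P}_b v=u$. Uniqueness follows by applying $\mathcal{P}$ to the generic identity $\rho(b\mathcal{P}_b w)=w+\nabla\mathcal{L}_b^{-1}\divg(bw)$, and the norm equivalence is then automatic.

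I handle (v) before (iv). Using (iii), for $u\in\mathcal{P}H^2$ and $v\in\mathcal{P}L^2$,
\begin{align*}
\langle\mathcal{S}u,v\rangle=\langle b\mathcal{P}_b\Delta u,v\rangle=\langle\Delta u,b\mathcal{P}_b v\rangle,
\end{align*}
so $v\in D(\mathcal{S}^{*})$ precisely when there exists $w\in\mathcal{P}L^2$ with this pairing equal to $\langle u,w\rangle$ for every $u\in\mathcal{P}H^2$. Testing against $u=\mathcal{P}\varphi$ with $\varphi\in C_c^{\infty}(\R^3;\R^3)$ and using $\mathcal{P}\Delta=\Delta\mathcal{P}$ together with the self-adjointness of $\mathcal{P}$ on $L^2$ translates this to $\Delta(b\mathcal{P}_b v)=w$ distributionally. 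Since $b\mathcal{P}_b v\in L^2$ and its distributional Laplacian lies in $L^2$, the Plancherel bound $\|\partial^\alpha f\|_2^2\le\|f\|_2^2+\|\Delta f\|_2^2$ for $|\alpha|\le 2$ upgrades $b\mathcal{P}_b v$ to $\mathcal{P}H^2$, and the alternative description $D(\mathcal{S}^{*})=\{\mathcal{P}(\rho v):v\in\mathcal{P}H^2\}$ follows by inverting via (ii). Claim (iv) runs in exact parallel: symmetry of $\mathcal{S}$ on $(\mathcal{P}L^2,\langle\cdot,\cdot\rangle_\rho)$ is immediate from $\langle\mathcal{S}u,v\rangle_\rho=\int\Delta u\cdot v\,dx=-\int\nabla u:\nabla v\,dx$, and the weighted adjoint is identified by the same $\mathcal{P}\varphi$ test, which now forces $\Delta v=\mathcal{P}(\rho w)\in L^2$ and hence $v\in\mathcal{P}H^2$. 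The main technical subtlety throughout (v) and the domain half of (iv) is that the variational identities only see divergence-free test functions, so extracting a genuine distributional equation on all of $\R^3$ requires passing generic $C_c^\infty$ test fields through $\mathcal{P}$; this is made clean by the commutation $\mathcal{P}\Delta=\Delta\mathcal{P}$ and the $L^2$ self-adjointness of $\mathcal{P}$.
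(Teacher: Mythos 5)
The paper explicitly states that the proof of Lemma~\ref{basic properties of P and S} is ``left to the reader,'' so there is no paper proof to compare against; your argument is essentially the canonical one and it is correct in all five parts.

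A few remarks on the soundness of the key steps. In (i), your single identity $\divg(b\mathcal{Q}_b u)=\divg(bu)$ (hence $\divg(b\mathcal{P}_b u)=0$) is exactly the right observation, and the boundedness via Lax--Milgram is standard. In (iii), the symmetric integral representation $\langle b\mathcal{Q}_b u,v\rangle=\int\mathcal{L}_b^{-1}\divg(bu)\,\divg(bv)\,dx$ cleanly gives $L^2$ self-adjointness; your observation that $\langle b\mathcal{P}_b u,v\rangle_\rho=\langle u,v\rangle$ on $\mathcal{P}L^2$ also makes it transparent why, as the paper notes, $b\mathcal{P}_b$ is \emph{not} self-adjoint on $(L^2,\langle\cdot,\cdot\rangle_\rho)$: the correction term $\int\nabla\mathcal{L}_b^{-1}\divg(bu)\cdot v\,dx$ survives precisely when $\divg v\neq0$. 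Your Helmholtz computation in (ii) correctly identifies $\mathcal{P}\rho$ as a two-sided inverse, and the device of testing the adjoint relations in (iv) and (v) against $u=\mathcal{P}\varphi$ with $\varphi\in C_c^\infty$, then using $\mathcal{P}\Delta=\Delta\mathcal{P}$ and $\mathcal{P}b\mathcal{P}_b=b\mathcal{P}_b$ to strip away the Leray projector, is the clean way to upgrade the variational identity (which a priori sees only divergence-free tests) to a genuine distributional Laplacian equation; the Fourier-side elliptic bootstrap $f\in L^2$, $\Delta f\in L^2\Rightarrow f\in H^2$ then closes both domain identifications.
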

The proof is straightforward and thus left to the reader.

\begin{lemma}\label{Stokes operator generates bounded analytic semigroup on PL2}
The Stokes operator $\mathcal{S}:\mathcal{P}H^2\subset \mathcal{P}L^2\rightarrow \mathcal{P}L^2$ generates an analytic semigroup of contraction $\{e^{t\mathcal{S}}\}_{t\ge0}$ on $(\mathcal{P}L^2,\langle\cdot,\cdot\rangle_\rho)$, and $e^{t\mathcal{S}}b\mathcal{P}_b$ is self-adjoint on $(L^2,\langle\cdot,\cdot\rangle)$ for every $t\ge0$.
\end{lemma}
\begin{proof}
For $u\in \mathcal{P}H^2$, we have
\begin{align*}
\langle \mathcal{S} u,u\rangle_{\rho}=\int_{\R^n}\mathcal{P}_b\Delta u(x)\cdot u(x)\,dx=-\|\nabla u\|^2\le0.
\end{align*}
Since $\mathcal{S}$ is self-adjoint on $(\mathcal{P}L^2,\langle\cdot,\cdot\rangle_{\rho})$, so by Theorem \ref{generation theorem of selfadjoint operator}, $\mathcal{S}$ generates an analytic semigroup of contraction $\{e^{t\mathcal{S}}\}_{t\ge0}$ on $(\mathcal{P}L^2,\langle\cdot,\cdot\rangle_\rho)$. Moreover, $e^{t\mathcal{S}}$ is self-adjoint on $(\mathcal{P}L^2,\langle\cdot,\cdot\rangle_\rho)$. Then we have for all $u,v\in L^2$ that
\begin{align*}
\langle e^{t\mathcal{S}}b\mathcal{P}_bu,v\rangle=\langle e^{t\mathcal{S}}b\mathcal{P}_bu,b\mathcal{P}_bv\rangle_{\rho}=\langle b\mathcal{P}_bu,e^{t\mathcal{S}}b\mathcal{P}_bv\rangle_{\rho}=\langle u,e^{t\mathcal{S}}b\mathcal{P}_bv\rangle.
\end{align*}
This means that $e^{t\mathcal{S}}b\mathcal{P}_b$ is self-adjoint on $(L^2,\langle\cdot,\cdot\rangle)$.
\end{proof}

\begin{proposition}\label{asymptotics of semigroup}
For any $u_0\in \mathcal{P}L^2$, it holds that $\lim_{t\rightarrow\infty}\|e^{t\mathcal{S}}u_0\|=0$.
\end{proposition}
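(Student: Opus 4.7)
The plan is to combine the spectral theorem for non-positive self-adjoint semigroups with a short energy-based identification of $\ker\mathcal{S}$. Recall from the preceding results that $\mathcal{S}:\mathcal{P}H^{2}\subset\mathcal{P}L^{2}\to\mathcal{P}L^{2}$ is self-adjoint on $(\mathcal{P}L^{2},\langle\cdot,\cdot\rangle_{\rho})$ with $\langle\mathcal{S}v,v\rangle_{\rho}=-\|\nabla v\|^{2}\leq0$ for all $v\in D(\mathcal{S})$. Equivalence of norms $\|\cdot\|\simeq\|\cdot\|_{\rho}$ (from \eqref{initial density bounds}) reduces the claim to showing $\lim_{t\to\infty}\|e^{t\mathcal{S}}u_{0}\|_{\rho}=0$.

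\textbf{Step 1: spectral representation.} Apply the spectral theorem to $-\mathcal{S}$, which is non-negative and self-adjoint on $(\mathcal{P}L^{2},\langle\cdot,\cdot\rangle_{\rho})$, to obtain a projection-valued measure $\{E_{\lambda}\}_{\lambda\in[0,\infty)}$ with $-\mathcal{S}=\int_{0}^{\infty}\lambda\,dE_{\lambda}$ and $e^{t\mathcal{S}}=\int_{0}^{\infty}e^{-t\lambda}\,dE_{\lambda}$. Writing $d\mu_{u_{0}}(\lambda)\vcentcolon=d\|E_{\lambda}u_{0}\|_{\rho}^{2}$, which is a finite positive measure on $[0,\infty)$ with total mass $\|u_{0}\|_{\rho}^{2}$, one has
\begin{equation*}
\|e^{t\mathcal{S}}u_{0}\|_{\rho}^{2}=\int_{0}^{\infty}e^{-2t\lambda}\,d\mu_{u_{0}}(\lambda).
\end{equation*}
Since the integrand is dominated by $1$ and converges pointwise to $\mathbf{1}_{\{0\}}(\lambda)$ as $t\to\infty$, dominated convergence gives $\lim_{t\to\infty}\|e^{t\mathcal{S}}u_{0}\|_{\rho}^{2}=\mu_{u_{0}}(\{0\})=\|P_{0}u_{0}\|_{\rho}^{2}$, where $P_{0}=E(\{0\})$ is the orthogonal projection onto $\ker\mathcal{S}$.

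\textbf{Step 2: triviality of the kernel.} Take $u\in\ker\mathcal{S}\subset D(\mathcal{S})=\mathcal{P}H^{2}$. The identity used in the proof of analyticity gives
\begin{equation*}
0=\langle\mathcal{S}u,u\rangle_{\rho}=-\|\nabla u\|^{2},
\end{equation*}
so $\nabla u\equiv 0$. Hence $u$ is constant on $\R^{3}$, and because $u\in L^{2}(\R^{3})$ we conclude $u=0$. Therefore $\ker\mathcal{S}=\{0\}$ and $P_{0}=0$, which combined with Step 1 yields $\|e^{t\mathcal{S}}u_{0}\|_{\rho}\to 0$, and hence $\|e^{t\mathcal{S}}u_{0}\|\to 0$.

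The whole argument is short and the only place requiring care is the invocation of the spectral theorem for an abstract self-adjoint operator on $(\mathcal{P}L^{2},\langle\cdot,\cdot\rangle_{\rho})$; once this is set up the dominated-convergence step and the kernel computation are immediate. An alternative, more hands-on route would try to exploit only the energy identities $\|u(t)\|_{\rho}^{2}+2\int_{0}^{t}\|\nabla u\|^{2}\,d\tau=\|u_{0}\|_{\rho}^{2}$ together with the monotonicity of $\|\nabla u(t)\|$, but passing from $\|\nabla u(t)\|\to 0$ to $\|u(t)\|\to 0$ on the unbounded domain $\R^{3}$ lacks a Rellich-type compactness and seems to require essentially the same spectral information, so the spectral route above appears to be the cleanest.
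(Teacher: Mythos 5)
Your proof is correct, but it takes a genuinely different route from the paper's. The paper argues quantitatively: it uses \eqref{analyticity criterion} together with Gagliardo--Nirenberg to get $\|e^{t\mathcal{S}}u_0\|_{p}\lesssim t^{-\theta}\|u_0\|$ for $p>2$, then exploits the self-adjointness of $e^{t\mathcal{S}}b\mathcal{P}_b$ on $L^2$ to dualize this into $\|e^{t\mathcal{S}}u_0\|\lesssim t^{-\theta}\|u_0\|_{p'}$ for $u_0\in\mathcal{P}L^2\cap L^{p'}$, and finishes with density of $\mathcal{P}L^2\cap L^{p'}$ in $\mathcal{P}L^2$. Your argument instead invokes the spectral theorem for the non-positive self-adjoint operator $\mathcal{S}$ on $(\mathcal{P}L^2,\langle\cdot,\cdot\rangle_\rho)$, applies dominated convergence to identify the limit with the norm of the projection onto $\ker\mathcal{S}$, and kills the kernel via the energy identity $\langle\mathcal{S}u,u\rangle_\rho=-\|\nabla u\|^2$ and the fact that no nonzero constant lies in $L^2(\R^3)$. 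Both are sound; yours is more abstract and arguably cleaner (no Gagliardo--Nirenberg, no duality, no density step), and it transparently pinpoints why the limit is zero, namely triviality of the kernel. The paper's method is more hands-on and has the bonus of yielding a quantitative polynomial decay rate $t^{-\theta}$ for initial data with extra $L^{p'}$ integrability, which the purely spectral argument does not give. Since the proposition only asserts qualitative decay, both are entirely adequate.
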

\begin{proof}
In view of Lemma \ref{Stokes operator generates bounded analytic semigroup on PL2} and Lemma \ref{basic properties of P and S} {\rm (\romannumeral2)}, we apply Gagliardo–Nirenberg inequality to get
\begin{align*}
\|e^{t\mathcal{S}}u_0\|_{p}\le C\|e^{t\mathcal{S}}u_0\|^{1-\theta} \|\mathcal{S}e^{t\mathcal{S}}u_0\|^{\theta}\le Ct^{-\theta}\|u_0\|,\ \forall u_0\in\mathcal{P}L^2,
\end{align*}
where $\frac{1}{p}=\frac12-\frac{2\theta}{3}$, $p\in(2,\infty]$ and $\theta\in(0,1)$.
Now for any $u_0\in L^2$, since $b\mathcal{P}_bu_0\in \mathcal{P}L^2$, we get $\|e^{t\mathcal{S}}b\mathcal{P}_bu_0\|_{p}\le Ct^{-\theta}\|u_0\|$.
By duality, we obtain $\|e^{t\mathcal{S}}b\mathcal{P}_bu_0\|\le Ct^{-\theta}\|u_0\|_{p'}$ with $\frac{1}{p'}=\frac12+\frac{2\theta}{3}$. So for any $u_0\in \mathcal{P}L^2\cap L^{p'}$, we have 
\begin{align*}
\|e^{t\mathcal{S}}u_0\|=\|e^{t\mathcal{S}}b\mathcal{P}_b(\rho u_0)\|\le Ct^{-\theta}\|u_0\|_{p'}, 
\end{align*}
which implies $\lim_{t\rightarrow\infty}\|e^{t\mathcal{S}}u_0\|=0$. By a density argument, the result still holds for $u_0\in \mathcal{P}L^2$. This completes the proof.
\end{proof}

\subsection{Characterizations of Besov norms via semigroup $e^{t\mathcal{S}}$}
In order to obtain maximal regularity estimates for the Stokes system, we use the semigroup $e^{t\mathcal{S}}$ to give equivalent characterizations of certain Besov norms for divergence-free vector fields. First, let us prove an easy but useful lemma.
\begin{lemma}\label{weighted estimates}
Let $w(t,\tau)$ be a nonnegative weight function satisfying
\begin{align*}
\sup_{\tau>0}\int_0^\infty w(t,\tau)\,\frac{dt}{t}+\sup_{t>0}\int_0^\infty w(t,\tau)\,\frac{d\tau}{\tau}\le C.
\end{align*}
Then for any $q\in[1,\infty]$, we have
\begin{align*}
\left\|\int_0^\infty w(t,\tau)f(\tau)\frac{d\tau}{\tau}\right\|_{L^q(\R_+,\frac{dt}{t})}\le C\|f\|_{L^q(\R_+,\frac{dt}{t})}.
\end{align*}
\end{lemma}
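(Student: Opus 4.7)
\medskip

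The plan is to recognize the estimate as an instance of Schur's test for integral operators, with $\R_+$ viewed as a measure space equipped with the multiplicative Haar measure $d\mu(t)=dt/t$. In this framework, the map
$$Tf(t)\vcentcolon=\int_0^\infty w(t,\tau)f(\tau)\,\frac{d\tau}{\tau}$$
is an integral operator on $L^q(\R_+,d\mu)$ whose kernel is $w(t,\tau)$ itself, and the hypothesis says exactly that both the row and column $d\mu$-sums of the kernel are uniformly bounded by $C$.

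First I would dispose of the two endpoints. For $q=\infty$, pulling $\|f\|_\infty$ out of the inner integral and invoking the column-sum bound gives the estimate immediately with constant $C$. For $q=1$, Tonelli's theorem swaps the two integrals (all integrands are nonnegative once $f$ is replaced by $|f|$), and the row-sum bound then finishes the argument with the same constant $C$.

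For the intermediate range $1<q<\infty$, the cleanest option is Riesz--Thorin interpolation applied to the two endpoint bounds, which yields $\|T\|_{L^q(d\mu)\to L^q(d\mu)}\le C$ for every $q\in(1,\infty)$. If one prefers a self-contained argument in the style of the classical Schur test, the key step is to factor the kernel as $w(t,\tau)=w(t,\tau)^{1/q'}\cdot w(t,\tau)^{1/q}$ and apply H\"older's inequality inside the $\tau$-integral: the first factor contributes $C^{1/q'}$ via the column-sum bound, while the remaining expression is an iterated integral of $w(t,\tau)|f(\tau)|^q$ to which Tonelli and the row-sum bound apply, producing the final factor $C^{1/q}$ and the combined constant $C$.

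The only step requiring a moment's care is the Tonelli exchange, which is harmless because $w\ge0$ and one may work with $|f|$ in place of $f$ throughout. I do not anticipate any real obstacle; the lemma is essentially a textbook Schur test tailored to the Haar measure $dt/t$ on $\R_+$, and the matching of the constant $C$ on both sides of the conclusion is automatic from either the direct H\"older argument or Riesz--Thorin.
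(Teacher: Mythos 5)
Your proposal is correct and the self-contained Schur-test argument you describe — factoring $w=w^{1/q'}w^{1/q}$, applying H\"older in $\tau$, then integrating in $t$ and invoking Tonelli with the row-sum bound — is exactly the paper's proof, with the same trivial treatment of the endpoints $q=1,\infty$. The Riesz--Thorin alternative you mention also works but is not needed.
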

\begin{proof}
The cases $q=1$ and $q=\infty$ are trivial. Assume that $f\ge0$. For $q\in(1,\infty)$, we apply H\"{o}lder's inequality to see that
\begin{align*}
\int_0^\infty w(t,\tau)f(\tau)\frac{d\tau}{\tau}\le\left(\int_0^\infty w(t,\tau)\frac{d\tau}{\tau}\right)^{1/q'} \left(\int_0^\infty w(t,\tau)f^q(\tau)\frac{d\tau}{\tau}\right)^{1/q}.
\end{align*}
By the assumption of the lemma, we obtain
\begin{align*}
\left(\int_0^\infty w(t,\tau)f(\tau)\frac{d\tau}{\tau}\right)^{q}\le C\int_0^\infty w(t,\tau)f^q(\tau)\frac{d\tau}{\tau}.
\end{align*}
Integrating both sides over $(0,\infty)$ with respect to $\frac{dt}{t}$, then the result follows from the assumption again and the Fubini's theorem.
\end{proof}

As in many other works (see, e.g., \cite{bui adv 2012,liu fm 2015}) concerning characterizations of function spaces via semigroups, a fundamental ingredient is to obtain a sort of reproducing formulas associated with the semigroups. The following reproducing formula depends on the very special (and simple) structure of the operator $\mathcal{S}$.
\begin{lemma}
For any $u_0\in \mathcal{P}L^2$, it holds that
\begin{align}\label{reproducing formula}
u_0=-\int_0^\infty\Delta e^{\tau \mathcal{S}}b\mathcal{P}_bu_0\,d\tau\vcentcolon=\lim_{\varepsilon\rightarrow0^+}-\int_{\varepsilon}^{1/\varepsilon}\Delta e^{\tau \mathcal{S}}b\mathcal{P}_bu_0\,d\tau,
\end{align}
where the limit converges in $L^2$.
\end{lemma}
\begin{proof}
Since $e^{t\mathcal{S}}$ is an analytic semigroup, the function $u(t)=e^{t\mathcal{S}}u_0$ is an classical solution to the equation $u'(t)=\mathcal{S} u(t)$. Integrating this equation in time from $s$ to $t$, we get
\begin{align*}
u(t)-u(s)=\int_s^t\mathcal{S} u(\tau)\,d\tau.
\end{align*}
Obviously, $u(s)\rightarrow u_0$ in $L^2$ as $s\rightarrow0^+$.
This together with Proposition \ref{asymptotics of semigroup} implies that
\begin{align*}
u_0=-\int_0^\infty \mathcal{S} e^{\tau \mathcal{S}}u_0\,d\tau.
\end{align*}
Replacing $u_0$ by $b\mathcal{P}_bu_0$ and recalling the expression for $\mathcal{S}$, we have
\begin{align*}
b\mathcal{P}_bu_0=-\int_0^\infty b\mathcal{P}_b\Delta e^{\tau \mathcal{S}}b\mathcal{P}_bu_0\,d\tau.
\end{align*}
Then the desired formula follows from the fact that $b\mathcal{P}_b$ is invertible on $\mathcal{P}L^2$.
\end{proof}

Let us first consider characterizations of Besov norms with negative regularity.

\begin{theorem}\label{characterization of besov spaces with negative regularity via stokes}
Suppose that $s\in(0,2)$ and $q\in[1,\infty]$. For any $u_0\in \mathcal{P}L^2$, we have
\begin{align*}
\left\|t^{s/2}\|e^{t\mathcal{S}}b\mathcal{P}_bu_0\|\right\|_{L^q(\R_+,\frac{dt}{t})}\simeq\|u_0\|_{\dot{B}_{2,q}^{-s}}.
\end{align*}
\end{theorem}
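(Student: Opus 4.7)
My plan is to route through the classical heat-semigroup characterization of the Besov norm. Applying Lemma~\ref{characterization via classic heat kernel} with $k=0$ at regularity $-s<0$ reduces the claim to
\[
\bigl\|t^{s/2}\|e^{t\mathcal{S}}b\mathcal{P}_bu_0\|\bigr\|_{L^q(\R_+,dt/t)}\simeq\bigl\|\sigma^{s/2}\|e^{\sigma\Delta}u_0\|\bigr\|_{L^q(\R_+,d\sigma/\sigma)}.
\]
The strategy for each direction is parallel: I will use one reproducing formula to rewrite one semigroup action as an integral against the other, apply the analyticity bounds $\|\mathcal{S}e^{t\mathcal{S}}h\|\le Ct^{-1}\|h\|$ (from \eqref{analyticity criterion}, valid on $\mathcal{P}L^2$) and $\|\Delta e^{\sigma\Delta}h\|\le C\sigma^{-1}\|h\|$ (classical heat analyticity) together with the crucial identity $b\mathcal{P}_b\Delta v=\mathcal{S}v$ for every divergence-free $v\in\mathcal{P}H^2$, and then convert the resulting weighted convolutions into norm inequalities via Minkowski's inequality after a scaling change of variable.

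For the upper bound, I apply $e^{t\mathcal{S}}b\mathcal{P}_b$ to the classical reproducing identity $u_0=-\int_0^\infty\Delta e^{\sigma\Delta}u_0\,d\sigma$ (valid in the improper Bochner sense since $\|e^{\sigma\Delta}u_0\|\to 0$ as $\sigma\to\infty$ for $u_0\in L^2$) to obtain
\[
e^{t\mathcal{S}}b\mathcal{P}_bu_0=-\int_0^\infty\mathcal{S}e^{t\mathcal{S}}e^{\sigma\Delta}u_0\,d\sigma.
\]
Splitting at $\sigma=t$ and estimating $\|\mathcal{S}e^{t\mathcal{S}}h\|\le Ct^{-1}\|h\|$ on $(0,t)$, while on $(t,\infty)$ using contraction together with $\|\mathcal{S}e^{\sigma\Delta}u_0\|\le C\|\Delta e^{\sigma\Delta}u_0\|\le C\sigma^{-1}\|e^{\sigma\Delta/2}u_0\|$, yields
\[
\|e^{t\mathcal{S}}b\mathcal{P}_bu_0\|\lesssim\frac{1}{t}\int_0^t\|e^{\sigma\Delta}u_0\|\,d\sigma+\int_t^\infty\frac{1}{\sigma}\|e^{\sigma\Delta/2}u_0\|\,d\sigma.
\]
Multiplying by $t^{s/2}$, performing the scaling change of variables $\sigma=t\alpha$ in each piece, and applying Minkowski's inequality in $L^q(\R_+,dt/t)$ bounds the result by $\bigl(\int_0^1\alpha^{-s/2}d\alpha+\int_1^\infty\alpha^{-s/2-1}d\alpha\bigr)\cdot\bigl\|\sigma^{s/2}\|e^{\sigma\Delta}u_0\|\bigr\|_{L^q(d\sigma/\sigma)}$; both integrals are finite precisely when $s\in(0,2)$, which is the hypothesis.

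The lower bound is mirror-symmetric: applying $e^{\sigma\Delta}$ to the Stokes reproducing formula \eqref{reproducing formula} and commuting $\Delta$ past $e^{\sigma\Delta}$ gives $e^{\sigma\Delta}u_0=-\int_0^\infty\Delta e^{\sigma\Delta}e^{\tau\mathcal{S}}b\mathcal{P}_bu_0\,d\tau$. For $\tau\le\sigma$ I invoke $\|\Delta e^{\sigma\Delta}h\|\le C\sigma^{-1}\|h\|$ with $h=e^{\tau\mathcal{S}}b\mathcal{P}_bu_0$; for $\tau\ge\sigma$ I use contraction of $e^{\sigma\Delta}$ combined with $\|\Delta e^{\tau\mathcal{S}}b\mathcal{P}_bu_0\|\le C\tau^{-1}\|e^{\tau\mathcal{S}/2}b\mathcal{P}_bu_0\|$, a consequence of \eqref{analyticity criterion} applied to $e^{\tau\mathcal{S}/2}b\mathcal{P}_bu_0\in\mathcal{P}L^2$. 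The same Minkowski/rescaling step then dominates $\bigl\|\sigma^{s/2}\|e^{\sigma\Delta}u_0\|\bigr\|_{L^q(d\sigma/\sigma)}$ by the Stokes-semigroup quantity, again for $s\in(0,2)$. The principal technical obstacle I anticipate is that neither reproducing formula converges absolutely: both integrands are only $O(\text{time}^{-1}\|u_0\|)$ and thus only conditionally integrable. I will handle this by truncating to $[\epsilon,T]$, evaluating the telescoping fundamental-theorem identity exactly, and passing to the limit using Proposition~\ref{asymptotics of semigroup} and the pointwise analyticity bounds; once this is set up the pointwise splitting and Minkowski step apply uniformly in the truncation.
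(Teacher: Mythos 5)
Your proposal is correct and takes essentially the same route as the paper: both directions hinge on the same pair of reproducing formulas (heat and Stokes), the same pointwise splitting of the dual time variable at $\tau=t$ using the analyticity bound $\|\mathcal{S}e^{t\mathcal{S}}\cdot\|\le Ct^{-1}\|\cdot\|$ on one side and $\|b\mathcal{P}_b\Delta e^{\tau\Delta}\cdot\|\le C\tau^{-1}\|\cdot\|$ on the other, and the same passage to $L^q(dt/t)$ norms. The one cosmetic difference is that you close the argument via Minkowski's inequality after the scaling substitution $\sigma=t\alpha$, whereas the paper packages the kernel $(t/\tau)^{s/2}(\tau/t\wedge 1)$ and invokes a Schur-type lemma (their Lemma~\ref{weighted estimates}); for a kernel depending only on $t/\tau$ these are the same estimate. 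Your worry about conditional convergence of the reproducing formula is legitimate but already resolved in the lemma preceding the theorem, where $L^2$-convergence of $-\int_0^\infty\Delta e^{\tau\mathcal{S}}b\mathcal{P}_bu_0\,d\tau$ is established directly from the fundamental theorem of calculus plus Proposition~\ref{asymptotics of semigroup}.
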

\begin{proof}
Let us first assume that $u_0\in\dot{B}_{2,q}^{-s}$. We need the reproducing formula (by taking $b\equiv1$ in \eqref{reproducing formula})
\begin{align*}
u_0=-\int_0^\infty\Delta e^{\tau\Delta}u_0\,d\tau,\ \ \  u_0\in \mathcal{P}L^2.
\end{align*}
Next, applying $e^{t\mathcal{S}}b\mathcal{P}_b$ to both sides of the above equation gives rise to
\begin{align*}
e^{t\mathcal{S}}b\mathcal{P}_bu_0=-\int_0^\infty e^{t\mathcal{S}}b\mathcal{P}_b\Delta e^{\tau\Delta}u_0\,d\tau.
\end{align*}
We may estimate the $L^2$ norm of the integrand in two different ways: we get from Lemma \ref{Stokes operator generates bounded analytic semigroup on PL2} that
\begin{align*}
\|e^{t\mathcal{S}}b\mathcal{P}_b\Delta e^{\tau\Delta}u_0\|=\|\mathcal{S}e^{t\mathcal{S}}e^{\tau\Delta}u_0\|\le\frac{C}{t}\|e^{\tau\Delta}u_0\|\le\frac{C}{t}\|e^{\frac{\tau}{2}\Delta}u_0\|,
\end{align*}
alternatively,
\begin{align*}
\|e^{t\mathcal{S}}b\mathcal{P}_b\Delta e^{\tau\Delta}u_0\|\le C\|\Delta e^{\tau\Delta}u_0\|\le\frac{C}{\tau}\|e^{\frac{\tau}{2}\Delta}u_0\|.
\end{align*}
Hence, we get
\begin{align*}
\|e^{t\mathcal{S}}b\mathcal{P}_bu_0\|\le C\int_0^\infty\left(\frac1t\wedge\frac{1}{\tau}\right) \|e^{\tau\Delta}u_0\|\,d\tau.
\end{align*}
Multiplying both sides by $t^{s/2}$, we write
\begin{align*}
t^{s/2}\|e^{t\mathcal{S}}b\mathcal{P}_bu_0\|\le C\int_0^\infty\left(\frac{t}{\tau}\right)^{s/2}\left(\frac{\tau}{t}\wedge1\right) \|\tau^{s/2}e^{\tau\Delta}u_0\|\,\frac{d\tau}{\tau}.
\end{align*}
Now if $s\in(0,2)$, it is easy to verify that
\begin{align*}
\sup_{t>0}\int_0^\infty\left(\frac{t}{\tau}\right)^{s/2}\left(\frac{\tau}{t}\wedge1\right)\,\frac{d\tau}{\tau}+\sup_{\tau>0}\int_0^\infty\left(\frac{t}{\tau}\right)^{s/2}\left(\frac{\tau}{t}\wedge1\right)\,\frac{dt}{t}
\le C.
\end{align*}
It follows form Lemma \ref{weighted estimates} and Lemma \ref{characterization via classic heat kernel} that
\begin{align*}
\left\|t^{s/2}\|e^{t\mathcal{S}}b\mathcal{P}_bu_0\|\right\|_{L^q(\R_+,\frac{dt}{t})}\le C\left\|t^{s/2}\|e^{t\Delta}u_0\|\right\|_{L^q(\R_+,\frac{dt}{t})}\le C\|u_0\|_{\dot{B}_{2,q}^{-s}}.
\end{align*}

For the reverse inequality, we apply $e^{t\Delta}$ to both sides of \eqref{reproducing formula} to get
\begin{align*}
e^{t\Delta}u_0=-\int_0^\infty e^{t\Delta}\Delta e^{\tau \mathcal{S}}b\mathcal{P}_bu_0\,d\tau.
\end{align*}
Again, in view of Lemma \ref{Stokes operator generates bounded analytic semigroup on PL2}, a similar argument as before gives rise to
\begin{align*}
\|e^{t\Delta}u_0\|\le C\int_0^\infty \left(\frac1t\wedge\frac{1}{\tau}\right)\|e^{\tau \mathcal{S}}b\mathcal{P}_bu_0\|\,d\tau.
\end{align*}
The rest of the steps are exactly the same as before. So the proof of the theorem is completed.
\end{proof}

Now Theorem \ref{characterization of besov spaces with positive regularity via stokes} is an immediate consequence of Theorem \ref{characterization of besov spaces with negative regularity via stokes}.

\begin{proof}[Proof of Theorem \ref{characterization of besov spaces with positive regularity via stokes}]
Since $u_0\in \mathcal{P}H^2$, the result immediately follows from Theorem \ref{characterization of besov spaces with negative regularity via stokes} if we replace $u_0$ by $\Delta u_0\in \mathcal{P}L^2$.
\end{proof}

With Theorem \ref{characterization of besov spaces with positive regularity via stokes} in hand, we can extrapolate $e^{t\mathcal{S}}$ to a semigroup on Besov spaces without assuming any regularity on the coefficient $b$. To this end, let us first study some regularity estimates of $e^{t\mathcal{S}}$ on $\mathcal{P}\dot{B}_{2,q}^{s}$.

\begin{proposition}
Suppose that $s\in(0,2)$, $q\in[1,\infty]$ and $k\in\mathbb{N}\cup\{0\}$. There exists a positive constant $C$ such that for any $u_0\in \mathcal{P}H^2$,
\begin{align}\label{analyticity in Besov spaces}
\|(t\mathcal{S})^ke^{t\mathcal{S}}u_0\|_{\dot{B}_{2,q}^{s}}\le C\|u_0\|_{\dot{B}_{2,q}^{s}},\ \ \forall t\ge0,
\end{align}
and
\begin{align}\label{space time estimates}
\left\|\|(t\mathcal{S})^{k+1} e^{t\mathcal{S}}u_0\|_{\dot{B}_{2,q}^{s}}\right\|_{L^q(\R_+,\frac{dt}{t})}\le C\|u_0\|_{\dot{B}_{2,q}^{s}}.
\end{align}
\end{proposition}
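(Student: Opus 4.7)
The plan is to reduce both \eqref{analyticity in Besov spaces} and \eqref{space time estimates} to weighted integral inequalities for the intrinsic quantity $g(r) := r^{-s/2}\|r\mathcal{S}e^{r\mathcal{S}}u_0\|_2$, which by Theorem \ref{characterization of besov spaces with positive regularity via stokes} satisfies $\|g\|_{L^q(\R_+,dr/r)}\simeq\|u_0\|_{\dot{B}_{2,q}^s}$. The key technical input is the analyticity-based bound
\[
\|\mathcal{S}^\ell e^{r\mathcal{S}}u_0\|_2 \leq C\, r^{s/2-\ell}\, g(r/2) \qquad (\ell\geq 1),
\]
obtained by writing $e^{r\mathcal{S}}=e^{r\mathcal{S}/2}e^{r\mathcal{S}/2}$ and iterating \eqref{analyticity criterion} to get $\|(r\mathcal{S})^{\ell-1}e^{r\mathcal{S}/2}\|_{L^2\to L^2}\lesssim 1$.

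For \eqref{space time estimates}, I apply Theorem \ref{characterization of besov spaces with positive regularity via stokes} pointwise to $(t\mathcal{S})^{k+1}e^{t\mathcal{S}}u_0\in\mathcal{P}H^2$, use
\[
\tau\mathcal{S}e^{\tau\mathcal{S}}(t\mathcal{S})^{k+1}e^{t\mathcal{S}}u_0 = \tau t^{k+1}\mathcal{S}^{k+2}e^{(\tau+t)\mathcal{S}}u_0,
\]
together with the technical bound above, then integrate against $d\tau/\tau\cdot dt/t$ (raised to the $q$-th power). The substitution $(r,a)=((\tau+t)/2,\tau/(\tau+t))$, whose Jacobian gives $d\tau\,dt/(\tau t)=dr\,da/(a(1-a)r)$, makes the integrand completely separable and proportional to $a^{q(1-s/2)-1}(1-a)^{q(k+1)-1}g(r)^q$. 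The $a$-integral is a finite Beta function (since $s<2$ and $k+1\geq 1$) and the $r$-integral equals $\|u_0\|_{\dot{B}_{2,q}^s}^q$ up to a constant. The endpoint $q=\infty$ is handled by the obvious supremum variant.

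For \eqref{analyticity in Besov spaces}, the same strategy applied to $(t\mathcal{S})^k e^{t\mathcal{S}}u_0$, together with the substitution $r=(\tau+t)/2$, yields pointwise in $t$
\[
\|(t\mathcal{S})^k e^{t\mathcal{S}}u_0\|_{\dot{B}_{2,q}^s}^q \lesssim \int_{t/2}^\infty (2r-t)^{q(1-s/2)-1}\,t^{qk}\,(2r)^{q(s/2-k-1)}\, g(r)^q\,dr,
\]
which is not uniformly controlled by $\|u_0\|_{\dot{B}_{2,q}^s}^q$ when $q(1-s/2)<1$ due to the non-integrable weight near $r=t/2$. To fix this, I would first establish an extended characterization
\[
\|v\|_{\dot{B}_{2,q}^s}\simeq\left\|\tau^{-s/2}\|(\tau\mathcal{S})^m e^{\tau\mathcal{S}}v\|_2\right\|_{L^q(d\tau/\tau)}, \qquad m\geq 1,\ s\in(0,2m),
\]
using the reproducing formula $v=\frac{(-1)^m}{(m-1)!}\int_0^\infty(r\mathcal{S})^m e^{r\mathcal{S}}v\,dr/r$ (valid by spectral calculus on the nonnegative self-adjoint $-\mathcal{S}$) and mimicking the proof of Theorem \ref{characterization of besov spaces with negative regularity via stokes}. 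Choosing $m$ so that $q(m-s/2)\geq 1$ then yields $(2r-t)^{q(m-s/2)-1}\leq(2r)^{q(m-s/2)-1}$, and combined with $(t/(2r))^{qk}\leq 1$ on $r\geq t/2$ this gives $\|(t\mathcal{S})^k e^{t\mathcal{S}}u_0\|_{\dot{B}_{2,q}^s}^q \lesssim \int_{t/2}^\infty g(r)^q\,dr/r\lesssim\|u_0\|_{\dot{B}_{2,q}^s}^q$.

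The main obstacle is establishing the extended characterization: one needs to verify the reproducing formula for general $m\geq 2$ and rerun the weighted Fubini argument of Theorem \ref{characterization of besov spaces with negative regularity via stokes} with the adjusted exponents $2-s \rightsquigarrow 2m-s$. Endpoint cases ($q=1,\infty$) and the density of $\mathcal{P}H^2$ in $\dot{B}_{2,q}^s$, needed to interpret all manipulations classically, also require some technical care.
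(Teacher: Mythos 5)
Your treatment of \eqref{space time estimates} is essentially the paper's argument: apply the characterization of Theorem \ref{characterization of besov spaces with positive regularity via stokes} pointwise, use Fubini, a change of variables that separates the integrand, a finite Beta function (finite precisely because $s<2$), and the analyticity bound to close. Your $(r,a)$ substitution is just a unified repackaging of the paper's $t\mapsto t+\tau$ change of variables; both yield the same Beta integral. One small difference: the paper only carries this out for $k=0$ and remarks that this suffices, whereas you handle general $k$ directly — a fine choice, since the $(1-a)^{q(k+1)-1}$ factor stays integrable.

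Your treatment of \eqref{analyticity in Besov spaces}, however, takes a genuinely harder route than necessary, and you have correctly diagnosed why: the naive integral bound fails when $q(1-s/2)<1$ because the weight $(2r-t)^{q(1-s/2)-1}$ is not integrable near $r=t/2$. Your proposed fix (an extended characterization with a higher power $m$ of $\mathcal{S}$, valid for $s\in(0,2m)$, proved via a general-$m$ reproducing formula) is sound and would go through; it is the $\mathcal{S}$-analogue of Lemma \ref{characterization via classic heat kernel}, which already allows any $k>s/2$. But this whole detour is avoidable. The paper's proof of \eqref{analyticity in Besov spaces} is a one-liner: $(t\mathcal{S})^k e^{t\mathcal{S}}$ commutes with $\tau\mathcal{S}e^{\tau\mathcal{S}}$, and $\|(t\mathcal{S})^k e^{t\mathcal{S}}\|_{L^2\to L^2}\le C$ uniformly in $t$ by iterated analyticity. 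Since the Besov norm in Theorem \ref{characterization of besov spaces with positive regularity via stokes} is defined by $\left\|\tau^{-s/2}\|\tau\mathcal{S}e^{\tau\mathcal{S}}\,\cdot\,\|_2\right\|_{L^q(d\tau/\tau)}$, the uniform $L^2$-boundedness of $(t\mathcal{S})^k e^{t\mathcal{S}}$ passes directly through the characterization:
\begin{align*}
\|(t\mathcal{S})^k e^{t\mathcal{S}}u_0\|_{\dot{B}_{2,q}^s}
\simeq\left\|\tau^{-s/2}\bigl\|(t\mathcal{S})^k e^{t\mathcal{S}}\bigl(\tau\mathcal{S}e^{\tau\mathcal{S}}u_0\bigr)\bigr\|_2\right\|_{L^q(d\tau/\tau)}
\le C\left\|\tau^{-s/2}\|\tau\mathcal{S}e^{\tau\mathcal{S}}u_0\|_2\right\|_{L^q(d\tau/\tau)}\simeq C\|u_0\|_{\dot{B}_{2,q}^s}.
\end{align*}
No weighted integral and no extended reproducing formula are needed. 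So there is no error in your plan, but you have left a genuine (if manageable) piece of work — verifying the higher-order reproducing formula and rerunning the weighted Fubini argument — for a statement that admits a direct two-line proof.
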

\begin{proof}
The first inequality follows immediately from Theorem \ref{characterization of besov spaces with positive regularity via stokes} and the fact that $e^{t\mathcal{S}}$ is a bounded analytic semigroup on $\mathcal{P}L^2$.

For the second inequality, we only need to prove for $k=0$ and $q<\infty$. Applying Theorem \ref{characterization of besov spaces with positive regularity via stokes}, Lemma \ref{Stokes operator generates bounded analytic semigroup on PL2} and Fubini's theorem, we have
\begin{align*}
\int_0^\infty\|\tau\mathcal{S}e^{\tau\mathcal{S}}u_0\|_{\dot{B}_{2,q}^{s}}^q\,\frac{d\tau}{\tau}\simeq&\int_0^\infty\int_{0}^{\infty}\left(t^{-s/2}\|t\tau \mathcal{S}^2e^{(t+\tau)\mathcal{S}}u_0\|\right)^q \,\frac{dt}{t}\,\frac{d\tau}{\tau}\\
=&\int_0^\infty\int_{\tau}^{\infty}(t-\tau)^{(1-s/2)q-1}\tau^{q-1}\|\mathcal{S}^2e^{t\mathcal{S}}u_0\|^q \,dt\,d\tau\\
=&\int_0^\infty\|\mathcal{S}^2e^{t\mathcal{S}}u_0\|^q\,dt\int_{0}^{t}(t-\tau)^{(1-s/2)q-1}\tau^{q-1}\,d\tau\\
\le&C\int_0^\infty\left(t^{-s/2}\|t\mathcal{S}e^{t\mathcal{S}}u_0\|\right)^q \,\frac{dt}{t}
\le C\|u_0\|_{\dot{B}_{2,q}^{s}}^q.
\end{align*}
This completes the proof.
\end{proof}

In the rest of this subsection, we assume that $s$ and $q$ satisfy
\begin{align}\label{s and q assumption}
(s,q)\in(0,3/2)\times[1,\infty), \ \ \mathrm{or} \  s\in(0,3/2]\ \ \mathrm{and}\ \ q=1.
\end{align}
Then $\mathcal{P}\dot{B}_{2,q}^{s}$ is a Banach space and $\mathcal{P}H^2$ is dense in $\mathcal{P}\dot{B}_{2,q}^{s}$. For each $t\ge0$, the inequality \eqref{analyticity in Besov spaces} (with $k=0$) guarantees that $e^{t\mathcal{S}}$ extends to a bounded operator on $\mathcal{P}\dot{B}_{2,q}^{s}$ with bounds uniform in $t$. We denote this extension by $\mathcal{T}(t)=\mathcal{T}_{s,q}(t)$. Then $\{\mathcal{T}(t)\}_{t\ge0}$ is a bounded semigroup on $\mathcal{P}\dot{B}_{2,q}^{s}$. Also, it is easy to verify the strong continuity for $\mathcal{T}(t)$.

\begin{proposition}
Suppose that $(s,q)$ satisfies \eqref{s and q assumption}. Then $\mathcal{T}(t)$ is a bounded $C_0$ semigroup on $\mathcal{P}\dot{B}_{2,q}^{s}$.
\end{proposition}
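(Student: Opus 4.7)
The plan is to verify the three defining properties of a bounded $C_0$ semigroup on the Banach space $\mathcal{P}\dot{B}_{2,q}^{s}$. Two of them come essentially for free. The uniform bound $\|\mathcal{T}(t)\|_{\mathcal{L}(\mathcal{P}\dot{B}_{2,q}^{s})}\le C$ is nothing but \eqref{analyticity in Besov spaces} with $k=0$. The semigroup identity $\mathcal{T}(t+\tau)=\mathcal{T}(t)\mathcal{T}(\tau)$ holds on the dense subspace $\mathcal{P}H^2$, where both sides coincide with $e^{(t+\tau)\mathcal{S}}=e^{t\mathcal{S}}e^{\tau\mathcal{S}}$, and extends to all of $\mathcal{P}\dot{B}_{2,q}^{s}$ by combining the uniform bound with the density of $\mathcal{P}H^2$; the identity $\mathcal{T}(0)=I$ is obtained the same way.

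The real task is to prove strong continuity at $t=0$. I would first do this on $\mathcal{P}H^2$ and then bootstrap to the whole space by an $\varepsilon/3$ argument. Fix an auxiliary index $s'\in(s,2)$, which is possible because the hypothesis \eqref{s and q assumption} forces $s\le 3/2<2$. A direct dyadic estimate, using $\|\dot\Delta_jw\|_2\le\min\bigl(\|w\|_{L^2},\,2^{-js'}\|w\|_{\dot{B}_{2,\infty}^{s'}}\bigr)$ and splitting the $\ell^q$ sum at the dyadic scale where the two bounds balance, yields the real-interpolation-type inequality
\begin{align*}
\|w\|_{\dot{B}_{2,q}^{s}}\le C\,\|w\|_{L^2}^{1-s/s'}\,\|w\|_{\dot{B}_{2,\infty}^{s'}}^{s/s'}.
\end{align*}
For $v_0\in\mathcal{P}H^2$, the same dyadic argument shows $v_0\in\mathcal{P}\dot{B}_{2,\infty}^{s'}$, and \eqref{analyticity in Besov spaces} applied with $(s',\infty)$ in place of $(s,q)$ furnishes the uniform bound $\|\mathcal{T}(t)v_0\|_{\dot{B}_{2,\infty}^{s'}}\le C\|v_0\|_{\dot{B}_{2,\infty}^{s'}}$. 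Since $e^{t\mathcal{S}}$ is a $C_0$ semigroup on $\mathcal{P}L^2$, one also has $\|\mathcal{T}(t)v_0-v_0\|_{L^2}\to 0$ as $t\to 0^+$. Plugging these into the interpolation inequality with $w=\mathcal{T}(t)v_0-v_0$ gives $\|\mathcal{T}(t)v_0-v_0\|_{\dot{B}_{2,q}^{s}}\to 0$.

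For general $u_0\in\mathcal{P}\dot{B}_{2,q}^{s}$ and any $\varepsilon>0$, density of $\mathcal{P}H^2$ lets me pick $v_0\in\mathcal{P}H^2$ with $\|u_0-v_0\|_{\dot{B}_{2,q}^{s}}<\varepsilon$, and the triangle inequality combined with the uniform bound on $\mathcal{T}(t)$ yields
\begin{align*}
\|\mathcal{T}(t)u_0-u_0\|_{\dot{B}_{2,q}^{s}}\le(C+1)\varepsilon+\|\mathcal{T}(t)v_0-v_0\|_{\dot{B}_{2,q}^{s}},
\end{align*}
so $\limsup_{t\to 0^+}\|\mathcal{T}(t)u_0-u_0\|_{\dot{B}_{2,q}^{s}}\le(C+1)\varepsilon$. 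Letting $\varepsilon\to 0$ gives strong continuity at $0$, and continuity at every positive $t$ then follows at once from the semigroup law and the uniform bound.

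The only delicate step is the construction of the auxiliary index $s'$ and the corresponding interpolation inequality. The restriction $s\le 3/2$ in \eqref{s and q assumption} is exactly what allows one to squeeze $s'$ strictly between $s$ and $2$, ensuring simultaneously the validity of the dyadic interpolation estimate and of \eqref{analyticity in Besov spaces} at level $s'$; without this head room the approach would break down.
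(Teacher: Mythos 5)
Your proof is correct, and its skeleton is the same as the paper's: establish strong continuity on the dense subspace $\mathcal{P}H^2$ and then transfer it to all of $\mathcal{P}\dot{B}_{2,q}^{s}$ by the uniform bound \eqref{analyticity in Besov spaces} with $k=0$. Where you differ is in how continuity on $\mathcal{P}H^2$ is obtained. The paper simply observes that for $u_0\in\mathcal{P}H^2$ the orbit $t\mapsto e^{t\mathcal{S}}u_0$ lies in $C([0,\infty);\mathcal{P}H^2)$ (continuity in the graph norm of $\mathcal{S}$, which is equivalent to the $H^2$ norm since $b\mathcal{P}_b$ is invertible on $\mathcal{P}L^2$) and then uses the embedding of $L^2\cap\dot{H}^2$ into $\dot{B}_{2,q}^{s}$. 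You instead interpolate: $L^2$-continuity of the orbit against the uniform $\dot{B}_{2,\infty}^{s'}$ bound at an auxiliary level $s'\in(s,2)$, via the standard convexity inequality $\|w\|_{\dot{B}_{2,q}^{s}}\lesssim\|w\|_{L^2}^{1-s/s'}\|w\|_{\dot{B}_{2,\infty}^{s'}}^{s/s'}$. Your route is slightly longer but more self-contained, in that it only invokes facts already recorded for $e^{t\mathcal{S}}$ (strong continuity on $\mathcal{P}L^2$ and \eqref{analyticity in Besov spaces}), whereas the paper's route implicitly uses the general semigroup fact that orbits of points in $D(\mathcal{S})$ are continuous for the graph norm. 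Both are valid.

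One small inaccuracy in your closing remark: the head room $s<2$ needed to choose $s'$ comes from the range of validity of Theorem \ref{characterization of besov spaces with positive regularity via stokes} and hence of \eqref{analyticity in Besov spaces}, not from the condition $s\le 3/2$ in \eqref{s and q assumption}; the latter is imposed so that $\dot{B}_{2,q}^{s}$ is complete and $\mathcal{P}H^2$ is dense in it, which is what your density step actually relies on. This does not affect the correctness of the argument.
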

\begin{proof}
For $u_0\in\mathcal{P}H^2$, the function $t\mapsto\mathcal{T}(t)u_0=e^{t\mathcal{S}}u_0$ belongs to $C([0,\infty);\mathcal{P}H^2)$, thus $C([0,\infty);\mathcal{P}\dot{B}_{2,q}^{s})$. By a density argument, we get the strong continuity of $\mathcal{T}(t)$ on $\mathcal{P}\dot{B}_{2,q}^{s}$.
\end{proof}

Let us denote by $\mathcal{G}=\mathcal{G}_{s,q}$ the generator of $\mathcal{T}(t)$ on $\mathcal{P}\dot{B}_{2,q}^{s}$. In general, it is not easy to identify the domain of the generator of a semigroup. However, it would be easier to find a core for the generator.

\begin{lemma}\label{core for G}
Suppose that $(s,q)$ satisfies \eqref{s and q assumption}. Define $\mathscr{C}=\{u\in\mathcal{P}H^2|\mathcal{S}u\in\mathcal{P}\dot{B}_{2,q}^{s}\}$. Then $\mathscr{C}$ is a core for $\mathcal{G}$, and it holds that $\mathcal{G}|_{\mathscr{C}}=\mathcal{S}|_{\mathscr{C}}$. In other words, $\mathcal{G}$ is the closure of $\mathcal{S}:\mathscr{C}\subset\mathcal{P}\dot{B}_{2,q}^{s}\rightarrow\mathcal{P}\dot{B}_{2,q}^{s}$.
\end{lemma}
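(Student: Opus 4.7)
The strategy is to verify the three hypotheses of Lemma \ref{identify a core}: that $\mathscr{C}\subset D(\mathcal{G})$ with $\mathcal{G}|_{\mathscr{C}}=\mathcal{S}|_{\mathscr{C}}$, that $\mathscr{C}$ is dense in $\mathcal{P}\dot{B}_{2,q}^{s}$, and that $\mathscr{C}$ is invariant under $\mathcal{T}(t)$. A preliminary Littlewood--Paley/interpolation argument (bounding low modes by the $L^2$ norm, high modes by the $\dot{H}^2$ norm, and balancing dyadically) yields the embedding $\mathcal{P}H^2\hookrightarrow \mathcal{P}\dot{B}_{2,q}^{s}$ for every $(s,q)$ satisfying \eqref{s and q assumption}, which will be used throughout.

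The crucial preparatory step is a \emph{consistency property}: for every $w\in \mathcal{P}L^2\cap \mathcal{P}\dot{B}_{2,q}^{s}$, the original semigroup value $e^{t\mathcal{S}}w$ on $\mathcal{P}L^2$ coincides with the extended value $\mathcal{T}(t)w$ on $\mathcal{P}\dot{B}_{2,q}^{s}$. To prove this, set $w_n=\dot{S}_n w$; these are smooth, divergence-free (since $\divg$ commutes with the Fourier cut-off), belong to $\mathcal{P}H^2$, and (because $q<\infty$) converge to $w$ simultaneously in $\mathcal{P}L^2$ and in $\mathcal{P}\dot{B}_{2,q}^{s}$. Since $e^{t\mathcal{S}}w_n=\mathcal{T}(t)w_n$ holds by the very construction of the extension, passing to the limit in both topologies identifies the two images inside $\mathscr{S}'$.

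With consistency in hand the three hypotheses follow. For the first, take $u\in\mathscr{C}$; classical semigroup theory applied to $u\in D(\mathcal{S})=\mathcal{P}H^2$ gives
\[
\mathcal{T}(t)u-u=\int_0^t \mathcal{T}(\tau)\mathcal{S}u\,d\tau
\]
as an equality in $\mathcal{P}L^2$. Since $\mathcal{S}u\in \mathcal{P}\dot{B}_{2,q}^{s}$ by the definition of $\mathscr{C}$ and $\mathcal{T}(\cdot)\mathcal{S}u$ is strongly continuous into $\mathcal{P}\dot{B}_{2,q}^{s}$, the right-hand side is also a Bochner integral in that space; dividing by $t$ and sending $t\to 0^+$ yields $\mathcal{G}u=\mathcal{S}u$. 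Invariance is immediate: $e^{t\mathcal{S}}u\in \mathcal{P}H^2$ because the $\mathcal{P}L^2$-semigroup preserves its domain, and $\mathcal{S}e^{t\mathcal{S}}u=e^{t\mathcal{S}}\mathcal{S}u=\mathcal{T}(t)\mathcal{S}u\in \mathcal{P}\dot{B}_{2,q}^{s}$ by consistency. For density, first approximate any $u\in \mathcal{P}\dot{B}_{2,q}^{s}$ by an element $v\in \mathcal{P}\mathscr{S}_0\subset \mathcal{P}H^2\cap \mathcal{P}\dot{B}_{2,q}^{s}$, using Lemma \ref{basic properties of Besov spaces}(iii) and boundedness of $\mathcal{P}$ on $\dot{B}_{2,q}^{s}$; then regularize by $v_\epsilon=e^{\epsilon \mathcal{S}}v$, which lies in $\mathscr{C}$ because $\mathcal{S}v_\epsilon=e^{\epsilon \mathcal{S}}\mathcal{S}v\in \mathcal{P}H^2\hookrightarrow \mathcal{P}\dot{B}_{2,q}^{s}$ by the analyticity estimate \eqref{analyticity criterion} applied to $\mathcal{S}v\in \mathcal{P}L^2$; finally $v_\epsilon\to v$ in $\mathcal{P}\dot{B}_{2,q}^{s}$ as $\epsilon\to 0^+$ by strong continuity of $\mathcal{T}$.

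The main obstacle I anticipate is precisely the consistency step. The semigroups $e^{t\mathcal{S}}$ and $\mathcal{T}(t)$ are constructed on different ambient spaces and agree by definition only on the dense subspace $\mathcal{P}H^2\cap \mathcal{P}\dot{B}_{2,q}^{s}$; without promoting this agreement to the larger intersection $\mathcal{P}L^2\cap \mathcal{P}\dot{B}_{2,q}^{s}$, one cannot transfer the analytic, self-adjoint structure from $\mathcal{P}L^2$ to the Besov setting, which is needed both to interpret the object $\mathcal{T}(t)\mathcal{S}u$ appearing in the invariance argument and to produce the regularizations $v_\epsilon$ used for density. Once the consistency bridge is in place, everything else reduces to standard $C_0$-semigroup bookkeeping and an application of Lemma \ref{identify a core}.
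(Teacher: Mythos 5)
Your proof is correct and follows the same overall strategy as the paper---verify the three hypotheses of Lemma~\ref{identify a core} for $\mathscr{C}$---but two of your tactics differ from the paper's. First, the density argument: the paper simply notes that $D(\mathcal{S}^2)\subset\mathscr{C}$ (if $u\in D(\mathcal{S}^2)$ then $\mathcal{S}u\in D(\mathcal{S})=\mathcal{P}H^2\hookrightarrow\mathcal{P}\dot{B}_{2,q}^{s}$), invokes the standard fact that $D(\mathcal{S}^2)$ is dense in $D(\mathcal{S})$ for the graph norm (equivalently the $H^2$ norm), and chains this with the density of $\mathcal{P}H^2$ in $\mathcal{P}\dot{B}_{2,q}^{s}$. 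Your two-stage approximation ($\mathcal{P}\mathscr{S}_0$ followed by mollification via $e^{\epsilon\mathcal{S}}$) is correct but substantially more work than necessary; the paper's observation makes density essentially free. Second---and to your credit---you make explicit the \emph{consistency} between $e^{t\mathcal{S}}$ on $\mathcal{P}L^2$ and its extension $\mathcal{T}(t)$ on $\mathcal{P}\dot{B}_{2,q}^{s}$ across the full intersection $\mathcal{P}L^2\cap\mathcal{P}\dot{B}_{2,q}^{s}$. The paper's proof passes silently from $\int_0^t e^{\tau\mathcal{S}}\mathcal{S}u_0\,d\tau$ to $\int_0^t\mathcal{T}(\tau)\mathcal{S}u_0\,d\tau$, which requires exactly this consistency since $\mathcal{S}u_0$ need not lie in $\mathcal{P}H^2$ (only in $\mathcal{P}L^2\cap\mathcal{P}\dot{B}_{2,q}^{s}$); your $\dot{S}_n$-truncation argument, using $q<\infty$ to get simultaneous convergence in both topologies, pins down a detail the paper leaves implicit. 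The identification of $\mathcal{G}|_{\mathscr{C}}=\mathcal{S}|_{\mathscr{C}}$ via the Bochner integral and the verification of $\mathcal{T}(t)$-invariance are in substance the same as in the paper (the latter is merely asserted there).
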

\begin{proof}
Note that the domain $D(\mathcal{S}^2)$ of $\mathcal{S}^2$ is contained in $\mathscr{C}$. Since $D(\mathcal{S}^2)$ is dense in $D(\mathcal{S})=\mathcal{P}H^2$ and $\mathcal{P}H^2$ is dense in $\mathcal{P}\dot{B}_{2,q}^{s}$, so $\mathscr{C}$ is also dense in $\mathcal{P}\dot{B}_{2,q}^{s}$. For every $u_0\in\mathscr{C}$, since $\mathcal{T}(t)u_0=e^{t\mathcal{S}}u_0$ is a classical solution to the equation $u'(t)=\mathcal{S}u(t)$, we have 
\begin{align*}
\mathcal{T}(t)u_0-u_0=\int_0^t e^{\tau \mathcal{S}}\mathcal{S}u_0\,d\tau=\int_0^t\mathcal{T}(\tau)\mathcal{S}u_0\,d\tau.
\end{align*}
So, we have
\begin{align*}
\frac1t (\mathcal{T}(t)u_0-u_0)=\frac1t\int_0^t\mathcal{T}(\tau)\mathcal{S}u_0\,d\tau.
\end{align*}
By strong continuity of $\mathcal{T}(t)$ on $\mathcal{P}\dot{B}_{2,q}^{s}$, the limit as $t\rightarrow0^+$ on the right exists and equals to $\mathcal{S}u_0$. We thus infer that $\mathscr{C}\subset D(\mathcal{G})$ and $\mathcal{G}|_{\mathscr{C}}=\mathcal{S}|_{\mathscr{C}}$. Obviously, $\mathscr{C}$ is invariant under $\mathcal{T}(t)$. Thus, by Lemma \ref{identify a core}, $\mathscr{C}$ is a core for $\mathcal{G}$. This completes the proof.
\end{proof}

\begin{proposition}\label{T is analytic}
Suppose that $(s,q)$ satisfies \eqref{s and q assumption}. Then $\mathcal{T}(t)$ is a bounded analytic semigroup on $\mathcal{P}\dot{B}_{2,q}^{s}$.
\end{proposition}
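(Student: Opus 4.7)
The plan is to invoke the criterion in Lemma \ref{generation theorem for analytic semigroups}: since $\mathcal{T}(t)$ is already known to be a bounded $C_0$ semigroup on $\mathcal{P}\dot{B}_{2,q}^{s}$, it suffices to exhibit a dense subspace $S$ of $\mathcal{P}\dot{B}_{2,q}^{s}$ on which the pointwise-in-$t$ bound $\|t\mathcal{G}\mathcal{T}(t)u_0\|_{\dot{B}_{2,q}^{s}} \le C\|u_0\|_{\dot{B}_{2,q}^{s}}$ holds for every $t>0$. The natural candidate is the core $\mathscr{C}$ supplied by Lemma \ref{core for G}, which is dense in $\mathcal{P}\dot{B}_{2,q}^{s}$ and on which $\mathcal{G}$ agrees with the $L^2$-Stokes operator $\mathcal{S}$.

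For $u_0\in\mathscr{C}\subset\mathcal{P}H^2$, the construction of $\mathcal{T}$ as the unique bounded extension of $e^{t\mathcal{S}}|_{\mathcal{P}H^2}$ forces $\mathcal{T}(t)u_0=e^{t\mathcal{S}}u_0$ for every $t\ge 0$. Since $u_0\in D(\mathcal{G})$, standard semigroup theory gives $\mathcal{T}(t)u_0\in D(\mathcal{G})$ together with $\mathcal{G}\mathcal{T}(t)u_0=\mathcal{T}(t)\mathcal{G}u_0=\mathcal{T}(t)\mathcal{S}u_0$. At the same time, the curve $u(t):=e^{t\mathcal{S}}u_0$ is a classical solution in $\mathcal{P}L^2$ of $u'(t)=\mathcal{S}u(t)$, so one identifies
\begin{equation*}
\mathcal{G}\mathcal{T}(t)u_0=\mathcal{S}e^{t\mathcal{S}}u_0
\end{equation*}
as elements of $\mathcal{P}L^2\cap\mathcal{P}\dot{B}_{2,q}^{s}$.

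Granting this identification, the desired estimate is exactly the $k=1$ case of \eqref{analyticity in Besov spaces}, whose hypothesis $u_0\in\mathcal{P}H^2$ is satisfied because $\mathscr{C}\subset\mathcal{P}H^2$:
\begin{equation*}
\|t\mathcal{G}\mathcal{T}(t)u_0\|_{\dot{B}_{2,q}^{s}}=\|t\mathcal{S}e^{t\mathcal{S}}u_0\|_{\dot{B}_{2,q}^{s}}\le C\|u_0\|_{\dot{B}_{2,q}^{s}}.
\end{equation*}
Applying Lemma \ref{generation theorem for analytic semigroups} to $\mathcal{T}$ on $\mathcal{P}\dot{B}_{2,q}^{s}$ then yields bounded analyticity, as asserted.

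The only delicate point is the consistency statement $\mathcal{G}\mathcal{T}(t)u_0=\mathcal{S}e^{t\mathcal{S}}u_0$ on $\mathcal{P}\dot{B}_{2,q}^{s}$, but this is a routine verification that the $L^2$-semigroup and its Besov extension have the same infinitesimal action on the dense set $\mathscr{C}$. All of the genuine analytic input has been concentrated in the earlier inequality \eqref{analyticity in Besov spaces}, so beyond this bookkeeping there is no new estimate to prove.
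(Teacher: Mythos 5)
Your proposal is correct and follows essentially the same route as the paper: restrict to the core $\mathscr{C}$ from Lemma \ref{core for G}, identify $\mathcal{G}\mathcal{T}(t)u_0=\mathcal{S}e^{t\mathcal{S}}u_0$ there, apply the $k=1$ case of \eqref{analyticity in Besov spaces}, and invoke Lemma \ref{generation theorem for analytic semigroups}. The paper reaches the identification slightly more directly by observing that $\mathscr{C}$ is invariant under $\mathcal{T}(t)$ (so $\mathcal{T}(t)u_0=e^{t\mathcal{S}}u_0\in\mathscr{C}$ and one can apply $\mathcal{G}|_{\mathscr{C}}=\mathcal{S}|_{\mathscr{C}}$ at time $t$), whereas you route through $\mathcal{G}\mathcal{T}(t)u_0=\mathcal{T}(t)\mathcal{G}u_0=\mathcal{T}(t)\mathcal{S}u_0$ and then need the consistency $\mathcal{T}(t)=e^{t\mathcal{S}}$ on $\mathcal{P}L^2\cap\mathcal{P}\dot{B}_{2,q}^{s}$, which is indeed a routine density check but is a small extra step the paper's formulation avoids.
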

\begin{proof}
We know from the above lemma that $\mathscr{C}$ is dense in $\mathcal{P}\dot{B}_{2,q}^{s}$, and that $\mathcal{G}\mathcal{T}(t)u_0=\mathcal{S}e^{t\mathcal{S}}u_0$ for $u_0\in \mathscr{C}$. It then follows from \eqref{analyticity in Besov spaces} that $\|t\mathcal{G}\mathcal{T}(t)u_0\|_{\dot{B}_{2,q}^{s}}\le C\|u_0\|_{\dot{B}_{2,q}^{s}}$. So we apply Lemma \ref{generation theorem for analytic semigroups} to conclude that $\mathcal{T}(t)$ is an analytic semigroup.
\end{proof}

\begin{remark}
Now \eqref{space time estimates} actually holds for data in $\mathcal{P}\dot{B}_{2,q}^{s}$, that is,
\begin{align*}
\left\|\|(t\mathcal{G})e^{t\mathcal{G}}u_0\|_{\dot{B}_{2,q}^{s}}\right\|_{L^q(\R_+,\frac{dt}{t})}\le C\|u_0\|_{\dot{B}_{2,q}^{s}},\ \ \forall u_0\in\mathcal{P}\dot{B}_{2,q}^{s}.
\end{align*}
In particular, choosing $q=1$ gives
\begin{align}\label{space time estimate G}
\|\mathcal{G} e^{t\mathcal{G}}u_0\|_{L^1(\R_+;\dot{B}_{2,1}^{s})}\le C\|u_0\|_{\dot{B}_{2,1}^{s}},\ \ \forall u_0\in\mathcal{P}\dot{B}_{2,1}^{s}.
\end{align}
\end{remark}

We conclude this subsection with a maximal $L^1$ regularity result for the following abstract Cauchy problem
\begin{align}\label{ACP G}
u'(t)-\mathcal{G}u(t)=f(t),\ \ u(0)=u_0.
\end{align}

\begin{theorem}\label{maximal regularity for ACP rough b}
Suppose that $s\in(0,3/2]$ and $T\in(0,\infty]$. Let $u_0\in\mathcal{P}\dot{B}_{2,1}^{s}$ and $f\in L^1((0,T);\mathcal{P}\dot{B}_{2,1}^{s})$. Then \eqref{ACP G} has a unique strong solution $u\in C_b([0,T);\mathcal{P}\dot{B}_{2,1}^{s})$. Moreover, there exists a positive constant $C=C(m,s)$ such that
\begin{align*}
\|u\|_{L_T^\infty(\dot{B}_{2,1}^{s})}+\|u',\mathcal{G}u\|_{L_T^1(\dot{B}_{2,1}^{s})}\le C\|u_0\|_{\dot{B}_{2,1}^{s}}+C\|f\|_{L_T^1(\dot{B}_{2,1}^{s})}.
\end{align*}
\end{theorem}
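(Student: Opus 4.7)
The plan is to construct the solution via Duhamel's formula and leverage the maximal regularity estimate \eqref{space time estimate G} together with the analyticity of $\mathcal{T}(t)=e^{t\mathcal{G}}$ on $\mathcal{P}\dot B_{2,1}^s$ established in Proposition \ref{T is analytic}. I would define the candidate
\begin{align*}
u(t) := e^{t\mathcal{G}}u_0 + \int_0^t e^{(t-\tau)\mathcal{G}} f(\tau)\,d\tau,
\end{align*}
which is the unique mild solution of \eqref{ACP G} lying in $C([0,T);\mathcal{P}\dot B_{2,1}^s)$. Since $\mathcal{T}(t)$ is a bounded $C_0$ semigroup on $\mathcal{P}\dot B_{2,1}^s$, I immediately obtain
\begin{align*}
\|u\|_{L_T^\infty(\dot B_{2,1}^s)}\le C\|u_0\|_{\dot B_{2,1}^s}+C\|f\|_{L_T^1(\dot B_{2,1}^s)}.
\end{align*}

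The heart of the argument is the bound on $\mathcal{G}u$ in $L_T^1(\dot B_{2,1}^s)$. For the homogeneous part, estimate \eqref{space time estimate G} gives $\|\mathcal{G}e^{t\mathcal{G}}u_0\|_{L^1(\R_+;\dot B_{2,1}^s)}\le C\|u_0\|_{\dot B_{2,1}^s}$. For the inhomogeneous part I apply \eqref{space time estimate G} pointwise in $\tau$ with data $f(\tau)\in\mathcal{P}\dot B_{2,1}^s$ and interchange integrals:
\begin{align*}
\left\|\mathcal{G}\!\int_0^t\! e^{(t-\tau)\mathcal{G}}f(\tau)\,d\tau\right\|_{L_T^1(\dot B_{2,1}^s)}
&\le \int_0^T\!\!\int_\tau^T\|\mathcal{G}e^{(t-\tau)\mathcal{G}}f(\tau)\|_{\dot B_{2,1}^s}\,dt\,d\tau\\
&\le \int_0^T\|\mathcal{G}e^{s\mathcal{G}}f(\tau)\|_{L^1(\R_+;\dot B_{2,1}^s)}\,d\tau
\le C\|f\|_{L_T^1(\dot B_{2,1}^s)}.
\end{align*}
The bound on $u'$ then comes for free from the equation $u'=\mathcal{G}u+f$.

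With $\mathcal{G}u\in L_T^1(\dot B_{2,1}^s)$, I have $u\in L^1((0,T);D(\mathcal{G}))$, so Lemma \ref{mild to strong} promotes the mild solution to a strong one. Uniqueness follows at the level of mild solutions: the difference $w$ of two strong (hence mild) solutions satisfies $w(t)=e^{t\mathcal{G}}\cdot 0+\int_0^t e^{(t-\tau)\mathcal{G}}\cdot 0\,d\tau=0$.

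The main obstacle is conceptual rather than computational: the nontrivial content is packaged into \eqref{space time estimate G}, itself a consequence of the semigroup characterization of Besov norms in Theorem \ref{characterization of besov spaces with positive regularity via stokes}. Once that is granted, the remaining work is a clean Fubini argument, with the only delicate point being the justification of pulling $\mathcal{G}$ inside the time integral; this can be handled by first taking $f$ to be a simple function valued in $\mathcal{P}H^2$ (where $\mathcal{G}$ acts classically as $\mathcal{S}$ by Lemma \ref{core for G}) and then passing to the limit using the density of $\mathcal{P}H^2$ in $\mathcal{P}\dot B_{2,1}^s$ together with the closedness of $\mathcal{G}$.
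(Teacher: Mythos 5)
Your proposal is correct and follows essentially the same route as the paper: Duhamel's formula, the uniform boundedness of $e^{t\mathcal{G}}$ for the $L^\infty_T$ bound, the space-time estimate \eqref{space time estimate G} plus Fubini for $\|\mathcal{G}\mathcal{I}f\|_{L_T^1}$, recovery of $u'$ from the equation, and Lemma \ref{mild to strong} to upgrade the mild solution to a strong one. Your extra remark on justifying the interchange of $\mathcal{G}$ with the time integral (via simple $\mathcal{P}H^2$-valued functions and closedness of $\mathcal{G}$) is a welcome refinement of a point the paper leaves implicit.
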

\begin{proof}
The homogeneous part $e^{t\mathcal{G}}u_0$ is a classical solution, and satisfies the estimates by Proposition \ref{T is analytic} and \eqref{space time estimate G}. Denote the inhomogeneous part by $\mathcal{I}f(t)=\int_0^t e^{(t-\tau)\mathcal{G}}f(\tau)\,d\tau$. Since $e^{t\mathcal{G}}$ is uniformly bounded, we have $\|\mathcal{I}f\|_{L_T^\infty(\dot{B}_{2,1}^{s})}\le C\|f\|_{L_T^1(\dot{B}_{2,1}^{s})}$. Using again \eqref{space time estimate G} and Fubini's theorem, we have
\begin{align*}
\|\mathcal{G}\mathcal{I}f\|_{L_T^1(\dot{B}_{2,1}^{s})}\le&\int_0^T\int_0^t\|\mathcal{G}e^{(t-\tau)\mathcal{G}}f(\tau)\|_{\dot{B}_{2,1}^{s}}\,d\tau\,dt\\
=&\int_0^T\,d\tau\int_{\tau}^T\|\mathcal{G}e^{(t-\tau)\mathcal{G}}f(\tau)\|_{\dot{B}_{2,1}^{s}}\,dt\le C\|f\|_{L_T^1(\dot{B}_{2,1}^{s})}.
\end{align*}
So by Lemma \ref{mild to strong}, $u$ is a strong solution to \eqref{ACP G}. The estimate for $u'$ follows directly from the previous estimates and the equation \eqref{ACP G}. So the proof is completed.
\end{proof}

\subsection{Elliptic estimates}
So far we have not assumed any regularity on the coefficient $b$. In what follows, we shall prove that $b\mathcal{P}_b$ is bounded on some Besov spaces if $b$ has suitable "critical" regularity. We allow a slight discontinuity for $b$ and point out that it is of independent interest to study elliptic estimates with discontinuous coefficients. The continuity of $b\mathcal{P}_b$ will also help us identify the domain of $\mathcal{G}$.

In the sequel, $P\in\dot{H}^1(\R^3)$ is the weak solution to \eqref{2nd order elliptic equation of divergence form} with $b$ satisfying \eqref{initial density bounds}, and $\mu$ is the H\"{o}lder index in Lemma \ref{gauss property}. The main result in this subsection is the following:
\begin{theorem}\label{elliptic estimates in Besov}
Given any $(p,r)\in[2,\frac{3}{1-\mu})\times[1,\infty]$, any $s\in(0,\frac{3}{p}+\mu-1)$. If $b$ satisfies $\nabla b\in\dot{B}_{q,\infty}^{3/q-1}(\R^3)$ with $\frac{3}{q}>s\vee(1-\mu)$, there exists a constant $C$ depending on $m$ and $\|b\|_{\dot{B}_{q,\infty}^{3/q}}$ such that
\begin{align*}
\|\nabla P\|_{\dot{B}_{p,r}^{s}}\le C\|f\|_{\dot{B}_{p,r}^{s}}
\end{align*}
for all vectors $f$ whose components belong to $L^2(\R^3)\cap\dot{B}_{p,r}^{s}(\R^3)$.
\end{theorem}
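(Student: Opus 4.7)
The plan is to follow the two-stage iterative scheme sketched in the introduction: first establish a base gradient estimate that concedes some regularity, then recover the loss by iterating a convenient integral identity. The key identity, obtained by rewriting $-\mathcal{L}_b P=\divg f$ as $\Delta P=\divg f-\divg((b-1)\nabla P)$ and applying $-\nabla(-\Delta)^{-1}$, is
\begin{align*}
\nabla P=\mathcal{Q}f+\mathcal{Q}\big((1-b)\nabla P\big),
\end{align*}
where $\mathcal{Q}=-\nabla(-\Delta)^{-1}\divg$ is a Fourier multiplier bounded on every $\dot{B}_{\tilde p,\tilde r}^{\sigma}$ with $\tilde p\in(1,\infty)$, so the whole burden is to control the second term via product laws.

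For the base step I would write $\nabla P=\mathcal{R}\mathcal{R}^{*}f$ with $\mathcal{R}=\nabla\mathcal{L}_b^{-1/2}$ the Riesz transform associated with $\mathcal{L}_b$. Using Lemma \ref{characterization via heat kernel for divergence operator} to characterize $\dot{B}_{p_0,r}^{s_0}$ via $e^{-t\mathcal{L}_b}$ for $s_0\in(0,\mu)$, the commutation of $\mathcal{R}$ with functions of $\mathcal{L}_b$, and the $L^{p_0'}$-boundedness of $\mathcal{R}^{*}$ for $p_0'$ near $2$ (recalled in Subsection~2.2), one obtains
\begin{align*}
\|\nabla P\|_{\dot{B}_{p_0,r}^{s_0}}\le C\|f\|_{\dot{B}_{p,r}^{s}}
\end{align*}
for a pair $(s_0,p_0)$ with $s_0<s$, $p_0>p$ and the same scaling $s_0-3/p_0=s-3/p$. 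Because the square root and the Riesz transform are only bounded on $L^{q}$ for $q$ close to $2$, this preliminary step necessarily loses some regularity, but it requires only the pointwise bounds \eqref{initial density bounds} on $b$.

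For the iteration step I would insert the base estimate into the identity above. Decomposing $(1-b)\nabla P$ by Bony's paraproduct and using $\nabla b\in\dot{B}_{q,\infty}^{3/q-1}$ (equivalently $b-1\in\dot{B}_{q,\infty}^{3/q}\cap L^{\infty}$), the product laws together with the continuity of $\mathcal{Q}$ give
\begin{align*}
\|\mathcal{Q}((1-b)\nabla P)\|_{\dot{B}_{p_1,r}^{s_1}}\le C\|\nabla b\|_{\dot{B}_{q,\infty}^{3/q-1}}\|\nabla P\|_{\dot{B}_{p_0,r}^{s_0}}
\end{align*}
for a new admissible pair $(s_1,p_1)$ with strictly larger regularity index than $(s_0,p_0)$ on the same scaling line. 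Feeding this back into the identity and iterating finitely many times, one reaches the target pair $(s,p)$. The constraints $p<3/(1-\mu)$, $s<3/p+\mu-1$ and $3/q>s\vee(1-\mu)$ are exactly what is needed to start the iteration inside $(0,\mu)$ and to keep all paraproduct estimates valid along the entire scaling line.

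The main obstacle I expect is organizing the iteration so that each pass actually gains a definite amount of regularity while keeping the indices inside the narrow ranges dictated by the product laws, and verifying that the initial loss $s-s_0$ can be absorbed after finitely many steps. Tracking the constants carefully --- so that they depend only on $m$ and $\|\nabla b\|_{\dot{B}_{q,\infty}^{3/q-1}}$ and not on $\nabla P$ itself --- and handling the endpoint cases, especially when $s$ approaches $3/p+\mu-1$, will require the most delicate bookkeeping.
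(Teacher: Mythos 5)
Your base step is in the same spirit as the paper's Lemma \ref{Initial iteration}: characterize $\dot{B}_{p_0,r}^{s_0}$ via the heat semigroup $e^{-t\mathcal{L}_b}$, use the $L^p$-boundedness of the Riesz transform $\mathcal{R}^{*}$ together with $L^p$--$L^q$ heat-semigroup decay, and apply a weighted Young inequality; this yields $\|\nabla P\|_{\dot{B}_{p_0,r}^{s_0}}\lesssim\|f\|_{\dot{B}_{p,r}^{s}}$ with $s_0<s$ on the same scaling line, requiring only \eqref{initial density bounds}. That part is fine.

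The iteration step, however, contains a genuine gap. You propose to gain regularity back via the fixed-point identity $\nabla P=\mathcal{Q}f+\mathcal{Q}((1-b)\nabla P)$ and the claim that product laws give
\begin{align*}
\|\mathcal{Q}((1-b)\nabla P)\|_{\dot{B}_{p_1,r}^{s_1}}\lesssim\|\nabla b\|_{\dot{B}_{q,\infty}^{3/q-1}}\|\nabla P\|_{\dot{B}_{p_0,r}^{s_0}}
\end{align*}
with $s_1>s_0$. This does not hold. Decomposing $(1-b)\nabla P$ by Bony's paraproduct, the high-low and remainder pieces $\dot{T}_{\nabla P}(1-b)+\dot{R}(1-b,\nabla P)$ do land at a higher regularity index (here one uses $b-1\in\dot{B}_{q,\infty}^{3/q}$ and $s_0<0$), but the low-high piece $\dot{T}_{1-b}\nabla P=\sum_j\dot{S}_{j-1}(1-b)\dot{\Delta}_j\nabla P$ does not gain: its $j$-th block is bounded in $L^{p_0}$ only by $\|1-b\|_{\infty}\|\dot{\Delta}_j\nabla P\|_{p_0}$, and membership of $b-1$ in $\dot{B}_{q,\infty}^{3/q}$ gives no decaying bound on $\|\dot{S}_{j-1}(1-b)\|_{q}$ (the series $\sum_{k\le j}2^{-3k/q}$ diverges). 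So $\dot{T}_{1-b}\nabla P$ lives in exactly the same space $\dot{B}_{p_0,r}^{s_0}$ as $\nabla P$ itself, with prefactor $\|1-b\|_{\infty}$, and the iteration step produces no improvement of the pair $(s_0,p_0)$. Closing the identity at any fixed level would then require absorbing this piece, i.e. $\|1-b\|_{\infty}<1$, a smallness hypothesis that \eqref{initial density bounds} emphatically does not provide and that would trivialize the theorem.

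The paper sidesteps this precisely by never treating $b\dot{\Delta}_j\nabla P$ perturbatively. In Lemma \ref{Iteration} one tests the weak formulation of $-\mathcal{L}_bP=\divg f$ against $\dot{\Delta}_j(|\dot{\Delta}_jP|^{p_2-2}\dot{\Delta}_jP)$; the ellipticity $b\ge m$ then gives coercivity of $\int b|\dot{\Delta}_j\nabla P|^2|\dot{\Delta}_jP|^{p_2-2}\,dx$ on the left, so the dangerous low-high contribution is hidden inside the positive quadratic form rather than estimated. Only the commutator $[\dot{\Delta}_j,b]\nabla P$ remains on the right, and it is the commutator structure --- in particular the piece $[\dot{\Delta}_j,\dot{T}_b]\nabla P$, which produces an explicit factor $2^{-j}$ (see Lemma \ref{commutator estimates}) --- that actually gains a derivative of $b$. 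A raw product $\dot{T}_{1-b}\nabla P$ has no such cancellation. To rescue your route you would either need smallness of $\|b-1\|_{\infty}$, or you would have to replace the raw Bony decomposition by the commutator/energy argument that is, in effect, the paper's Lemma \ref{Iteration}.
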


The proof of Theorem \ref{elliptic estimates in Besov} takes two steps. In the first step, we do not assume any regularity on the elliptic coefficients and allow a loss of regularity for $\nabla P$.

\begin{lemma}\label{Initial iteration}
Given any $(p,r)\in[2,\frac{3}{1-\mu})\times[1,\infty]$, any $s\in(0,\frac{3}{p}+\mu-1)$. For any $-s_0\in(1-\mu,\frac{3}{p}-s]$, let $p_0$ be defined by $s_0-\frac{3}{p_0}=s-\frac{3}{p}$. Then there exists a constant $C>0$ such that
\begin{align*}
\|\nabla P\|_{\dot{B}_{p_0,r}^{s_0}}\le C\|f\|_{\dot{B}_{p,r}^{s}}, \ \ \forall f\in L^2(\R^3)\cap \dot{B}_{p,r}^{s}(\R^3).
\end{align*}
\end{lemma}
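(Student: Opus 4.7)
The plan is to exploit the factorization $\nabla P = -\mathcal{R}\mathcal{R}^* f$, where $\mathcal{R} = \nabla\mathcal{L}^{-1/2}$ is the Riesz transform associated to $\mathcal{L}$ as recalled before Theorem~\ref{elliptic estimates in Besov}. The two crucial tools are the $L^q$-boundedness of $\mathcal{R}$ for $q \in (1, 2]$ (hence, by duality, of $\mathcal{R}^*$ for all $p \in [2, \infty)$), and the heat semigroup characterization of Besov norms (Lemma~\ref{characterization via heat kernel for divergence operator}) combined with the Gaussian upper bounds on $e^{-t\mathcal{L}}$ from Lemma~\ref{gauss property}. The decisive point is that $\mathcal{R}$ itself is \emph{not} bounded on $L^{p_0}$ for large $p_0$; this is precisely why a loss of regularity is unavoidable at this stage, since the gap between $L^2$ and $L^{p_0}$ must be bridged by Gaussian smoothing of the semigroup, which pins down the scaling condition $s_0 - 3/p_0 = s - 3/p$.

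Concretely, I would first use Lemma~\ref{characterization via heat kernel for divergence operator}, extended to the negative regularity $s_0$ via the ball-mean-difference equivalence of Lemma~\ref{characterization of Besov via ball mean difference} together with Lemma~\ref{characterization via classic heat kernel} (as indicated in the remark following Lemma~\ref{characterization via heat kernel for divergence operator}), to rewrite
\[
\|\nabla P\|_{\dot{B}_{p_0,r}^{s_0}}
\simeq
\Bigl\| t^{k-s_0/2}\,\|\mathcal{L}^k e^{-t\mathcal{L}}\nabla P\|_{p_0} \Bigr\|_{L^r(\R_+;\, dt/t)}
\]
for a sufficiently large $k$. Next, using the identity $\mathcal{L}^{-1}=\int_0^\infty e^{-s\mathcal{L}}\,ds$ and the factorization $\divg f = \mathcal{L}^{1/2}\mathcal{R}^* f$, one obtains the integral representation
\[
\nabla P = -\nabla\mathcal{L}^{-1}\divg f = -\int_0^\infty \nabla e^{-s\mathcal{L}}\mathcal{L}^{1/2}\mathcal{R}^* f\,ds,
\]
so each integrand is estimated in $L^{p_0}$ by routing through $L^2$: apply the Gaussian bound $\|\mathcal{L}^{1/2}e^{-s\mathcal{L}/2}\|_{L^p\to L^2}\lesssim s^{-1/2-\frac{3}{2}(1/p-1/2)}$ (together with the $L^p$-boundedness of $\mathcal{R}^*$) to bring the argument of $\nabla=\mathcal{R}\mathcal{L}^{1/2}$ into $L^2$, where $\mathcal{R}$ is bounded; then use a further Gaussian smoothing to lift from $L^2$ into $L^{p_0}$. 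The resulting two-variable kernel in $(s,t)$ is of Schur type and is controlled by Lemma~\ref{weighted estimates}; matching the exponents yields exactly the bound with $\|f\|_{\dot{B}_{p,r}^s}$ on the right, after one final application of Lemma~\ref{characterization via heat kernel for divergence operator} to $f$ to reconstitute its Besov norm.

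The main obstacle is the non-commutation of $\mathcal{R}$ (essentially $\nabla$) with the heat semigroup $e^{-s\mathcal{L}}$ induced by the variable coefficient $b$, which forbids any clean symbolic rearrangement and is what forces the routing through $L^2$. A related technical point is the range restriction $-s_0 > 1-\mu$ in the hypothesis: this is exactly the threshold where the Hölder exponent $\mu$ of the heat kernel (Lemma~\ref{gauss property}) ceases to provide enough smoothness for the gradient-type estimates on $e^{-s\mathcal{L}}$. Below this threshold, one genuinely needs more regularity of $b$, which will be brought in only during the subsequent iteration step culminating in Theorem~\ref{elliptic estimates in Besov}.
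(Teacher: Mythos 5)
Your plan goes off the rails at the very first step. Lemma~\ref{characterization via heat kernel for divergence operator} characterizes $\dot{B}_{p,q}^{s}$ via $e^{-t\mathcal{L}}$ only for $s\in(0,\mu)$, whereas here $s_0<\mu-1<0$ is \emph{always} negative, so that lemma never applies directly to $\|\nabla P\|_{\dot{B}_{p_0,r}^{s_0}}$. The ``fix'' you propose --- extending to negative regularity via the ball-mean-difference characterization of Lemma~\ref{characterization of Besov via ball mean difference} --- cannot work, since that lemma is likewise restricted to $s\in(0,1)$: ball mean differences simply cannot detect negative smoothness. This is precisely the difficulty that the paper's opening move is designed to circumvent: one factors $\nabla P=\nabla(\mathcal{L}^{-1/2}\mathcal{R}^{*}f)$ and uses the \emph{ordinary} Besov regularity shift for the gradient, $\|\nabla g\|_{\dot{B}_{p_0,r}^{s_0}}\simeq\|g\|_{\dot{B}_{p_0,r}^{s_0+1}}$, so that the object to be measured, $\mathcal{L}^{-1/2}\mathcal{R}^{*}f$, lives at regularity $s_0+1\in(0,\mu)$ where Lemma~\ref{characterization via heat kernel for divergence operator} does apply (with $k=1/2$), and the factor $(t\mathcal{L})^{1/2}$ in the characterization exactly cancels the $\mathcal{L}^{-1/2}$, leaving the benign quantity $\|e^{-t\mathcal{L}}\mathcal{R}^{*}f\|_{p_0}$. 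The gradient never re-enters.

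Your second structural choice compounds the problem. By expanding $\mathcal{L}^{-1}=\int_0^\infty e^{-s\mathcal{L}}\,ds$ you end up needing $L^{p}\to L^{p_0}$ bounds on compositions of the form $\mathcal{L}^{k}e^{-t\mathcal{L}}\nabla e^{-s\mathcal{L}}$, with a gradient sandwiched between two variable-coefficient semigroups. But the only kernel information available (Lemma~\ref{gauss property}) is Gaussian decay with $\mu$-H\"older continuity, $\mu<1$; nothing controls $\nabla_x K_t(x,y)$ pointwise, and $\nabla e^{-s\mathcal{L}}$ is in general unbounded on $L^{q}$ for $q$ outside $(1,2+\varepsilon)$. ``Routing through $L^2$'' does not rescue this: it would require the outer $\mathcal{L}^{k}e^{-t\mathcal{L}}$ to absorb a factor $\nabla=\mathcal{R}\mathcal{L}^{1/2}$ from the right while mapping into $L^{p_0}$, and one cannot commute $\nabla$ past $\mathcal{L}^{k}e^{-t\mathcal{L}}$. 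The paper instead applies the classical reproducing formula $f=\frac{1}{(k-1)!}\int_0^\infty(-\tau\Delta)^k e^{\tau\Delta}f\,\frac{d\tau}{\tau}$ to $f$, hits it with $e^{-t\mathcal{L}}\mathcal{R}^{*}$ (both of which have clean $L^p$--$L^{p_0}$ bounds via Gaussian decay and the $L^{p'}$-boundedness of $\mathcal{R}$ for $p'\in(1,2]$), and then closes with Lemma~\ref{weighted estimates} and Lemma~\ref{characterization via classic heat kernel}. So while you identified the right ingredients (Riesz transforms, Gaussian bounds, Schur test), you missed the two moves --- the regularity shift to $s_0+1>0$ and the elimination of the interior gradient --- that make the estimate go through.
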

\begin{proof}
Note that the assumptions on $(p,r,s)$ guarantee the existence of $(p_0,s_0)$. By Lemma \ref{basic properties of Besov spaces} {\rm (\romannumeral1)}, we may assume that $s_0>-1$ and $p_0<\infty$. First, by Lemma \ref{characterization via heat kernel for divergence operator}, we have
\begin{align*}
\|\nabla P\|_{\dot{B}_{p_0,r}^{s_0}}\simeq&\|\mathcal{L}^{-1/2}\mathcal{R}^{*}f\|_{\dot{B}_{p_0,r}^{s_0+1}}\\
\simeq&\left\|t^{-\frac12-\frac{s_0}{2}}\|(t\mathcal{L})^{\frac12}e^{-t\mathcal{L}}\mathcal{L}^{-1/2}\mathcal{R}^{*}f\|_{p_0}\right\|_{L^r(\R_+,\frac{dt}{t})}\\
=&\left\|t^{-\frac{s_0}{2}}\|e^{-t\mathcal{L}}\mathcal{R}^{*}f\|_{p_0}\right\|_{L^r(\R_+,\frac{dt}{t})}.
\end{align*}
Next, applying $e^{-t\mathcal{L}}\mathcal{R}^{*}$ to the reproducing formula 
\begin{align*}
f(x)=\frac{1}{(k-1)!}\int_0^\infty(-\tau\Delta)^ke^{\tau\Delta}f(x)\,\frac{d\tau}{\tau}\ \ \mathrm{with}\ \ k>\frac{s}{2},
\end{align*}
we write
\begin{align*}
e^{-t\mathcal{L}}\mathcal{R}^{*}f(x)=
\frac{1}{(k-1)!}\int_0^\infty e^{-t\mathcal{L}}\mathcal{R}^{*}e^{\frac{\tau}{2}\Delta}(-\tau\Delta)^ke^{\frac{\tau}{2}\Delta}f(x)\,\frac{d\tau}{\tau}.
\end{align*}
The $L^p-L^q$ estimates for heat semigroups and the boundedness of $\mathcal{R}^*$ from $L^p(\R^3,\R^3)$ to $L^p(\R^3)$ ($2\le p<\infty$) then imply that
\begin{align*}
\|e^{-t\mathcal{L}}\mathcal{R}^{*}f\|_{p_0}\le C
\int_0^\infty\left(\frac1t\wedge\frac1\tau\right)^{\frac{3}{2}(\frac1p-\frac{1}{p_0})}\|(\tau\Delta)^ke^{\tau\Delta}f\|_p\,\frac{d\tau}{\tau}.
\end{align*}
Thus,
\begin{align*}
t^{-\frac{s_0}{2}}\|e^{-t\mathcal{L}}\mathcal{R}^{*}f\|_{p_0}\le C
\int_0^\infty \left(\frac{\tau}{t}\right)^{s_0/2}
\left(\frac{\tau}{t}\wedge1\right)^{\frac{s-s_0}{2}}\tau^{-s/2}\|(\tau\Delta)^ke^{\tau\Delta}f\|_p
\,\frac{d\tau}{\tau}.
\end{align*}
Note that $s$ and $-s_0$ are positive numbers. One can readily check that
\begin{align*}
\sup_{\tau>0}\int_0^\infty\left(\frac{\tau}{t}\right)^{s_0/2}\left(\frac{\tau}{t}\wedge1\right)^{\frac{s-s_0}{2}}\,\frac{dt}{t}+\sup_{t>0}\int_0^\infty\left(\frac{\tau}{t}\right)^{s_0/2}\left(\frac{\tau}{t}\wedge1\right)^{\frac{s-s_0}{2}}\,\frac{d\tau}{\tau}\le C.
\end{align*}
Finally, we apply Lemma \ref{weighted estimates} and Lemma \ref{characterization via classic heat kernel} to conclude the proof.
\end{proof}

In the second step, if the coefficient $b$ has suitable regularity, the loss of regularity of $\nabla P$ can be recovered using an iteration technique in the spirit of De Giorgi-Nash-Moser. To this end, we need the following commutator estimates that can help us gain regularity.

\begin{lemma}\label{commutator estimates}
Suppose that $r\in[1,\infty]$, $1\le p_2<p_1\le\infty$, $q\in[1,\infty)$, $(s_1,s_2)\in\R^2$, $s_1-\frac{3}{p_1}=s_2-\frac{3}{p_2}$, and
\begin{align*}
\frac{1}{p_2}\le\frac{1}{p_1}+\frac{1}{q},\  \frac{1}{p_2}<\frac{1}{p_1}+\frac{1}{3},\  \frac{3}{q\vee p_2}>s_2,\ \ \frac{3}{q\vee p'_{1}}>-s_1.
\end{align*}
If $a\in L^\infty(\R^3)$ and $\nabla a\in\dot{B}_{q,\infty}^{3/q-1}(\R^3)$, then there exists a constant $C$ such that
\begin{align*}
\left\|\left(2^{js_2}\|[\dot{\Delta}_j,a]f\|_{p_2}\right)_j\right\|_{l_j^r}\le C\|a\|_{\dot{B}_{q,\infty}^{3/q}}\|f\|_{\dot{B}_{p_1,r}^{s_1}},
\end{align*}
where $[\dot{\Delta}_j,a]f$ denotes the commutator $\dot{\Delta}_j(af)-a\dot{\Delta}_jf$. 
\end{lemma}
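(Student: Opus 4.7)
The strategy is to apply Bony's paraproduct decomposition $af = \dot{T}_a f + \dot{T}_f a + \dot{R}(a,f)$ to split the commutator into three pieces,
\[
[\dot{\Delta}_j, a]f = \bigl(\dot{\Delta}_j \dot{T}_a f - \dot{T}_a \dot{\Delta}_j f\bigr) + \bigl(\dot{\Delta}_j \dot{T}_f a - \dot{T}_{\dot{\Delta}_j f} a\bigr) + \bigl(\dot{\Delta}_j \dot{R}(a,f) - \dot{R}(a,\dot{\Delta}_j f)\bigr),
\]
which I denote $I_j$, $II_j$, and $III_j$. Each is to be estimated in $L^{p_2}$, and the weighted sequence $(2^{js_2}\|\cdot\|_{p_2})_j$ summed in $\ell^r$, with the scaling identity $s_1-3/p_1 = s_2-3/p_2$ used to balance the various powers of $2^k$ that arise.

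For the \emph{true commutator} $I_j$, quasi-orthogonality restricts the double sum to $|k-j|\le 4$. Writing $[\dot{\Delta}_j, \dot{S}_{k-1}a]g$ as a convolution operator with kernel $h_j(x-y)(\dot{S}_{k-1}a(x) - \dot{S}_{k-1}a(y))$ and Taylor-expanding $\dot{S}_{k-1}a$ to first order produces the crucial derivative gain
\[
\bigl\|[\dot{\Delta}_j, \dot{S}_{k-1}a]g\bigr\|_{p_2} \lesssim 2^{-j}\,\|\nabla \dot{S}_{k-1}a\|_{q_1}\,\|g\|_{p_1},\qquad \tfrac{1}{q_1} = \tfrac{1}{p_2} - \tfrac{1}{p_1}.
\]
The Hölder hypothesis $1/p_2 \le 1/p_1 + 1/q$ forces $q_1 \ge q$ while $1/p_2 < 1/p_1 + 1/3$ forces $q_1 > 3$; a Bernstein embedding then transfers $\nabla a \in \dot{B}_{q,\infty}^{3/q-1}$ into the right $L^{q_1}$ norm at the cost of a controlled power of $2^k$, after which $\|\dot{\Delta}_k f\|_{p_1}\lesssim c_k 2^{-ks_1}\|f\|_{\dot{B}_{p_1,r}^{s_1}}$ closes the estimate.

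For $II_j$, one expands it as a finite sum (indices $|k-j|\le C$) of terms of the form $\dot{\Delta}_j(\dot{S}_{k-1}f\cdot\dot{\Delta}_k a)$ and $\dot{S}_{k-1}\dot{\Delta}_j f\cdot\dot{\Delta}_k a$, handled by Hölder together with $\|\dot{\Delta}_k a\|_q \lesssim 2^{-3k/q}\|a\|_{\dot{B}_{q,\infty}^{3/q}}$; the low-frequency cutoff $\dot{S}_{k-1}f$ is bounded by $\sum_{m\le k-2}\|\dot{\Delta}_m f\|_{p_1}$, whose summability (after embedding to match exponents) corresponds exactly to the hypothesis $s_2 < 3/(q\vee p_2)$ via the scaling identity. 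The remainder $III_j$ is treated through $\dot{R}(a,f) = \sum_k \dot{\Delta}_k a\,\widetilde{\dot{\Delta}}_k f$; only indices $k\ge j-3$ contribute, and after Hölder plus Bernstein (permitted by $1/p_2 < 1/p_1 + 1/3$) the remaining geometric series in $k$ converges precisely under the dual endpoint $3/(q\vee p'_1) > -s_1$.

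Assembling the three pieces and summing on $j$ with weight $2^{js_2}$ produces discrete convolutions in $j,k$ with sequences in $\ell^r$, so a discrete Young-type inequality delivers the claimed $\ell^r$ bound; the factor $\|a\|_{\dot{B}_{q,\infty}^{3/q}}$ appears naturally from the estimates on $\nabla \dot{S}_{k-1}a$ and $\dot{\Delta}_k a$, with $a\in L^\infty$ covering the degenerate low-frequency cases. I expect the main obstacle to be the remainder $III_j$: its frequency sum extends to arbitrarily high $k\ge j$, and convergence rests on the dual inequality $3/(q\vee p'_1)>-s_1$ used essentially as an equality—this is the least standard of the hypotheses, and bookkeeping the precise exponents there, especially when $p_2$ is close to its lower bound relative to $p_1$ and $q$, is where the proof becomes delicate.
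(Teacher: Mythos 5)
Your proposal matches the paper's proof in all essentials: both split the commutator via Bony's decomposition into the true commutator $[\dot{\Delta}_j,\dot{T}_a]f$ (handled by the kernel/mean-value argument with the $2^{-j}\|\nabla\dot S_{k-1}a\|$ gain) plus the paraproduct and remainder pieces (handled by H\"older and Bernstein), with the same hypotheses doing the same work. The paper groups the last three pieces as $\dot{\Delta}_j(\dot T_f a)$, $\dot{\Delta}_j(\dot R(a,f))$, and $\dot T'_{\dot\Delta_j f}a$ rather than your $II_j$, $III_j$, but this is purely bookkeeping, not a different argument. One small slip: you call $II_j$ a finite sum in $|k-j|\le C$, but the $\dot T_{\dot\Delta_j f}a$ part runs over all $k\gtrsim j$ and its convergence (via $\|\dot\Delta_k a\|_q\lesssim 2^{-3k/q}$) is exactly where the hypothesis $q<\infty$ enters, a point the paper flags explicitly; similarly, the Bernstein step in $III_j$ uses $1/p_2\le 1/p_1+1/q$ rather than the strict $1/p_2<1/p_1+1/3$ (which is only needed in $I_j$).
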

\begin{proof}
This type of estimate is nowadays classical (see, e.g., {\cite[Section~2.10]{BCD}}). We give proof here for the sake of completeness.

By Bony's paraproduct, we split the commutator $[\dot{\Delta}_j,a]f$ into four terms:
\begin{align}\label{split commutator}
[\dot{\Delta}_j,\dot{T}_a]f+\dot{\Delta}_j(\dot{T}_{f}a)+\dot{\Delta}_j(\dot{R}(a,f))-\dot{T}_{\dot{\Delta}_jf}^{'}a.
\end{align}
Many terms in the summation can be canceled out because of the frequency localization of the dyadic blocks. Specifically, the first term can be expressed as
\begin{align*}
[\dot{\Delta}_j,\dot{T}_a]f=\sum_{|j'-j|\le4}2^{nj}\int h(2^jy)\big(\dot{S}_{j'-1}a(x-y)-\dot{S}_{j'-1}a(x)\big)\dot{\Delta}_{j'}f(x-y)\,dy
\end{align*}
Choosing $p$ such that $\frac{1}{p_2}=\frac{1}{p}+\frac{1}{p_1}$, we use H\"{o}lder's inequality to see
\begin{align*}
\|[\dot{\Delta}_j,\dot{T}_a]f\|_{p_2}\lesssim& \sum_{|j'-j|\le4}2^{nj}\int |h(2^jy)|\|\dot{S}_{j'-1}a(\cdot-y)-\dot{S}_{j'-1}a(\cdot)\|_{p}\|\dot{\Delta}_{j'}f\|_{p_1}\,dy\\
\lesssim&2^{-j}\sum_{|j'-j|\le4}\|\nabla\dot{S}_{j'-1}a\|_{p}\|\dot{\Delta}_{j'}f\|_{p_1}.
\end{align*}
Noticing that $p\ge q$ and $1-\frac{3}{p}>0$, we use Lemma \ref{Bernstein Lemma} to get
\begin{align*}
\|\nabla\dot{S}_{j'-1}a\|_{p}\lesssim&\sum_{k\le j'-2}2^{3k(1/q-1/p)}\|\dot{\Delta}_{k}\nabla a\|_{q}\\\lesssim&\sum_{k\le j'-2}2^{k(1-3/p)}\|\nabla a\|_{\dot{B}_{q,\infty}^{3/q-1}}\lesssim2^{j'(1-3/p)}\|a\|_{\dot{B}_{q,\infty}^{3/q}}.
\end{align*}
Consequently,
\begin{align}\label{commutator estimate 1}
\|[\dot{\Delta}_j,\dot{T}_a]f\|_{p_2}\lesssim c_{j,r}2^{-js_2}\|a\|_{\dot{B}_{q,\infty}^{3/q}}\|f\|_{\dot{B}_{p_1,r}^{s_1}}.
\end{align}

For the second term in \eqref{split commutator}, if $q\ge p_2$, we redefine $p$ by $\frac{1}{p_2}=\frac{1}{p}+\frac{1}{q}$. Again, applying H\"{o}lder's inequality and Lemma \ref{Bernstein Lemma}, we get
\begin{align}\label{commutator estimate 2}
\|\dot{\Delta}_j(\dot{T}_{f}a)\|_{p_2}\lesssim \sum_{|j'-j|\le4}\|\dot{\Delta}_{j'}a\|_{q}\|\dot{S}_{j'-1}f\|_{p}\lesssim c_{j,r}2^{-js_2}\|a\|_{\dot{B}_{q,\infty}^{3/q}}\|f\|_{\dot{B}_{p_1,r}^{s_1}},
\end{align}
where in the second inequality we need the fact that $p\ge p_1$ and $\frac{3}{q}>s_2$. In the case $q\le p_2$, thanks to Lemma \ref{basic properties of Besov spaces} {\rm (\romannumeral1)}, the same result stays true whenever $\frac{3}{p_2}>s_2$.

For the third term in \eqref{split commutator}, we first assume $\frac{1}{p_1}+\frac{1}{q}\le1$ and redefine $p$ by $\frac{1}{p}=\frac{1}{p_1}+\frac{1}{q}$. Using Lemma \ref{Bernstein Lemma} and H\"{o}lder's inequality, and noticing that $p\le p_2$ and $\frac{3}{q}+s_1>0$, we obtain
\begin{align}\label{commutator estimate 3}
\|\dot{\Delta}_j(\dot{R}(a,f))\|_{p_2}\lesssim2^{3j(\frac1p-\frac{1}{p_2})}\sum_{j'\ge j-3}\|\dot{\Delta}_{j'}a\|_{q}\|\widetilde{\dot{\Delta}}_{j'}f\|_{p_1}\lesssim c_{j,r}2^{-js_2}\|a\|_{\dot{B}_{q,\infty}^{3/q}}\|f\|_{\dot{B}_{p_1,r}^{s_1}}.
\end{align}
If $\frac{1}{p_1}+\frac{1}{q}\ge1$, the result still holds provided that $\frac{3}{p'_1}+s_1>0$.

For the last term in \eqref{split commutator}, redefining $p$ by $\frac{1}{p_2}=\frac{1}{q}+\frac{1}{p}$ if $q\ge p_2$, we see that
\begin{align}\label{commutator estimate 4}
\|\dot{T}_{\dot{\Delta}_jf}^{'}a\|_{p_2}\lesssim \sum_{j'\ge j-2}\|\dot{\Delta}_{j'}a\|_{q}\|\dot{\Delta}_{j}f\|_{p}\lesssim c_{j,r}2^{-js_2}\|a\|_{\dot{B}_{q,\infty}^{3/q}}\|f\|_{\dot{B}_{p_1,r}^{s_1}},
\end{align}
where in the second inequality we need the condition $q<\infty$. If $p_2\ge q$, the same result holds under the assumption that $p_2<\infty$. 

Putting \eqref{commutator estimate 1}-\eqref{commutator estimate 4} together finishes the proof.
\end{proof}
\begin{remark}
The technical assumption $a\in L^\infty(\R^3)$ is needed in order for the product $af$ to be well-defined via paraproducts.
\end{remark}

With the above commutator estimates at our disposal, we are now able to prove the following elliptic regularity which will be used for iteration.

\begin{lemma}\label{Iteration}
Suppose that $r\in[1,\infty]$, $2\le p_2<p_1\le\infty$, $q\in[1,\infty)$, $(s_1,s_2)\in\R^2$, $s_1-\frac{3}{p_1}=s_2-\frac{3}{p_2}$, and
\begin{align*}
\frac{1}{p_2}\le\frac{1}{p_1}+\frac{1}{q},\  \frac{1}{p_2}<\frac{1}{p_1}+\frac{1}{3},\  \frac{3}{q\vee p_2}>s_2,\ \ \frac{3}{q\vee p'_{1}}>-s_1.
\end{align*}
Let $b$ satisfy $\nabla b\in\dot{B}_{q,\infty}^{3/q-1}(\R^3)$. If in addition, $f\in L^2(\R^3)\cap\dot{B}_{p_2,r}^{s_2}(\R^3)$ and $\nabla P\in \dot{B}_{p_1,r}^{s_1}(\R^3)$, then there exists a constant $C>0$ such that
\begin{align*}
\|\nabla P\|_{\dot{B}_{p_2,r}^{s_2}}\le C\|f\|_{\dot{B}_{p_2,r}^{s_2}}+C\|b\|_{\dot{B}_{q,\infty}^{3/q}}\|\nabla P\|_{\dot{B}_{p_1,r}^{s_1}}.
\end{align*}
\end{lemma}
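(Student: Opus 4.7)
My plan is to apply $\dot{\Delta}_j$ to the elliptic equation $\divg(b\nabla P)=\divg f$ and exploit the identity $\dot{\Delta}_j(b\nabla P)=b\nabla\dot{\Delta}_j P+[\dot{\Delta}_j,b]\nabla P$ to derive
$$
-\mathcal{L}_b(\dot{\Delta}_j P)=\divg\bigl(\dot{\Delta}_j f-[\dot{\Delta}_j,b]\nabla P\bigr).
$$
Thus each dyadic block $\dot{\Delta}_jP$ satisfies the same type of divergence-form elliptic equation, with the commutator appearing as an additional source. This reduces the problem to a sequence of block-wise elliptic estimates, coupled by the commutator.

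Next, I would invoke Lemma~\ref{Initial iteration} on this block-wise equation. Choosing the input-output pair $(p,s)\mapsto(p_0,s_0)$ so that $p_0=p_2$ and $s_0$ lies just below $\mu-1$, and using that $\nabla\dot{\Delta}_jP$ is spectrally localized in the annulus $\{|\xi|\sim2^j\}$, one obtains the norm equivalence $\|\nabla\dot{\Delta}_j P\|_{\dot{B}^{s_0}_{p_2,r}}\simeq 2^{js_0}\|\nabla\dot{\Delta}_jP\|_{p_2}$. After a Bernstein estimate to convert the Besov norm of $\dot{\Delta}_jf$ into a suitably weighted $L^{p_2}$ norm, this should yield a block-wise bound of the form
$$
\|\nabla\dot{\Delta}_jP\|_{p_2}\lesssim \|\dot{\Delta}_jf\|_{p_2}+\|[\dot{\Delta}_j,b]\nabla P\|_{p_2},
$$
with the scaling relation $s_1-3/p_1=s_2-3/p_2$ ensuring that all powers of $2^j$ cancel.

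Multiplying by $2^{js_2}$ and taking the $\ell^r$-norm in $j$, the spectrally localized piece assembles into $\|f\|_{\dot{B}^{s_2}_{p_2,r}}$. For the commutator term, the hypotheses of the lemma have been tailored so that Lemma~\ref{commutator estimates} applies and yields
$$
\Bigl\|\bigl(2^{js_2}\|[\dot{\Delta}_j,b]\nabla P\|_{p_2}\bigr)_j\Bigr\|_{\ell^r}\le C\|b\|_{\dot{B}^{3/q}_{q,\infty}}\|\nabla P\|_{\dot{B}^{s_1}_{p_1,r}},
$$
which is precisely the structure needed to close the estimate.

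The principal obstacle is the block-wise $L^{p_2}$ bound. Since $\nabla\mathcal{L}_b^{-1}\divg$ is not a Calder\'on--Zygmund operator and is bounded on $L^{p_2}$ only for $p_2$ near $2$, no pointwise-in-$j$ $L^{p_2}$ estimate is available by general theory. The way out is to accept the loss of regularity produced by Lemma~\ref{Initial iteration}, compensate it by the spectral localization of $\dot{\Delta}_jP$, and absorb the resulting power of $2^j$ using the scaling identity. This interplay between Bernstein inequalities, the commutator lemma, and the loss encoded in Lemma~\ref{Initial iteration} is the technical heart of the argument.
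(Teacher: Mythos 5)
Your overall plan—localize in frequency, exhibit a block-wise divergence-form equation $-\mathcal{L}_b(\dot{\Delta}_jP)=\divg(\dot{\Delta}_jf-[\dot{\Delta}_j,b]\nabla P)$, reduce to the block-wise bound $\|\dot{\Delta}_j\nabla P\|_{p_2}\lesssim\|\dot{\Delta}_jf\|_{p_2}+\|[\dot{\Delta}_j,b]\nabla P\|_{p_2}$, then sum using Lemma~\ref{commutator estimates}—matches the skeleton of the paper's argument. But the way you propose to obtain the block-wise $L^{p_2}$ bound has a genuine gap. To apply Lemma~\ref{Initial iteration} with output exponent $p_0=p_2$, the input must sit at some exponent $p<p_2$ with $s>s_0$ (that is what the relation $s_0-3/p_0=s-3/p$ forces). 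You then need to estimate the input side, i.e.\ $\|\dot{\Delta}_jf-[\dot{\Delta}_j,b]\nabla P\|_{\dot{B}^{s}_{p,r}}$, in terms of $\|\dot{\Delta}_jf\|_{p_2}$ and the commutator norm. For the frequency-localized piece, $\|\dot{\Delta}_jf\|_{\dot{B}^{s}_{p,r}}\simeq 2^{js}\|\dot{\Delta}_jf\|_{p}$; to convert $\|\dot{\Delta}_jf\|_{p}$ with $p<p_2$ into $\|\dot{\Delta}_jf\|_{p_2}$ you would need a reverse Bernstein inequality, and Bernstein only goes from smaller to larger Lebesgue exponent. There is no estimate $\|\dot{\Delta}_jf\|_{p}\lesssim 2^{-3j(1/p-1/p_2)}\|\dot{\Delta}_jf\|_{p_2}$ for $p<p_2$; a wave packet of frequency $\sim 2^j$ and spatial extent $O(1)$ violates it. The commutator term is worse: $[\dot{\Delta}_j,b]\nabla P$ is not spectrally localized, so its $\dot{B}^{s}_{p,r}$-norm is not a single weighted Lebesgue norm, and no cancellation of powers of $2^j$ is available for it.

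The paper sidesteps this entirely. It does not invoke Lemma~\ref{Initial iteration} at the block level; instead it tests the weak formulation with $v=\dot{\Delta}_j\bigl(|\dot{\Delta}_jP|^{p_2-2}\dot{\Delta}_jP\bigr)$, uses self-adjointness of $\dot{\Delta}_j$ to move it onto $b\nabla P$ and $f$, and splits $\dot{\Delta}_j(b\nabla P)=b\dot{\Delta}_j\nabla P+[\dot{\Delta}_j,b]\nabla P$. The resulting identity
\begin{align*}
\int_{\R^3} b\,|\dot{\Delta}_j\nabla P|^2\,|\dot{\Delta}_jP|^{p_2-2}\,dx
=\int_{\R^3}\bigl(\dot{\Delta}_jf-[\dot{\Delta}_j,b]\nabla P\bigr)\cdot\dot{\Delta}_j\nabla P\,|\dot{\Delta}_jP|^{p_2-2}\,dx
\end{align*}
together with ellipticity $b\ge m$, the lower bound from {\cite[Lemma~A.5]{danchin cpde 2001}} (a Poincar\'e-type inequality for frequency-localized functions), H\"older, and Bernstein yields $\|\dot{\Delta}_j\nabla P\|_{p_2}\lesssim\|\dot{\Delta}_jf\|_{p_2}+\|[\dot{\Delta}_j,b]\nabla P\|_{p_2}$ directly, without loss and without any recourse to the lossy estimate of Lemma~\ref{Initial iteration}. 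This is the missing idea: the block-wise bound is an $L^{p_2}$ energy estimate exploiting the sign of $b$, not a consequence of the weaker elliptic lemma plus Bernstein. I recommend replacing your step two with this energy identity; the remainder of your argument (multiplying by $2^{js_2}$, taking $\ell^r$, and applying Lemma~\ref{commutator estimates}) is then correct and identical to the paper's.
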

\begin{proof}
Testing $v=\dot{\Delta}_j(|\dot{\Delta}_jP|^{p_2-2}\dot{\Delta}_jP)\in H^\infty(\R^3)$ in the equation 
\begin{align*}
\int_{\R^3}b\nabla P\cdot\nabla v\,dx=\int_{\R^3}f\cdot\nabla v\,dx, \end{align*}
we have
\begin{align*}
\int_{\R^3}b|\dot{\Delta}_j\nabla P|^2|\dot{\Delta}_jP|^{p_2-2}\,dx=\int_{\R^3}(\dot{\Delta}_jf-[\dot{\Delta}_j,b]\nabla P)\cdot\dot{\Delta}_j\nabla P|\dot{\Delta}_jP|^{p_2-2}\,dx.
\end{align*}
Applying {\cite[Lemma~A.5]{danchin cpde 2001}}, Lemma \ref{Bernstein Lemma} and H\"{o}lder's inequality, we get
\begin{align*}
\|\dot{\Delta}_j\nabla P\|_{p_2}\lesssim \|\dot{\Delta}_jf\|_{p_2}+\|[\dot{\Delta}_j,b]\nabla P\|_{p_2}.
\end{align*}
Multiplying both sides by $2^{js_2}$ and taking $l^r$ norm with respect to $j$, we obtain
\begin{align*}
\|\nabla P\|_{\dot{B}_{p_2,r}^{s_2}}\lesssim\|f\|_{\dot{B}_{p_2,r}^{s_2}}+\left\|\left(2^{js_2}\|[\dot{\Delta}_j,b]\nabla P\|_{p_2}\right)_j\right\|_{l_j^r}.
\end{align*}
The desired result then follows from Lemma \ref{commutator estimates}.
\end{proof}

We are now in a position to complete the proof of Theorem \ref{elliptic estimates in Besov}.
\begin{proof}[Proof of Theorem \ref{elliptic estimates in Besov}] We start with choosing $s_0$ and $p_0$. Since $(-\frac{3}{q})\vee(s-\frac{3}{p})<\mu-1$, we can choose an $s_0$ between both sides of this inequality. Define $p_0$ by $s_0-\frac{3}{p_0}=s-\frac{3}{p}$, then $p_0\in(p,\infty)$. By Lemma \ref{Initial iteration}, we have $\|\nabla P\|_{\dot{B}_{p_0,r}^{s_0}}\le C\|f\|_{\dot{B}_{p,r}^{s}}$. Next, we shall choose $(p_1,s_1)\in[p,p_0)\times(s_0,s]$ that satisfies the assumptions in Lemma \ref{Iteration}. It is not difficult to see that those assumptions can be reduced to $s_1-\frac{3}{p_1}=s-\frac{3}{p}$ and 
\begin{align*}
\frac{1}{p_1}\le\frac{1}{p_0}+\frac{1}{q},\  \frac{1}{p_1}<\frac{1}{p_0}+\frac{1}{3}.
\end{align*}
If $(p_1,s_1)=(p,s)$ satisfies the above assumptions, we are done by using Lemma \ref{Iteration}. Otherwise, we define $p_1$ by $\frac{1}{p_1}=\frac{1}{p_0}+\frac{1}{2(q\vee 3)}$, and get
\begin{align*}
\|\nabla P\|_{\dot{B}_{p_1,r}^{s_1}}\lesssim \|f\|_{\dot{B}_{p_1,r}^{s_1}}+\|b\|_{\dot{B}_{q,\infty}^{3/q}}\|\nabla P\|_{\dot{B}_{p_0,r}^{s_0}}\lesssim \|f\|_{\dot{B}_{p,r}^{s}}.
\end{align*}
The $(p_k,s_k)$ is defined by $\frac{1}{p_k}=\frac{1}{p_0}+\frac{k}{2(q\vee 3)}$ and $s_k-\frac{3}{p_k}=s-\frac{3}{p}$. So the iteration scheme will end in a finite number of steps. This completes the proof.
\end{proof}

\begin{remark}
If we know a priori that the norm $\|\nabla P\|_{\dot{B}_{p,r}^{s}}$ is finite, then the iteration process is not needed. We apply Lemma \ref{Iteration} only once to get
\begin{align*}
\|\nabla P\|_{\dot{B}_{p,r}^{s}}\le C\|f\|_{\dot{B}_{p,r}^{s}}+C\|b\|_{\dot{B}_{q,\infty}^{3/q}}\|\nabla P\|_{\dot{B}_{p_1,r}^{s_1}}
\end{align*}
with some $s_1\in(0,s)$. To complete the proof, we use Lemma \ref{basic properties of Besov spaces} (\romannumeral2) and Young's inequality. In this way, we can also remove the technical assumption that $q<\frac{3}{1-\mu}$. 
\end{remark}

Given Theorem \ref{elliptic estimates in Besov}, we are now able to identify $\mathcal{G}$ and its domain $D(\mathcal{G})$. In order to avoid some unpleasant technicalities, we will simply assume that $b-1\in\dot{B}_{q,1}^{3/q}(\R^3)$. Then by Lemma \ref{basic properties of Besov spaces} (\romannumeral5), $\rho-1=(1-b)/b$ satisfies the same assumption.
\begin{lemma}\label{identify G in besov}
Let $0<s<\frac{1}{2}+\mu$ and $1\le q<\frac{3}{s\vee(1-\mu)}$. Assume that $b-1\in\dot{B}_{q,1}^{3/q}(\R^3)$. Then $\mathcal{G}=\mathcal{G}_{s,1}$ coincides with the operator $\Tilde{\mathcal{G}}$ defined by
\begin{align*}
\Tilde{\mathcal{G}}=b\mathcal{P}_b\Delta:\mathcal{P}\dot{B}_{2,1}^{s}\cap\mathcal{P}\dot{B}_{2,1}^{s+2}\subset\mathcal{P}\dot{B}_{2,1}^{s}\rightarrow\mathcal{P}\dot{B}_{2,1}^{s}.
\end{align*}
\end{lemma}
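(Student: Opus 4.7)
The plan is to compare the abstract generator $\mathcal{G}$ (defined via the extrapolated semigroup $\mathcal{T}(t)$) with the concrete differential operator $\tilde{\mathcal{G}}$, by first showing $\mathcal{G}\subset\tilde{\mathcal{G}}$ via closedness of $\tilde{\mathcal{G}}$ applied to the core $\mathscr{C}$ from Lemma~\ref{core for G}, and then showing $D(\tilde{\mathcal{G}})\subset D(\mathcal{G})$ using that $\mathcal{G}$ is a $C_0$-semigroup generator together with injectivity of $\lambda-\tilde{\mathcal{G}}$.

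First I would verify that $b\mathcal{P}_b$ is a bounded operator on $\mathcal{P}\dot B_{2,1}^{s}$. Indeed, by the very definition \eqref{Hodge b operator}, $\mathcal{Q}_bf=\nabla\Psi$ with $-\mathcal{L}_b\Psi=\divg(bf)$. The hypotheses $s\in(0,\tfrac12+\mu)$ and $\tfrac{3}{q}>s\vee(1-\mu)$ place us exactly within the range of Theorem~\ref{elliptic estimates in Besov} with $p=2$, $r=1$, giving $\|\mathcal{Q}_bf\|_{\dot B_{2,1}^{s}}\lesssim\|bf\|_{\dot B_{2,1}^{s}}\lesssim\|f\|_{\dot B_{2,1}^{s}}$, where the second inequality uses the product law for multiplication by $b-1\in\dot B_{q,1}^{3/q}$. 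Hence $\mathcal{P}_b=I-\mathcal{Q}_b$, and then $b\mathcal{P}_b$, are bounded on $\dot B_{2,1}^{s}$. Consequently $\tilde{\mathcal{G}}=b\mathcal{P}_b\Delta$ acts continuously from the Banach space $\mathcal{P}\dot B_{2,1}^{s}\cap\mathcal{P}\dot B_{2,1}^{s+2}$ into $\mathcal{P}\dot B_{2,1}^{s}$, so that its graph is closed in $\mathcal{P}\dot B_{2,1}^{s}\times\mathcal{P}\dot B_{2,1}^{s}$.

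Next I would check $\mathscr{C}\subset D(\tilde{\mathcal{G}})$ with $\tilde{\mathcal{G}}|_{\mathscr{C}}=\mathcal{S}|_{\mathscr{C}}$. The embedding $\mathcal{P}H^2\hookrightarrow\mathcal{P}\dot B_{2,1}^{s}$ follows from a dyadic Bernstein decomposition since $s\in(0,2)$, while $\Delta u=\mathcal{P}\rho(\mathcal{S}u)\in\dot B_{2,1}^{s}$ by Lemma~\ref{basic properties of P and S}(ii) combined with boundedness of the zeroth-order multiplier $\mathcal{P}$ and of multiplication by $\rho-1\in\dot B_{q,1}^{3/q}$ on $\dot B_{2,1}^{s}$. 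Since $\mathscr{C}$ is a core for $\mathcal{G}$, for any $u\in D(\mathcal{G})$ there are $u_n\in\mathscr{C}$ with $u_n\to u$ and $\mathcal{G}u_n=\tilde{\mathcal{G}}u_n\to\mathcal{G}u$ in $\mathcal{P}\dot B_{2,1}^{s}$; closedness of $\tilde{\mathcal{G}}$ then yields $u\in D(\tilde{\mathcal{G}})$ and $\tilde{\mathcal{G}}u=\mathcal{G}u$, hence $\mathcal{G}\subset\tilde{\mathcal{G}}$.

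For the reverse inclusion, fix $\lambda>0$. As $\mathcal{G}$ generates a bounded $C_0$ semigroup, $\lambda I-\mathcal{G}:D(\mathcal{G})\to\mathcal{P}\dot B_{2,1}^{s}$ is a bijection. Given $u\in D(\tilde{\mathcal{G}})$, pick $v\in D(\mathcal{G})\subset D(\tilde{\mathcal{G}})$ with $(\lambda I-\mathcal{G})v=(\lambda I-\tilde{\mathcal{G}})u$; then $(\lambda I-\tilde{\mathcal{G}})(u-v)=0$, so it suffices to show $\lambda I-\tilde{\mathcal{G}}$ is injective. This is the main obstacle. My approach is to rewrite $\lambda w=b\mathcal{P}_b\Delta w$ via the Hodge decomposition as $-\Delta w+\nabla P'+\lambda\rho w=0$ with $\divg w=0$, to interpolate $w$ from $\mathcal{P}\dot B_{2,1}^{s}\cap\mathcal{P}\dot B_{2,1}^{s+2}$ into $\dot H^{1}$ (through $\dot B_{2,1}^{1}\hookrightarrow\dot H^{1}$), and to test against $w$ (legitimizing the pairing by a density argument with smooth divergence-free approximations) to obtain $\|\nabla w\|^{2}+\lambda\|\sqrt{\rho}\,w\|^{2}=0$, which forces $w=0$. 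The delicate point is that elements of $\dot B_{2,1}^{s}$ with $s>0$ need not lie in $L^{2}$, so the term $\|\sqrt{\rho}\,w\|^{2}$ must be controlled with some care, most likely by a Littlewood--Paley truncation of $w$ and passage to the limit using the extra regularity $w\in\dot B_{2,1}^{s+2}$.
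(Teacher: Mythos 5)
Your first half (boundedness of $b\mathcal{P}_b$ via Theorem \ref{elliptic estimates in Besov}, closedness of $\tilde{\mathcal{G}}$, $\mathscr{C}\subset D(\tilde{\mathcal{G}})$ with agreement on $\mathscr{C}$, and then $\mathcal{G}\subset\tilde{\mathcal{G}}$ by the core property of $\mathcal{G}$ plus closedness of $\tilde{\mathcal{G}}$) is correct and is essentially the paper's argument. For the reverse inclusion, however, you diverge: the paper shows that $\mathscr{C}$ is also dense in $D(\tilde{\mathcal{G}})$ for the graph norm (since $b\mathcal{P}_b$ is boundedly invertible on $\mathcal{P}\dot B_{2,1}^{s}$ with inverse $\mathcal{P}\rho$, the graph norm of $\tilde{\mathcal{G}}$ is equivalent to $\|\cdot\|_{\dot B_{2,1}^{s}}+\|\cdot\|_{\dot B_{2,1}^{s+2}}$, and $D(\tilde{\mathcal{G}})$ is the inhomogeneous space $\mathcal{P}B_{2,1}^{s+2}$, in which $\mathscr{C}$ is dense by spectral truncation), so that both closed operators are the closures of their common restriction to $\mathscr{C}$ and hence equal. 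This avoids any resolvent or injectivity argument.

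The genuine gap in your route is the injectivity of $\lambda I-\tilde{\mathcal{G}}$. Your energy identity $\|\nabla w\|^{2}+\lambda\|\sqrt{\rho}\,w\|^{2}=0$ requires $w\in L^{2}$, but an element $w\in\dot B_{2,1}^{s}\cap\dot B_{2,1}^{s+2}$ with $s>0$ need not be square-integrable: interpolation only yields $w\in\dot B_{2,1}^{\sigma}$ for $\sigma\in[s,s+2]$, and $0$ never lies in this range (likewise $\nabla w\in L^{2}$ fails when $s>1$, which is allowed since $s$ may approach $\tfrac12+\mu\le\tfrac32$). The extra regularity $\dot B_{2,1}^{s+2}$ controls only high frequencies, whereas the obstruction sits at low frequencies, where $\|\dot\Delta_{j}w\|_{2}$ may grow like $c_{j}2^{-js}$ as $j\to-\infty$; consequently a Littlewood--Paley truncation $w_{j}$ can have $\|w_{j}\|_{2}\to\infty$, and the cross terms $\langle\rho(w-w_{j}),w_{j}\rangle$ in the truncated identity are not obviously negligible relative to the diverging main terms. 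So the step "test against $w$ and conclude $w=0$" does not go through as stated, and the proposed fix does not address the actual source of the difficulty. Either supply a genuinely low-frequency-safe uniqueness argument for $(\lambda\rho-\Delta)w+\nabla\Psi=0$, $\divg w=0$, in these homogeneous spaces, or replace the whole second half by the paper's core-density argument, which sidesteps the issue.
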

\begin{proof}
First, we get from Theorem \ref{elliptic estimates in Besov} that $\nabla\mathcal{L}_b^{-1}\divg$ extends to a continuous operator on $\dot{B}_{2,1}^{s}$. In view of product laws in Besov spaces and Lemma \ref{basic properties of P and S} (\romannumeral2), $b\mathcal{P}_b$ is also continuous on $\dot{B}_{2,1}^{s}$, and the restriction of $b\mathcal{P}_b$ on $\mathcal{P}\dot{B}_{2,1}^{s}$ is invertible with a continuous inverse $\mathcal{P}\rho$. Based on this, it is not difficult to see that $\Tilde{\mathcal{G}}$ is a closed operator. Note that the space $\mathscr{C}$ (defined in Lemma \ref{core for G}) coincides with the inhomogeneous space $\mathcal{P}B_{2,1}^{s+2}$, so it is dense in $D(\Tilde{\mathcal{G}})$. This shows that $\Tilde{\mathcal{G}}$ is the closure of $\mathcal{S}:\mathscr{C}\subset\mathcal{P}\dot{B}_{2,1}^{s}\rightarrow\mathcal{P}\dot{B}_{2,1}^{s}$. So we have $\mathcal{G}=\Tilde{\mathcal{G}}$ as a consequence of Lemma \ref{core for G}.
\end{proof}

With a slight abuse of notation, we shall not distinguish between $\mathcal{S}$ and $\mathcal{G}$.

\subsection{Proof of Theorem \ref{maximal regularity for Stokes}}

We give a proof of Theorem \ref{maximal regularity for Stokes} in this subsection. To further simplify the exposition, we assume that $b-1\in\dot{B}_{2,1}^{3/2}(\R^3)$. First, let us go back to the maximal regularity for the abstract Cauchy problem
\begin{align}\label{ACP bPbf}
u'(t)-b\mathcal{P}_b\Delta u(t)=b\mathcal{P}_bf(t),\ \ u(0)=u_0.
\end{align}
As a consequence of Theorem \ref{maximal regularity for ACP rough b}, Theorem \ref{elliptic estimates in Besov} and Lemma \ref{identify G in besov}, we have:

\begin{corollary}\label{maximal regularity for ACP regular b}
Let $T\in(0,\infty]$. Assume that $b$ satisfies \eqref{initial density bounds} and $b-1\in\dot{B}_{2,1}^{3/2}(\R^3)$. Let $u_0\in\mathcal{P}\dot{B}_{2,1}^{1/2}(\R^3)$ and $f\in L^1((0,T);\dot{B}_{2,1}^{1/2}(\R^3))$. Then \eqref{ACP bPbf} has a unique strong solution $u\in C_b([0,T);\mathcal{P}\dot{B}_{2,1}^{1/2}(\R^3))$. Moreover, there exists a constant $C$ depending on $m$ and $\|b-1\|_{\dot{B}_{2,1}^{3/2}}$ such that
\begin{align*}
\|u\|_{L_T^\infty(\dot{B}_{2,1}^{1/2})}+\|u',\Delta u\|_{L_T^1(\dot{B}_{2,1}^{1/2})}\le C\|u_0\|_{\dot{B}_{2,1}^{1/2}}+C\|f\|_{L_T^1(\dot{B}_{2,1}^{1/2})}.
\end{align*}
\end{corollary}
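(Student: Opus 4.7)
My plan is to present this corollary as a direct assembly of the three cited results, with the actual work being a verification of their hypotheses in the range of interest. First I would apply Theorem \ref{elliptic estimates in Besov} with the parameters $(p,r,s) = (2,1,1/2)$ and $q=2$; one has to check that $1/2 < 1/2 + \mu$ and $3/2 > 1/2 \vee (1-\mu)$, both of which hold since the Aronson--Nash Hölder exponent $\mu$ from Lemma \ref{gauss property} is strictly positive (in particular $2 < 3/(1-\mu)$). This yields continuity of $\nabla \mathcal{L}_b^{-1} \divg$ on $\dot{B}_{2,1}^{1/2}$, and when combined with continuity of multiplication by $b = 1 + (b-1)$ on the same space (via product laws and $b-1 \in \dot{B}_{2,1}^{3/2}$), it gives the key bound
\begin{align*}
\|b \mathcal{P}_b f\|_{\dot{B}_{2,1}^{1/2}} \le C \|f\|_{\dot{B}_{2,1}^{1/2}},
\end{align*}
with $C$ depending only on $m$ and $\|b-1\|_{\dot{B}_{2,1}^{3/2}}$.

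With this continuity in hand, the source $b\mathcal{P}_b f$ lies in $L^1((0,T); \mathcal{P}\dot{B}_{2,1}^{1/2})$ whenever $f$ does, so Theorem \ref{maximal regularity for ACP rough b} at regularity $s=1/2$ (within its admissible range $(0,3/2]$) produces a unique strong solution $u \in C_b([0,T); \mathcal{P}\dot{B}_{2,1}^{1/2})$ of $u' - \mathcal{G} u = b\mathcal{P}_b f$ satisfying
\begin{align*}
\|u\|_{L_T^\infty(\dot{B}_{2,1}^{1/2})} + \|u', \mathcal{G} u\|_{L_T^1(\dot{B}_{2,1}^{1/2})} \le C \|u_0\|_{\dot{B}_{2,1}^{1/2}} + C \|f\|_{L_T^1(\dot{B}_{2,1}^{1/2})}.
\end{align*}
Next, Lemma \ref{identify G in besov} identifies the abstract generator $\mathcal{G}$ on $\mathcal{P}\dot{B}_{2,1}^{1/2}$ with the concrete operator $b \mathcal{P}_b \Delta$ defined on $\mathcal{P}\dot{B}_{2,1}^{1/2} \cap \mathcal{P}\dot{B}_{2,1}^{5/2}$; its hypotheses $s < 1/2 + \mu$ and $q < 3/(s\vee(1-\mu))$ are again satisfied for $(s,q) = (1/2, 2)$ since $\mu > 0$. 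Thus $\mathcal{G} u = b \mathcal{P}_b \Delta u$ for a.e.\ $t$, and the Cauchy problem \eqref{ACP bPbf} is exactly the one solved.

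The final step is to convert the estimate on $\mathcal{G} u = b \mathcal{P}_b \Delta u$ into one on $\Delta u$ directly. For this I would invoke the invertibility of $b \mathcal{P}_b : \mathcal{P}\dot{B}_{2,1}^{1/2} \to \mathcal{P}\dot{B}_{2,1}^{1/2}$ with inverse $\mathcal{P} \rho$, which is itself bounded on this space by product laws together with $\rho - 1 \in \dot{B}_{2,1}^{3/2}$ (as noted in the proof of Lemma \ref{identify G in besov}). Since $u$ is divergence-free, so is $\Delta u$, hence
\begin{align*}
\|\Delta u\|_{L_T^1(\dot{B}_{2,1}^{1/2})} \le C \|b \mathcal{P}_b \Delta u\|_{L_T^1(\dot{B}_{2,1}^{1/2})} = C \|\mathcal{G} u\|_{L_T^1(\dot{B}_{2,1}^{1/2})},
\end{align*}
which completes the stated estimate. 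Uniqueness is inherited directly from Theorem \ref{maximal regularity for ACP rough b}. I do not anticipate a serious obstacle here, since this corollary is essentially a repackaging of the three preceding results; the only care needed is in verifying the boundary-case parameter restrictions ($p=q=2$, $s=1/2$) in Theorem \ref{elliptic estimates in Besov} and Lemma \ref{identify G in besov}, where positivity of the Aronson--Nash exponent $\mu$ is repeatedly what saves the argument.
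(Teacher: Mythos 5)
Your proposal is correct and follows exactly the route the paper intends: the paper states this corollary as an immediate consequence of Theorem \ref{maximal regularity for ACP rough b}, Theorem \ref{elliptic estimates in Besov} and Lemma \ref{identify G in besov} without further detail, and your parameter checks ($p=q=2$, $r=1$, $s=1/2$, all admissible because $\mu>0$), together with the recovery of $\Delta u$ from $\mathcal{G}u$ via the bounded inverse $\mathcal{P}\rho$ of $b\mathcal{P}_b$ on $\mathcal{P}\dot{B}_{2,1}^{1/2}$, are precisely the verifications the paper leaves implicit.
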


Let us now give the proof of the well-posedness part of Theorem \ref{maximal regularity for Stokes}.

\begin{proof}[Proof of the well-posedness part of Theorem \ref{maximal regularity for Stokes}]
Note that $b=\rho^{-1}$ satisfies the same assumptions as $\rho$. By Corollary \ref{maximal regularity for ACP regular b}, we see that the following Cauchy problem
\begin{align*}
\partial_tv(t)-b\mathcal{P}_b\Delta v(t)=b\mathcal{P}_b(f-\rho\mathcal{Q}\partial_tR),\ v(0)=u_0
\end{align*}
has a unique strong solution $v\in C_b([0,T);\mathcal{P}\dot{B}_{2,1}^{1/2}(\R^3))$ satisfying
\begin{align}\label{rough maximal regularity estimate for v}
\|v\|_{L_T^\infty(\dot{B}_{2,1}^{1/2})}+\|\partial_tv,\Delta v\|_{L_T^1(\dot{B}_{2,1}^{1/2})}\lesssim \|u_0\|_{\dot{B}_{2,1}^{1/2}}+\|f,\partial_t R\|_{L_T^1(\dot{B}_{2,1}^{1/2})}.
\end{align}
Define $u=v+\mathcal{Q}R=v-\nabla(-\Delta)^{-1}g$ and $\nabla P=\mathcal{Q}(f-\rho\partial_t v-\rho\mathcal{Q}\partial_tR)+\nabla g$. One can readily check that $(u,\nabla P)$ is a strong solution to \eqref{Stokes system}.
\end{proof}

By \eqref{rough maximal regularity estimate for v} and the construction of $\nabla P$, we also have
\begin{align}\label{pressure estimate for P}
\|\nabla P\|_{L_T^1(\dot{B}_{2,1}^{1/2})}\lesssim \|u_0\|_{\dot{B}_{2,1}^{1/2}}+\|f,\partial_t R,\nabla g\|_{L_T^1(\dot{B}_{2,1}^{1/2})}.
\end{align}
But if we apply \eqref{rough maximal regularity estimate for v} to bound $u$ directly, we have to include the term $\|\mathcal{Q}R\|_{L_T^\infty(\dot{B}_{2,1}^{1/2})}$ on the right side of \eqref{maximal regularity estimates for Stokes}. This would cause serious trouble for us to prove local existence of large solutions to \eqref{Lagrangian formulation}. Thanks to \eqref{pressure estimate for P}, we can view $\nabla P$ in the first equation of \eqref{Stokes system} as a source term. So we choose to prove the maximal regularity for the solution $u$ to the parabolic system \eqref{write P as a source term}. Having had success in establishing maximal $L^1$ regularity for \eqref{ACP bPbf} based on Theorem \ref{characterization of besov spaces with positive regularity via stokes}, we are going to obtain maximal $L^1$ regularity for the parabolic Cauchy problem
\begin{align}\label{ACP b delta}
\partial_t u-b\Delta u=f,\ \ u(0)=u_0
\end{align}
by characterizations of Besov norms via the semigroup $e^{tb\Delta}$. To simplify the exposition, we only prove what is needed for the proof of \eqref{maximal regularity estimates for Stokes}.

\begin{lemma}\label{maximal parabolic regularity}
Suppose that $b$ satisfies \eqref{initial density bounds}, $b$ and $b^{-1}$ are multipliers of $\dot{B}_{2,1}^{1/2}(\R^3)$ (i.e., multiplications by $b$ and $b^{-1}$ are continuous on $\dot{B}_{2,1}^{1/2}(\R^3)$). Let $u_0\in \dot{B}_{2,1}^{1/2}(\R^3)$ and $f\in L^1((0,T);\dot{B}_{2,1}^{1/2}(\R^3))$. Then \eqref{ACP b delta} has a unique strong solution $u$ satisfying
\begin{align*}
\|u\|_{L_T^\infty(\dot{B}_{2,1}^{1/2})}+\|\partial_tu,\Delta u\|_{L_T^1(\dot{B}_{2,1}^{1/2})}\lesssim \|u_0\|_{\dot{B}_{2,1}^{1/2}}+\|f\|_{L_T^1(\dot{B}_{2,1}^{1/2})}.
\end{align*}
\end{lemma}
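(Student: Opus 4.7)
The plan is to follow the same semigroup blueprint used for the Stokes operator $\mathcal{S}$, but applied to the simpler operator $\mathcal{A}\vcentcolon=b\Delta$. Since there is no pressure and no projection, the elliptic gradient estimates of Theorem \ref{elliptic estimates in Besov} are not needed; the hypothesis that $b,b^{-1}$ are $\dot{B}_{2,1}^{1/2}$ multipliers plays only the role that $\dot{B}_{2,1}^{3/2}$ regularity played for $\mathcal{S}$, namely to identify the generator of the extrapolated semigroup on $\dot{B}_{2,1}^{1/2}$.

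First I would define $\mathcal{A}:H^2(\R^3)\subset L^2\to L^2$. Equipping $L^2$ with the weighted inner product $\langle u,v\rangle_{b^{-1}}=\int u\,v\,b^{-1}\,dx$ (equivalent to $\langle\cdot,\cdot\rangle$ since $m\le b\le 1/m$), an integration by parts shows that $\mathcal{A}$ is self-adjoint, with $\langle\mathcal{A}u,u\rangle_{b^{-1}}=-\|\nabla u\|^2\le 0$. Corollary \ref{generation theorem on Hilbert spaces} then produces a contraction semigroup $e^{t\mathcal{A}}$ on $(L^2,\langle\cdot,\cdot\rangle_{b^{-1}})$. Repeating the energy argument used for $\mathcal{S}$ (test $u'=\mathcal{A}u$ against $u$ and against $-\mathcal{A}u$, then exploit monotonicity in $t$) yields the analyticity estimate $\|t\mathcal{A}e^{t\mathcal{A}}u_0\|_2\lesssim\|u_0\|_2$, so Lemma \ref{generation theorem for analytic semigroups} gives a bounded analytic semigroup on $L^2$. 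The decay $\|e^{t\mathcal{A}}u_0\|_2\to 0$ as $t\to\infty$ follows as in Proposition \ref{asymptotics of semigroup} via Gagliardo--Nirenberg plus duality (with $b$ replacing $b\mathcal{P}_b$), which together with $\int_0^\infty\mathcal{A}e^{\tau\mathcal{A}}\,d\tau=-I$ on $L^2$ supplies the reproducing formula $u_0=-\int_0^\infty\Delta e^{\tau\mathcal{A}}(bu_0)\,d\tau$.

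Next I would establish the Besov norm characterization: for every $s\in(0,2)$, $q\in[1,\infty]$ and $u_0\in\mathcal{P}H^2$ (or more generally $u_0\in H^2$),
\begin{equation*}
\|u_0\|_{\dot{B}_{2,q}^{s}}\simeq\left\|t^{-s/2}\|t\mathcal{A}e^{t\mathcal{A}}u_0\|_2\right\|_{L^q(\R_+,\frac{dt}{t})}.
\end{equation*}
The proof is a carbon copy of that of Theorem \ref{characterization of besov spaces with negative regularity via stokes}: apply $e^{t\mathcal{A}}b$ to the classical heat reproducing formula $u_0=-\int_0^\infty\Delta e^{\tau\Delta}u_0\,d\tau$ to write $e^{t\mathcal{A}}bu_0=-\int_0^\infty\mathcal{A}e^{t\mathcal{A}}e^{\tau\Delta}u_0\,d\tau$, estimate $\|\mathcal{A}e^{t\mathcal{A}}e^{\tau\Delta}u_0\|_2$ by $\min(1/t,1/\tau)\,\|e^{\tau/2\,\Delta}u_0\|_2$, feed the inequality into Lemma \ref{weighted estimates} and Lemma \ref{characterization via classic heat kernel}, then run the symmetric argument (apply $e^{t\Delta}$ to the analogous reproducing formula for $\mathcal{A}$) to obtain the reverse inequality. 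This yields first the negative-regularity version in terms of $\|e^{t\mathcal{A}}(bu_0)\|_2$, and then the positive-regularity version stated above by replacing $u_0$ with $\Delta u_0$ and using the equivalence $\|\mathcal{A}e^{t\mathcal{A}}u_0\|_2\simeq\|\Delta e^{t\mathcal{A}}u_0\|_2$ (which holds because multiplication by $b^{\pm 1}$ is bounded on $L^2$).

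With these characterizations in hand, I would extrapolate $e^{t\mathcal{A}}$ to a bounded analytic $C_0$ semigroup on $\dot{B}_{2,1}^{1/2}$ exactly as in the passage from $\mathcal{S}$ on $\mathcal{P}L^2$ to $\mathcal{G}$ on $\mathcal{P}\dot{B}_{2,1}^{s}$: invariance of $\{u\in H^2:\mathcal{A}u\in\dot{B}_{2,1}^{1/2}\}$ under the semigroup provides a core, and the multiplier assumption on $b,b^{-1}$ identifies the generator with $b\Delta$ on the domain $\dot{B}_{2,1}^{5/2}$ (equivalently, $\Delta u\in\dot{B}_{2,1}^{1/2}$). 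The maximal $L^1$ estimate $\|\mathcal{A}e^{t\mathcal{A}}u_0\|_{L^1(\R_+;\dot{B}_{2,1}^{1/2})}\lesssim\|u_0\|_{\dot{B}_{2,1}^{1/2}}$ then follows from the $q=1$ case of the characterization plus Fubini, exactly as in the derivation of \eqref{space time estimate G}. Applying the abstract Duhamel argument of Theorem \ref{maximal regularity for ACP rough b} verbatim to $\mathcal{A}$ and finally converting $\|\mathcal{A}u\|=\|b\Delta u\|$ to $\|\Delta u\|$ using the multiplier hypothesis on $b^{-1}$ completes the proof. The main technical point to keep honest is the bookkeeping in Step 3 (insertion of the factor $b$ in the reproducing identities so that the two semigroups communicate through $\mathcal{A}e^{t\mathcal{A}}$), but this is essentially the mechanism already perfected in the Stokes analysis.
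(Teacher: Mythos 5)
Your proposal is correct and, at the level of overall architecture, it is the same argument the paper sketches: generate an analytic semigroup $e^{tb\Delta}$ on $L^2$, establish the Besov characterization
\begin{equation*}
\|u_0\|_{\dot{B}_{2,1}^{1/2}}\simeq\left\|t^{-1/4}\|t\,b\Delta\, e^{tb\Delta}u_0\|_2\right\|_{L^1(\R_+,\frac{dt}{t})},
\end{equation*}
extrapolate to a bounded analytic semigroup on $\dot{B}_{2,1}^{1/2}$, identify a core for its generator, use the multiplier hypothesis on $b,b^{-1}$ to pin the generator down to $b\Delta$ with domain $\dot{B}_{2,1}^{1/2}\cap\dot{B}_{2,1}^{5/2}$, and then run the Duhamel/Fubini argument of Theorem \ref{maximal regularity for ACP rough b}. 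The one genuine difference is at the $L^2$ preliminary stage: the paper cites McIntosh--Nahmod for the analyticity of $e^{tb\Delta}$ and for a Gaussian upper bound on its kernel, using the latter to get $\|e^{tb\Delta}f\|_2\to0$. You instead build everything from scratch, noting that $b\Delta$ is self-adjoint on $(L^2,\langle\cdot,\cdot\rangle_{b^{-1}})$ so Lumer--Phillips plus the same energy estimate used for $\mathcal{S}$ gives the bounded analytic contraction semigroup, and then recovering the decay via Gagliardo--Nirenberg and duality exactly as in Proposition \ref{asymptotics of semigroup}. This is a cleaner, self-contained route that mirrors the construction of $e^{t\mathcal{S}}$ and avoids invoking Gaussian kernel bounds, which are indeed not needed anywhere in this proof; what the citation buys the paper is only brevity.

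One small bookkeeping point worth making explicit when you write this up: in the identity $e^{t\mathcal{A}}bu_0=-\int_0^\infty\mathcal{A}e^{t\mathcal{A}}e^{\tau\Delta}u_0\,d\tau$, the two semigroups communicate because $e^{t\mathcal{A}}\cdot b\cdot\Delta=\mathcal{A}e^{t\mathcal{A}}$ on $H^2$ (the insertion of $b$ is exactly what converts $\Delta$ to $\mathcal{A}$), and the symmetric identity uses $e^{t\Delta}\Delta e^{\tau\mathcal{A}}(bu_0)$ with the estimate $\|\Delta e^{\tau\mathcal{A}}(bu_0)\|_2\lesssim(\tau^{-1}\wedge1)\|\mathcal{A}e^{\tau\mathcal{A}}(bu_0)\|_2^{\theta}\cdots$; since $\|\Delta v\|_2\simeq\|\mathcal{A}v\|_2$ by boundedness of $b^{\pm1}$ on $L^2$, this goes through, as you note. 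The rest is exactly the paper's route.
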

\begin{proof}
Since the proof is analogous to that of Theorem \ref{maximal regularity for ACP rough b}, we only outline the key steps.

First, $b\Delta:H^2(\R^3)\subset L^2(\R^3)\rightarrow L^2(\R^3)$ generates a bounded analytic semigroup $e^{tb\Delta}$ whose kernel has a Gaussian upper bound (see \cite{mcintosh 2000,duong DIE 1999}). So we have $\lim_{t\rightarrow\infty}\|e^{tb\Delta}f\|=0$ for every $f\in L^2(\R^3)$. Based on this, we can mimic the proof of Theorem \ref{characterization of besov spaces with positive regularity via stokes} to get the equivalence of norms:
\begin{align*}
\|u_0\|_{\dot{B}_{2,1}^{1/2}}\simeq\left\|t^{-1/4}\|tb\Delta e^{tb\Delta}u_0\|\right\|_{L^1(\R_+,\frac{dt}{t})}, \ \ \forall u_0\in H^2(\R^3).
\end{align*}
This would imply that \begin{align*}
\sup_{t>0}\|e^{tb\Delta} u_0\|_{\dot{B}_{2,1}^{1/2}}+\sup_{t>0}\|tb\Delta e^{tb\Delta} u_0\|_{\dot{B}_{2,1}^{1/2}}+\left\|\|tb\Delta e^{tb\Delta}u_0\|_{\dot{B}_{2,1}^{1/2}}\right\|_{L^1(\R_+,\frac{dt}{t})}\le C\|u_0\|_{\dot{B}_{2,1}^{1/2}},
\end{align*}
for all $u_0\in H^2(\R^3)$. So $e^{tb\Delta}|_{H^2(\R^3)}$ extends to a bounded analytic semigroup $\mathcal{T}(t)$ on $\dot{B}_{2,1}^{1/2}(\R^3)$. Denote by $\mathcal{G}$ the generator of $\mathcal{T}(t)$. Then $\mathscr{C}\vcentcolon=\{u_0\in H^2(\R^3)|b\Delta u_0\in\dot{B}_{2,1}^{1/2}(\R^3)\}$ is a core for $\mathcal{G}$, and $\mathcal{G}|_{\mathscr{C}}=b\Delta|_{\mathscr{C}}$. So far, all statements hold if $b$ merely satisfies \eqref{initial density bounds}.

Next, we assume that both $b$ and $b^{-1}$ are multipliers of $\dot{B}_{2,1}^{1/2}(\R^3)$. Then $\mathcal{G}$ coincides with the operator
\begin{align*}
b\Delta:\dot{B}_{2,1}^{1/2}(\R^3)\cap\dot{B}_{2,1}^{5/2}(\R^3)\subset\dot{B}_{2,1}^{1/2}(\R^3)\rightarrow\dot{B}_{2,1}^{1/2}(\R^3).
\end{align*}

Now that all preparation work is done, we mimic the proof of Theorem \ref{maximal regularity for ACP rough b} to finish the proof of the present lemma. The details are left to the reader.
\end{proof}

We conclude this section by completing the proof of Theorem \ref{maximal regularity for Stokes}
\begin{proof}[Proof of Theorem \ref{maximal regularity for Stokes}] It remains to show \eqref{maximal regularity estimates for Stokes}. Applying Lemma \ref{maximal parabolic regularity} to \eqref{write P as a source term}, we have
\begin{align*}
\|u\|_{L_T^\infty(\dot{B}_{2,1}^{1/2})}+\|\partial_tu,\Delta u\|_{L_T^1(\dot{B}_{2,1}^{1/2})}\lesssim \|u_0\|_{\dot{B}_{2,1}^{1/2}}+\|f\|_{L_T^1(\dot{B}_{2,1}^{1/2})}+\|\nabla P\|_{L_T^1(\dot{B}_{2,1}^{1/2})}.
\end{align*}
This together with \eqref{pressure estimate for P} gives \eqref{maximal regularity estimates for Stokes}. Thus, the proof of Theorem \ref{maximal regularity for Stokes} is completed.
\end{proof}

\bigskip

\section{Local and global well-posedness}\label{proof of main theorems}

\bigskip

In this section, we prove the well-posedness of \eqref{Lagrangian formulation}, then the well-posedness of \eqref{INS} will follow. Let $E(T)$ denote the space of all pairs $(\lagr{u},\nabla\lagr{P})$ satisfying
\begin{align*}
\lagr{u}\in C([0,T];\dot{B}_{2,1}^{1/2}(\R^3)),\ 
(\partial_t\lagr{u}, \Delta\lagr{u}, \nabla \lagr{P})\in \left(L^{1}((0,T);\dot{B}_{2,1}^{1/2}(\R^3))\right)^3,  
\end{align*}
endowed with the norm
\begin{align*}
\|(\lagr{u},\nabla\lagr{P})\|_{E(T)}=\|\lagr{u}\|_{L_T^\infty(\dot{B}_{2,1}^{1/2})}+\|\Delta\lagr{u},\partial_t\lagr{u},\nabla \lagr{P}\|_{L_T^1(\dot{B}_{2,1}^{1/2})}.
\end{align*}

Given Theorem \ref{maximal regularity for Stokes}, we can prove the well-posedness of \eqref{Lagrangian formulation} in $E(T)$ by using the contraction mapping theorem.

\begin{theorem}\label{local wellposedness for lagrangian formulation}
Assume that the initial density $\rho_0$ satisfies \eqref{initial density bounds} and $\rho_0-1\in\dot{B}_{2,1}^{3/2}(\R^3)$, and the initial velocity $u_0\in\mathcal{P}\dot{B}_{2,1}^{1/2}(\R^3)$. Then there exists some $T>0$, such that the system \eqref{Lagrangian formulation} has a unique strong solution $(\lagr{u},\nabla \lagr{P})\in E(T)$.
\end{theorem}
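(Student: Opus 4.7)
The strategy is to apply the Banach fixed-point theorem to a map built from Theorem \ref{maximal regularity for Stokes}. Given a candidate $(\lagr{v}, \nabla\lagr{Q})$, I would define $(\lagr{u}, \nabla\lagr{P}) = \Phi(\lagr{v}, \nabla\lagr{Q})$ as the solution to the linear Stokes system \eqref{Stokes system} with coefficient $\rho_0$, initial datum $u_0$, and data
\[
f = \divg\bigl((\mathscr{A}_{\lagr{v}}\mathscr{A}_{\lagr{v}}^{T} - I)\nabla\lagr{v}\bigr) + (I - \mathscr{A}_{\lagr{v}}^{T})\nabla\lagr{Q}, \qquad R = (I - \mathscr{A}_{\lagr{v}})\lagr{v}, \qquad g = \divg R.
\]
By the identity \eqref{identity for divergence} one has $\divg R = \mathrm{Tr}((I - \mathscr{A}_{\lagr{v}})D\lagr{v}) = g$, and $R(0) = 0$ since $\mathscr{A}_{\lagr{v}}(0) = I$, so all hypotheses of Theorem \ref{maximal regularity for Stokes} are met and $\Phi$ is well-defined as a map on $E(T)$.

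Let $(\lagr{u}_L, \nabla\lagr{P}_L) \in E(\infty)$ be the solution of the homogeneous linearized system (i.e. $f = 0$, $R = 0$), which by Theorem \ref{maximal regularity for Stokes} satisfies $\|(\lagr{u}_L, \nabla\lagr{P}_L)\|_{E(\infty)} \le C\|u_0\|_{\dot{B}_{2,1}^{1/2}}$. I would carry out the fixed-point argument on the closed ball
\[
\mathcal{B}_T^{\eta} \vcentcolon= \{(\lagr{v}, \nabla\lagr{Q}) \in E(T) : \lagr{v}|_{t=0} = u_0,\ \|(\lagr{v} - \lagr{u}_L, \nabla\lagr{Q} - \nabla\lagr{P}_L)\|_{E(T)} \le \eta\}
\]
for some small $\eta > 0$ to be fixed later. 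The key observation enabling smallness \emph{without} a smallness assumption on $u_0$ is that $\|\nabla \lagr{u}_L\|_{L^1_T(\dot{B}_{2,1}^{3/2})} \to 0$ as $T \to 0^+$ by absolute continuity of the Lebesgue integral; hence for every $(\lagr{v}, \nabla\lagr{Q}) \in \mathcal{B}_T^\eta$ the quantity $\|\int_0^{\cdot} D\lagr{v}(\tau)\,d\tau\|_{L^\infty_T(\dot{B}_{2,1}^{3/2})}$ can be made arbitrarily small. Since $\dot{B}_{2,1}^{3/2}(\R^3)$ is an algebra, a Neumann-series expansion of $(DX_{\lagr{v}})^{-1}$ together with the change-of-variable estimates from Appendix \ref{appendix1} then control $\|\mathscr{A}_{\lagr{v}} - I\|_{L^\infty_T(\dot{B}_{2,1}^{3/2})}$ by the same small quantity.

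Equipped with this, the Besov product laws from Section \ref{preliminaries}, in particular $\|ab\|_{\dot{B}_{2,1}^{1/2}} \lesssim \|a\|_{\dot{B}_{2,1}^{3/2}}\|b\|_{\dot{B}_{2,1}^{1/2}}$, give schematically
\[
\|f,\nabla g,\partial_t R\|_{L^1_T(\dot{B}_{2,1}^{1/2})} \le C\|\mathscr{A}_{\lagr{v}} - I\|_{L^\infty_T(\dot{B}_{2,1}^{3/2})}\bigl(1 + \|\mathscr{A}_{\lagr{v}} - I\|_{L^\infty_T(\dot{B}_{2,1}^{3/2})}\bigr)\|(\lagr{v}, \nabla\lagr{Q})\|_{E(T)}.
\]
Applying Theorem \ref{maximal regularity for Stokes} to the difference $\Phi(\lagr{v}, \nabla\lagr{Q}) - (\lagr{u}_L, \nabla\lagr{P}_L)$, which satisfies a Stokes system with zero initial datum and the above source terms, I obtain that $\Phi$ maps $\mathcal{B}_T^\eta$ into itself for $T$ sufficiently small (depending on $\eta$, $m$, $\|\rho_0 - 1\|_{\dot{B}_{2,1}^{3/2}}$ and $\|u_0\|_{\dot{B}_{2,1}^{1/2}}$). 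A nearly identical computation on $\Phi(\lagr{v}_1, \nabla\lagr{Q}_1) - \Phi(\lagr{v}_2, \nabla\lagr{Q}_2)$, using the multilinear structure of $\mathscr{A}_{\lagr{v}_1} - \mathscr{A}_{\lagr{v}_2}$ as a polynomial in $\int_0^t D(\lagr{v}_1 - \lagr{v}_2)$ and $\int_0^t D\lagr{v}_i$, produces a $\tfrac12$-contraction for $T$ further reduced, and Banach's fixed-point theorem delivers the unique strong solution in $E(T)$.

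The main obstacle in carrying out this plan lies in the estimate of $\partial_t R = -(\partial_t \mathscr{A}_{\lagr{v}})\lagr{v} + (I - \mathscr{A}_{\lagr{v}})\partial_t \lagr{v}$: differentiating the cofactor $\mathscr{A}_{\lagr{v}} = \adj DX_{\lagr{v}}$ in time produces a polynomial in the two matrices $\int_0^t D\lagr{v}$ and $D\lagr{v}$, and only the first is automatically small when $T \to 0$. Closing the estimate requires combining the algebra property of $\dot{B}_{2,1}^{3/2}$, the control of $\partial_t \lagr{v}$ in $L^1_T(\dot{B}_{2,1}^{1/2})$, and the product law between these two spaces; this is exactly where the time-derivative estimates of Appendix \ref{appendix1} enter in an essential way.
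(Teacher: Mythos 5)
Your proposal follows the paper's strategy exactly: a Banach fixed point for the linearized system \eqref{linearized Lagrangian formulation} on a small ball around the caloric reference $(\lagr{u}_L,\nabla\lagr{P}_L)$, powered by Theorem \ref{maximal regularity for Stokes}, the flow and change-of-variable estimates of Appendix \ref{appendix1}, and the fact that $M(T)\vcentcolon=\|\Delta\lagr{u}_L,\partial_t\lagr{u}_L,\nabla\lagr{P}_L\|_{L^1_T(\dot{B}_{2,1}^{1/2})}\to0$ as $T\to0^+$ makes the ball's radius and the flow perturbation simultaneously small. You correctly single out $(\partial_t\mathscr{A}_{\lagr{v}})\lagr{v}$ as the delicate term; the paper closes it with $\|\partial_t\mathscr{A}_{\lagr{v}}\|_{\dot{B}_{2,1}^{1/2}}\lesssim\|\nabla\lagr{v}\|_{\dot{B}_{2,1}^{1/2}}$ followed by the interpolation $\|\lagr{v}\|_{L_T^2(\dot{B}_{2,1}^{3/2})}^2\le\|\lagr{v}\|_{L_T^\infty(\dot{B}_{2,1}^{1/2})}\|\nabla\lagr{v}\|_{L_T^1(\dot{B}_{2,1}^{3/2})}$, which supplies the small factor $\|\nabla\lagr{v}\|_{L_T^1(\dot{B}_{2,1}^{3/2})}\le r+M(T)$ that your schematic $\|\mathscr{A}_{\lagr{v}}-I\|_{L^\infty_T}$-prefactored display would otherwise not deliver.
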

\begin{proof}
We shall construct a contraction mapping on $E(T)$ by solving the linearized system \eqref{linearized Lagrangian formulation}. Let us denote the inhomogeneous terms in \eqref{linearized Lagrangian formulation} by
\begin{eqnarray*}
\left\{\begin{aligned}
&f(\lagr{v},\nabla\lagr{Q})=\divg((\mathscr{A}_{\lagr{v}}\mathscr{A}_{\lagr{v}}^{T}-I)\nabla\lagr{v})+(I-\mathscr{A}_{\lagr{v}}^{T})\nabla\lagr{Q},\\
&R(\lagr{v})=(I-\mathscr{A}_{\lagr{v}})\lagr{v},\\
&g(\lagr{v})=\mathrm{Tr}((I-\mathscr{A}_{\lagr{v}})D\lagr{v}),
\end{aligned}\right.
\end{eqnarray*}
where $(\lagr{v},\nabla\lagr{Q})\in E(T)$ and $I$ is a $3\times3$ identity matrix. However, since no smallness is assumed on the initial data, one has to perform the contraction mapping theorem around a neighborhood of some reference, which here is chosen as the solution $(\lagr{u}_L,\nabla\lagr{P}_L)$ to the homogeneous linear system
\begin{eqnarray}\label{homogeneous stokes system}
\left\{\begin{aligned}
&\rho_0\partial_t \lagr{u}_L-\Delta\lagr{u}_L+\nabla\lagr{P}_L=0,\\
&\divg\lagr{u}_L=0,\\
&\lagr{u}_L(0,\cdot)=u_0.
\end{aligned}\right.
\end{eqnarray}
In view of Theorem \ref{maximal regularity for Stokes}, we immediately have
\begin{align}\label{maximal regularity for free solutions}
\|(\lagr{u}_L,\nabla\lagr{P}_L)\|_{E(T)}
\le C\|u_0\|_{\dot{B}_{2,1}^{1/2}}.
\end{align}
Hence, we see that
\begin{align*}
M(t)\vcentcolon=\|\Delta\lagr{u}_L,\partial_t\lagr{u}_L,\nabla\lagr{P}_L\|_{L_{t}^{1}(\dot{B}_{2,1}^{1/2})}\rightarrow0
\end{align*}
as $t$ tends to $0$. We will solve \eqref{linearized Lagrangian formulation} in a closed ball in $E(T)$ centered at $(\lagr{u}_L,\nabla\lagr{P}_L)$ with radius $r$, that is,
\begin{align*}
B_r(\lagr{u}_L,\nabla\lagr{P}_L)=\{(\lagr{u},\nabla\lagr{P})\in E(T):\|(\bar{\lagr{u}},\nabla\bar{\lagr{P}})\|_{E(T)}\le r\},
\end{align*}
where $(\bar{\lagr{u}},\bar{\lagr{P}})=(\lagr{u}-\lagr{u}_L,\lagr{P}-\lagr{P}_L)$. 
The numbers $r$ and $T$ will be chosen suitably small later.

Let us estimate the inhomogeneous terms first. For any $(\lagr{v},\nabla\lagr{Q})\in B_r(\lagr{u}_L,\nabla\lagr{P}_L)$, we denote $(\bar{\lagr{v}},\bar{\lagr{Q}})=(\lagr{v}-\lagr{u}_L,\lagr{Q}-\lagr{P}_L)$. Obviously, we have
\begin{align*}
\|\nabla\lagr{v}\|_{L_T^1(\dot{B}_{2,1}^{3/2})}\le\|\nabla\bar{\lagr{v}}\|_{L_T^1(\dot{B}_{2,1}^{3/2})}+\|\nabla\lagr{u}_L\|_{L_T^1(\dot{B}_{2,1}^{3/2})}\le r+M(T).
\end{align*}
So Lemma \ref{inverse flow estimates} and Lemma \ref{flow estimates} are effective if we require
\begin{align*}
r+M(T)\le c_0.
\end{align*}
Then applying \eqref{flow estimate for A-I} and product laws in Besov spaces, we see that
\begin{align}\label{estimate for inhomogeneous terms f and g}
\|f(\lagr{v},\nabla\lagr{Q}),\nabla g(\lagr{v})\|_{L_T^1(\dot{B}_{2,1}^{1/2})}\lesssim\|\nabla\lagr{v}\|_{L_T^1(\dot{B}_{2,1}^{3/2})}\|\Delta\lagr{v},\nabla\lagr{Q}\|_{L_T^1(\dot{B}_{2,1}^{1/2})}.
\end{align}
While applying \eqref{flow estimate for A-I} and \eqref{flow estimate for partial t A}, we get
\begin{align}
\|\partial_tR(\lagr{v})\|_{L_T^1(\dot{B}_{2,1}^{1/2})}\lesssim&\int_0^T\|\partial_t\mathscr{A}_{\lagr{v}}\|_{\dot{B}_{2,1}^{1/2}}\|\lagr{v}\|_{\dot{B}_{2,1}^{3/2}}+\|I-\mathscr{A}_{\lagr{v}}\|_{\dot{B}_{2,1}^{3/2}}\|\partial_t\lagr{v}\|_{\dot{B}_{2,1}^{1/2}}\,dt\nonumber\\
\lesssim&\|\lagr{v}\|_{L_T^2(\dot{B}_{2,1}^{3/2})}^2+\|\nabla\lagr{v}\|_{L_T^1(\dot{B}_{2,1}^{3/2})}\|\partial_t\lagr{v}\|_{L_T^1(\dot{B}_{2,1}^{1/2})}\label{estimate for inhomogeneous term R}.
\end{align}
So, by Theorem \ref{maximal regularity for Stokes}, the system \eqref{linearized Lagrangian formulation} has a unique solution $(\lagr{u},\nabla\lagr{P})\in E(T)$. Subtracting \eqref{homogeneous stokes system} from \eqref{linearized Lagrangian formulation}, and then applying again Theorem \ref{maximal regularity for Stokes} to the resulting system for $(\bar{\lagr{u}},\nabla\bar{\lagr{P}})$, we obtain
\begin{align*}
\|(\bar{\lagr{u}},\nabla\bar{\lagr{P}})\|_{E(T)}\lesssim\|f(\lagr{v},\nabla\lagr{Q}),\partial_tR(\lagr{v}),\nabla g(\lagr{v})\|_{L_T^1(\dot{B}_{2,1}^{1/2})}.
\end{align*}
Plugging \eqref{estimate for inhomogeneous terms f and g} and \eqref{estimate for inhomogeneous term R} in the above estimate, and using Lemma \ref{basic properties of Besov spaces} (\romannumeral2) and \eqref{maximal regularity for free solutions}, we arrive at
\begin{align*}
\|(\bar{\lagr{u}},\nabla\bar{\lagr{P}})\|_{E(T)}\lesssim&\|\lagr{v}\|_{L_T^2(\dot{B}_{2,1}^{3/2})}^2+ \|\nabla\lagr{v}\|_{L_T^1(\dot{B}_{2,1}^{3/2})}\|\Delta\lagr{v},\partial_t\lagr{v},\nabla\lagr{Q}\|_{L_T^1(\dot{B}_{2,1}^{1/2})}\\
\le& C_0\|u_0\|_{\dot{B}_{2,1}^{1/2}}M(T)+C_0(r+M(T))^2,
\end{align*}
where $C_0$ depends on $m$ and $\|\rho_0-1\|_{\dot{B}_{2,1}^{3/2}}$. Now choosing $T$ and $r$ small enough so that
\begin{align}\label{smallness condition on r and T}
r\le\frac{1}{8C_0}\wedge\frac{c_0}{2}\ \ \mathrm{and}\ \ M(T)\le r\wedge\frac{r}{2C_0\|u_0\|_{\dot{B}_{2,1}^{1/2}}}, 
\end{align}
we have $\|(\bar{\lagr{u}},\nabla\bar{\lagr{P}})\|_{E(T)}\le r$. So the solution mapping $\mathcal{N}$ that assigns $(\lagr{v},\nabla\lagr{Q})$ to $(\lagr{u},\nabla\lagr{P})$  is a self-map on $B_r(\lagr{u}_L,\nabla\lagr{P}_L)$.

It remains to show that $\mathcal{N}$ is contractive. Let $(\lagr{v}_i,\nabla\lagr{Q}_i)\in B_r(\lagr{u}_L,\nabla\lagr{P}_L)$ and $(\lagr{u}_i,\nabla\lagr{P}_i)=\mathcal{N}(\lagr{v}_i,\nabla\lagr{Q}_i)$, $i=1,2$. In what follows, for two quantities $q_1$ and $q_2$, $\delta q$ always denotes their difference $q_1-q_2$. Then the system for $(\delta\lagr{u},\nabla\delta \lagr{P})$ reads
\begin{eqnarray}\label{Stokes system for difference of two solutions}
\left\{\begin{aligned}
&\rho_0\partial_t\delta\lagr{u}-\Delta\delta\lagr{u}+\nabla\delta\lagr{P}=\delta f,\\
&\divg\delta\lagr{u}=\divg\delta R=\delta g,\\
&\delta\lagr{u}|_{t=0}=0,
\end{aligned}\right.
\end{eqnarray}
where $f_i=f(\lagr{v}_i,\nabla\lagr{Q}_i)$, $g_i=g(\lagr{v}_i)$, and $R_i=R(\lagr{v}_i)$ with $\mathscr{A}_{i}=\mathscr{A}_{\lagr{v}_i}$. 

We write  $\delta f=(\delta f)_1+(\delta f)_2$, where
\begin{align*}
(\delta f)_1=&\divg((\mathscr{A}_{\lagr{v}_1}\mathscr{A}_{\lagr{v}_1}^T-I)\nabla \delta\lagr{v})+(I-\mathscr{A}_{\lagr{v}_1}^T)\nabla\delta\lagr{Q},\\
(\delta f)_2=&-(\delta \mathscr{A})^T\nabla\lagr{Q}_2+\divg\big[(\mathscr{A}_{\lagr{v}_1}\mathscr{A}_{\lagr{v}_1}^T-\mathscr{A}_{\lagr{v}_2}\mathscr{A}_{\lagr{v}_2}^T)\nabla\lagr{v}_2\big].
\end{align*}
Along the lines of deriving \eqref{estimate for inhomogeneous terms f and g}, we have
\begin{align*}
\|(\delta f)_1\|_{L_T^1(\dot{B}_{2,1}^{1/2})}\lesssim\|\nabla\lagr{v}_1\|_{L_T^1(\dot{B}_{2,1}^{3/2})}\|\Delta\delta\lagr{v},\nabla\delta\lagr{Q}\|_{L_T^1(\dot{B}_{2,1}^{1/2})}.
\end{align*}
Applying \eqref{flow estimate for A-I}, \eqref{flow estimate for difference of A} and product laws in Besov spaces, we obtain
\begin{align*}
\|(\delta f)_2\|_{L_T^1(\dot{B}_{2,1}^{1/2})}\lesssim\|\nabla\delta\lagr{v}\|_{L_T^1(\dot{B}_{2,1}^{3/2})}\|\Delta\lagr{v}_2,\nabla\lagr{Q}_2\|_{L_T^1(\dot{B}_{2,1}^{1/2})}.
\end{align*}
Summing up the estimates, we have
\begin{align}\label{estimate of difference of inhomogeneous term f}
\|\delta f\|_{L_T^1(\dot{B}_{2,1}^{1/2})}\lesssim(M(T)+r)\|(\delta\lagr{v},\nabla\delta\lagr{Q})\|_{E(T)}.  
\end{align}
Note that $\partial_t\delta R=-\partial_t\mathscr{A}_{\lagr{v}_1}\delta\lagr{v}+(I-\mathscr{A}_{\lagr{v}_1})\partial_t\delta\lagr{v}-\partial_t(\delta\mathscr{A})\lagr{v}_2-\delta\mathscr{A}\partial_t\lagr{v}_2$. Again, applying \eqref{flow estimate for A-I}, \eqref{flow estimate for partial t A}, \eqref{flow estimate for difference of A} and \eqref{flow estimate for partial t difference of A L2} gives
\begin{align*}
\|\partial_t\mathscr{A}_{\lagr{v}_1}\delta\lagr{v}\|_{L_T^1(\dot{B}_{2,1}^{1/2})}\lesssim&\|\nabla\lagr{v}_1\|_{L_T^1(\dot{B}_{2,1}^{3/2})}\|\delta\lagr{v}\|_{L_T^\infty(\dot{B}_{2,1}^{1/2})},\\
\|(I-\mathscr{A}_{\lagr{v}_1})\partial_t\delta\lagr{v}\|_{L_T^1(\dot{B}_{2,1}^{1/2})}\lesssim&\|\nabla\lagr{v}_1\|_{L_T^1(\dot{B}_{2,1}^{3/2})}\|\partial_t\delta\lagr{v}\|_{L_T^1(\dot{B}_{2,1}^{1/2})},\\
\|\partial_t(\delta\mathscr{A})\lagr{v}_2\|_{L_T^1(\dot{B}_{2,1}^{1/2})}\lesssim&\|\delta\lagr{v}\|_{L_T^2(\dot{B}_{p,1}^{3/2})}\|\lagr{v}_2\|_{L_T^2(\dot{B}_{2,1}^{3/2})},\\
\|\delta\mathscr{A}\partial_t\lagr{v}_2\|_{L_T^1(\dot{B}_{2,1}^{1/2})}\lesssim&\|\nabla\delta\lagr{v}\|_{L_T^1(\dot{B}_{2,1}^{3/2})}\|\partial_t\lagr{v}_2\|_{L_T^1(\dot{B}_{2,1}^{1/2})}.
\end{align*}
Putting things together, and using \eqref{maximal regularity for free solutions} and interpolation inequality in Besov spaces, we arrive at
\begin{align}\label{estimate of difference of inhomogeneous term R}
\|\partial_t\delta R\|_{L_T^1(\dot{B}_{2,1}^{1/2})}\lesssim(\|u_0\|_{\dot{B}_{2,1}^{1/2}}M^{1/2}(T)+M(T)+r)\|(\delta\lagr{v},\nabla\delta\lagr{Q})\|_{E(T)}. 
\end{align}
For the estimate of $\delta g$, we write $\delta g=\mathrm{Tr}((I-\mathscr{A}_{\lagr{v}_1})D\delta\lagr{v})-\mathrm{Tr}(\delta\mathscr{A}D\lagr{v}_2)$. We have
\begin{align}\label{estimate of difference of inhomogeneous term g}
\|\delta g\|_{L_T^1(\dot{B}_{2,1}^{3/2})}\lesssim&\|\nabla\lagr{v}_1\|_{L_T^1(\dot{B}_{2,1}^{3/2})}\|\nabla\delta\lagr{v}\|_{L_T^1(\dot{B}_{2,1}^{3/2})}+\|\nabla\lagr{v}_2\|_{L_T^1(\dot{B}_{2,1}^{3/2})}\|\nabla\delta\lagr{v}\|_{L_T^1(\dot{B}_{2,1}^{3/2})}\nonumber\\
\lesssim&(M(T)+r)\|(\delta\lagr{v},\nabla\delta\lagr{Q})\|_{E(T)}.
\end{align}

Now summing up \eqref{estimate of difference of inhomogeneous term f}--\eqref{estimate of difference of inhomogeneous term g} and applying Theorem \ref{maximal regularity for Stokes} to \eqref{Stokes system for difference of two solutions}, we obtain
\begin{align*}
\|(\delta\lagr{u},\nabla\delta\lagr{P})\|_{E(T)}\lesssim&\|\delta f,\partial_t\delta R,\nabla\delta g\|_{L_T^1(\dot{B}_{2,1}^{1/2})}\\
\le& C_1(\|u_0\|_{\dot{B}_{2,1}^{1/2}}M^{1/2}(T)+M(T)+r)\|(\delta\lagr{v},\nabla\delta\lagr{Q})\|_{E(T)}
\end{align*}
with $C_1$ depending on $m$ and $\|\rho_0-1\|_{\dot{B}_{2,1}^{3/2}}$. Taking \eqref{smallness condition on r and T} into consideration, we choose $r$ and $T$ so small that
\begin{align*}
r\le\frac{c_0}{2}\wedge\frac{1}{8C_0}\wedge\frac{1}{8C_1}\ \ \mathrm{and}\ \ M(T)\le r\wedge\frac{r}{2C_0\|u_0\|_{\dot{B}_{2,1}^{1/2}}}\wedge\left(4C_1\|u_0\|_{\dot{B}_{2,1}^{1/2}}\right)^{-2}.
\end{align*}
Then $\mathcal{N}$ is a contraction mapping on $B_r(\lagr{u}_L,\nabla\lagr{P}_L)$. So it admits a unique fixed point $(\lagr{u},\nabla\lagr{P})$ in $B_r(\lagr{u}_L,\nabla\lagr{P}_L)$, which is a solution to \eqref{Lagrangian formulation} in $E(T)$. The proof of uniqueness in $E(T)$ is similar to the stability estimates. So the proof of the theorem is completed.
\end{proof}

\begin{theorem}\label{global wellposedness for lagrangian formulation}
Under the assumptions in Theorem \ref{local wellposedness for lagrangian formulation}, there exists a constant $\varepsilon_0$ depending on $m$ and $\|\rho_0-1\|_{\dot{B}_{2,1}^{3/2}}$ such that if
\begin{align*}
\|u_0\|_{\dot{B}_{2,1}^{1/2}}\le\varepsilon_0,
\end{align*}
then the local solution $(\lagr{u},\nabla\lagr{P})$ exists globally in time and verifies
\begin{align*}
\|(\lagr{u},\nabla\lagr{P})\|_{E(\infty)}\vcentcolon=\|\lagr{u}\|_{L^\infty(\R_+;\dot{B}_{2,1}^{1/2})}+\|\Delta\lagr{u},\partial_t\lagr{u},\nabla \lagr{P}\|_{L^1(\R_+;\dot{B}_{2,1}^{1/2})}\le C\|u_0\|_{\dot{B}_{2,1}^{1/2}}.
\end{align*}
\end{theorem}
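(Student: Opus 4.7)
The plan is to perform a standard continuation argument based on the local solution furnished by Theorem \ref{local wellposedness for lagrangian formulation} and on the linear maximal regularity estimate provided by Theorem \ref{maximal regularity for Stokes}. First, I would view \eqref{Lagrangian formulation} as a Stokes system perturbed by nonlinear source terms, namely
\begin{equation*}
\rho_0\partial_t\lagr{u}-\Delta\lagr{u}+\nabla\lagr{P}=f(\lagr{u},\nabla\lagr{P}),\qquad \divg\lagr{u}=\divg R(\lagr{u})=g(\lagr{u}),
\end{equation*}
where $f$, $R$ and $g$ are exactly the right-hand sides of \eqref{linearized Lagrangian formulation} with $\lagr{v}=\lagr{u}$ and $\lagr{Q}=\lagr{P}$. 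Applying Theorem \ref{maximal regularity for Stokes} on any interval $[0,T]$ with $T<T^{*}$ (the maximal time of existence) yields
\begin{equation*}
\Phi(T)\vcentcolon=\|(\lagr{u},\nabla\lagr{P})\|_{E(T)}\le C_0\|u_0\|_{\dot{B}_{2,1}^{1/2}}+C_0\|f,\partial_tR,\nabla g\|_{L_T^1(\dot{B}_{2,1}^{1/2})},
\end{equation*}
with $C_0$ depending only on $m$ and $\|\rho_0-1\|_{\dot{B}_{2,1}^{3/2}}$.

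Next, I would estimate the nonlinear source terms exactly as in the proof of Theorem \ref{local wellposedness for lagrangian formulation}, using the flow estimates of Appendix \ref{appendix1} (\eqref{flow estimate for A-I}, \eqref{flow estimate for partial t A}) together with the product laws of Lemma \ref{basic properties of Besov spaces}. After controlling $\|\nabla\lagr{u}\|_{L_T^1(\dot{B}_{2,1}^{3/2})}\le\|\Delta\lagr{u}\|_{L_T^1(\dot{B}_{2,1}^{1/2})}\le\Phi(T)$ and $\|\lagr{u}\|_{L_T^2(\dot{B}_{2,1}^{3/2})}^2\lesssim\|\lagr{u}\|_{L_T^\infty(\dot{B}_{2,1}^{1/2})}\|\Delta\lagr{u}\|_{L_T^1(\dot{B}_{2,1}^{1/2})}\le\Phi(T)^2$ by interpolation, one obtains a quadratic inequality
\begin{equation*}
\Phi(T)\le C_0\|u_0\|_{\dot{B}_{2,1}^{1/2}}+C_0^{\,*}\,\Phi(T)^2
\end{equation*}
valid for all $T<T^{*}$, where $C_0^{\,*}$ depends on the same parameters. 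The key point is that the quadratic constant is uniform in $T$, so no time-growth can enter the estimate.

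Now I would close the argument by a classical bootstrap. Setting $\varepsilon_0$ small enough that both $4C_0C_0^{\,*}\varepsilon_0<1$ and $2C_0\varepsilon_0<c_0/2$ hold (the latter being the threshold from Theorem \ref{local wellposedness for lagrangian formulation}), the set $\{T\in[0,T^{*}):\Phi(T)\le 2C_0\|u_0\|_{\dot{B}_{2,1}^{1/2}}\}$ is non-empty, relatively closed, and the quadratic inequality together with the continuity of $T\mapsto\Phi(T)$ on $[0,T^{*})$ (which follows from the membership of $\lagr{u}\in C([0,T^{*});\dot{B}_{2,1}^{1/2})$ and of $(\partial_t\lagr{u},\Delta\lagr{u},\nabla\lagr{P})\in L^1_{\mathrm{loc}}([0,T^{*});\dot{B}_{2,1}^{1/2})$) shows it is also open. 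Hence this set is all of $[0,T^{*})$, giving $\Phi(T)\le 2C_0\|u_0\|_{\dot{B}_{2,1}^{1/2}}$ for every $T<T^{*}$.

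The main obstacle is the continuation step: one must rule out $T^{*}<\infty$. If $T^{*}$ were finite, the uniform bound on $\Phi$ would give $\|\nabla\lagr{u}\|_{L^1(0,T^{*};\dot{B}_{2,1}^{3/2})}\le 2C_0\varepsilon_0<c_0/2$. Restarting the local existence theorem from any time $T_1$ slightly less than $T^{*}$ (with data $\lagr{u}(T_1)\in\dot{B}_{2,1}^{1/2}$ of norm at most $2C_0\varepsilon_0$, and with a density that still satisfies \eqref{initial density bounds} since the flow is a diffeomorphism on $[0,T_1]$ by the flow estimates) yields a solution on $[T_1,T_1+\delta]$ for some $\delta>0$ independent of $T_1$ close to $T^{*}$; gluing to the original solution extends it past $T^{*}$, contradicting the maximality. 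Therefore $T^{*}=\infty$ and the proclaimed global estimate holds with $C=2C_0$.
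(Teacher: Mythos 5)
Your argument follows a genuinely different path from the paper's. The paper simply reruns the fixed-point argument of Theorem~\ref{local wellposedness for lagrangian formulation}, replacing $E(T)$ by $E(\infty)$ and centering the ball at $(0,0)$ rather than at the free solution $(\lagr{u}_L,\nabla\lagr{P}_L)$; for small $\|u_0\|_{\dot B^{1/2}_{2,1}}$ the self-map and contraction constants close directly because the quantity playing the role of $M(T)$ is now $\lesssim\|u_0\|_{\dot B^{1/2}_{2,1}}\le\varepsilon_0$, so no growth in $T$ ever enters and global existence, uniqueness, and the bound are obtained in one stroke. You instead prove an a~priori quadratic inequality $\Phi(T)\le C_0\|u_0\|+C_0^{*}\Phi(T)^2$ on the maximal existence interval, run the open-closed bootstrap, and then invoke a continuation criterion. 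Both routes are standard, and your a~priori estimate step is correct (including the norm equivalence $\|\nabla\lagr u\|_{\dot B^{3/2}_{2,1}}\simeq\|\Delta\lagr u\|_{\dot B^{1/2}_{2,1}}$ and the interpolation bound for $\|\lagr u\|_{L^2_T(\dot B^{3/2}_{2,1})}$); the bootstrap is also standard provided you include the constraint $\|\nabla\lagr u\|_{L^1_T(\dot B^{3/2}_{2,1})}\le c_0$ in the bootstrap set so that the flow estimates \eqref{flow estimate for A-I}--\eqref{flow estimate for partial t A} remain applicable, which your smallness condition $2C_0\varepsilon_0<c_0/2$ guarantees. The one place you should be more careful is the restart: Theorem~\ref{local wellposedness for lagrangian formulation} is formulated for the Lagrangian system \eqref{Lagrangian formulation} with $X_{\lagr u}(0,\cdot)=\mathrm{id}$, hence $\mathscr{A}_{\lagr u}(0)=I$; at time $T_1$ the adjugate $\mathscr{A}_{\lagr u}(T_1)\neq I$, so you cannot literally re-invoke that theorem with data $\lagr u(T_1)$. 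One must either pass to Eulerian coordinates at $T_1$, solve locally there (which requires noting that $\rho(T_1,\cdot)-1$ is still in $\dot B^{3/2}_{2,1}$ with uniformly controlled norm via Lemma~\ref{regularity under changes of variables}), and pull back to a fresh Lagrangian frame, or modify the local theorem to allow a non-identity reference configuration. Alternatively — and most cleanly — the continuation is unnecessary: once the uniform bound $\Phi(T)\le 2C_0\|u_0\|$ holds, the fixed-point map of Theorem~\ref{local wellposedness for lagrangian formulation} already contracts on $B_r(0,0)\subset E(\infty)$ for $r\simeq\|u_0\|$ small, which is precisely the paper's observation and renders the maximal-time discussion superfluous.
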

\begin{proof}
The proof is almost the same as that of Theorem \ref{local wellposedness for lagrangian formulation}. Let us just mention a few modifications. First, we should replace $E(T)$ by $E(\infty)$ that consists of all pairs $(\lagr{u},\nabla\lagr{P})$ satisfying
\begin{align*}
\lagr{u}\in C_b([0,\infty);\dot{B}_{2,1}^{1/2}(\R^3)),\ 
(\partial_t\lagr{u}, \Delta\lagr{u}, \nabla \lagr{P})\in \left(L^{1}(\R_+;\dot{B}_{2,1}^{1/2}(\R^3))\right)^3. 
\end{align*}
Second, we replace the reference solution $(\lagr{u}_L,\nabla\lagr{P}_L)$ by $(0,0)$, and choose $r$ as a small number depending on $m$ and $\|\rho_0-1\|_{\dot{B}_{2,1}^{3/2}}$ and $\|u_0\|_{\dot{B}_{2,1}^{1/2}}\lesssim r$. The details are left to the reader.
\end{proof}

For completeness, let us now give the proof of Theorem \ref{global wellposedness} and Theorem \ref{local and global well posedness for Lagragian formulation}.

\begin{proof}[Proof of Theorem \ref{local and global well posedness for Lagragian formulation}] Theorem \ref{local wellposedness for lagrangian formulation} and Theorem \ref{global wellposedness for lagrangian formulation} constitute a proof of Theorem \ref{local and global well posedness for Lagragian formulation}. 
\end{proof}

\begin{proof}[Proof of Theorem \ref{global wellposedness}] All details of the proof are provided in Appendix \ref{appendix1} except the uniqueness. For two solutions $(\rho_i,u_i,\nabla P_i)$, $i=1,2$, to \eqref{INS} with the same initial value, the corresponding solutions $(\lagr{u}_i,\nabla\lagr{P}_i)$ to \eqref{Lagrangian formulation} are identical in a short time interval, thus, so are $(\rho_i,u_i,\nabla P_i)$. Uniqueness on the interval where $(\rho_i,u_i,\nabla P_i)$ are defined can be proved in a standard way.
\end{proof}

\bigskip

\section{Long-time asymptotics}\label{proof of long time behavior}

\bigskip

This section is devoted to the proof of Theorem \ref{long time asymptotics}. Besides the maximal regularity estimate \eqref{maximal regularity INS}, the proof also relies on a recent result in \cite{zhang ping adv 2020}. 

\begin{lemma}\label{combine with zhang's result}
Assume that $\rho_0$ satisfies \eqref{initial density bounds} and $\rho_0-1\in\dot{B}_{2,1}^{3/2}(\R^3)$, and $u_0\in\mathcal{P}\dot{B}_{2,1}^{1/2}(\R^3)$. There exists a constant $\varepsilon_1$ depending on $m$ and $\|\rho_0-1\|_{\dot{B}_{2,1}^{3/2}}$ such that if $u_0$ satisfies
\begin{align*}
\|u_0\|_{\dot{B}_{2,1}^{1/2}}\le\varepsilon_1,
\end{align*}
then \eqref{INS} has a global solution $(\rho,u,\nabla P)$ that verifies \eqref{maximal regularity INS}, \eqref{global in time estimate for the density} and 
\begin{align}\label{zhang's estimate}
\|u\|_{L^\infty(\R_+;\dot{B}_{2,1}^{1/2})}+\|\sqrt{t}(\partial_tu+u\cdot\nabla u)\|_{L^2(\R_+;\dot{B}_{2,1}^{1/2})}\le C_2\|u_{0}\|_{\dot{B}_{2,1}^{1/2}},
\end{align}
where $C_2$ is a constant depending only on $m$.
\end{lemma}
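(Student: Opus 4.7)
The plan is to build the solution by combining Theorem \ref{global wellposedness} of this paper with the global existence result of Zhang \cite{zhang ping adv 2020}, and then invoke uniqueness to transfer Zhang's estimate \eqref{zhang's estimate} onto the solution produced here. Concretely, let $\varepsilon_0$ be the threshold given by Theorem \ref{global wellposedness} (depending on $m$ and $\|\rho_0-1\|_{\dot B^{3/2}_{2,1}}$), and let $\varepsilon_Z$ be the smallness threshold of the corresponding theorem in \cite{zhang ping adv 2020} (which depends only on $m$, since there $\rho_0$ is merely bounded and bounded from below by $m$). I would set
\[
\varepsilon_1 \vcentcolon= \varepsilon_0 \wedge \varepsilon_Z.
\]

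First, assuming $\|u_0\|_{\dot B^{1/2}_{2,1}}\le\varepsilon_1$, Theorem \ref{global wellposedness} produces a unique global strong solution $(\rho,u,\nabla P)$ in the class of Definition \ref{definition of strong solutions to Eulerian formulation}, satisfying the maximal $L^1$ estimate \eqref{maximal regularity INS} and the density bound \eqref{global in time estimate for the density}. This in particular gives $\nabla u\in L^1(\R_+;\dot B^{1/2}_{2,1})\hookrightarrow L^1(\R_+;\dot B^0_{\infty,1})$, hence the global Lipschitz-in-$x$ control \eqref{global L1Linfty estimate for gradient of velocity} holds, which is the regularity needed to run a standard weak–strong uniqueness argument for \eqref{INS} in the class of finite-energy solutions with bounded density.

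Next, because the smallness condition $\|u_0\|_{\dot B^{1/2}_{2,1}}\le\varepsilon_Z$ is met, Zhang's theorem in \cite{zhang ping adv 2020} also yields a global weak solution $(\tilde\rho,\tilde u,\nabla\tilde P)$ with the same initial data, satisfying the energy/regularity bound \eqref{zhang's estimate}. I would then apply the weak–strong uniqueness mentioned above: since our $(\rho,u,\nabla P)$ has the Lipschitz control on $u$ and the density bound \eqref{density bounds}, any finite-energy solution with the same initial data (in particular Zhang's) must coincide with it on $[0,\infty)$. Consequently, the estimate \eqref{zhang's estimate} proved for $(\tilde\rho,\tilde u,\nabla\tilde P)$ transfers verbatim to $(\rho,u,\nabla P)$.

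The step I expect to be the most delicate is the identification $(\rho,u,\nabla P)\equiv(\tilde\rho,\tilde u,\nabla\tilde P)$, since the two solutions are constructed in different functional frameworks (strong/Lagrangian on one side, weak/Eulerian with only bounded density on the other). The key observation that makes it routine rather than hard is that \eqref{maximal regularity INS} implies the scale-invariant quantity $\int_0^\infty\|\nabla u\|_\infty\,dt$ is finite, which is precisely the condition under which uniqueness of finite-energy solutions to \eqref{INS} with merely bounded density is known (as recalled in the discussion following Theorem \ref{global wellposedness} and in the references \cite{danchin arma 2013,paicu cpde 2013,chen jde 2016}). Once this identification is made, \eqref{zhang's estimate} follows, completing the proof.
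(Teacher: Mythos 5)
Your overall plan --- combine Theorem \ref{global wellposedness} with Zhang's result and use uniqueness to transfer the estimate --- is the right idea, but the way you identify the two solutions has a genuine gap, and it is precisely the step the paper is careful to avoid. You propose to identify the strong solution of Theorem \ref{global wellposedness} with Zhang's solution via ``a standard weak--strong uniqueness argument for \eqref{INS} in the class of finite-energy solutions with bounded density.'' First, the solutions here are not finite-energy: $u_0\in\dot{B}_{2,1}^{1/2}(\R^3)$ does not embed into $L^2(\R^3)$, so neither solution is a Leray--Hopf weak solution and the classical energy-space weak--strong uniqueness framework does not apply as stated. Second, no weak--strong uniqueness theorem covering Zhang's class of solutions is established in this paper or available off the shelf in the cited references: the uniqueness results of \cite{danchin arma 2013,paicu cpde 2013,chen jde 2016} are proved within classes that require quantitative regularity of the solution being compared (Lipschitz velocity, time-weighted bounds), and the introduction explicitly points out that uniqueness for Zhang's solutions is open. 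Knowing that \emph{your} solution is Lipschitz does not by itself let you conclude that \emph{Zhang's} solution coincides with it unless such a comparison theorem is actually proved; doing so (estimating the density difference with one velocity merely in Zhang's class, in a non-$L^2$ setting) is a nontrivial piece of work that you cannot dismiss as routine.

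The paper sidesteps this entirely by mollifying the data: for the truncated data $(\rho_{0,N},u_{0,N})$ both constructions produce solutions lying in the uniqueness class of Theorem \ref{global wellposedness}, so they coincide and simultaneously satisfy Zhang's bounds and \eqref{maximal regularity INS}--\eqref{global in time estimate for the density}; one then passes to the limit using the uniform estimates and identifies the limit with the solution of Theorem \ref{global wellposedness} by the uniqueness already proved there. If you want to keep your route, you must either supply the weak--strong uniqueness argument in the critical (non-energy) framework or switch to the approximation argument. A second, smaller point: \eqref{zhang's estimate} is not literally one of Zhang's stated bounds; the term $\|\sqrt{t}\,(\partial_tu+u\cdot\nabla u)\|_{L^2(\R_+;\dot{B}_{2,1}^{1/2})}$ has to be derived from Zhang's estimates via the product law
\begin{align*}
\|\sqrt{t}\,u\cdot\nabla u\|_{L^2(\R_+;\dot{B}_{2,1}^{1/2})}\lesssim\|\sqrt{t}\,u\|_{L^\infty(\R_+;\dot{B}_{2,1}^{3/2})}\|u\|_{L^2(\R_+;\dot{B}_{2,1}^{3/2})},
\end{align*}
so ``transfers verbatim'' glosses over a (short but necessary) computation.
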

\begin{proof}
Let $\varepsilon_1$ be so small that the Theorem \ref{global wellposedness} in the present paper and the Theorem 1.2 in \cite{zhang ping adv 2020} hold. We start with mollifying the data by defining
\begin{align*}
\rho_{0,N}=1+\sum_{|j|\le N}\Dot{\Delta}_j(\rho_0-1)\ \ \mathrm{and}\ \ u_{0,N}=\sum_{|j|\le N}\Dot{\Delta}_ju_0.
\end{align*}
As in \cite{zhang ping adv 2020}, the above data generates a global solution $(\rho_N,u_N,\nabla P_N)$ to \eqref{INS} that satisfies the estimates in {\cite[Theorem~1.2]{zhang ping adv 2020}}. On the other hand, in view of Theorem \ref{global wellposedness}, $(\rho_N,u_N,\nabla P_N)$ also satisfies \eqref{maximal regularity INS} and \eqref{global in time estimate for the density}. The uniform estimates allow us to pass to a limit to obtain a global strong solution $(\rho,u,\nabla P)$ to \eqref{INS}, which is unique due to Theorem \ref{global wellposedness}. Finally, we use the estimates in {\cite[Theorem~1.2]{zhang ping adv 2020}} to get
\begin{align*}
&\|\sqrt{t}(\partial_tu+u\cdot\nabla u)\|_{L^2(\R_+;\dot{B}_{2,1}^{1/2})}\\
\lesssim&\|\sqrt{t}\partial_tu\|_{L^2(\R_+;\dot{B}_{2,1}^{1/2})}+\|\sqrt{t}u\|_{L^\infty(\R_+;\dot{B}_{2,1}^{3/2})}\|u\|_{L^2(\R_+;\dot{B}_{2,1}^{3/2})}\lesssim\|u_{0}\|_{\dot{B}_{2,1}^{1/2}}.
\end{align*}
This completes the proof of the lemma.
\end{proof}

We are now in a position to give the proof of Theorem \ref{long time asymptotics}. Our proof is motivated by \cite{gallagher 2002,gallagher 2003} concerning asymptotics and stability for global solutions to the classical Navier-Stokes equations.

\begin{proof}[Proof of Theorem \ref{long time asymptotics}]
Fix any $\varepsilon<\varepsilon_1$. We first split the initial velocity into two parts
$u_0=u_{0,h}+u_{0,l}$, where $u_{0,h}=\sum_{j\ge-N}\dot{\Delta}_ju_0$ is the high frequency part that belongs to the inhomogeneous Besov space $ B^{1/2}_{2,1}(\R^3)$, while $u_{0,l}$ satisfies that 
\begin{align*}
\|u_{0,l}\|_{\dot{B}^{1/2}_{2,1}}\le \varepsilon.
\end{align*}
By Lemma \ref{combine with zhang's result}, $(\rho_0,u_{0,l})$ generates a global solution $(\rho_l,u_l,\nabla P_l)$ to \eqref{INS} that satisfies
\begin{align}\label{estimate for low frequency velocity}
\|u_l\|_{L^\infty(\R_+;\dot{B}_{2,1}^{1/2})}+\|\nabla u_l\|_{L^1(\R_+;\dot{B}_{2,1}^{3/2})}+\|\sqrt{t}(\partial_tu+u\cdot\nabla u)\|_{L^2(\R_+;\dot{B}_{2,1}^{1/2})}\lesssim\|u_{0,l}\|_{\dot{B}_{2,1}^{1/2}},
\end{align}
and
\begin{align}\label{estimate for low frequency density}
\|\rho_l-1\|_{L^\infty(\R_+;\dot{B}_{2,1}^{1/2})}\lesssim\|\rho_0-1\|_{\dot{B}_{2,1}^{3/2}}. 
\end{align}

Let $(\rho_h,u_h,P_h)=(\rho-\rho_l,u-u_l,P-P_l)$. Then $(\rho_h,u_h,P_h)$ satisfies the system
\begin{eqnarray}\label{system for high frequency velocity}
\left\{\begin{aligned}
&\partial_t\rho_h+u\cdot\nabla\rho_h+u_h\cdot\nabla\rho_l=0,\\
&\rho(\partial_tu_h+u\cdot\nabla u_h)-\Delta u_h+\nabla P_h=-\rho u_h\cdot\nabla u_l-\rho_h(\partial_tu_l+u_l\cdot\nabla u_l),\\
&\divg u_h=0,\\
&(\rho_h,u_h)|_{t=0}=(0,u_{0,h}).
\end{aligned}\right.
\end{eqnarray}
We shall use the energy method to derive an $L_t^4(\dot{B}_{2,1}^{1/2})$ estimate for $u_h$. Taking the $L^2$ inner product of the second equation in \eqref{system for high frequency velocity} with $u_h$, and using H\"{o}lder's inequality and Sobolev inequality, we have
\begin{align*}
\frac{1}{2}\frac{d}{dt}\|\sqrt{\rho}u_h\|_2^2+\|\nabla u_h\|_2^2
=&-\int\rho(u_h\cdot\nabla u_l)\cdot u_h\,dx-\int\rho_h(\partial_tu_l+u_l\cdot\nabla u_l)\cdot u_h\,dx\\
\lesssim&\|\nabla u_l\|_\infty\|\sqrt{\rho}u_h\|_2^2+\|\rho_h\|_2\|\partial_tu_l+u_l\cdot\nabla u_l\|_3\|\nabla u_h\|_2.
\end{align*}
To bound $\|\rho_h\|_2$, we get from the transport equation in \eqref{system for high frequency velocity} that
\begin{align*}
\|\rho_h(t)\|_{2}\le\int_0^t\|u_h\cdot\nabla\rho_l\|_{2}\,d\tau\lesssim \int_0^t\|\nabla u_h\|_2\|\nabla\rho_l\|_{3}\,d\tau\lesssim \sqrt{t}\|\nabla u_h\|_{L_t^2(L^2)}\|\nabla\rho_l\|_{L_t^\infty(\dot{B}_{2,1}^{1/2})}.
\end{align*}
Putting things together and using \eqref{estimate for low frequency density}, we have
\begin{align*}
\frac{d}{dt}\|\sqrt{\rho}u_h\|_2^2+\|\nabla u_h\|_2^2
\lesssim\|\nabla u_l\|_\infty\|\sqrt{\rho}u_h\|_2^2+\|\nabla u_h\|_{L_t^2(L^2)}\|\sqrt{t}(\partial_tu_l+u_l\cdot\nabla u_l)\|_{3}\|\nabla u_h\|_2.
\end{align*}
Integrating both sides of the above inequality over the time interval $[0,t]$, then using \eqref{estimate for low frequency velocity}, we have
\begin{align*}
\|\sqrt{\rho}u_h\|_2^2(t)+\|\nabla u_h\|_{L_t^2(L^2)}^2
\lesssim\|\sqrt{\rho_0}u_{0,h}\|_2^2+\int_0^t\|\nabla u_l\|_\infty\|\sqrt{\rho}u_h\|_2^2\,d\tau+\varepsilon\|\nabla u_h\|_{L_t^2(L^2)}^2.
\end{align*}
So, if $\varepsilon$ is small enough, we arrive at
\begin{align*}
\|\sqrt{\rho}u_h\|_2^2(t)+\|\nabla u_h\|_{L_t^2(L^2)}^2
\lesssim\|\sqrt{\rho_0}u_{0,h}\|_2^2+\int_0^t\|\nabla u_l\|_\infty\|\sqrt{\rho}u_h\|_2^2\,d\tau.
\end{align*}
Applying Gronwall's inequality and using \eqref{estimate for low frequency velocity} give us that
\begin{align*}
\|u_h\|_{L_t^\infty(L^2)}+\|\nabla u_h\|_{L_t^2(L^2)}
\le C\|u_{0,h}\|_2\exp\{C\|u_{0,l}\|_{\dot{B}_{2,1}^{1/2}}\}.
\end{align*}
We interpolate to have
\begin{align*}
\|u_h\|_{L_t^4(\dot{B}_{2,1}^{1/2})}\le C\|u_{0,h}\|_2\exp\{C\|u_{0,l}\|_{\dot{B}_{2,1}^{1/2}}\}.
\end{align*}
This implies that there exists a positive number $t_\varepsilon$ such that $\|u_h(t_\varepsilon)\|_{\dot{B}_{2,1}^{1/2}}\le\varepsilon$, and so $\|u(t_\varepsilon)\|_{\dot{B}_{2,1}^{1/2}}\lesssim\varepsilon$. Now we  apply Lemma \ref{combine with zhang's result} to conclude that
\begin{align*}
\|u\|_{L^\infty((t_\varepsilon,\infty);\dot{B}_{2,1}^{1/2})}\lesssim \|u(t_\varepsilon)\|_{\dot{B}_{2,1}^{1/2}}\lesssim\varepsilon,
\end{align*}
which implies \eqref{large solution asymptotic to 0} since $\varepsilon$ is arbitrarily small. This completes the proof of Theorem \ref{long time asymptotics}.
\end{proof}

\bigskip


\appendix

\section{Equivalence between Eulerian and Lagrangian formulations}\label{appendix1}

\bigskip

We start with a classical result concerning the regularity of the solutions to the ODE
\begin{eqnarray}\label{ODE}
\left\{\begin{aligned}
&\frac{d}{dt}X(t,y)=u(t,X(t,y)),\\
&X(0,y)=y
\end{aligned}\right.
\end{eqnarray}
within the Cauchy-Lipschitz framework. Let us temporarily assume that $u$ is $C^1$ vector field, namely, $u\in L^1([0,T];C_b^{1}(\R^3;\R^3))$.

\begin{lemma}\label{cauchy lipschitz theorem}
For any $y\in\R^3$, \eqref{ODE} has a unique solution $X(\cdot,y)\in W^{1,1}([0,T])$. For any $t\in[0,T]$, $X(t,\cdot)$ is a $C^1$ diffeomorphism over $\R^3$ that satisfies
\begin{align*}
\|\nabla X(t)\|_{\infty}\vee\|\nabla X^{-1}(t)\|_{\infty}\le\exp\left(\|\nabla u\|_{L_t^1(L^\infty)}\right),
\end{align*}
where $X^{-1}(t,\cdot)$ is the inverse of $X(t,\cdot)$. The determinant $J_X(t,y)$ of $DX(t,y)$ satisfies
\begin{align*}
\exp\left(-\|\divg u\|_{L_t^1(L^\infty)}\right)\le J_X(t,y)\le\exp\left(\|\divg u\|_{L_t^1(L^\infty)}\right).
\end{align*}
In particular, if $\divg u=0$, then $J_X(t,y)$ is identical to $1$.
\end{lemma}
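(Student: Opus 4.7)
The plan is to proceed in four stages, following the classical Cauchy-Lipschitz framework adapted to the $L^1$-in-time regularity of $u$.

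First I would establish existence and uniqueness of $X(\cdot,y)$ pointwise in $y$. I rewrite \eqref{ODE} as the integral equation $X(t,y) = y + \int_0^t u(\tau, X(\tau,y))\,d\tau$ and apply a Picard iteration in $C([0,T];\R^3)$ for each fixed $y$. Since $u\in L^1([0,T];C_b^1)$, the map $X\mapsto \int_0^{\cdot}u(\tau,X(\tau))\,d\tau$ satisfies a Gronwall-type contraction estimate with integrable Lipschitz constant $\|\nabla u(\tau)\|_\infty$, which yields a unique fixed point. The regularity $X(\cdot,y)\in W^{1,1}([0,T])$ follows from $\partial_tX(\cdot,y) = u(\cdot,X(\cdot,y))\in L^1([0,T])$.

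Next I would obtain the spatial regularity and the bound on $\nabla X$. Formally differentiating \eqref{ODE} in $y$ gives the linearized system
\begin{equation*}
\partial_t(\nabla_y X)(t,y) = (\nabla u)(t,X(t,y))\,\nabla_y X(t,y),\quad \nabla_y X(0,y)=I.
\end{equation*}
Taking $|\cdot|$ and applying Gronwall immediately yields $\|\nabla X(t)\|_\infty \le \exp(\|\nabla u\|_{L^1_t(L^\infty)})$. The rigorous justification (which I expect to be the main technical point) proceeds by a difference quotient argument: for $h\neq 0$, writing $\delta_h X(t,y) = h^{-1}(X(t,y+he_i)-X(t,y))$, one uses the mean value theorem together with $\int_0^t\|\nabla u\|_\infty\,d\tau<\infty$ to show that $\{\delta_h X\}$ is uniformly bounded and Cauchy as $h\to 0$, so it converges to a solution of the linearized ODE above. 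Because $u$ is merely $L^1$ in time, one must handle integrability carefully, but Gronwall with $L^1$ coefficient is standard.

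Third, I would prove the diffeomorphism property and bound $\nabla X^{-1}$. Injectivity of $X(t,\cdot)$ follows from uniqueness: if $X(t,y_1)=X(t,y_2)$, then by uniqueness of the backward Cauchy problem $y_1=y_2$. For surjectivity, fix $t\in[0,T]$ and $x\in\R^3$; the backward ODE $\tfrac{d}{ds}Z(s)=-u(t-s,Z(s))$, $Z(0)=x$, admits a unique solution by the same argument, and $y\vcentcolon= Z(t)$ satisfies $X(t,y)=x$. This defines $X^{-1}(t,\cdot)$; the same differentiation argument applied to the backward flow gives $\|\nabla X^{-1}(t)\|_\infty \le \exp(\|\nabla u\|_{L^1_t(L^\infty)})$. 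Continuity in $(t,y)$ of $\nabla_y X$, together with $\nabla X^{-1}(t,X(t,y))\nabla X(t,y)=I$, yields the $C^1$-diffeomorphism property.

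Finally, the Jacobian bounds come from Liouville's formula. Using Jacobi's identity $\tfrac{d}{dt}\det M = \det M\cdot \mathrm{Tr}(M^{-1}\dot M)$ with $M=\nabla_y X$ and the linearized equation above,
\begin{equation*}
\frac{d}{dt}J_X(t,y) = J_X(t,y)\,\mathrm{Tr}\bigl[(\nabla_y X)^{-1}(\nabla u)(t,X)\,\nabla_y X\bigr] = J_X(t,y)\,(\divg u)(t,X(t,y)).
\end{equation*}
Since $J_X(0,y)=1$, integrating this scalar linear ODE gives $J_X(t,y)=\exp\!\bigl(\int_0^t(\divg u)(\tau,X(\tau,y))\,d\tau\bigr)$, from which the stated two-sided bound is immediate, and $J_X\equiv 1$ when $\divg u=0$. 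The derivation of the Liouville identity is initially formal; one justifies it by the same difference-quotient/approximation argument used for $\nabla X$, or by first mollifying $u$ in time and passing to the limit using the uniform bounds already obtained.
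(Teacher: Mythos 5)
Your proposal is correct and is exactly the standard Cauchy--Lipschitz argument (Picard iteration with $L^1$-in-time Lipschitz constant, Gronwall for $\nabla X$ and for the backward flow, Liouville's formula for the Jacobian) that the paper invokes without proof, stating the lemma as classical. All four steps, including the difference-quotient justification and the two-sided Jacobian bound, check out.
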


\noindent\fbox{From Eulerian to Lagrangian formulation.}
We assume that $(\rho,u,\nabla P)$ is a strong solution to \eqref{INS} (see Definition \ref{definition of strong solutions to Eulerian formulation}). In particular, $u$ is a $C^1$ vector field since $\dot{B}_{2,1}^{3/2}(\R^3)\hookrightarrow C_0(\R^3)$. Let $X_u(t,y)$ be the solution to \eqref{ODE}. Then we introduce new unknowns
\begin{align*}
(\lagr{\rho},\lagr{u},\lagr{P})(t,y)=(\rho,u,P)\big(t,X_u(t,y)\big).
\end{align*}
In view of the transport equation in \eqref{INS}, we have $\lagr{\rho}\equiv\rho_0$. Before we derive the equations satisfied by $(\lagr{u},\lagr{P})$, we have to study its regularity. To this end, we need the following result which guarantees that Besov regularity of a function is preserved under changes of variables.
\begin{lemma}\label{regularity under changes of variables}
Let $X$ be a $C^1$ diffeomorphism over $\R^3$. Let $p\in[1,\infty)$, $q\in[1,\infty]$ and $s\in(0,1)$. It holds that
\begin{align*}
\|f\circ X\|_{\dot{B}_{p,q}^{s}}\le C\|J_{X^{-1}}\|_{\infty}^{2/p}\|DX\|_{\infty}^{s+3/p}\|f\|_{\dot{B}_{p,q}^{s}},\ \ \forall f\in \mathscr{S}(\R^3).
\end{align*}
If in addition $DX-I\in\dot{B}_{6,1}^{1/2}(\R^3)$, we have
\begin{align*}
\|f\circ X\|_{\dot{B}_{2,1}^{3/2}}\le C(\|DX-I\|_{\dot{B}_{6,1}^{1/2}}+1)\|J_{X^{-1}}\|_{\infty}\|DX\|_{\infty}^{2}\|f\|_{\dot{B}_{2,1}^{3/2}},\ \ \forall f\in \dot{B}_{2,1}^{3/2}(\R^3).
\end{align*}
\end{lemma}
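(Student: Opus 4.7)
\medskip

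\noindent\textbf{Proof proposal.} The plan is to prove the two inequalities separately, with the second reducing to the first via the chain rule and a product law.

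For the first inequality, since $s\in(0,1)$, the ball-mean-difference characterization (Lemma \ref{characterization of Besov via ball mean difference}) tells us that $\|f\circ X\|_{\dot B_{p,q}^{s}}\simeq\|f\circ X\|_{\dot\Lambda_{p,q}^{s}}$. I would therefore estimate, for each fixed $r>0$, the quantity
\begin{align*}
I(r)\vcentcolon=\int_{\R^3}\fint_{B(x,r)}\frac{|f(X(x))-f(X(y))|^p}{r^{sp}}\,dy\,dx
\end{align*}
by changing variables $u=X(x)$, $v=X(y)$, using $dx\,dy=J_{X^{-1}}(u)J_{X^{-1}}(v)\,du\,dv$. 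The key geometric input is the mean-value type inclusion $X(B(X^{-1}(u),r))\subset B(u,\|DX\|_\infty r)$, which, together with the nonnegativity of the integrand, allows the enlargement of the domain of $v$. Writing $r'=\|DX\|_\infty r$ and extracting $\|J_{X^{-1}}\|_\infty$ from each Jacobian, a direct computation yields
\begin{align*}
I(r)\le\|J_{X^{-1}}\|_\infty^{2}\,\|DX\|_\infty^{3+sp}\int_{\R^3}\fint_{B(u,r')}\frac{|f(u)-f(v)|^p}{(r')^{sp}}\,dv\,du.
\end{align*}
Taking $p$-th roots, then the $L^q(\R_+;dr/r)$ norm (noting $dr/r=dr'/r'$), and finally the $q$-th root gives $\|f\circ X\|_{\dot\Lambda_{p,q}^{s}}\le\|J_{X^{-1}}\|_\infty^{2/p}\|DX\|_\infty^{s+3/p}\|f\|_{\dot\Lambda_{p,q}^{s}}$, which together with the characterization proves the first inequality. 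The claim for $f\in\mathscr{S}(\R^3)$ extends to general $f$ by density.

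For the second inequality, I would exploit the fact that the gradient realizes an isomorphism $\dot B_{2,1}^{3/2}\simeq\dot B_{2,1}^{1/2}$ combined with the chain rule $\nabla(f\circ X)=(DX)^T(\nabla f)\circ X$, so that
\begin{align*}
\|f\circ X\|_{\dot B_{2,1}^{3/2}}\simeq\|(DX)^T(\nabla f)\circ X\|_{\dot B_{2,1}^{1/2}}.
\end{align*}
Splitting $(DX)^T=I+(DX-I)^T$ and applying the product law $\|ab\|_{\dot B_{2,1}^{1/2}}\lesssim\|a\|_{\dot B_{6,1}^{1/2}}\|b\|_{\dot B_{2,1}^{1/2}}$ (the $q=6$ endpoint recalled in Lemma \ref{basic properties of Besov spaces}) yields
\begin{align*}
\|(DX)^T g\|_{\dot B_{2,1}^{1/2}}\lesssim\bigl(1+\|DX-I\|_{\dot B_{6,1}^{1/2}}\bigr)\|g\|_{\dot B_{2,1}^{1/2}}.
\end{align*}
Choosing $g=(\nabla f)\circ X$ and invoking the first inequality with $(p,q,s)=(2,1,1/2)$ gives
\begin{align*}
\|(\nabla f)\circ X\|_{\dot B_{2,1}^{1/2}}\le C\|J_{X^{-1}}\|_\infty\|DX\|_\infty^{2}\|\nabla f\|_{\dot B_{2,1}^{1/2}}\simeq\|J_{X^{-1}}\|_\infty\|DX\|_\infty^{2}\|f\|_{\dot B_{2,1}^{3/2}},
\end{align*}
and combining the two previous displays delivers the second inequality.

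The step I expect to require the most care is the change-of-variables bookkeeping in the first part: one must track how the ball $B(x,r)$ transforms under $X$, introduce the rescaling $r\mapsto r'=\|DX\|_\infty r$ at the correct moment, and check that exactly two copies of $\|J_{X^{-1}}\|_\infty$ appear so that the final exponent is $2/p$ (not some other fractional power that would make the density argument or the subsequent application in the second inequality fail). The product-law and chain-rule manipulations in the second part are essentially routine once the correct endpoint $q=6$ is identified.
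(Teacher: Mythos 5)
Your proof is correct and carries out precisely the ``easier proof'' via Lemma \ref{characterization of Besov via ball mean difference} that the paper remarks on but does not actually write out; the paper itself defers the argument to \cite{danchin aif 2014}, which proceeds instead by a Littlewood-Paley block-by-block estimate. Your change-of-variables bookkeeping in the first part yields exactly the stated exponents (two copies of $\|J_{X^{-1}}\|_\infty$ from the two Jacobians, the inclusion $X(B(X^{-1}(u),r))\subset B(u,\|DX\|_\infty r)$, and the rescaling $r'=\|DX\|_\infty r$ which preserves $dr/r$ and converts $|B(0,r)|^{-1}$ into $\|DX\|_\infty^3|B(0,r')|^{-1}$), and the reduction of the second inequality to the first via the chain rule, the gradient isomorphism $\dot B_{2,1}^{3/2}\simeq\dot B_{2,1}^{1/2}$, and the $q=6$ product law is the intended route (with the first inequality applied at $(p,q,s)=(2,1,1/2)$, first on $\mathscr{S}_0$, then extended by density).
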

\begin{remark}
See {\cite[Lemma~2.1.1]{danchin memoir 2015}} for a more general result.
\end{remark}

Given Lemma \ref{regularity under changes of variables}, we are now able to improve the regularity for the trajectory $X_u(t,y)$.

\begin{lemma}\label{Besov regularity of the flow}
We have $DX_u-I\in W^{1,1}([0,T];\dot{B}_{2,1}^{3/2}(\R^3))$. There exists a constant $C$ depending on $\|Du\|_{L_T^1(\dot{B}_{2,1}^{3/2})}$ such that
\begin{align*}
\|DX_u-I\|_{L_T^\infty(\dot{B}_{2,1}^{3/2})}\le C\|Du\|_{L_T^1(\dot{B}_{2,1}^{3/2})}.
\end{align*}
\end{lemma}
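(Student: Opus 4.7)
The plan is to derive the estimate from the integral equation for $DX_u - I$ via a Gronwall-type bootstrap. Differentiating the flow equation $X_u(t,y) = y + \int_0^t u(\tau, X_u(\tau,y))\,d\tau$ in $y$ yields
\begin{align*}
DX_u(t,y) - I = \int_0^t (Du)(\tau, X_u(\tau,y))\,DX_u(\tau,y)\,d\tau.
\end{align*}
Before working in Besov spaces, I would first record the $L^\infty$ bounds provided by Lemma \ref{cauchy lipschitz theorem}. Thanks to the embedding $\dot{B}_{2,1}^{3/2}(\R^3) \hookrightarrow L^\infty(\R^3)$, we have $\|Du\|_{L_T^1(L^\infty)} \le C\|Du\|_{L_T^1(\dot{B}_{2,1}^{3/2})}$, and therefore $\|DX_u\|_{L_T^\infty(L^\infty)}$ and $\|J_{X_u^{-1}}\|_{L_T^\infty(L^\infty)}$ are both controlled by $\exp(C\|Du\|_{L_T^1(\dot{B}_{2,1}^{3/2})})$.

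Next, I would take the $\dot{B}_{2,1}^{3/2}$-norm of the integral equation. The key ingredient is the composition estimate in Lemma \ref{regularity under changes of variables} (the second bound), which, combined with the embedding $\dot{B}_{2,1}^{3/2} \hookrightarrow \dot{B}_{6,1}^{1/2}$ and the $L^\infty$ bounds above, gives
\begin{align*}
\|(Du)(\tau)\circ X_u(\tau)\|_{\dot{B}_{2,1}^{3/2}} \le C_0\bigl(1 + \|DX_u(\tau) - I\|_{\dot{B}_{2,1}^{3/2}}\bigr)\,\|Du(\tau)\|_{\dot{B}_{2,1}^{3/2}},
\end{align*}
where $C_0$ depends only on $\|Du\|_{L_T^1(\dot{B}_{2,1}^{3/2})}$. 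Since $\dot{B}_{2,1}^{3/2}(\R^3)$ is an algebra, multiplying by $DX_u = I + (DX_u - I)$ and applying the product law yields the closed inequality
\begin{align*}
\|DX_u(t) - I\|_{\dot{B}_{2,1}^{3/2}} \le C_0\int_0^t \bigl(1 + \|DX_u(\tau) - I\|_{\dot{B}_{2,1}^{3/2}}\bigr)^2 \|Du(\tau)\|_{\dot{B}_{2,1}^{3/2}}\,d\tau.
\end{align*}
A nonlinear Gronwall argument, implemented on a sufficiently small initial interval where $\|DX_u - I\|_{\dot{B}_{2,1}^{3/2}} \le 1$ and then bootstrapped by iteration, will deliver the asserted linear bound $\|DX_u - I\|_{L_T^\infty(\dot{B}_{2,1}^{3/2})} \le C\|Du\|_{L_T^1(\dot{B}_{2,1}^{3/2})}$. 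Finally, the $W^{1,1}([0,T];\dot{B}_{2,1}^{3/2})$ regularity follows at once from differentiating the integral equation in $t$: the relation $\partial_t(DX_u - I) = (Du\circ X_u)\,DX_u$ combined with the previous bounds puts $\partial_t(DX_u - I)$ in $L_T^1(\dot{B}_{2,1}^{3/2})$.

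The main obstacle will be the circular appearance of $\|DX_u - I\|_{\dot{B}_{2,1}^{3/2}}$ on both sides of the Gronwall inequality through the composition estimate. To handle this rigorously, I would either use a continuity argument on a short time interval where $\|DX_u - I\|_{\dot{B}_{2,1}^{3/2}}$ is a priori small and then extend by iteration using the uniform $L^\infty$ bounds on $DX_u$, or construct $DX_u - I$ by Picard iteration and propagate uniform $\dot{B}_{2,1}^{3/2}$ estimates on the approximations. A secondary technical point, already implicit in Lemma \ref{regularity under changes of variables}, is to justify that $(Du)\circ X_u$ actually belongs to $\dot{B}_{2,1}^{3/2}$; this will be handled by a standard mollification of $u$ and passage to the limit in the closed estimate above.
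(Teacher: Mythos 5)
Your derivation of the integral equation and the use of the composition estimate are on track, but the key step — closing the Gronwall inequality without any smallness hypothesis — fails as written. When you substitute the embedding $\dot{B}_{2,1}^{3/2}\hookrightarrow\dot{B}_{6,1}^{1/2}$ into the second bound of Lemma~\ref{regularity under changes of variables} and then multiply by $DX_u=I+(DX_u-I)$ using the algebra property, you arrive at
\begin{align*}
\|DX_u(t)-I\|_{\dot{B}_{2,1}^{3/2}}\le C_0\int_0^t\bigl(1+\|DX_u(\tau)-I\|_{\dot{B}_{2,1}^{3/2}}\bigr)^2\|Du(\tau)\|_{\dot{B}_{2,1}^{3/2}}\,d\tau,
\end{align*}
which is a Riccati-type (quadratic) inequality. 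Its comparison solution is $\psi(t)=\bigl(1-C_0\int_0^t\|Du\|_{\dot{B}_{2,1}^{3/2}}\,d\tau\bigr)^{-1}$, which blows up in finite time unless $C_0\|Du\|_{L_T^1(\dot{B}_{2,1}^{3/2})}<1$. Your proposed remedy — iterating on short subintervals — does not escape this: setting $\psi_k=1+\|DX_u(t_k)-I\|_{\dot{B}_{2,1}^{3/2}}$ and chaining the local Riccati bounds gives $1/\psi_k\ge 1-C_0\int_0^{t_k}\|Du\|_{\dot{B}_{2,1}^{3/2}}\,d\tau$, which reproduces exactly the same smallness threshold. The lemma, however, carries no smallness hypothesis; $C$ is allowed to depend on $\|Du\|_{L_T^1(\dot{B}_{2,1}^{3/2})}$, but the estimate must hold for arbitrarily large data and times.

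The paper avoids the quadratic Gronwall with a two-step strategy that you are missing. First it bounds the intermediate norm $\|DX_u-I\|_{L_T^\infty(\dot{B}_{6,1}^{1/2})}$ by appealing only to the \emph{first} bound of Lemma~\ref{regularity under changes of variables} (valid for $s=1/2\in(0,1)$), whose constant involves only $\|DX_u\|_\infty$ and $\|J_{X_u^{-1}}\|_\infty$ — both already controlled by Lemma~\ref{cauchy lipschitz theorem} — and no Besov norm of $DX_u-I$. This yields a genuinely \emph{linear} Gronwall inequality for the $\dot{B}_{6,1}^{1/2}$ norm, with no smallness required. Only then does it invoke the second bound of Lemma~\ref{regularity under changes of variables}, in which the factor $\|DX_u-I\|_{\dot{B}_{6,1}^{1/2}}+1$ is now a known constant rather than the unknown, so the $\dot{B}_{2,1}^{3/2}$ estimate also reduces to a linear Gronwall. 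If you retrace your argument keeping the $\dot{B}_{6,1}^{1/2}$ norm separate (rather than embedding it upward into $\dot{B}_{2,1}^{3/2}$) and doing the $\dot{B}_{6,1}^{1/2}$ Gronwall first, your proof will close without smallness.
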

\begin{proof}
Let us only show the estimate. Note that $DX_u$ satisfies the integral equation
\begin{align*}
DX_u(t,y)-I=\int_0^t Du(\tau,X_u(\tau,y))DX_u(\tau,y)\,d\tau.
\end{align*}
In order to obtain the global-in-time estimate without assuming smallness on $\|Du\|_{L_T^1(\dot{B}_{2,1}^{3/2})}$, we first need to obtain an estimate for $\|DX-I\|_{L_T^\infty(\dot{B}_{6,1}^{1/2})}$. Applying Lemma \ref{cauchy lipschitz theorem} and Lemma \ref{regularity under changes of variables}, and using the fact that $\dot{B}_{6,1}^{1/2}(\R^3)$ is an algebra, we arrive at
\begin{align*}
\|DX_u(t)-I\|_{\dot{B}_{6,1}^{1/2}}\le C\int_0^t\|Du(\tau)\|_{\dot{B}_{6,1}^{1/2}}\left(\|D X_u(\tau)-I\|_{\dot{B}_{6,1}^{1/2}}+1\right)\,d\tau,
\end{align*}
where $C$ depends on $\|\nabla u\|_{L_t^1(L^\infty)}$. So using Gronwall's inequality gives rise to
\begin{align*}
\|DX_u(t)-I\|_{\dot{B}_{6,1}^{1/2}}\le C\|Du\|_{L_t^1(\dot{B}_{6,1}^{1/2})}\exp\left(C\|Du\|_{L_t^1(\dot{B}_{6,1}^{1/2})}\right).
\end{align*}
Using a similar argument, then applying the above estimate, we get
\begin{align*}
\|DX_u(t)-I\|_{\dot{B}_{2,1}^{3/2}}\le C\int_0^t\|Du(\tau)\|_{\dot{B}_{2,1}^{3/2}}\left(\|D X_u(\tau)-I\|_{\dot{B}_{2,1}^{3/2}}+1\right)\,d\tau,
\end{align*}
where $C$ depends on $\|\nabla u\|_{L_t^1(\dot{B}_{6,1}^{1/2})}$. Using again Gronwall's inequality finishes the proof.
\end{proof}

Next, by the chain rule, Lemma \ref{regularity under changes of variables}, Lemma \ref{Besov regularity of the flow} and product laws in Besov spaces, we obtain
\begin{eqnarray}\label{regularity of Lagrangian variables in terms of Eulerian variables}
\left\{\begin{aligned}
&\|\lagr{u}\|_{L_T^\infty(\dot{B}_{2,1}^{1/2})}\le C\|u\|_{L_T^\infty(\dot{B}_{2,1}^{1/2})},\\
&\|\partial_t\lagr{u}\|_{L_T^1(\dot{B}_{2,1}^{1/2})}\le C\big(\|\partial_tu\|_{L_T^1(\dot{B}_{2,1}^{1/2})}+\|u\|_{L_T^2(\dot{B}_{2,1}^{3/2})}^2\big),\\
&\|\nabla\lagr{u}\|_{L_T^1(\dot{B}_{2,1}^{3/2})}\le C\|\nabla u\|_{L_T^1(\dot{B}_{2,1}^{3/2})},\\
&\|\nabla\lagr{P}\|_{L_T^1(\dot{B}_{2,1}^{1/2})}\le C\|\nabla P\|_{L_T^1(\dot{B}_{2,1}^{1/2})},
\end{aligned}\right.
\end{eqnarray}
where $C$ depends on $\|\nabla u\|_{L_T^1(\dot{B}_{2,1}^{3/2})}$. Using a similar argument and recalling the definition of adjugate matrices, it is not difficult to see that
\begin{align}\label{regularity of adjugate matrix}
\|\mathscr{A}_{\lagr{u}}-I\|_{L_T^\infty(\dot{B}_{2,1}^{3/2})}\le C\|\nabla u\|_{L_T^1(\dot{B}_{2,1}^{3/2})},
\end{align}
where, of course, $C$ depends on $\|\nabla u\|_{L_T^1(\dot{B}_{2,1}^{3/2})}$. Finally, the estimates \eqref{regularity of Lagrangian variables in terms of Eulerian variables} and \eqref{regularity of adjugate matrix} allow us to conclude that $(\lagr{u},\lagr{P})$ is a strong solution to \eqref{Lagrangian formulation} (see Definition \ref{definition of strong solutions to Lagrangian formulation}).

\noindent\fbox{From Lagrangian to Eulerian formulation.} 
Assume that $(\lagr{u},\nabla\lagr{P})$ is a strong solution to the system \eqref{Lagrangian formulation} supplemented with \eqref{definition of Au and Xu}. Unlike the previous justification in which the existence time for \eqref{Lagrangian formulation} is the same as that for \eqref{INS} unconditionally (i.e., without smallness condition on $\|\nabla u\|_{L_T^1(\dot{B}_{2,1}^{3/2})}$), here we shall assume
\begin{align}
\|\nabla\lagr{u}\|_{L_T^1(\dot{B}_{2,1}^{3/2})}\le c_0,
\end{align}
so that $X_{\lagr{u}}(t,\cdot)$ (defined in \eqref{definition of Au and Xu}) becomes a $C^1$ diffeomorphism over $\R^3$ for every $t\in[0,T]$. In fact, we have the following:

\begin{lemma}[see also \cite{danchin cpam 2012}]\label{X largv is C1 diffeomorphism}
Suppose that $\lagr{v}\in L^1([0,T];C_b^{1}(\R^3;\R^3))$ satisfies
\begin{align}\label{smallness on lip norm of v}
\|\nabla\lagr{v}\|_{L_T^1(L^\infty)}\le\frac12.
\end{align}
Then $X_{\lagr{v}}(t,\cdot)$ is a $C^1$-diffeomorphism over $\R^3$ for every $t\in[0,T]$. Denote by $X_{\lagr{v}}^{-1}(t,\cdot)$ the inverse of $X_{\lagr{v}}(t,\cdot)$. It holds that
\begin{align}
\|DX_{\lagr{v}}-I\|_{L_T^\infty(L^\infty)}\le& \|D\lagr{v}\|_{L_T^1(L^\infty)},\label{L infty norm of DXv-I}\\
\|DX_{\lagr{v}}^{-1}-I\|_{L_T^\infty(L^\infty)}\le& 2\|D\lagr{v}\|_{L_T^1(L^\infty)}\label{L infty norm of DYv-I}.
\end{align}
\end{lemma}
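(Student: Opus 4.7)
The plan is to proceed directly from the defining integral equation $X_{\lagr{v}}(t,y)=y+\int_0^t\lagr{v}(\tau,y)\,d\tau$. Since $\lagr{v}\in L^1([0,T];C_b^1)$, differentiation under the integral sign yields $DX_{\lagr{v}}(t,y)=I+\int_0^t D\lagr{v}(\tau,y)\,d\tau$, and taking sup norms immediately produces the first bound \eqref{L infty norm of DXv-I}. In particular the hypothesis \eqref{smallness on lip norm of v} gives $\|DX_{\lagr{v}}(t,\cdot)-I\|_\infty\le 1/2$, so $DX_{\lagr{v}}(t,y)$ is invertible at every $(t,y)$ by a Neumann series in $\mathcal{M}_{3\times3}(\R)$.

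Next I would establish that $X_{\lagr{v}}(t,\cdot):\R^3\to\R^3$ is a bijection for each fixed $t$. For injectivity, write
\begin{equation*}
X_{\lagr{v}}(t,y)-X_{\lagr{v}}(t,y')=(y-y')+\int_0^t\bigl(\lagr{v}(\tau,y)-\lagr{v}(\tau,y')\bigr)\,d\tau,
\end{equation*}
apply the mean value theorem to the integrand, and use \eqref{smallness on lip norm of v} to conclude $|X_{\lagr{v}}(t,y)-X_{\lagr{v}}(t,y')|\ge \tfrac{1}{2}|y-y'|$. For surjectivity, fix $x\in\R^3$ and set $\Phi(y)=x-\int_0^t\lagr{v}(\tau,y)\,d\tau$; the same Lipschitz bound shows $\Phi$ is a $\tfrac{1}{2}$-contraction on $\R^3$, so the Banach fixed point theorem furnishes a unique $y$ with $X_{\lagr{v}}(t,y)=x$. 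Combined with invertibility of $DX_{\lagr{v}}(t,\cdot)$ and the classical inverse function theorem, $X_{\lagr{v}}(t,\cdot)$ is a $C^1$-diffeomorphism.

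Finally, for \eqref{L infty norm of DYv-I}, differentiate the identity $X_{\lagr{v}}(t,X_{\lagr{v}}^{-1}(t,x))=x$ to obtain $DX_{\lagr{v}}^{-1}(t,x)=\bigl(DX_{\lagr{v}}(t,X_{\lagr{v}}^{-1}(t,x))\bigr)^{-1}$. Expand this inverse by the Neumann series, set $N(t,y)\vcentcolon= I-DX_{\lagr{v}}(t,y)$, and compute
\begin{equation*}
\|DX_{\lagr{v}}^{-1}(t,\cdot)-I\|_\infty\le \sum_{k=1}^\infty\|N(t,\cdot)\|_\infty^{k}\le \frac{\|N(t,\cdot)\|_\infty}{1-\|N(t,\cdot)\|_\infty}\le 2\|N(t,\cdot)\|_\infty,
\end{equation*}
where the last inequality uses $\|N\|_\infty\le 1/2$. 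Plugging in the bound \eqref{L infty norm of DXv-I} on $\|N\|_\infty$ and taking the $L^\infty$ norm in time yields \eqref{L infty norm of DYv-I}. There is no real obstacle here; the argument is entirely elementary, the only subtlety being to ensure that $\lagr{v}$ is regular enough to differentiate $X_{\lagr{v}}$ in $y$ and to justify the pointwise inversion—both of which follow from the $L^1([0,T];C_b^1)$ assumption and the smallness \eqref{smallness on lip norm of v}.
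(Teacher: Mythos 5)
Your proof is correct and follows essentially the same route as the paper: both derive \eqref{L infty norm of DXv-I} by differentiating the explicit integral formula for $X_{\lagr{v}}$, and both construct the inverse via the contraction $y\mapsto x-\int_0^t\lagr{v}(\tau,y)\,d\tau$ enabled by \eqref{smallness on lip norm of v}. The only cosmetic difference is in the last bound: the paper differentiates the fixed-point integral equation $Y_{\lagr{v}}(t,x)=x-\int_0^t\lagr{v}(\tau,Y_{\lagr{v}}(t,x))\,d\tau$ and bootstraps $\|DY_{\lagr{v}}\|_\infty\le 2$, whereas you invert $DX_{\lagr{v}}$ pointwise with a Neumann series --- two elementary computations yielding the same constant.
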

\begin{proof}
The first inequality \eqref{L infty norm of DXv-I} is easily seen. Let $Y_{\lagr{v}}(t,x)$ be the solution of the integral equation $Y_{\lagr{v}}(t,x)=x-\int_0^t\lagr{v}(\tau,Y_{\lagr{v}}(t,x))\,d\tau$. Note that this equation is solvable under the assumption \eqref{smallness on lip norm of v}. Then it is not difficult to see that $Y_{\lagr{v}}(t,\cdot)$ and $X_{\lagr{v}}(t,\cdot)$ are inverses to each other. One can readily get $\|DY_{\lagr{v}}\|_{L_T^\infty(L^\infty)}\le2$, which further implies \eqref{L infty norm of DYv-I}. 
\end{proof}

Next, we assume additionally that $\lagr{v}\in C([0,T];\dot{B}_{2,1}^{1/2}(\R^3))\cap L^1((0,T);\dot{B}_{2,1}^{5/2}(\R^3))$ satisfies
\begin{align}\label{smallness on besov norm of v}
\|\nabla\lagr{v}\|_{L_T^1(\dot{B}_{2,1}^{3/2})}\le c_0,
\end{align}
so that \eqref{smallness on lip norm of v} is fulfilled. The number $c_0$ may be chosen even smaller later, but it is always an absolute constant. Let $\mathscr{A}_{\lagr{v}}$ be defined by \eqref{definition of Au and Xu}. Then we have the following:
\begin{lemma}[see also \cite{danchin cpam 2012}]\label{inverse flow estimates}
There exists a constant $C>0$ such that
\begin{align}
\|DX_{\lagr{v}}-I\|_{L_T^\infty(\dot{B}_{2,1}^{3/2})}+\|\mathscr{A}_{\lagr{v}}-I\|_{L_T^\infty(\dot{B}_{2,1}^{3/2})}\le& C\|D\lagr{v}\|_{L_T^1(\dot{B}_{2,1}^{3/2})},\label{flow estimate for A-I}\\
\|DX_{\lagr{v}}^{-1}-I\|_{L_T^\infty(\dot{B}_{2,1}^{3/2})}\le& C\|D\lagr{v}\|_{L_T^1(\dot{B}_{2,1}^{3/2})},\nonumber\\
\|\partial_tX_{\lagr{v}}^{-1}\|_{L_T^2(\dot{B}_{2,1}^{3/2})}\le& C\|\lagr{v}\|_{L_T^2(\dot{B}_{2,1}^{3/2})}\nonumber.
\end{align}
\end{lemma}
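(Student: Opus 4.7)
The plan is to chain together three ingredients: a direct time integration to bound $DX_{\lagr{v}}-I$, the Banach-algebra property of $\dot{B}_{2,1}^{3/2}(\R^3)$ (together with smallness of $c_0$) to handle polynomial and inverse matrix expressions, and the composition estimates of Lemma \ref{regularity under changes of variables} to transfer bounds to the inverse flow $X_{\lagr{v}}^{-1}$.

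First, directly from $X_{\lagr{v}}(t,y)=y+\int_0^t\lagr{v}(\tau,y)\,d\tau$ I get $DX_{\lagr{v}}(t,\cdot)-I=\int_0^tD\lagr{v}(\tau,\cdot)\,d\tau$, so Minkowski's inequality in the Banach space $\dot{B}_{2,1}^{3/2}(\R^3)$ yields
\begin{align*}
\|DX_{\lagr{v}}-I\|_{L_T^\infty(\dot{B}_{2,1}^{3/2})}\le\|D\lagr{v}\|_{L_T^1(\dot{B}_{2,1}^{3/2})}.
\end{align*}
For $\mathscr{A}_{\lagr{v}}=\adj DX_{\lagr{v}}$, each entry of $\adj(I+B)-I$ is a polynomial in the entries of $B$ with no constant term; setting $B=DX_{\lagr{v}}-I$ and using that $\dot{B}_{2,1}^{3/2}(\R^3)$ is an algebra (product law in Lemma \ref{basic properties of Besov spaces}), together with the smallness coming from $c_0$ to absorb the quadratic piece, I obtain the bound for $\|\mathscr{A}_{\lagr{v}}-I\|$ stated in \eqref{flow estimate for A-I}. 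By the same Neumann-series/algebra reasoning applied to $A_{\lagr{v}}\vcentcolon=(DX_{\lagr{v}})^{-1}=\sum_{k\ge 0}(-B)^k$, I also get the auxiliary bound
\begin{align*}
\|A_{\lagr{v}}-I\|_{L_T^\infty(\dot{B}_{2,1}^{3/2})}\le C\|D\lagr{v}\|_{L_T^1(\dot{B}_{2,1}^{3/2})}.
\end{align*}

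For the estimate on $DX_{\lagr{v}}^{-1}-I$, I would use the pointwise identity $DX_{\lagr{v}}^{-1}=A_{\lagr{v}}\circ X_{\lagr{v}}^{-1}$ and apply Lemma \ref{regularity under changes of variables} with $X=X_{\lagr{v}}^{-1}$, whose $L^\infty$ Jacobian bounds are supplied by Lemma \ref{X largv is C1 diffeomorphism} (and are close to those of the identity under \eqref{smallness on besov norm of v}). The hard part will be the mild circular dependency here: the sharper form of Lemma \ref{regularity under changes of variables} at regularity $\dot{B}_{2,1}^{3/2}$ requires a \emph{prior} bound for $DX_{\lagr{v}}^{-1}-I$ in $\dot{B}_{6,1}^{1/2}$. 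I would bypass it by a two-step bootstrap: first use the coarser composition inequality at scaling $(p,s)=(6,1/2)$, combined with the embedding $\dot{B}_{2,1}^{3/2}\hookrightarrow\dot{B}_{6,1}^{1/2}$ applied to $A_{\lagr{v}}-I$, to obtain $\|DX_{\lagr{v}}^{-1}-I\|_{L_T^\infty(\dot{B}_{6,1}^{1/2})}\lesssim\|D\lagr{v}\|_{L_T^1(\dot{B}_{2,1}^{3/2})}$; then feed this into the second inequality of Lemma \ref{regularity under changes of variables} to conclude
\begin{align*}
\|DX_{\lagr{v}}^{-1}-I\|_{L_T^\infty(\dot{B}_{2,1}^{3/2})}\le C\|A_{\lagr{v}}-I\|_{L_T^\infty(\dot{B}_{2,1}^{3/2})}\le C\|D\lagr{v}\|_{L_T^1(\dot{B}_{2,1}^{3/2})}.
\end{align*}

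Finally, for the time derivative, I differentiate $X_{\lagr{v}}(t,X_{\lagr{v}}^{-1}(t,x))=x$ in $t$ and use $\partial_tX_{\lagr{v}}=\lagr{v}$ to get the identity $\partial_tX_{\lagr{v}}^{-1}=-(A_{\lagr{v}}\,\lagr{v})\circ X_{\lagr{v}}^{-1}$. Applying the composition estimate of Lemma \ref{regularity under changes of variables} (whose constants are already controlled by the previous step) reduces the problem to bounding $\|A_{\lagr{v}}(t)\lagr{v}(t)\|_{\dot{B}_{2,1}^{3/2}}$. This last quantity is $\lesssim\|\lagr{v}(t)\|_{\dot{B}_{2,1}^{3/2}}$ by the algebra property applied to $(A_{\lagr{v}}-I)\lagr{v}+\lagr{v}$, using the smallness of $\|A_{\lagr{v}}-I\|_{L_T^\infty(\dot{B}_{2,1}^{3/2})}$ proved above. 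Taking $L^2$ in time yields the third inequality, finishing the proof.
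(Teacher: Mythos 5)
Your proof is correct and follows essentially the same route as the paper: both arguments hinge on the composition estimates of Lemma \ref{regularity under changes of variables}, the $L^\infty$ bounds of Lemma \ref{X largv is C1 diffeomorphism}, the algebra property of $\dot{B}_{2,1}^{3/2}$ and $\dot{B}_{6,1}^{1/2}$, and the same two-step bootstrap through $\dot{B}_{6,1}^{1/2}$ to break the circularity in the second estimate, before deducing the third estimate from the identity $\partial_tX_{\lagr{v}}^{-1}=-DX_{\lagr{v}}^{-1}\,(\lagr{v}\circ X_{\lagr{v}}^{-1})$. The only cosmetic differences are that you actually prove the first inequality (which the paper simply cites from Danchin--Mucha) and that you express $DX_{\lagr{v}}^{-1}-I$ via the Neumann series for $(DX_{\lagr{v}})^{-1}$ composed with $X_{\lagr{v}}^{-1}$, whereas the paper works with the implicit integral equation for $DX_{\lagr{v}}^{-1}-I$ and absorbs the unknown term using the smallness of $c_0$.
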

\begin{proof}
We only prove the last two estimates because they were not explicitly given in \cite{danchin cpam 2012}. Note that $X_{\lagr{v}}^{-1}(t,x)$ satisfies
\begin{align*}
DX_{\lagr{v}}^{-1}(t,x)-I=-\int_0^tD\lagr{v}(\tau,X_{\lagr{v}}^{-1}(t,x))\,d\tau DX_{\lagr{v}}^{-1}(t,x).
\end{align*}
Applying Lemma \ref{regularity under changes of variables}, Lemma \ref{X largv is C1 diffeomorphism} and the fact that $\dot{B}_{6,1}^{1/2}(\R^3)$ is an algebra, we have
\begin{align*}
\|DX_{\lagr{v}}^{-1}-I\|_{L_T^\infty(\dot{B}_{6,1}^{1/2})}\le C\|D\lagr{v}\|_{L_T^1(\dot{B}_{6,1}^{1/2})}(\|DX_{\lagr{v}}^{-1}-I\|_{L_T^\infty(\dot{B}_{6,1}^{1/2})}+1).
\end{align*}
From this, if $c_0$ is small enough, we get $\|DX_{\lagr{v}}^{-1}-I\|_{L_T^\infty(\dot{B}_{6,1}^{1/2})}\lesssim c_0$. This further implies the second estimate. Noticing that
\begin{align*}
\partial_tX_{\lagr{v}}^{-1}(t,x)=-DX_{\lagr{v}}^{-1}(t,x)\lagr{v}(t,X_{\lagr{v}}^{-1}(t,x)),
\end{align*}
then the third estimate follows from the second one. So the proof is completed.
\end{proof}

With Lemma \ref{inverse flow estimates} in hand, one can go back to the Euler coordinates by introducing
\begin{align*}
\rho(t,x)=\rho_0(X_{\lagr{u}}^{-1}(t,x))\ \ \mathrm{and}\ \ (u,P)(t,x)=(\lagr{u},\lagr{P})(t,X_{\lagr{u}}^{-1}(t,x)).
\end{align*}
Then by the chain rule, Lemma \ref{regularity under changes of variables}, Lemma \ref{inverse flow estimates} and product laws in Besov spaces, we obtain
\begin{eqnarray*}
\left\{\begin{aligned}
&\|u\|_{L_T^\infty(\dot{B}_{2,1}^{1/2})}\le C\|\lagr{u}\|_{L_T^\infty(\dot{B}_{2,1}^{1/2})},\\
&\|\partial_tu\|_{L_T^1(\dot{B}_{2,1}^{1/2})}\le C\big(\|\partial_t\lagr{u}\|_{L_T^1(\dot{B}_{2,1}^{1/2})}+\|\lagr{u}\|_{L_T^2(\dot{B}_{2,1}^{3/2})}^2\big),\\
&\|\nabla u\|_{L_T^1(\dot{B}_{2,1}^{3/2})}\le C\|\nabla\lagr{u}\|_{L_T^1(\dot{B}_{2,1}^{3/2})},\\
&\|\nabla P\|_{L_T^1(\dot{B}_{2,1}^{1/2})}\le C\|\nabla\lagr{P}\|_{L_T^1(\dot{B}_{2,1}^{1/2})}.
\end{aligned}\right.
\end{eqnarray*}
For the estimates of the density, we can use a classical result for transport equations (see, e.g., {\cite[Proposition~3.1]{danchin edinburgh 2003}}) to conclude that $\rho-1\in C([0,T];\dot{B}_{2,1}^{3/2}(\R^3))$, and that
\begin{align*}
\|\rho-1\|_{L_T^\infty(\dot{B}_{2,1}^{3/2})}\le\|\rho_0-1\|_{\dot{B}_{2,1}^{3/2}}\exp(C\|\nabla u\|_{L_T^1(\dot{B}_{2,1}^{3/2})})\le C\|\rho_0-1\|_{\dot{B}_{2,1}^{3/2}}.
\end{align*}
Consequently, we also have $\partial_t\rho\in L^2((0,T);\dot{B}_{2,1}^{1/2}(\R^3))$. Finally, one can readily check that $(\rho,u,P)$ is a strong solution to \eqref{INS}.

We conclude this section with some estimates that have been used to prove the existence and stability of solutions to the Lagrangian formulation \eqref{Lagrangian formulation}.

\begin{lemma}[see \cite{danchin cpam 2012}]\label{flow estimates}
Let $\lagr{v}, \lagr{v}_1$ and $\lagr{v}_2$ be vector fields in $C([0,T];\dot{B}_{2,1}^{1/2}(\R^3))\cap L_T^1(\dot{B}_{2,1}^{5/2}(\R^3))$ and satisfy \eqref{smallness on besov norm of v}. Let $\delta\lagr{v}=\lagr{v}_1-\lagr{v}_2$. Then we have
\begin{align}
\|\partial_t\mathscr{A}_{\lagr{v}}(t)\|_{\dot{B}_{2,1}^{s}}\lesssim&\|\nabla\lagr{v}(t)\|_{\dot{B}_{2,1}^{s}},\ \ s=\frac12,\frac32,\label{flow estimate for partial t A}\\
\|\mathscr{A}_{\lagr{v}_1}-\mathscr{A}_{\lagr{v}_2}\|_{L_t^\infty(\dot{B}_{2,1}^{3/2})}\lesssim&\|\nabla\delta\lagr{v}\|_{L_t^1(\dot{B}_{2,1}^{3/2})},\label{flow estimate for difference of A}\\
\|\partial_t(\mathscr{A}_{\lagr{v}_1}-\mathscr{A}_{\lagr{v}_2})\|_{L_t^2(\dot{B}_{2,1}^{1/2})}\lesssim&\|\delta\lagr{v}\|_{L_t^2(\dot{B}_{2,1}^{3/2})}\label{flow estimate for partial t difference of A L2}.
\end{align}
\end{lemma}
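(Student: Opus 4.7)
The plan is to exploit the polynomial structure of the adjugate in dimension three: each entry of $\adj M$ is a signed $2\times 2$ minor of $M$, hence a polynomial of total degree exactly two in the entries of $M$. Setting $N_{\lagr{v}} \vcentcolon= DX_{\lagr{v}} - I = \int_0^t D\lagr{v}(\tau)\,d\tau$, one therefore has the exact identity
\[
\mathscr{A}_{\lagr{v}} - I = L(N_{\lagr{v}}) + Q(N_{\lagr{v}}),
\]
where $L$ is a fixed linear map and $Q$ a fixed (matrix-valued) quadratic form on $3\times 3$ matrices; write $Q'$ for the symmetric bilinear polarization of $Q$. The smallness assumption \eqref{smallness on besov norm of v} gives $\|N_{\lagr{v}}\|_{L_t^\infty(\dot{B}_{2,1}^{3/2})} \le \|D\lagr{v}\|_{L_t^1(\dot{B}_{2,1}^{3/2})} \le c_0$, so every occurrence of $N_{\lagr{v}}$ produced by expanding $L$ or $Q$ can be absorbed as a small factor in the algebra $\dot{B}_{2,1}^{3/2}(\R^3)$. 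All three estimates then reduce to Besov product laws applied to this algebraic expansion.

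For \eqref{flow estimate for partial t A} I differentiate in time to obtain $\partial_t \mathscr{A}_{\lagr{v}} = L(D\lagr{v}) + 2\,Q'(N_{\lagr{v}}, D\lagr{v})$. The linear term is bounded directly in $\dot{B}_{2,1}^s$; for the bilinear term I place $N_{\lagr{v}}$ in $\dot{B}_{2,1}^{3/2}$ and invoke either the algebra property of $\dot{B}_{2,1}^{3/2}$ (when $s=3/2$) or the product law $\|fg\|_{\dot{B}_{2,1}^{1/2}}\lesssim \|f\|_{\dot{B}_{2,1}^{3/2}}\|g\|_{\dot{B}_{2,1}^{1/2}}$ (when $s=1/2$), absorbing the factor $c_0$ into the $\lesssim$. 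Estimate \eqref{flow estimate for difference of A} follows analogously by polarizing: setting $\delta N \vcentcolon= N_{\lagr{v}_1} - N_{\lagr{v}_2} = \int_0^t D\delta\lagr{v}\,d\tau$, one has $\mathscr{A}_{\lagr{v}_1}-\mathscr{A}_{\lagr{v}_2} = L(\delta N) + Q'(N_{\lagr{v}_1}+N_{\lagr{v}_2},\,\delta N)$, and the algebra property of $\dot{B}_{2,1}^{3/2}$ combined with $\|N_{\lagr{v}_i}\|_{L_t^\infty(\dot{B}_{2,1}^{3/2})}\le c_0$ reduces everything to the trivial bound $\|\delta N\|_{L_t^\infty(\dot{B}_{2,1}^{3/2})} \le \|\nabla\delta\lagr{v}\|_{L_t^1(\dot{B}_{2,1}^{3/2})}$.

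For \eqref{flow estimate for partial t difference of A L2} I differentiate once more, yielding
\[
\partial_t \delta\mathscr{A} = L(D\delta\lagr{v}) + 2\,Q'(N_{\lagr{v}_1}, D\delta\lagr{v}) + 2\,Q'(\delta N, D\lagr{v}_2).
\]
The first two summands are handled exactly as in \eqref{flow estimate for partial t A} with $s=1/2$, noting $\|D\delta\lagr{v}\|_{L_t^2(\dot{B}_{2,1}^{1/2})} = \|\delta\lagr{v}\|_{L_t^2(\dot{B}_{2,1}^{3/2})}$ and absorbing $\|N_{\lagr{v}_1}\|_{L_t^\infty(\dot{B}_{2,1}^{3/2})}\le c_0$. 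I expect the cross term $Q'(\delta N, D\lagr{v}_2)$ to be the main obstacle: a naive product bound placing $\delta N$ in $\dot{B}_{2,1}^{3/2}$ costs an extra spatial derivative on $\delta\lagr{v}$ which is not supplied by the hypothesis, while pushing $\delta N$ to a lower-regularity space forces the companion factor $D\lagr{v}_2$ into a time-integrability class not provided by the interpolation of $\lagr{v}_2 \in L_t^\infty(\dot{B}_{2,1}^{1/2}) \cap L_t^1(\dot{B}_{2,1}^{5/2})$. Following the strategy of \cite{danchin cpam 2012}, I would exploit the commutation of the spatial derivative with the time integral by writing $\delta N = D\int_0^t \delta\lagr{v}(\tau)\,d\tau$, so that $\delta N \in L_t^\infty(\dot{B}_{2,1}^{1/2})$ with norm $\lesssim t^{1/2}\|\delta\lagr{v}\|_{L_t^2(\dot{B}_{2,1}^{3/2})}$, and then dissect $Q'(\delta N, D\lagr{v}_2)$ via Bony's paraproduct, balancing the regularities so that the $N_{\lagr{v}_i}$-type coefficient sits in $L_t^\infty(\dot{B}_{2,1}^{3/2})$ (absorbed by $c_0$) while the remaining paraproduct produces the desired $L_t^2(\dot{B}_{2,1}^{1/2})$ control by $\|\delta\lagr{v}\|_{L_t^2(\dot{B}_{2,1}^{3/2})}$.
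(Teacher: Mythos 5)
Your algebraic setup --- expanding $\mathscr{A}_{\lagr{v}}=\adj(I+N_{\lagr{v}})=I+L(N_{\lagr{v}})+Q(N_{\lagr{v}})$ with $N_{\lagr{v}}=\int_0^tD\lagr{v}\,d\tau$, polarizing $Q$, and reducing everything to product laws plus the smallness $\|N_{\lagr{v}}\|_{L_t^\infty(\dot{B}_{2,1}^{3/2})}\le c_0$ --- is exactly the standard route: the paper itself gives no proof and defers to \cite{danchin cpam 2012}, where the same finite polynomial/Neumann-type expansion is used. Your treatment of \eqref{flow estimate for partial t A} and \eqref{flow estimate for difference of A} is complete and correct.

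The gap is in \eqref{flow estimate for partial t difference of A L2}, precisely at the cross term $Q'(\delta N, D\lagr{v}_2)$ that you flag but do not close. The obstruction is temporal, not spatial, so no Bony rebalancing can remove it: whatever spatial exponents you choose, you end up estimating a scalar expression of the form $a(\tau)\int_0^\tau b(\sigma)\,d\sigma$ in $L^2_\tau$ with $a(\tau)=\|D\lagr{v}_2(\tau)\|_{\dot{B}_{2,1}^{3/2}}\in L^1_\tau$ and $b(\sigma)=\|D\delta\lagr{v}(\sigma)\|_{\dot{B}_{2,1}^{1/2}}\in L^2_\tau$, and the bilinear bound $\|a\int_0^{\cdot}b\|_{L^2}\lesssim\|a\|_{L^1}\|b\|_{L^2}$ is simply false (take $b=\mathbf{1}_{[0,1]}$ and $a(\tau)=(\tau-1)^{-1/2}\mathbf{1}_{[1,2]}$). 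Closing it by Schur's test would require $\sup_\tau \tau\|D\lagr{v}_2(\tau)\|_{\dot{B}_{2,1}^{3/2}}<\infty$, a weighted norm not supplied by the hypotheses. The only pairing compatible with the hypotheses places $\delta N$ in $L^\infty_t(\dot{B}_{2,1}^{3/2})$ and yields $\|Q'(\delta N,D\lagr{v}_2)\|_{L_t^2(\dot{B}_{2,1}^{1/2})}\lesssim\|\nabla\delta\lagr{v}\|_{L_t^1(\dot{B}_{2,1}^{3/2})}\|\lagr{v}_2\|_{L_t^2(\dot{B}_{2,1}^{3/2})}$, which is not of the stated form. The honest fix is to keep this extra term on the right-hand side of \eqref{flow estimate for partial t difference of A L2}; it costs nothing downstream, since in \eqref{estimate of difference of inhomogeneous term R} the quantity $\partial_t(\delta\mathscr{A})$ is always paired with $\lagr{v}_2$, so the additional contribution is again multiplied by the small factor $\|\lagr{v}_2\|_{L_T^2(\dot{B}_{2,1}^{3/2})}$ and the contraction argument still closes.
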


\bigskip
\bigskip
\bigskip


\begin{thebibliography}{99}

\bibitem{abidi rmi 2007}
\newblock H. Abidi,
\newblock \'{E}quation de Navier-Stokes avec densit\'{e} et viscosit\'{e} variables dans l'espace critique,
\newblock \emph{Rev. Mat. Iberoam.} \textbf{23} (2007), no. 2, 537-586.

\bibitem{abidi arma 2012}
\newblock H. Abidi, G. Gui, P. Zhang,
\newblock On the wellposedness of three-dimensional inhomogeneous Navier-Stokes equations in the critical spaces,
\newblock \emph{Arch. Ration. Mech. Anal.} \textbf{204} (2012), no. 1, 189-230.

\bibitem{abidi jmpa 2013}
\newblock H. Abidi, G. Gui, P. Zhang,
\newblock Well-posedness of 3-D inhomogeneous Navier-Stokes equations with highly oscillatory initial velocity field,
\newblock \emph{J. Math. Pures Appl. (9)} \textbf{100} (2013), no. 2, 166-203.

\bibitem{abidi aif 2007}
\newblock H. Abidi, M. Paicu,
\newblock Existence globale pour un fluide inhomog\`{e}ne,
\newblock \emph{Ann. Inst. Fourier (Grenoble)} \textbf{57} (2007), no. 3, 883-917.

\bibitem{arendt book 2011}
\newblock  W. Arendt, C. J. K. Batty, M. Hieber, F. Neubrander,
\newblock \emph{Vector-valued Laplace transforms and Cauchy problems},
\newblock Second edition. Monographs in Mathematics, 96. Birkh\"{a}user/Springer Basel AG, Basel, 2011.

\bibitem{auscher jlms 1996}
\newblock P. Auscher,
\newblock Regularity theorems and heat kernel for elliptic operators,
\newblock \emph{J. London Math. Soc. (2)} \textbf{54} (1996), no. 2, 284-296.

\bibitem{auscher book 1998}
\newblock P. Auscher, P. Tchamitchian,
\newblock \emph{Square root problem for divergence operators and related topics},
\newblock Ast\'{e}risque No. 249 (1998).

\bibitem{BCD}
\newblock  H. Bahouri, J.-Y. Chemin, R. Danchin,
\newblock \emph{Fourier analysis and nonlinear partial differential equations},
\newblock Grundlehren der Mathematischen Wissenschaften [Fundamental Principles of Mathematical Sciences], \textbf{343}, Springer, Heidelberg, 2011.

\bibitem{bui adv 2012}
\newblock H.-Q. Bui, X. T. Duong, L. Yan,
\newblock Calder\'{o}n reproducing formulas and new Besov spaces associated with operators,
\newblock \emph{Adv. Math.}
\textbf{229} (2012), no. 4, 2449-2502.

\bibitem{burtea apde 2017}
\newblock C. Burtea,
\newblock Optimal well-posedness for the inhomogeneous incompressible Navier-Stokes system with general viscosity,
\newblock \emph{Anal. PDE}
\textbf{10} (2017), no. 2, 439-479.

\bibitem{chen jde 2016}
\newblock D. Chen, Z. Zhang, W. Zhao,
\newblock Fujita-Kato theorem for the 3-D inhomogeneous Navier-Stokes equations,
\newblock \emph{J. Differential Equations}, \textbf{261} (2016), no. 1, 738-761.

\bibitem{danchin cpde 2001}
\newblock R. Danchin,
\newblock Local theory in critical spaces for compressible viscous and heat-conductive gases,
\newblock \emph{Comm. Partial Differential Equations}, \textbf{26} (2001), no. 7-8, 1183-1233.

\bibitem{danchin edinburgh 2003}
\newblock R. Danchin,
\newblock Density-dependent incompressible viscous fluids in critical spaces,
\newblock \emph{Proc. Roy. Soc. Edinburgh Sect. A}, \textbf{133} (2003), no. 6, 1311-1334.


\bibitem{danchin arxiv 2020}
\newblock R. Danchin,  M. Hieber, P. B. Mucha, P. Tolksdorf,
\newblock Free boundary problems via Da Prato-Grisvard theory,
\newblock  arXiv:2011.07918.

\bibitem{danchin jfa 2009}
\newblock R. Danchin, P. B. Mucha,
\newblock A critical functional framework for the inhomogeneous Navier-Stokes equations in the half-space,
\newblock \emph{J. Funct. Anal.} \textbf{256} (2009), no. 3, 881-927.

\bibitem{danchin cpam 2012}
\newblock R. Danchin, P. B. Mucha,
\newblock A Lagrangian approach for the incompressible Navier-Stokes equations with variable density,
\newblock \emph{Comm. Pure Appl. Math.} \textbf{65} (2012), no. 10, 1458-1480.

\bibitem{danchin arma 2013}
\newblock R. Danchin, P. B. Mucha,
\newblock Incompressible flows with piecewise constant density,
\newblock \emph{Arch. Ration. Mech. Anal.} \textbf{207} (2013), no. 3, 991-1023.

\bibitem{danchin memoir 2015}
\newblock R. Danchin, P. B. Mucha,
\newblock Critical functional framework and maximal regularity in action on systems of incompressible flows,
\newblock \emph{M\'{e}m. Soc. Math. Fr. (N.S.)} (2015), no. 143, vi+151 pp.

\bibitem{danchin cpam 2019}
\newblock R. Danchin, P. B. Mucha,
\newblock The incompressible Navier-Stokes equations in vacuum,
\newblock \emph{Comm. Pure Appl. Math.} \textbf{72} (2019), no. 7, 1351-1385.

\bibitem{desjardins DIE 1997}
\newblock B. Desjardins,
\newblock Global existence results for the incompressible density-dependent Navier-Stokes equations in the whole space,
\newblock \emph{Differential Integral Equations},
\textbf{10} (1997), no. 3, 587-598.

\bibitem{duong DIE 1999}
\newblock X. T. Duong, E. M. Ouhabaz,
\newblock Complex multiplicative perturbations of elliptic operators: heat kernel bounds and holomorphic functional calculus,
\newblock \emph{Differential Integral Equations},
\textbf{12} (1999), no. 3, 395-418.

\bibitem{engel book 2000}
\newblock K.-J. Engel, R. Nagel,
\newblock \emph{One-parameter semigroups for linear evolution equations},
\newblock Graduate Texts in Mathematics, 194. Springer-Verlag, New York, 2000.

\bibitem{gallagher 2002}
\newblock I. Gallagher, D. Iftimie, F.  Planchon,
\newblock Non-explosion en temps grand et stabilit\'{e} de solutions globales des \'{e}quations de Navier-Stokes,
\newblock \emph{C. R. Math. Acad. Sci. Paris}, \textbf{334} (2002), no. 4, 289-292.

\bibitem{gallagher 2003}
\newblock I. Gallagher, D. Iftimie, F.  Planchon,
\newblock Asymptotics and stability for global solutions to the Navier-Stokes equations,
\newblock \emph{Ann. Inst. Fourier (Grenoble)}, \textbf{53} (2003), no. 5, 1387-1424.

\bibitem{liu fm 2015}
\newblock A. Grigor'yan, L. Liu,
\newblock Heat kernel and Lipschitz-Besov spaces,
\newblock \emph{Forum Math.} \textbf{27} (2015), no. 6, 3567-3613.

\bibitem{huang arma 2013}
\newblock J. Huang, M. Paicu, P. Zhang,
\newblock Global well-posedness of incompressible inhomogeneous fluid systems with bounded density or non-Lipschitz velocity,
\newblock \emph{Arch. Ration. Mech. Anal.} \textbf{209} (2013), no. 2,  631-682.

\bibitem{jiang jmpa 2020}
\newblock R. Jiang, F. Lin,
\newblock Riesz transform under perturbations via heat kernel regularity,
\newblock \emph{J. Math. Pures Appl.}
\textbf{133} (2020), no. 9, 39-65.

\bibitem{kazihov 1974}
\newblock A. V. Ka\v{z}ihov,
\newblock Solvability of the initial-boundary value problem for the equations of the motion of an inhomogeneous viscous incompressible fluid,
\newblock \emph{Dokl. Akad. Nauk SSSR},
\textbf{216} (1974), 1008-1010.

\bibitem{ladyzhenskaya}
\newblock O. A. Lady\v{z}enskaja, V. A. Solonnikov,
\newblock The unique solvability of an initial-boundary value problem for viscous incompressible inhomogeneous fluids,
\newblock \emph{Zap. Nau\v{c}n. Sem. Leningrad. Otdel. Mat. Inst. Steklov. (LOMI)}
\textbf{52} (1975), 52-109, 218-219.

\bibitem{Lions book}
\newblock   P.-L. Lions,
\newblock
\emph{Mathematical topics in fluid mechanics. Vol. 1. Incompressible models},
\newblock Oxford Lecture Series in Mathematics and its Applications, 3. Oxford Science Publications. The Clarendon Press, Oxford University Press, New York, 1996.

\bibitem{mcintosh 2000}
\newblock A. McIntosh, A. Nahmod,
\newblock Heat kernel estimates and functional calculi of $-b\Delta$,
\newblock \emph{Math. Scand.}
\textbf{87} (2000), no. 2, 287-319.

\bibitem{ouhabaz pams 1995}
\newblock E.-M. Ouhabaz,
\newblock Gaussian estimates and holomorphy of semigroups,
\newblock \emph{Proc. Amer. Math. Soc.}
\textbf{123} (1995), no. 5, 1465-1474.

\bibitem{paicu cpde 2013}
\newblock M. Paicu, P. Zhang, Z. Zhang,
\newblock Global unique solvability of inhomogeneous Navier-Stokes equations with bounded density,
\newblock \emph{Comm. Partial Differential Equations}, \textbf{38} (2013), no. 7, 1208-1234.

\bibitem{pazy book 1983}
\newblock A. Pazy,
\newblock \emph{Semigroups of linear operators and applications to partial differential equations},
\newblock Applied Mathematical Sciences, 44. Springer-Verlag, New York, 1983.


\bibitem{simon siam 1990}
\newblock   J. Simon,
\newblock Nonhomogeneous viscous incompressible fluids: existence of velocity, density, and pressure,
\newblock \emph{SIAM J. Math. Anal.} \textbf{21} (1990), no. 5, 1093-1117.

\bibitem{solonnikov}
\newblock V. A. Solonnikov,
\newblock On a nonstationary motion of an isolated mass of a viscous incompressible fluid (in Russian),
\newblock \emph{Izv. Akad. Nauk SSSR Ser. Mat.} \textbf{51} (1987), no. 5, 1065-1087; \emph{translation in
Math. USSR-Izv.} \textbf{31} (1988), no. 2, 381-405.

\bibitem{Triebel book 1983}
\newblock H. Triebel,
\newblock \emph{Theory of function spaces},
\newblock Monographs in Mathematics, 78. Birkh\"{a}user Verlag, Basel, 1983. 284 pp.

\bibitem{zhai jde 2017}
\newblock   X. Zhai, Z. Yin,
\newblock Global well-posedness for the 3D incompressible inhomogeneous Navier-Stokes equations and MHD equations,
\newblock \emph{J. Differential Equations} \textbf{262} (2017), no. 3, 1359-1412.

\bibitem{zhang ping adv 2020}
\newblock   P. Zhang,
\newblock Global Fujita-Kato solution of 3-D inhomogeneous incompressible Navier-Stokes system,
\newblock \emph{Adv. Math.} \textbf{363} (2020), 107007, 43 pp.

\end{thebibliography}
\end{document}